\renewcommand{\phi}{\varphi}
\renewcommand{\ker}{\Ker}
\def\ch{\mathrm{ch}}
\def\cl{\mathrm{cl}}
\def\fn{\mathrm{fn}}
\def\dd{\partial}
\def\al{\alpha}
\def\si{\sigma}
\def\ph{\varphi}
\def\Ga{\Gamma}
\def\la{\lambda}
\def\La{\Lambda}
\newcommand{\mc}[1]{\mathcal{#1}}
\newcommand{\mf}[1]{\mathfrak{#1}}
\newcommand{\mb}[1]{\mathbb{#1}}
\newcommand{\id}{\mathbbm{1}}
\newcommand{\tint}{{\textstyle\int}}
\DeclareMathOperator{\Hom}{Hom}
\DeclareMathOperator{\End}{End}
\DeclareMathOperator{\ad}{ad}
\DeclareMathOperator{\im}{Im}
\DeclareMathOperator{\Der}{Der}
\DeclareMathOperator{\Inder}{Inder}
\DeclareMathOperator{\Cas}{Cas}
\DeclareMathOperator{\Ker}{Ker}
\DeclareMathOperator{\fil}{F}
\DeclareMathOperator{\gr}{gr}
\newcommand{\as}{\mathop{\rm as }}
\newcommand{\vac}{|0\rangle}
\theoremstyle{plain}
\newtheorem{theorem}{Theorem}[section]
\newtheorem{lemma}[theorem]{Lemma}
\newtheorem{proposition}[theorem]{Proposition}
\newtheorem{corollary}[theorem]{Corollary}
\theoremstyle{definition}
\newtheorem{definition}[theorem]{Definition}
\newtheorem{example}[theorem]{Example}
\theoremstyle{remark}
\newtheorem{remark}[theorem]{Remark}
\numberwithin{equation}{section}
\definecolor{light}{gray}{.9}
\begin{document}

\title{An operadic approach to vertex algebra and Poisson vertex algebra cohomology}

\author{Bojko Bakalov}
\address{Department of Mathematics, North Carolina State University,
Raleigh, NC 27695, USA}
\email{bojko\_bakalov@ncsu.edu}
%\urladdr{http://www4.ncsu.edu/\$$\sim$\$bnbakalo/}
%
\author{Alberto De Sole}
\address{Dipartimento di Matematica, Sapienza Universit\`a di Roma,
P.le Aldo Moro 2, 00185 Rome, Italy}
\email{desole@mat.uniroma1.it}
\urladdr{www1.mat.uniroma1.it/$\sim$desole}
\author{Reimundo Heluani}
\address{IMPA, Rio de Janeiro, Brasil}
\email{rheluani@gmail.com}
\author{Victor G. Kac}
\address{Department of Mathematics, MIT,
77 Massachusetts Ave., Cambridge, MA 02139, USA}
\email{kac@math.mit.edu}

%\subjclass{
%Primary 17B63; 
%Secondary 17B69, 17B80, 37K30, 17B08
%}

%%%%%%%%

\begin{abstract}
We translate the construction of the chiral operad by Beilinson and Drinfeld
to the purely algebraic language of vertex algebras.
Consequently,
the general construction of a cohomology complex associated to a linear operad
produces a vertex algebra cohomology complex.
Likewise, the associated graded of the chiral operad
leads to a classical operad, which produces
a Poisson vertex algebra cohomology complex.
The latter is closely related to the variational Poisson cohomology studied by two 
of the authors.
\end{abstract}
\keywords{Superoperads,
chiral and classical operads, 
vertex algebra and PVA cohomologies,
variational Poisson cohomology}

\maketitle

\tableofcontents

\pagestyle{plain}

%%%%%%%%%%%%%%%%%%%%%%%%%%%%%%%%%%%%%%%
\section{Introduction}\label{sec:1}

The universal Lie superalgebra associated to a vector superspace $V$
is defined as a $\mb Z$-graded Lie superalgebra 
$$W(V)=\bigoplus_{j\geq-1}W_j(V)
\,,\,\,\text{ with }\,\,W_{-1}(V)=V\,,
$$
such that for any $\mb Z$-graded Lie superagebra $\mf g=\bigoplus_{j\geq-1}\mf g_j$,
with $\mf g_{-1}=V$,
there is a unique grading preserving homomorphism $\mf g\to W(V)$
identical on $V$.
It is easy to see that
$$
W_j(V)
=
\Hom(S^{j+1}(V),V)
\,,
$$
for all $j\geq-1$.
The Lie superalgebra bracket on $W(V)$ is given by
\begin{equation}\label{eq:intro1}
[X,Y]
=
X\Box Y-(-1)^{p(X)p(Y)}Y\Box X
\,,
\end{equation}
were $p$ is the parity on $W(V)$, and, for $X\in W_n(V)$, $Y\in W_m(V)$,
\begin{equation}\label{eq:intro2}
\begin{split}
& (X\Box Y)(v_0\otimes\dots\otimes v_{m+n})
\\
& =
\!\!\!\!\!\!\!\!\!
\sum_{\substack{i_0<\dots<i_m \\ i_{m+1}<\dots<i_{m+n}}}
\!\!\!\!\!\!\!\!\!
\epsilon_v(i_0,\dots,i_{m+n})
X(Y(v_{i_0}\otimes\dots\otimes v_{i_m})\otimes v_{i_{m+1}}\otimes\dots\otimes v_{i_{m+n}})
\,.
\end{split}
\end{equation}
Here
$\epsilon_v(i_0,\dots,i_{m+n})$ is non-zero only if $i_0,\dots,i_{m+n}$ are distinct,
and in this case it is equal to $(-1)^N$,
where $N$ is the number of interchanges of indices of odd $v_i$'s in the permutation.

Clearly, $W_0(V)=\End V$ and $W_1(V)=\Hom(S^2V,V)$,
so that any even element of the vector superspace $W_1(V)$
defines a commutative superalgebra structure on $V$,
and this correspondence is bijective.
On the other hand, any odd element $X$ of the vector superspace $W_1(\Pi V)$
defines a skewcommutative superalgebra structure on $V$
by the formula
\begin{equation}\label{eq:intro3}
[a,b]
=
(-1)^{p(a)}X(a\otimes b)
\,,\qquad
a,b\in V
\,.
\end{equation}
Here and further $\Pi V$ stands for the vector superspace $V$ with reversed parity.
Moreover, \eqref{eq:intro3}
defines a Lie superalgebra structure on $V$
if and only if $[X,X]=0$ in $W(\Pi V)$.
Thus, given a Lie superalgebra structure on $V$,
considering the corresponding odd element $X\in W_1(\Pi V)$,
we obtain a cohomology complex
\begin{equation}\label{eq:intro4}
\big(
C^\bullet
=
\bigoplus_{j\geq0}C^j
,\ad X
\big)
\,\,,\,\,\text{ where }\,\,
C^j=W_{j-1}(\Pi V)
\,,
\end{equation}
which coincides with the cohomology complex of the Lie superalgebra $V$
with the bracket defined by $X$,
with coefficients in the adjoint representation.
This construction for $V$ purely even goes back to the paper \cite{NR67}
on deformations of Lie algebras;
for a general superspace $V$ it was explained in \cite{DSK13}.
Note also that, more generally, given a module $M$ over the Lie superalgebra $V$,
one considers instead of $V$ the Lie superalgebra $V\ltimes M$
with $M$ an abelian ideal,
and by a simple reduction procedure constructs the cohomology complex 
of the Lie superalgebra $V$ with coefficients in $M$.

In the paper \cite{DSK13}, this point of view on cohomology has been also applied
to several other algebraic structures.
The most important for the present paper is that of a Lie conformal (super)algebra
and the corresponding cohomology complex
introduced in \cite{BKV99};
see also \cite{BDAK01}, \cite{DSK09}.
The complex is constructed in \cite{DSK13} as follows.
Assume that the vector superspace $V$ carries an even endomorphism $\partial$.
For each integer $k\geq0$,
denote by $\mb F_-[\lambda_1,\dots,\lambda_k]$
the space of polynomials in the $k$ variables $\lambda_1,\dots,\lambda_k$
of even parity with coefficients in the field $\mb F$,
endowed with the structure of a left $\mb F[\partial]^{\otimes k}$-module
by letting
$P_1(\partial)\otimes\dots\otimes P_k(\partial)$
act as multiplication by
$P_1(-\lambda_1)\cdots P_k(-\lambda_k)$.
This space carries also a right $\mb F[\partial]$-module structure,
for which $\partial$ acts as multiplication by 
$-\lambda_1-\dots-\lambda_k$.
Then we let for $k\geq0$:
\begin{equation}\label{eq:intro5}
P^\partial_k(V)
=
\Hom_{\mb F[\partial]^{\otimes k}}
\big(
V^{\otimes k}
,
\mb F_-[\lambda_1,\dots,\lambda_k]
\otimes_{\mb F[\partial]} V
\big)
\,.
\end{equation}
The symmetric group $S_k$
acts on the vector superspace $P^\partial_k(V)$
by simultaneous permutation of the factors of the vector superspace 
$V^{\otimes k}$
and of the $\lambda_i$'s.
We denote by $W^\partial_k(V)$
the subspace of fixed points in $P^\partial_{k+1}(V)$, $k\geq-1$.
Then the ``conformal'' analogue of $W(V)$ is the vector superspace
$$
W^\partial(V)
=
\bigoplus_{j\geq-1}
W^\partial_j(V)
\,,
$$
with a $\mb Z$-graded Lie superalgebra structure 
similar to \eqref{eq:intro1}-\eqref{eq:intro2}.
Note that we have:
$$
W_{-1}^\partial(V)=V/\partial V
\,\,,\,\,\,\,
W_{0}^\partial(V)=\End_{\mb F[\partial]}V
\,.
$$
Moreover, the even elements in the vector superspace $W^\partial_1(V)$
are identified, letting $\lambda_1=\lambda$ and $\lambda_2=-\lambda-\partial$,
with maps
$$
X\colon V\otimes V\to V[\lambda]
\,,\quad
a\otimes b\mapsto X_\lambda(a\otimes b)
\,,
$$
which satisfy certain sesquilinearity and commutativity conditions.

Proceeding in exactly the same way as in the Lie superalgebra case, 
consider the Lie superalgebra $W^\partial(\Pi V)$.
Then we get a bijection between odd elements $X\in W^\partial_1(\Pi V)$,
such that $[X,X]=0$,
and the \emph{Lie conformal superalgebra} structures on $V$,
i.e., $\lambda$-brackets on $V$ satisfying sesquilinearity 
\begin{equation}\label{eq:intro6}
[\partial a_\lambda b]=-\lambda[a_\lambda b]
\,,\quad
[a_\lambda\partial b]=(\lambda+\partial)[a_\lambda b]
\,,
\end{equation}
skewcommutativity
\begin{equation}\label{eq:intro8}
[b_\lambda a]
=
-(-1)^{p(a)p(b)}[a_{-\lambda-\partial}b]
\,,
\end{equation}
and Jacobi identity
\begin{equation}\label{eq:intro9}
[a_\lambda[b_\mu c]]
-(-1)^{p(a)p(b)}
[b_\mu[a_\lambda c]]
=
[[a_\lambda b]_{\lambda+\mu}c]
\,.
\end{equation}
This bijection is similar to \eqref{eq:intro3}:
\begin{equation}\label{eq:intro10}
[a_\lambda b]
=
(-1)^{p(a)}X_\lambda(a\otimes b)
\,.
\end{equation}
Moreover, similarly to \eqref{eq:intro4},
we obtain the cohomology complex of the Lie conformal superalgebra $V$ with $\lambda$-bracket
given by $X_\lambda$ via \eqref{eq:intro10},
with coefficients in the adjoint representation.
One defines the cohomology of $V$ with coefficients in a $V$-module $M$
in a similar way as well.

The most relevant to \cite{DSK13} construction is obtained by endowing the $\mb F[\partial]$-module
$V$ with a structure of a (commutative associative) differential superalgebra.
In this case one considers the $\mb Z$-graded subalgebra
$W^{\partial,\as}(V)=\bigoplus_{j\geq-1}W^{\partial,\as}_j$
of $W^\partial(V)$,
where $W^{\partial,\as}_j=W^{\partial}_j$ for $j=-1,0$,
while $W^{\partial,\as}_j$ for $j\geq1$ consists of the maps from $W^{\partial}_j$
satisfying the Leibniz rule.
The odd elements $X\in W^{\partial,\as}_1(\Pi V)$, such that $[X,X]=0$,
correspond bijectively to Poisson vertex algebra (PVA) structures on $V$
with the given differential algebra structure,
and using this, one constructs the variational Poisson cohomology of the PVA $V$.
Recall that a \emph{Poisson vertex (super)algebra} is a differential (super)algebra 
endowed with a Lie conformal (super)algebra 
$\lambda$-bracket satisfying the Leibniz rule
\begin{equation}\label{eq:intro11}
[a_\lambda bc]
=
[a_\lambda b]c
+
(-1)^{p(a)p(b)}
b[a_\lambda c]
\,.
\end{equation}

A somewhat different point of view on cohomology complexes of algebraic structures
is provided by the theory of linear unital symmetric superoperads,
which we call \emph{operads} in this paper for simplicity.
(It covers the first two above mentioned examples, but not the third one.)
One of its equivalent definitions is that it is a sequence of vector superspaces $\mc P(n)$, 
$n\in\mb Z_{\geq0}$, endowed with a right action of $S_n$ for $n\geq1$,
and bilinear %(over $\mb F$) 
parity preserving products
$$\circ_i\colon\mc P(n)\times\mc P(m)\to\mc P(n+m-1) \,, \qquad i=1,\dots,n \,,$$
satisfying the associativity axioms given by formula \eqref{eq:operad25} below
and the equivariance axioms given by formula \eqref{eq:operad9}.
(There is also a unity $1\in\mc P(1)$, satisfying the unity axiom,
but this is irrelevant to our paper.)
See, e.g. \cite{MSS02}, \cite{LV12}.
The universal (to the operad $\mc P$) $\mb Z$-graded Lie superalgebra
$W(\mc P)=\bigoplus_{j\geq-1}W_j$
is defined by letting $W_n=\mc P(n+1)^{S_{n+1}}$
with the bracket \eqref{eq:intro1},
where 
$$
X\Box Y=\sum_{\sigma\in S_{m+1,n}}(X\circ_1 Y)^{\sigma^{-1}}
\,.
$$
Here $S_{m,n}$ denotes the set of $(m,n)$-shuffles in $S_{m+n}$;
see Section \ref{sec:3} for details.
The earliest reference to this construction that we know of is \cite{Tam02}.

The most popular example of an operad is $\mc P=\mc Hom$, for which
$$
\mc Hom(n)=\Hom(V^{\otimes n},V)
\,,
$$
for a vector superspace $V$.
The action of $S_n$ on $\mc P(n)$ is defined via its natural action on $V^{\otimes n}$
(taking into account the parity of $V$),
and the $i$-th product $X\circ_i Y$ of $X\in\mc Hom(n)$ and $Y\in\mc Hom(m)$
is defined by ($i=1,\dots,n$)
$$
(X\circ_iY)(v_1,\dots, v_{n+m-1})
=
X(v_1,\dots,v_{i-1},
Y(v_i,\dots,v_{i+m-1}),
v_{i+1},\dots,v_{n+m-1})
\,.
$$
It is easy to see that the Lie superalgebras $W(V)$ and $W(\mc Hom)$ are identical.
Likewise, for an $\mb F[\partial]$-module $V$,
one defines the operad $\mc Chom$,
for which $\mc Chom(n)$ is the space $P^\partial_n(V)$ defined by \eqref{eq:intro5},
and recovers thereby the associated Lie superalgebra $W^\partial(V)$.

Thus, the operads $\mc Hom$ and $\mc Chom$ ``govern'' the Lie algebras
and the Lie conformal superalgebras respectively.
In their seminal book \cite{BD04},
Beilinson and Drinfeld generalized the notion of a vertex algebra,
introduced by Borcherds \cite{Bor86},
by defining a chiral algebra in the language of $\mc D$-modules
on any smooth algebraic curve,
so that a vertex algebra is 
a weakly translation covariant chiral algebra on the affine line.
They also constructed the corresponding chiral operad and the cohomology theory of chiral algebras,
and the associated graded chiral operad.

In the present paper, we translate the construction of the chiral operad from \cite{BD04}
to the purely algebraic language of vertex algebras.
The resulting operad, which we denote by $P^\ch$,
not surprisingly turns out to be an extension of the operad $\mc Chom$
in the same spirit as $\mc Chom$ is an extension of the operad $\mc Hom$.

In order to explain the construction of $P^\ch$ (see Section \ref{sec:wch}),
let us introduce some notation.
For $k\in\mb Z_{\geq-1}$, let
$\mc O^T_{k+1}$ and $\mc O^{\star T}_{k+1}$
be respectively the algebras of polynomials and Laurent polynomials in
$z_{ij}=z_i-z_j$, where $0\leq i<j\leq k+1$,
and let $\mc D^T_{k+1}=\sum_{i=0}^k\mc O^T_{k+1}\partial_{z_i}$
be the algebra of translation invariant regular differential operators.
Let $V$ be an $\mb F[\partial]$-module.
The space $V^{\otimes(k+1)}\otimes\mc O^{\star T}_{k+1}$
carries the structure of a right $\mc D^T_{k+1}$-module by letting $z_{ij}$
act by multiplication on $\mc O^\star_{k+1}$,
and letting $\partial_{z_i}$ act by
$$
(v_0\otimes\dots\otimes v_k\otimes f)\partial_{z_i}
=
v_0\otimes\cdots\partial v_i\cdots\otimes v_k\otimes f
-
v_0\otimes\dots\otimes v_k\otimes \frac{\partial f}{\partial z_i}
\,.
$$
The space $\mb F_-[\lambda_0,\dots,\lambda_k]$ considered above
carries a structure of a $\mc D^T_{k+1}$-module as well,
by letting $z_{ij}$ act as $-\frac{\partial}{\partial\lambda_i}+\frac{\partial}{\partial\lambda_j}$
and $\partial_{z_i}$ act as multiplication by $-\lambda_i$.
Then $P^\ch(k+1)$ is defined as the space of all right $\mc D^T_{k+1}$-module
homomorphisms
$$
V^{\otimes(k+1)}\otimes\mc O^{\star T}_{k+1}
\longrightarrow
\mb F_-[\lambda_0,\dots,\lambda_k]
\otimes_{\mb F[\partial]} V
\,.
$$
The right action of the symmetric group $S_{k+1}$ on $P^\ch(k+1)$
is defined by simultaneous permutations in $V^{\otimes(k+1)}\otimes\mc O^{\star T}_{k+1}$
of factors of $V^{\otimes(k+1)}$ and the corresponding variables $z_0,\dots,z_k$ 
in $\mc O^{\star T}_{k+1}$.
The $\circ_1$ product in $P^\ch$ is defined by \eqref{circ1},
and the general composition by \eqref{circ5}.

We denote by $W^\ch(V)=\bigoplus_{j\geq-1}W^\ch_j(V)$ 
the $\mb Z$-graded Lie superalgebra associated to the operad $P^\ch$
for the $\mb F[\partial]$-module $V$.
It is clear that $W^\ch_j$ for $j=-1,0$
is the same as for the operad $\mc Chom$.
However, $W^\ch_1(\Pi V)$ is identified not with the space
of sesquilinear skewsymmetric $\lambda$-brackets
as for $\mc Chom$, but with their integrals; see Proposition \ref{20160719:prop}.
Moreover, the set of odd elements $X\in W^\ch_1(\Pi V)$
such that $[X,X]=0$ is identified with those integrals of $\lambda$-brackets
satisfying the ``integral'' Jacobi identity;
see Theorem \ref{20160719:thm}.
Thus, due to the integral of $\lambda$-bracket definition of a vertex algebra
introduced in \cite{DSK06},
such elements $X$ parametrize non-unital vertex algebra structures 
on the $\mb F[\partial]$-module $V$.

As explained above, we thus obtain a cohomology complex for any non-unital vertex algebra $V$
and its module $M$.
The low cohomology is as expected from any Lie-type cohomology.
Namely, the $0$-th cohomology parametrizes Casimirs (i.e., invariants)
of the $V$-module $M$,
the $1$-st cohomology is identified with the quotient 
of all derivations from $V$ to $M$
by the space of inner derivations,
and the $2$-nd cohomology parametrizes the $\mb F[\partial]$-split extensions
of $V$ by $M$ with a trivial structure of a non-unital vertex algebra
(see Theorem \ref{thm:lowcoho}).
The vertex algebra cohomology studied in \cite{Bor98}, \cite{Hua14} and \cite{Lib17} 
is rather of Harrison type; for example, their $1$-st cohomology is identified
with the space of all derivations from $V$ to $M$.

The $\mathbb{Z}$-graded Lie superalgebra associated to the operad $P^{\ch}$ and its corresponding differential complex associated to a non-unital vertex algebra structure on $V$ was defined in \cite{Tam02} in the context of chiral algebras as the complex governing deformations of the chiral algebra structure. %It was later studied in \cite{yanagida}. 
It was later studied in \cite{yanagida} where the author introduces also a Lie algebra structure on the complex governing deformations of Poisson vertex algebras. Both \cite{Tam02} and \cite{yanagida} rely on the geometric language of \cite{BD04} to construct these Lie algebras. In particular, they associate a deformation complex to any smooth algebraic curve $X$ and any chiral (respectively, coisson) algebra on $X$. In this article we restrict to the the case when $X$ is the affine line and the chiral (respectively, coisson) algebra is translation equivariant, hence associated to a vertex algebra (respectively, Poisson vertex algebra). In this restricted case, we are able to give a more explicit linear algebraic and combinatorial description of these complexes, providing a suitable framework to carry out computations of (Poisson) vertex algebra cohomologies.

The algebras $\mc O_{k+1}^{\star T}$ carry a natural increasing filtration
by the number of poles,
which induces a decreasing filtration of the operad $P^\ch$.
We study the associated graded operad, denoted by $P^\cl$.
Its explicit description is quite involved
and uses the cooperad of graphs (see Theorem \ref{20170616:thm1}). 
One can show that the operad $P^\ch$
studied in our paper is (non-canonically) isomorphic to that in \cite{BD04}
in the case of the curve being the affine line.

We also consider a refinement of the above filtration of $P^\ch$,
associated to an increasing filtration $0\subset \fil^1V\subset \fil^2V\subset\cdots$
of the $\mb F[\partial]$-module $V$,
and show that the structures of a filtered vertex algebra on $V$
are in bijective correspondence with odd elements $X\in \fil^1 W^\ch_1(\Pi V)$
such that $[X,X]=0$ (see Theorem \ref{20160719:thm-ref}).
Moreover, one has an %isomorphism 
injective morphism
of complexes
$$
\big(
\gr W^\ch(\Pi V),\gr(\ad X)
\big)
\,\hookrightarrow\,
%\stackrel{\sim}{\longrightarrow}
\big(
W^\cl(\gr\Pi V),\ad(\gr X)
\big)
$$
(see Theorem \ref{cor:nomore}),
which is an isomorphism at least in low degrees.

Next, we show that the structures of a Poisson vertex algebra on the $\mb F[\partial]$-module $V$
are in bijection with the odd elements $X\in W^\cl_1(\Pi V)$
such that $[X,X]=0$ (see Theorem \ref{20170616:thm2}).
Using this, we relate the cohomology of the corresponding complex,
called the PVA cohomology,
to the variation Poisson cohomology studied in \cite{DSK13}.
In particular, we show that the low vertex algebra cohomology is majorized 
by the variational Poisson cohomology.
Using this and a computation of the variational Poisson 
cohomology in \cite{DSK12}-\cite{DSK13},
we compute the Casimirs and derivations of the vertex algebra
of $N$ bosons.

Throughout the paper, the base field $\mb F$ is a field of characteristic $0$
and, unless otherwise specified, all vector spaces, their tensor products and Hom's 
are over $\mb F$.

\subsubsection*{Acknowledgments} 

We are grateful to Pavel Etingof for providing a proof of Lemma \ref{20160719:lem}.
We would like to acknowledge discussions with him as well as 
with Corrado De Concini, Andrea Maffei and Alexander Voronov.
We also thank the referee for thoughtful comments.
The research was partially conducted during the authors' visits 
to IHES, MIT, SISSA and the University of Rome La Sapienza.
We are grateful to these institutions for their kind hospitality.
The first author was supported in part by a Simons Foundation grant 279074.
The second author was partially supported by the national PRIN fund n. 2015ZWST2C$\_$001
and the University funds n. RM116154CB35DFD3 and RM11715C7FB74D63.
The third author was partially supported by the Bert and Ann Kostant fund.

%%%%%%%%%%%%%%%%%%%%%%%%%%%%%%%%%%%%%%%
\section{Preliminaries on vector superspaces and the symmetric group}\label{sec:2}

%%%
\subsection{Vector superspaces, tensor products and linear maps}\label{sec:2.0}

Recall that a vector superspace is a $\mb Z/2\mb Z$-graded vector space $V=V_{\bar 0}\oplus V_{\bar 1}$.
We denote by $p(v)\in\mb Z/2\mb Z=\{\bar 0,\bar 1\}$ the parity of a homogeneous element $v\in V$.
Given two vector superspaces $U,V$, their tensor product $U\otimes V$
and the space of linear maps $\Hom(U,V)$
are naturally vector superspaces,
with $\mb Z/2\mb Z$-grading induced by those of $U$ and $V$,
i.e. we have $p(u\otimes v)=p(u)+p(v)$, and $p(f)=p(f(u))-p(u)$,
for $u\in U$, $v\in V$, $f\in\Hom(U,V)$.
Let $g_i\colon U_i\to V_i$, $i=1,\dots,n$, be linear maps of vector superspaces.
One defines their tensor product 
$g_1\otimes\dots\otimes g_n\colon U_1\otimes\dots\otimes U_n\to V_1\otimes\dots\otimes V_n$,
by
\begin{equation}\label{20170804:eq1}
(g_1\otimes\dots\otimes g_n)(u_1\otimes\dots\otimes u_n)
=
(-1)^{\sum_{i<j}p(g_j)p(u_i)}g_1(u_1)\otimes\dots\otimes g_n(u_n)
\,.
\end{equation}
In other words, we follow the usual Koszul--Quillen rule:
every time two odd elements are switched, we change the sign.

%%%
\subsection{The action of the symmetric group on tensor powers}\label{sec:2.1}

The symmetric group $S_n$ is, by definition, the group of bijections
$\sigma\colon \{1,\dots,n\}\stackrel{\sim}{\longrightarrow}\{1,\dots,n\}$,
mapping $i\mapsto\sigma(i)$.

If $V=V_{\bar 0}\oplus V_{\bar 1}$ is a vector superspace,
the symmetric group $S_n$ acts linearly on $V^{\otimes n}$:
\begin{equation}\label{eq:operad6}
\sigma(v_1\otimes\dots\otimes v_n)
:=
\epsilon_v(\sigma) 
\,
v_{\sigma^{-1}(1)}\otimes\dots\otimes v_{\sigma^{-1}(n)}
\,,
\end{equation}
where 
\begin{equation}\label{eq:operad14}
\epsilon_v(\sigma)
=
\prod_{i<j\,|\,\sigma(i)>\sigma(j)}(-1)^{p(v_i)p(v_j)}
\,.
\end{equation}
(Again, we follow the Koszul-Quillen rule for the sign factor.)
Formula \eqref{eq:operad6} defines a left action of $S_n$ on $V^{\otimes n}$,
since the signs $\epsilon_v(\sigma)$ satisfy the relation
\begin{equation}\label{eq:operad15}
\epsilon_v(\sigma\tau)
=
\epsilon_{\tau(v)}(\sigma)
\epsilon_v(\tau)
\,,
\end{equation}
which can be easily checked.

We also have the corresponding \emph{right} action of $S_n$
on the space $\Hom(V^{\otimes n},V)$
of linear maps $f(v_1\otimes\dots\otimes v_n)$, given by
\begin{equation}\label{eq:operad7}
f^\sigma(v_1\otimes\dots\otimes v_n)
=
\epsilon_v(\sigma) f(v_{\sigma^{-1}(1)}\otimes\dots\otimes v_{\sigma^{-1}(n)})
\,\Big(
=f(\sigma(v_1\otimes\dots\otimes v_n))
\Big)
\,.
\end{equation}
Note that the same formula \eqref{eq:operad6} makes sense
when applied to an element $v_1\otimes\dots\otimes v_n\in W_1\otimes\dots\otimes W_n$,
where $W_1,\dots,W_n$ are different vector superspaces.
In this case, $\sigma\in S_n$ defines an (even) linear map
\begin{equation}\label{eq:operad18}
\sigma\colon
W_1\otimes \dots\otimes W_n
\stackrel{\sim}{\longrightarrow}
W_{\sigma^{-1}(1)}\otimes\dots\otimes W_{\sigma^{-1}(n)}
\,.
\end{equation}
\begin{lemma}\label{20170821:lem}
Let\/ $g_i\colon U_i\to V_i$, $i=1,\dots,n$, be linear maps of vector superspaces,
and let\/ $u_i\in U_i$, $i=1,\dots,n$.
For every\/ $\sigma\in S_n$, we have
\begin{equation}\label{20170821:eq1}
\sigma\big((g_1\otimes\dots\otimes g_n)(u_1\otimes\dots\otimes u_n)\big)
=
\big(\sigma(g_1\otimes\dots\otimes g_n)\big)(\sigma(u_1\otimes\dots\otimes u_n))
\,.
\end{equation}
\end{lemma}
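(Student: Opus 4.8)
The plan is to notice that both sides of \eqref{20170821:eq1} are scalar multiples of the same pure tensor $g_{\sigma^{-1}(1)}(u_{\sigma^{-1}(1)})\otimes\dots\otimes g_{\sigma^{-1}(n)}(u_{\sigma^{-1}(n)})$ living in $V_{\sigma^{-1}(1)}\otimes\dots\otimes V_{\sigma^{-1}(n)}$, so that everything reduces to checking that the two Koszul sign factors coincide. On the left one applies \eqref{20170804:eq1} first, getting the sign $(-1)^{A}$ with $A=\sum_{i<j}p(g_j)p(u_i)$, and then \eqref{eq:operad6}--\eqref{eq:operad14} to the pure tensor with entries $w_k=g_k(u_k)$ of parity $p(g_k)+p(u_k)$, picking up $\epsilon_w(\sigma)$. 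On the right one applies \eqref{eq:operad6} separately to $g_1\otimes\dots\otimes g_n$, viewed as an element of $\Hom(U_1,V_1)\otimes\dots\otimes\Hom(U_n,V_n)$, and to $u_1\otimes\dots\otimes u_n$, getting $\epsilon_g(\sigma)\epsilon_u(\sigma)$, and then \eqref{20170804:eq1} to the permuted tensor of maps applied to the permuted tensor of vectors, picking up $(-1)^{B}$ with $B=\sum_{k<l}p(g_{\sigma^{-1}(l)})p(u_{\sigma^{-1}(k)})$. Expanding $\epsilon_w(\sigma)$ by bilinearity of the exponent, the $p(g_i)p(g_j)$ and $p(u_i)p(u_j)$ contributions are exactly the exponents of $\epsilon_g(\sigma)$ and $\epsilon_u(\sigma)$, and everything collapses to the single congruence $A+\sum_{i<j,\,\sigma(i)>\sigma(j)}\big(p(g_i)p(u_j)+p(g_j)p(u_i)\big)\equiv B\pmod 2$.

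To finish I would reindex $B$: writing $k=\sigma(i)$, $l=\sigma(j)$ turns it into $\sum_{i\neq j,\,\sigma(i)<\sigma(j)}p(g_j)p(u_i)$, and splitting according to whether $i<j$ or $i>j$ (and renaming $i\leftrightarrow j$ in the second part) gives $B=\sum_{i<j,\,\sigma(i)<\sigma(j)}p(g_j)p(u_i)+\sum_{i<j,\,\sigma(i)>\sigma(j)}p(g_i)p(u_j)$. Since $A=\sum_{i<j}p(g_j)p(u_i)$ decomposes over the same two cases with full summand $p(g_j)p(u_i)$, one gets $A-B=\sum_{i<j,\,\sigma(i)>\sigma(j)}\big(p(g_j)p(u_i)-p(g_i)p(u_j)\big)$, which modulo $2$ is precisely the claimed sum; hence the signs agree and \eqref{20170821:eq1} holds.

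An alternative, if the direct sign bookkeeping feels opaque, is to reduce to $\sigma$ an adjacent transposition $(i\;\,i{+}1)$, where \eqref{20170821:eq1} is a one-line check comparing $p(g_i)p(g_{i+1})+p(u_i)p(u_{i+1})$ (from the right) against $(p(g_i)+p(u_i))(p(g_{i+1})+p(u_{i+1}))$ plus the change $p(g_i)p(u_{i+1})-p(g_{i+1})p(u_i)$ in the cross-term $A$ (from the left), the two differing by an even quantity. The general case then follows by induction on the length of $\sigma$, writing $\sigma\tau$-composed actions via $\sigma(\tau(-))$, using the cocycle identity \eqref{eq:operad15} for $\epsilon_g$, $\epsilon_u$ and $\epsilon_w$, and the fact that $\sigma(g_1\otimes\dots\otimes g_n)$ is again a pure tensor of linear maps with permuted domains and codomains, to which the inductive hypothesis applies. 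In both approaches the only genuine content is keeping the Koszul signs for the maps and for the vectors separate until the last step; there is no conceptual obstacle, just this accounting, which is the part I would write out most carefully.
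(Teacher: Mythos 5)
Your proposal is correct, and in fact contains two valid arguments. Your second, "alternative" argument is exactly the paper's proof: the paper reduces to adjacent transpositions $\sigma=(s,s+1)$ and declares that case straightforward (your one-line parity check, comparing $p(g_i)p(g_{i+1})+p(u_i)p(u_{i+1})$ with $(p(g_i)+p(u_i))(p(g_{i+1})+p(u_{i+1}))$ together with the change in the cross-term, is precisely the omitted verification, and the cocycle identity \eqref{eq:operad15} handles the induction). Your first argument is a genuinely different route: a direct global sign computation for arbitrary $\sigma$, identifying both sides as multiples of the same pure tensor in $V_{\sigma^{-1}(1)}\otimes\dots\otimes V_{\sigma^{-1}(n)}$ and reducing everything to the congruence $A+\sum_{i<j,\,\sigma(i)>\sigma(j)}\bigl(p(g_i)p(u_j)+p(g_j)p(u_i)\bigr)\equiv B\pmod 2$, which your reindexing of $B$ establishes correctly. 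What the direct computation buys is a complete, self-contained verification with no induction and no appeal to generators; what the paper's (and your alternative) reduction buys is brevity, at the cost of leaving the base case and the compatibility with composition implicit. Either write-up would be acceptable; the direct one is the more informative of the two.
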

\begin{proof}
Since $S_n$ is generated by transpositions,
it suffices to prove that equation \eqref{20170821:eq1} holds for $\sigma=(s,s+1)$, $s=1,\dots,n-1$.
In this case it is straightforward.
%
%The left-hand side of \eqref{20170821:eq1} is
%$$
%\begin{array}{l}
%\displaystyle{
%\vphantom{\Big(}
%\sigma\big((g_1\otimes\dots\otimes g_n)(u_1\otimes\dots\otimes u_n)\big)
%=
%(-1)^{\sum_{i<j}p(g_j)p(u_i)}\sigma(g_1(u_1)\otimes\dots\otimes g_n(u_n))
%} \\
%\displaystyle{
%\vphantom{\Big(}
%=
%(-1)^{\sum_{i<j}p(g_j)p(u_i)+(p(g_s)+p(u_s))(p(g_{s+1})+p(u_{s+1}))}
%g_1(u_1)
%\otimes\dots
%g_{s+1}(u_{s+1})\otimes g_s(u_s)
%\dots\otimes g_n(u_n)
%\,,}
%\end{array}
%$$
%while the right-hand side of \eqref{20170821:eq1} is
%$$
%\begin{array}{l}
%\displaystyle{
%\vphantom{\Big(}
%\big(\sigma(g_1\otimes\dots\otimes g_n)\big)(\sigma(u_1\otimes\dots\otimes u_n))
%} \\
%\displaystyle{
%\vphantom{\Big(}
%=
%(-1)^{p(g_s)p(g_{s+1})+p(u_s)p(u_{s+1})}
%(g_1\otimes\dots g_{s+1}\otimes g_s\dots\otimes g_n)
%(u_1\otimes\dots u_{s+1}\otimes u_s\dots\otimes u_n)
%} \\
%\displaystyle{
%\vphantom{\Big(}
%=
%(-1)^{p(g_s)p(g_{s+1})+p(u_s)p(u_{s+1})
%+\sum_{i<j}p(u_i)p(g_j)+p(u_s)p(g_{s+1})+p(g_s)p(u_{s+1})}
%(g_1(u_1)\otimes\dots g_{s+1}(u_{s+1})\otimes g_s(u_s)\dots\otimes g_n(u_n))
%\,.}
%\end{array}
%$$
\end{proof}

We also define the (left) action of the symmetric group $S_n$ on an arbitrary 
ordered $n$-tuple of objects $(x_1,\dots,x_n)$ as follows:
\begin{equation}\label{20170615:eq2}
\sigma(x_1,\dots,x_n)
=
(x_{\sigma^{-1}(1)},\dots,x_{\sigma^{-1}(n)})
\,.
\end{equation}
In other words,
we put the object $x_1$ in position $\sigma(1)$,
the object $x_2$ in position $\sigma(2)$, and so on.
(Note that, if the objects $x_1,\dots,x_n$ are the numbers $1,\dots,n$,
this action is obtained by applying not $\sigma$ to each of the entries of the list,
but $\sigma^{-1}$.)

%%%
\subsection{Composition of permutations}\label{sec:2.2}

Let $n\geq1$ and $m_1,\dots,m_n\geq0$.
Given permutations $\sigma\in S_n$, $\tau_1\in S_{m_1},\dots,\tau_n\in S_{m_n}$,
we want to define their \emph{composition}
$\sigma(\tau_1,\dots,\tau_n)\in S_{m_1+\dots+m_n}$.
To describe it, it is easier to say how it acts on 
the tensor power $V^{\otimes(m_1+\dots+m_n)}$ of the vector superspace $V$,
in analogy to \eqref{eq:operad6}.
Let
\begin{equation}\label{20170821:eq2a}
M_i=\sum_{j=1}^im_j
\,,\,\,
i=0,\dots,n
\,.
\end{equation}
To apply $\sigma(\tau_1,\dots,\tau_n)$ to $v$,
we first apply each $\tau_i\in S_{m_i}$ to the vector 
$w_i=v_{M_{i-1}+1}\otimes\dots\otimes v_{M_i}\,\in V^{\otimes m_i}$ via \eqref{eq:operad6},
and then we apply $\sigma\in S_n$ to $w=\tau_1(w_1)\otimes\dots\otimes \tau_n(w_n)$,
again by the same formula \eqref{eq:operad6},
where we view $w$ as an element of $W_1\otimes\dots\otimes W_n$,
with $W_i=V^{\otimes m_i}$,
and we consider the generalization of \eqref{eq:operad6} defined in \eqref{eq:operad18}.
Summarizing this in a formula, we have:
\begin{equation}\label{eq:operad19}
(\sigma(\tau_1,\dots,\tau_n))(v)
=
\sigma\big(
\tau_1(v_{1}\otimes\dots\otimes v_{M_1})
\otimes\dots\otimes
\tau_n(v_{M_{n-1}+1}\otimes\dots\otimes v_{M_n})
\big)\,.
\end{equation}
\begin{remark}\label{rem:operad1}
We can write explicitly how $\sigma(\tau_1,\dots,\tau_n)\in S_{m_1+\dots+m_n}$
permutes the integers $1,\dots,m_1+\dots+m_n$.
An integer $k\in\{1,\dots,m_1+\dots+m_n\}$ can be uniquely decomposed in the form
\begin{equation}\label{eq:operad16}
k=m_1+\dots+m_{i-1}+j
\,,
\end{equation}
with $1\leq i\leq n$ and $1\leq j\leq m_i$.
Then, we have
\begin{equation}\label{eq:operad17}
(\sigma(\tau_1,\dots,\tau_n))(k)=m_{\sigma^{-1}(1)}+\dots+m_{\sigma^{-1}(\sigma(i)-1)}+\tau_{i}(j)
\,.
\end{equation}
\end{remark}
\begin{proposition}\label{20170607:prop1}
The composition of permutations satisfies the following associativity condition:
given\/ $\sigma\in S_n$, $\tau_i\in S_{m_i}$ for\/ $i=1,\dots,n$,
and\/ $\rho_j\in S_{\ell_j}$ for $j=1,\dots,M_n$,
we have
\begin{equation}\label{eq:operad22}
\begin{array}{l}
\displaystyle{
\vphantom{\Big(}
(\sigma(\tau_1,\dots,\tau_\ell))(\rho_1,\dots,\rho_{M_n})
} \\
\displaystyle{
\vphantom{\Big(}
=
\sigma\big(\tau_1(\rho_1,\dots,\rho_{M_1}),\dots,\tau_n(\rho_{M_{n-1}+1},\dots,\rho_{M_n})\big)
\,\in S_{\sum_{j}\ell_j}
\,.}
\end{array}
\end{equation}
\end{proposition}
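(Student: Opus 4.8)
The plan is to verify the associativity identity \eqref{eq:operad22} by checking that both sides act identically on a tensor power $V^{\otimes L}$ of an arbitrary vector superspace $V$, where $L=\sum_{j=1}^{M_n}\ell_j$. This suffices, because a permutation in $S_L$ is determined by its action on $V^{\otimes L}$ for $V$ of large enough dimension (indeed, taking $V$ with a basis of $L$ distinct homogeneous vectors, all odd, say, the action on the corresponding basis monomial recovers the permutation together with the correct sign, and since \eqref{eq:operad6} is compatible with the signs we only need to match the underlying permutations; alternatively one can simply take $V$ purely even and one-dimensional coefficients attached to ordered labels). So I would pick $v=v_1\otimes\dots\otimes v_L\in V^{\otimes L}$ and compute $\big((\sigma(\tau_1,\dots,\tau_n))(\rho_1,\dots,\rho_{M_n})\big)(v)$ using the defining formula \eqref{eq:operad19} twice, and likewise for the right-hand side, and compare.

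The key steps are as follows. First I would set up the bookkeeping: in addition to $M_i=\sum_{j\le i}m_j$ from \eqref{20170821:eq2a}, introduce the partial sums $L_j=\sum_{t=1}^{j}\ell_t$ for the $\rho$'s, so that $v$ is cut into $M_n$ blocks $u_j=v_{L_{j-1}+1}\otimes\dots\otimes v_{L_j}\in V^{\otimes\ell_j}$. Next, apply the definition of $\pi(\rho_1,\dots,\rho_{M_n})$ for $\pi=\sigma(\tau_1,\dots,\tau_n)$: by \eqref{eq:operad19} this first applies each $\rho_j$ to $u_j$, forming $w=\rho_1(u_1)\otimes\dots\otimes\rho_{M_n}(u_{M_n})$, and then applies $\pi$ to $w$, viewed in $W_1\otimes\dots\otimes W_{M_n}$ with $W_j=V^{\otimes\ell_j}$. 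Now I unfold $\pi=\sigma(\tau_1,\dots,\tau_n)$ acting on $w$ by \eqref{eq:operad19} again: group the $M_n$ tensor factors of $w$ into $n$ superblocks, the $i$-th consisting of factors $M_{i-1}+1,\dots,M_i$, apply $\tau_i$ within the $i$-th superblock, then apply $\sigma$ to the $n$ resulting superblocks. For the right-hand side of \eqref{eq:operad22}, apply \eqref{eq:operad19} with outer permutation $\sigma$ and inner permutations $\tau_i(\rho_{M_{i-1}+1},\dots,\rho_{M_i})$: this first forms, for each $i$, the element $\tau_i(\rho_{M_{i-1}+1},\dots,\rho_{M_i})$ applied to the $i$-th superblock of $v$, which by \eqref{eq:operad19} (now for the pair $\tau_i$, $(\rho_{M_{i-1}+1},\dots,\rho_{M_i})$) is exactly: apply each relevant $\rho_j$ within its $\ell_j$-block, then apply $\tau_i$ to the resulting $m_i$ sub-blocks; finally apply $\sigma$ to the $n$ superblocks. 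Comparing, both sides are literally ``apply every $\rho_j$ in its block, then every $\tau_i$ across its group of blocks, then $\sigma$ across the groups,'' so they agree as maps $V^{\otimes L}\to V^{\otimes L}$.

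The one technical point that needs care is the generalized action \eqref{eq:operad18} of a permutation on a tensor product of \emph{different} superspaces, and in particular the compatibility statement of Lemma \ref{20170821:lem}: when I regroup $w$ into superblocks and apply $\sigma$, the factors $W_j=V^{\otimes\ell_j}$ after the $\rho_j$'s have been applied are themselves nontrivial tensor products, and I must be sure that ``apply $\tau_i$ inside superblock $i$, then $\sigma$'' on these composite factors matches ``$\tau_i$ applied to the corresponding sub-blocks of $V^{\otimes\ell_j}$''; this is precisely where Lemma \ref{20170821:lem} is used, to commute the inner maps $\rho_j$ (regarded as $g_j\colon V^{\otimes\ell_j}\to V^{\otimes\ell_j}$) past the outer permutations. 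Modulo invoking that lemma, the argument is purely a matter of tracking which index goes where, which is the main (but entirely routine) obstacle; alternatively, one can give a bare-hands proof via Remark \ref{rem:operad1}, decomposing an index $k\in\{1,\dots,L\}$ in the nested form $k=L_{j-1}+r$ with $j=M_{i-1}+s$, and checking that formula \eqref{eq:operad17} applied twice, in either nesting order, yields the same image; I would present the tensor-power argument as the clean proof and perhaps remark that the index computation gives an alternative.
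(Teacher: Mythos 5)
Your proposal is correct and follows essentially the same route as the paper: both compute the action of each side of \eqref{eq:operad22} on a monomial $v_1\otimes\dots\otimes v_{L_{M_n}}$ via two applications of \eqref{eq:operad19} and observe that the two nested expressions coincide. Your additional care about why the tensor-power action determines the permutation and where Lemma \ref{20170821:lem} enters only makes explicit what the paper's terser proof leaves implicit.
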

\begin{proof}
Take a monomial $v_1\otimes\dots\otimes v_{L_{M_n}}$,
where we define $M_i$ as in \eqref{20170821:eq2a}, and let
\begin{equation}\label{20170821:eq2b}
L_j=\sum_{k=1}^j \ell_k
\,,\,\,
j=0,\dots,M_n
\,.
\end{equation}
By \eqref{eq:operad19}, 
when we apply either side of \eqref{eq:operad22} to such monomial,
we get
$$
\begin{array}{l}
\displaystyle{
\vphantom{\big(}
\sigma\Big(
\tau_1\big(
\rho_1(v_1\times\dots\times v_{L_1})
\otimes\dots\otimes
\rho_{M_1}(v_{L_{M_1-1}+1}\otimes\dots\otimes v_{L_{M_1}})
\big)
\otimes\dots
} \\
\displaystyle{
\vphantom{\big(}
\dots\otimes
\tau_n\big(
\rho_{M_{\!n\!-\!1}\!+1}(v_{L_{\!M_{\!n\!-\!1}}\!\!+1}\!\otimes\dots\otimes v_{L_{M_{\!n\!-\!1}\!\!+1}})
\otimes\dots\otimes
\rho_{M_n}(v_{L_{M_n\!-\!1}\!+1}\!\otimes\dots\otimes v_{L_{M_n}})
\big)
\!\Big)
.}
\end{array}
$$
The claim follows.
\end{proof}
\begin{proposition}\label{20170607:prop2}
The composition of permutations satisfies the following
equivariance condition:
\begin{equation}\label{eq:operad23}
(\varphi\sigma)(\psi_1\tau_1,\dots,\psi_n\tau_n)
=
\varphi(\psi_{\sigma^{-1}(1)},\dots,\psi_{\sigma^{-1}(n)})
\,
\sigma(\tau_1,\dots,\tau_n)
\,,
\end{equation}
for every\/ $\varphi,\sigma\in S_n$,
$\psi_1,\tau_1\in S_{m_1}$, $\dots$, $\psi_n,\tau_n\in S_{m_n}$.
\end{proposition}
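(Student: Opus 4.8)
The plan is to mimic the strategy used for the associativity condition (Proposition~\ref{20170607:prop1}): rather than manipulating the permutations directly, I would verify the identity \eqref{eq:operad23} by checking that both sides induce the same linear operator on an arbitrary monomial $v=v_1\otimes\dots\otimes v_{M_n}\in V^{\otimes M_n}$ for a vector superspace $V$, where $M_n=m_1+\dots+m_n$. Since the map $S_{M_n}\to\mathrm{GL}(V^{\otimes M_n})$ given by \eqref{eq:operad6} is a faithful action (for $V$ of large enough odd and even dimension, the signs do not cause collapse), equality of the operators forces equality of the permutations.

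First I would compute the right-hand side applied to $v$. By definition of composition \eqref{eq:operad19}, the operator $\sigma(\tau_1,\dots,\tau_n)$ acts by first applying each $\tau_i$ to the $i$-th block $w_i=v_{M_{i-1}+1}\otimes\dots\otimes v_{M_i}$ and then applying $\sigma$ to the resulting tensor $\tau_1(w_1)\otimes\dots\otimes\tau_n(w_n)$, viewed in $W_1\otimes\dots\otimes W_n$ with $W_i=V^{\otimes m_i}$. Then $\varphi(\psi_{\sigma^{-1}(1)},\dots,\psi_{\sigma^{-1}(n)})$ acts on the output: after $\sigma$ the $i$-th block has moved to position $\sigma(i)$ and has the form $\tau_i(w_i)\in W_i=V^{\otimes m_i}$, so we apply $\psi_{\sigma^{-1}(\sigma(i))}=\psi_i$ to it — this is exactly the point where the reindexing $\psi_{\sigma^{-1}(k)}$ is engineered to line up — and then apply $\varphi$ to the whole tensor. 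On the left-hand side, $(\varphi\sigma)(\psi_1\tau_1,\dots,\psi_n\tau_n)$ first applies $\psi_i\tau_i$ to the $i$-th block $w_i$, giving $\psi_i(\tau_i(w_i))$, and then applies $\varphi\sigma$ to the resulting tensor in $W_1\otimes\dots\otimes W_n$. Using that \eqref{eq:operad6} is a left action (relation \eqref{eq:operad15}) both at the level of the blocks (so that $(\psi_i\tau_i)(w_i)=\psi_i(\tau_i(w_i))$) and at the outer level (so that $(\varphi\sigma)(\text{tensor})=\varphi(\sigma(\text{tensor}))$), both sides become the same nested expression.

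The one genuine subtlety — and the step I expect to be the main obstacle — is the bookkeeping of the sign factors $\epsilon$ and of which vector superspace each $\psi_i$ acts on. On the right-hand side $\psi_{\sigma^{-1}(k)}$ acts on $V^{\otimes m_{\sigma^{-1}(k)}}$, which is precisely the block sitting in outer position $k$ after $\sigma$ has permuted the blocks; the reindexing is forced by the generalized action \eqref{eq:operad18} together with Lemma~\ref{20170821:lem}, which tells us how $\sigma$ intertwines a tensor product of maps with the permuted tensor product. Concretely, I would invoke Lemma~\ref{20170821:lem} with $g_k=\psi_{\sigma^{-1}(k)}$ (after $\sigma$) or, working from the left side, expand $(\varphi\sigma)(\psi_1\tau_1,\dots)$ and pull $\sigma$ past the tensor product $\psi_1\tau_1\otimes\dots\otimes\psi_n\tau_n$ of outer maps using \eqref{20170821:eq1}, which reorders it into $\psi_{\sigma^{-1}(1)}\tau_{\sigma^{-1}(1)}\otimes\dots\otimes\psi_{\sigma^{-1}(n)}\tau_{\sigma^{-1}(n)}$ acting on the $\sigma$-permuted tensor; this is exactly the right-hand side. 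All Koszul signs are produced automatically by the two applications of \eqref{eq:operad6} on each side, so once the maps are matched up there is nothing left to check.

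Alternatively, and perhaps cleaner for the write-up, I would give a purely combinatorial proof using the explicit formula \eqref{eq:operad17} from Remark~\ref{rem:operad1}: evaluate both sides of \eqref{eq:operad23} on an integer $k=m_1+\dots+m_{i-1}+j$ and verify they give the same answer, treating the parity signs separately via relation \eqref{eq:operad15}. I would likely present the operator-theoretic argument as the main proof and remark that the combinatorial check is routine from \eqref{eq:operad17} and \eqref{eq:operad15}.
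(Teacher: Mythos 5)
Your proposal is correct and follows essentially the same route as the paper: the paper's proof also applies both sides of \eqref{eq:operad23} to an arbitrary monomial $v_1\otimes\dots\otimes v_{M_n}$, uses the left-action property of \eqref{eq:operad6} to split $(\varphi\sigma)$ and $(\psi_i\tau_i)$, and invokes \eqref{20170615:eq2} together with Lemma \ref{20170821:lem} to justify the reindexing $\psi_{\sigma^{-1}(k)}$ — exactly the step you single out as the main subtlety. Your extra remark about faithfulness of the action is a point the paper leaves implicit, but otherwise the arguments coincide.
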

\begin{proof}
It suffices to show that both sides of \eqref{eq:operad23}
give the same result when applied to a monomial in $V^{\otimes(m_1+\dots+m_n)}$.
When we apply the left-hand side of \eqref{eq:operad23}
to $v_1\otimes\dots\otimes v_{M_n}$,
we get
$$
\begin{array}{l}
\displaystyle{
\vphantom{\Big(}
\big((\varphi\sigma)(\psi_1\tau_1,\dots\psi_n\tau_n)\big)
(v_1\otimes\dots\otimes v_{M_n})
} \\
\displaystyle{
\vphantom{\Big(}
=
(\varphi\sigma)\big(
(\psi_1\tau_1)(v_1\otimes\dots\otimes v_{M_1})
\otimes\dots\otimes
(\psi_n\tau_n)(v_{M_{n-1}+1}\otimes\dots\otimes v_{M_n})
\big)
\,.}
\end{array}
$$
On the other hand, if we apply the right-hand side of \eqref{eq:operad23} 
to the same monomial, we get, 
$$
\begin{array}{l}
\displaystyle{
\vphantom{\Big(}
\big(\varphi(\psi_{\sigma^{-1}(1)},\dots,\psi_{\sigma^{-1}(n)})\big)
\big(\sigma(\tau_1,\dots,\tau_n)\big)
(v_1\otimes\dots\otimes v_{M_n})
} \\
\displaystyle{
\vphantom{\Big(}
=
\!
\big(\varphi(\psi_{\sigma^{-1}\!(1)},\dots,\psi_{\sigma^{-1}\!(n)})\big)
\Big(\!
\sigma\big(
\tau_1(v_1\!\otimes\dots\otimes\! v_{M_1})\!
\otimes\dots\otimes\!
\tau_n(v_{M_{\!n\!-1}\!\!+1}\!\otimes\dots\otimes\! v_{M_n})
\big)
\!\Big)
} \\
\displaystyle{
\vphantom{\Big(}
=
\varphi
\Big(
\sigma\Big(
\psi_1(\tau_1(v_1\otimes\dots\otimes v_{M_1}))
\otimes\dots\otimes
\psi_n(\tau_n(v_{M_{n-1}+1}\otimes\dots\otimes v_{M_n}))
\Big)
\Big)
\,.}
\end{array}
$$
For the second equality we used \eqref{20170615:eq2}
and Lemma \ref{20170821:lem}.
Equation \eqref{eq:operad23} follows.
\end{proof}

%%%
\subsection{$\circ_i$-products of permutations}\label{sec:2.3}

For $i=1,\dots,n$, we define the $\circ_i$ product of permutations
$\circ_i\colon S_n\times S_m\to S_{n+m-1}$ as follows
($\beta\in S_n$, $\alpha\in S_m$):
\begin{equation}\label{eq:perm1}
\beta\circ_i\alpha:=\beta(
\overbrace{1,\dots,1}^{i-1},\stackrel{\vphantom{\Big(}i}{\alpha},\overbrace{1,\dots,1}^{n-i}
)
\,.
\end{equation}
In other words, its action on the tensor power $V^{\otimes{m+n-1}}$ 
of the vector superspace $V$, is given by
\begin{equation}\label{eq:perm2}
(\beta\circ_i\alpha)(v_1\otimes\cdots\otimes v_{n+m-1})
=
\beta(v_1\otimes\cdots\stackrel{\vphantom{\Big(}i}{w}\cdots\otimes v_{n+m-1})
\,,\,\,
w=\alpha(v_i\otimes\,\cdots\,\otimes v_{i+m-1})
\,.
\end{equation}

As a consequence of Proposition \ref{20170607:prop1},
the $\circ_i$-products satisfy the following associativity conditions:
($\gamma\in S_n,\,\beta\in S_m,\,\alpha\in S_\ell$, $i=1,\dots,n$, $j=1,\dots,n+m-1$):
\begin{equation}\label{eq:operad25b}
(\gamma\circ_i\beta)\circ_j\alpha
=
\left\{\begin{array}{ll}
(\gamma\circ_j\alpha)\circ_{\ell+i-1}\beta & \text{ if } 1\leq j<i \,,\\
\gamma\circ_i(\beta\circ_{j-i+1}\alpha) & \text{ if } i\leq j<i+m \,,\\
(\gamma\circ_{j-m+1}\alpha)\circ_i\beta & \text{ if } i+m\leq j<n+m\,.
\end{array}\right.
\end{equation}
In particular, the $\circ_1$-product is associative.
Moroever, as a consequence of Proposition \ref{20170607:prop2},
the $\circ_i$-products satisfy the following equivariance condition
($\beta,\sigma\in S_n$, $\alpha,\tau\in S_m$ $i=1,\dots,n$):
\begin{equation}\label{eq:operad9b}
(\beta\sigma)\circ_i (\alpha\tau)
=
(\beta\circ_{\sigma(i)}\alpha)(\sigma\circ_i\tau)
\,.
\end{equation}

We shall denote the identity element of the symmetric group $S_n$ by $1_n$.
For every $m,n\geq1$ and $i=1,\dots,n$ we have
\begin{equation}\label{eq:perm3}
1_n\circ_i 1_m=1_{n+m-1}
\,.
\end{equation}
By \eqref{eq:operad9b}, we have 
$(1_n\circ_i\alpha)(1_n\circ_i\tau)=1_n\circ_i(\alpha\tau)$.
In other words, 
for each $n,m\geq1$, and each $i=1,\dots,n$, we have
the injective group homomorphism
\begin{equation}\label{eq:perm4}
S_m\hookrightarrow S_{n+m-1}
\,\,,\,\,\,\,
\alpha\mapsto 1_n\circ_i\alpha
\,.
\end{equation}
For $\alpha\in S_m$ and $i=1,\dots,n$, 
we can write explicitly the action of $1_n\circ_i\alpha\in S_{n+m-1}$
on $V^{\otimes(n+m-1)}$:
$$
(1_n\circ_i\alpha)(v_1\otimes\cdots\otimes v_{n+m-1})
=
v_1\otimes\cdots\otimes
\alpha(v_i\otimes\cdots\otimes v_{m-1+i})
\otimes\cdots\otimes v_{m+n-1}
\,.
$$
In particular, for $\alpha\in S_m$ and $\beta\in S_{n}$,
the actions of 
$$
1_{n+1}\circ_1\alpha
\,\text{ and }\,
1_{m+1}\circ_{m+1}\beta
\,\,\in S_{m+n}
\,\,\text{ commute.}
$$
In the special case $i=1$ it is particularly easy to describe
the image of the map \eqref{eq:perm4}
as a permutation of the numbers $\{1,\dots,n+m-1\}$.
We have, for $m,n\geq1$ and $\alpha\in S_m$,
\begin{equation}\label{eq:perm7}
(1_n\circ_1\alpha)(i)=
\left\{\begin{array}{ll}
\alpha(i)\, & \text{ if }\,\,1\leq i\leq m\,, \\
i & \text{ if }\, m+1\leq i\leq m+n-1\,.
\end{array}\right.
\end{equation}
By \eqref{eq:operad9b}, we also have 
\begin{equation}\label{20170608:eq7a}
(\beta\circ_{\sigma(i)}1_m)(\sigma\circ_i1_m)=(\beta\sigma)\circ_i1_m
\,.
\end{equation}
Hence, the injective map
$S_n\hookrightarrow S_{n+m-1}$ mapping $\sigma\mapsto\sigma\circ_i1_m$
is not a group homomorphism.
On the other hand, it becomes a group homomorphism when we restrict to the 
stabilizer $(S_n)_i=\{\sigma\in S_n\,|\,\sigma(i)=i\}$ of $i$:
\begin{equation}\label{eq:perm5}
(S_n)_i\hookrightarrow S_{n+m-1}
\,\,,\,\,\,\,
\sigma\mapsto\sigma\circ_i1_m
\,.
\end{equation}
As special cases of \eqref{eq:operad25b}
(with $\alpha=1_{\ell}$, $\gamma=1_2$, $i=2$ and $j=1$),
we get the following identity, which we shall need later
($\alpha\in S_m$):
\begin{equation}\label{eq:perm14}
1_{\ell+1}\circ_{\ell+1}\alpha
=
(1_2\circ_{2}\alpha)\circ_11_{\ell}
\,.
\end{equation}

Note that we can write the cyclic permutation $(1,\dots,m+1)$ mapping 
$1\mapsto 2\mapsto\dots\mapsto m+1\mapsto1$ in terms of the $\circ_i$-products as follows:
\begin{equation}\label{michelino:eq1}
(1,\dots,m+1)=(1,2)\circ_11_m\,\,\text{ in }\,\, S_{m+1}
\,.
\end{equation}
More generally, if we consider the cyclic permutation $(1,\dots,m+1)$
in the permutation group $S_{m+n}$, we have the identity
\begin{equation}\label{michelino:eq2}
(1,\dots,m+1)
=
1_n\circ_1(1,2)\circ_11_m\,\,\text{ in }\,\, S_{m+n}
\end{equation}
(there is no need to put parentheses in the right-hand side since the $\circ_1$-product is associative).
An equivalent way to write equation \eqref{michelino:eq2} is
\begin{equation}\label{michelino:eq3}
(1,\dots,m+1)
=
(1,2)(1_m,1_1,1_{n-1})
\,\,\text{ in }\,\, S_{m+n}
\,,
\end{equation}
where, in this case, we consider the transposition $(1,2)$ as an element of $S_3$.

%%%
\subsection{Shuffles}\label{sec:2.4}

A permutation $\sigma\in S_{m+n}$ is called an $(m,n)$-\emph{shuffle} if
\begin{equation}\label{eq:perm6}
\sigma(1)<\dots<\sigma(m)
\,,\,\,
\sigma(m+1)<\dots<\sigma(m+n)
\,.
\end{equation}
In equivalent terms,
when acting on the tensor power $V^{\otimes(m+n)}$ of the (purely even) vector space $V$,
a shuffle $\sigma$ maps the monomial $v=v_1\otimes\,\cdots\,\otimes v_{m+n}$
to a permuted monomial in which the factors $v_1,\dots,v_m$ appear in their order:
$$
\sigma(v)
=
\cdots\otimes\stackrel{\vphantom{\Big(}\sigma(1)}{v_1}\otimes\cdots\otimes
\stackrel{\vphantom{\Big(} \sigma(2)}{v_2}\otimes\cdots
\otimes\stackrel{\vphantom{\Big(} \sigma(m)}{v_m}\otimes\cdots
\,,
$$
and the factors $v_{m+1},\dots,v_{m+n}$ appear in order.
We shall denote by $S_{m,n}\subset S_{m+n}$ the subset (it is not a subgroup) of $(m,n)$-shuffles.
By definition, $S_{n,0}=S_{0,n}=\{1\}$ for every $n\geq0$
and, by convention, we let $S_{m,n}=\emptyset$ if either $m$ or $n$ is negative.

Similarly, we shall denote by $S_{\ell,m,n}\subset S_{\ell+m+n}$
the subset of $(\ell,m,n)$-shuffles, i.e. permutations $\sigma\in S_{\ell+m+n}$ satisfying
\begin{equation}\label{eq:perm6b}
\sigma(1)<\dots<\sigma(\ell)
\,,\,\,
\sigma(\ell+1)<\dots<\sigma(\ell+m)
\,,\,\,
\sigma(\ell+m+1)<\dots<\sigma(\ell+m+n)
\,,
\end{equation}
and the same for $(m_1,\dots,m_k)$-shuffles in $S_{m_1+\dots+m_k}$,
for arbitrary $k\geq2$.
\begin{proposition}\label{prop:perm0}
\begin{enumerate}[(a)]
\item
We have a bijection\/
$S_{m,n}\stackrel{\sim}{\longrightarrow}S_{n,m}$
given by\/
$\sigma\mapsto\sigma\cdot(1,2)(1_n,1_m)$.
\item
We have a bijection\/
$S_{\ell,m,n}\stackrel{\sim}{\longrightarrow}S_{m,\ell,n}$
given by\/
$\sigma\mapsto\sigma\cdot(1,2)(1_m,1_\ell,1_n)$.
\end{enumerate}
\end{proposition}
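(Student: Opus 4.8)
The plan is to prove both statements uniformly by exhibiting explicit two-sided inverses, reducing everything to the composition calculus developed in Sections~\ref{sec:2.2}--\ref{sec:2.3}. The key observation is that the map $\sigma\mapsto\sigma\cdot(1,2)(1_n,1_m)$ is right-multiplication by the block transposition $\tau_{n,m}:=(1,2)(1_n,1_m)\in S_{n+m}$, which by \eqref{eq:operad19} acts on a monomial $v_1\otimes\dots\otimes v_{n+m}$ by swapping the first block of $n$ factors with the last block of $m$ factors (for part (b), $\tau_{m,\ell,n}:=(1,2)(1_m,1_\ell,1_n)\in S_3$ acting as a composition swaps the first $m$-block with the following $\ell$-block, fixing the last $n$-block pointwise). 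Since this is honest right multiplication in a group, bijectivity of the map on all of $S_{n+m}$ (resp.\ $S_{\ell+m+n}$) is automatic; the only content is that it restricts to a bijection between the indicated shuffle subsets.

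First I would record, using Remark~\ref{rem:operad1} or directly \eqref{eq:operad17}, the explicit permutation underlying $\tau_{n,m}$: namely $\tau_{n,m}(i)=m+i$ for $1\leq i\leq n$ and $\tau_{n,m}(n+j)=j$ for $1\leq j\leq m$. Given $\sigma\in S_{m,n}$, I need to check that $\sigma':=\sigma\tau_{n,m}$ lies in $S_{n,m}$, i.e.\ satisfies \eqref{eq:perm6} with the roles of $m$ and $n$ exchanged: $\sigma'(1)<\dots<\sigma'(n)$ and $\sigma'(n+1)<\dots<\sigma'(n+m)$. But $\sigma'(i)=\sigma(\tau_{n,m}(i))$, so for $1\leq i\leq n$ we get $\sigma'(i)=\sigma(m+i)$, and the sequence $\sigma(m+1)<\dots<\sigma(m+n)$ is increasing precisely because $\sigma$ is an $(m,n)$-shuffle; similarly for $n+1\leq n+j\leq n+m$ we get $\sigma'(n+j)=\sigma(j)$, and $\sigma(1)<\dots<\sigma(m)$ is increasing for the same reason. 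Hence $\sigma'\in S_{n,m}$. The inverse map is right-multiplication by $\tau_{n,m}^{-1}=\tau_{m,n}$ (a direct check, or note $\tau_{n,m}^2$ corresponds to the identity double-block swap), which by the symmetric argument sends $S_{n,m}$ back into $S_{m,n}$; composing the two gives the identity on each side since right-multiplication is a group action. This proves~(a).

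For part (b) the argument is identical in structure but one must track three blocks. Here the element $(1,2)(1_m,1_\ell,1_n)$ is the composition (in the sense of \eqref{eq:operad19}) of the transposition $(1,2)\in S_3$ with $(1_m,1_\ell,1_n)$, so by \eqref{eq:operad17} its underlying permutation of $\{1,\dots,\ell+m+n\}$ sends the block $\{1,\dots,\ell\}$ to $\{m+1,\dots,m+\ell\}$, the block $\{\ell+1,\dots,\ell+m\}$ to $\{1,\dots,m\}$, and fixes $\{\ell+m+1,\dots,\ell+m+n\}$ pointwise (all order-preservingly within each block). Writing out $\sigma':=\sigma\cdot(1,2)(1_m,1_\ell,1_n)$ with this substitution, the three increasing conditions \eqref{eq:perm6b} defining $S_{m,\ell,n}$ for $\sigma'$ become exactly the three increasing conditions defining $S_{\ell,m,n}$ for $\sigma$, after the relabeling of which block is which. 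Again the map is globally bijective because it is right multiplication in $S_{\ell+m+n}$, and the inverse is right-multiplication by the inverse element, which one checks corresponds to the block permutation $(\ell,m,n)\mapsto$ swap-back; it carries $S_{m,\ell,n}$ into $S_{\ell,m,n}$, completing the proof.

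I do not anticipate a genuine obstacle here: the statement is essentially bookkeeping, and the only mild subtlety is keeping the conventions straight --- in particular \eqref{20170615:eq2} warns that the action on tuples of objects uses $\sigma^{-1}$, so one must be careful whether ``$\tau_{n,m}(i)=m+i$'' or its inverse is the relevant formula when translating between the $V^{\otimes(n+m)}$-action and the combinatorial permutation. The cleanest route avoids this entirely by working with the underlying permutation via \eqref{eq:operad17} throughout and never passing to the module action; that is the approach I would write up. One could alternatively give a bijective-proof flavored argument --- an $(m,n)$-shuffle is the same data as a subset of size $m$ in $\{1,\dots,m+n\}$ (the image of $\{1,\dots,m\}$), and right multiplication by $\tau_{n,m}$ is the complementation-and-reindexing of subsets --- but the permutation-formula computation is shorter and self-contained given what the excerpt has already set up.
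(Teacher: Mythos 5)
Your proof is correct and follows essentially the same route as the paper: both compute the underlying block-swap permutation of $(1,2)(1_n,1_m)$ (resp.\ $(1,2)(1_m,1_\ell,1_n)$) and observe that right multiplication by it converts the $(m,n)$-shuffle (resp.\ $(\ell,m,n)$-shuffle) inequalities into the $(n,m)$- (resp.\ $(m,\ell,n)$-) ones. Your explicit identification of the inverse as right multiplication by $\tau_{m,n}$ is a slightly more careful justification of bijectivity than the paper's, but the substance is identical.
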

\begin{proof}
The permutation $(1,2)(1_n,1_m)$ switches the first $n$ factors of $V^{\otimes(m+n)}$
with the last $m$ factors, i.e. it maps
$$
\begin{array}{cccccc}
1&\dots&n&1+n&\dots&m+n \\
\downarrow&&\downarrow&\downarrow&&\downarrow \\
m+1&\dots&m+n&1&\dots&m
\end{array}
$$
Hence, the product $\sigma\cdot(1,2)(1_n,1_m)$ maps
$$
\begin{array}{cccccc}
1&\dots&n&1+n&\dots&m+n \\
\downarrow&&\downarrow&\downarrow&&\downarrow \\
\sigma(m+1)\,<&\dots\,<&\sigma(m+n)\,,&\sigma(1)\,<&\dots\,<&\sigma(m)
\end{array}
$$
so it lies in $S_{n,m}$, provided that $\sigma\in S_{m,n}$.
Claim (a) follows.
Similarly for claim (b).
\end{proof}
\begin{proposition}\label{prop:perm1}
For\/ $\ell,m,n\geq1$ we have the following bijections:
\begin{enumerate}[(a)]
\item
$S_{m,n}\times S_m\times S_n\stackrel{\sim}{\longrightarrow} S_{m+n}$,
mapping 
$$
\sigma,\alpha,\beta\mapsto\sigma\,\cdot(1_{n+1}\circ_1\alpha)\cdot(1_{m+1}\circ_{m+1}\beta)
\,,
$$
where\/ $\cdot$ denotes the product in the symmetric group\/ $S_{m+n}$.
\item
$S_{\ell+m,n}\times S_{\ell,m}\stackrel{\sim}{\longrightarrow} S_{\ell,m,n}$,
mapping 
$$
\sigma,\tau\mapsto\sigma\cdot(1_{n+1}\circ_1\tau)
\,.
$$
\item
$S_{m,n}\times S_{\ell,m+n}\stackrel{\sim}{\longrightarrow} S_{\ell,m,n}$,
mapping 
$$
\sigma,\tau\mapsto\tau\cdot(1_{\ell+1}\circ_{\ell+1}\sigma)
\,.
$$
\end{enumerate}
\end{proposition}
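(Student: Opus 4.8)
The plan is to prove all three bijections by the same strategy: exhibit the map explicitly on the level of permutations (using Remark~\ref{rem:operad1} and formulas \eqref{eq:perm7}, the description of $1_{n+1}\circ_1\alpha$ and $1_{m+1}\circ_{m+1}\beta$), check that the image lands in the claimed target set, and then construct an inverse by a ``reading off blocks'' procedure. Throughout, it is convenient to think of a permutation $\rho\in S_N$ via the corresponding total order it induces on the positions, or equivalently via the ordered tuple $(\rho^{-1}(1),\dots,\rho^{-1}(N))$; a shuffle is then precisely the statement that certain consecutive sub-tuples are increasing. The associativity and equivariance identities \eqref{eq:operad25b}, \eqref{eq:operad9b}, together with the commuting-actions remark after \eqref{eq:perm4}, will be the algebraic tools; the combinatorial content is a counting/uniqueness argument.

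For part (a): given $\rho\in S_{m+n}$, I would first recover $\alpha\in S_m$ and $\beta\in S_n$ as the relative orders of $\rho(1),\dots,\rho(m)$ and of $\rho(m+1),\dots,\rho(m+n)$ respectively, i.e.\ $\alpha$ is the unique permutation with $\alpha(i)<\alpha(j)\iff\rho(i)<\rho(j)$ for $1\le i,j\le m$, and similarly for $\beta$. Then set $\sigma=\rho\cdot\big((1_{n+1}\circ_1\alpha)(1_{m+1}\circ_{m+1}\beta)\big)^{-1}$; by construction $\sigma$ sends $1,\dots,m$ and $m+1,\dots,m+n$ to increasing sequences, so $\sigma\in S_{m,n}$, and one checks this gives a two-sided inverse to the stated map. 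The key point that the two factors $1_{n+1}\circ_1\alpha$ and $1_{m+1}\circ_{m+1}\beta$ commute (noted just after \eqref{eq:perm4}) guarantees the parametrization is well-defined and the inverse unambiguous; that the total count matches, $|S_{m,n}|\,m!\,n!=\binom{m+n}{m}m!n!=(m+n)!$, is a sanity check but the explicit inverse makes it a genuine bijection.

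For parts (b) and (c): these follow the same template but one only separates $S_{\ell+m+n}$ into \emph{two} blocks rather than three. In (b), from $\rho\in S_{\ell,m,n}$ one reads off $\tau\in S_{\ell,m}$ as the relative order of $\rho(1),\dots,\rho(\ell+m)$ — which is automatically an $(\ell,m)$-shuffle because each of $\rho(1)<\dots<\rho(\ell)$ and $\rho(\ell+1)<\dots<\rho(\ell+m)$ already holds — and then $\sigma=\rho\cdot(1_{n+1}\circ_1\tau)^{-1}$ lies in $S_{\ell+m,n}$; conversely, given $\sigma,\tau$ the product $\sigma\cdot(1_{n+1}\circ_1\tau)$ visibly keeps the last $n$ entries in order and, within the first $\ell+m$ entries, refines the $(\ell+m)$-shuffle pattern of $\sigma$ by the $(\ell,m)$-shuffle $\tau$, landing in $S_{\ell,m,n}$. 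Part (c) is the mirror image, splitting off the \emph{last} $m+n$ positions instead and using $1_{\ell+1}\circ_{\ell+1}\sigma$, or alternatively can be deduced from (b) by conjugating with the block-swap permutations of Proposition~\ref{prop:perm0}. I expect the main obstacle to be purely bookkeeping: verifying that the explicitly written inverse maps compose to the identity requires carefully tracking how $1_{n+1}\circ_1(-)$ and $1_{m+1}\circ_{m+1}(-)$ act on indices via \eqref{eq:perm7} and \eqref{eq:operad17}, and making sure the relative-order extraction is consistent with right multiplication in $S_N$ (the action \eqref{20170615:eq2} puts the $\sigma^{-1}$ subtlety exactly where one must be careful). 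Once the index formulas are pinned down, each verification is a short direct computation.
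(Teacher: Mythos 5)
Your proposal is correct, and the forward direction (checking that $\sigma\cdot(1_{n+1}\circ_1\alpha)\cdot(1_{m+1}\circ_{m+1}\beta)$, resp.\ $\sigma\cdot(1_{n+1}\circ_1\tau)$, lands in the claimed target) is carried out exactly as in the paper, via the index formulas \eqref{eq:perm7} and the observation that the two block factors in (a) act on disjoint position sets. Where you differ is in how bijectivity is finished: the paper first matches cardinalities ($|S_{m,n}|\,m!\,n!=(m+n)!$ for (a), $\binom{\ell+m+n}{\ell,m,n}=\binom{\ell+m+n}{n}\binom{\ell+m}{\ell}$ for (b)) and then proves only injectivity --- for (a) by arguing that the permuted monomial determines $(\sigma,\alpha,\beta)$, and for (b) by noting that the values on $\ell+m+1,\dots,\ell+m+n$ pin down $\sigma$ and hence $\tau$. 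You instead build an explicit two-sided inverse by standardization: extract $\alpha$ (resp.\ $\tau$) as the relative order of $\rho$ on the appropriate block, then peel it off by right multiplication and verify the quotient is a shuffle. Your route costs a little more bookkeeping (one must confirm that relative-order extraction composed with the forward map is the identity, which works precisely because a shuffle is order-preserving on each block) but buys independence from the counting argument and makes surjectivity manifest rather than inferred; the paper's route is shorter but leans on the cardinality match. Both are sound, and your reduction of (c) to (b) via the block-swap of Proposition \ref{prop:perm0} is a legitimate alternative to the paper's ``similar proof works'' for that part.
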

\begin{proof}
First, the two sets $S_{m,n}\times S_m\times S_n$ and $S_{m+n}$
have the same cardinality $(m+n)!$.
Hence, to prove that the map (a) is a bijection it suffices to prove that it is injective or surjective.
On the other hand, we can see how 
$$
X=\sigma\cdot(1_{n+1}\circ_1\alpha)\cdot(1_{m+1}\circ_{m+1}\beta)
$$
acts on the tensor power $V^{m+n-1}$,
of a vector space $V$.
First, we permute the factors of $v=v_1\otimes\,\dots\,\otimes v_m$ by $\alpha$
and the factors of $v_{m+1}\otimes\,\dots\,\otimes v_{m+n}$ by $\beta$.
Then, we shuffle the resulting monomial, 
in such a way that the factors of $\alpha(v_1\otimes\,\dots\,\otimes v_m)$
appear in the same order in $X(v)$, in positions $\sigma(1),\dots,\sigma(m)$,
and similarly the factors of $\beta(v_{m+1}\otimes\,\dots\,\otimes v_{m+n})$
appear in the same order in $X(v)$, in positions $\sigma(m+1),\dots,\sigma(m+n)$.
Now it is clear that the resulting monomial $X(v)$
is uniquely determined by the choice of $\sigma\in S_{m,n}$, $\alpha\in S_m$ and $\beta\in S_n$.
In other words, the map (a) is injective.

Let us now prove that the map (b) is bijective.
First note that two sets $S_{\ell+m,n}\times S_{\ell,m}$ and $S_{\ell,m,n}$
have the same cardinality $\frac{(\ell+m+n)!}{\ell!m!n!}$.
Next, we need to prove that,
for $\sigma\in S_{\ell+m,n}$ and $\tau\in S_{\ell,m}$,
the permutation $\sigma(1_{n+1}\circ_1\tau)$ is an $(\ell,m,n)$-shuffle.
Indeed, by \eqref{eq:perm7},
\begin{equation}\label{eq:perm8}
1\leq (1_{n+1}\circ_1\tau)(i)=\tau(i)\leq \ell+m
\,\text{ for }\,
i=1,\dots,\ell+m
\,,
\end{equation}
and
\begin{equation}\label{eq:perm9}
(1_{\ell+1}\circ_1\tau)(i)=i
\,\text{ for }\,
i=\ell+m+1,\dots,\ell+m+n
\,.
\end{equation}
On the other hand, since $\tau\in S_{\ell,m}$, we have
\begin{equation}\label{eq:perm10}
1\leq\tau(1)<\dots<\tau(\ell)\leq \ell+m
\,\text{ and }\,
1\leq\tau(\ell+1)<\dots<\tau(\ell+m)\leq \ell+m
\,,
\end{equation}
and since $\sigma\in S_{\ell+m,n}$, we have
\begin{equation}\label{eq:perm11}
\sigma(1)<\dots<\sigma(\ell+m)
\,\text{ and }\,
\sigma(\ell+m+1)<\dots<\sigma(\ell+m+n)
\,.
\end{equation}
Combining \eqref{eq:perm8}--\eqref{eq:perm11},
we get
\begin{equation}\label{eq:perm12}
\begin{array}{l}
\displaystyle{
\vphantom{\Big(}
\sigma(1_{n+1}\circ_1\tau)(1)=\sigma(\tau(1))<\dots<\sigma(1_{n+1}\circ_1\tau)(\ell)=\sigma(\tau(\ell))
\,,} \\
\displaystyle{
\vphantom{\Big(}
\sigma(1_{n+1}\circ_1\tau)(\ell+1)=\sigma(\tau(\ell+1))<\dots<\sigma(1_{n+1}\circ_1\tau)(\ell+m)
=\sigma(\tau(\ell+m))
\,,} \\
\displaystyle{
\vphantom{\Big(}
\!
\sigma(1_{n\!+\!1}\!\circ_1\!\tau)(\ell\!+\!m\!+\!1)=\sigma(\ell\!+\!m\!+\!1)<\dots<
\sigma(1_{n\!+\!1}\!\circ_1\!\tau)(\ell\!+\!m\!+\!n)=\sigma(\ell\!+\!m\!+\!n)
,} 
\end{array}
\end{equation}
namely, $\sigma(1_{n+1}\circ_1\tau)\in S_{\ell,m,n}$.
To prove that the map (b) is injective,
we just observe that, by the third line in \eqref{eq:perm12},
the values of $\sigma(1_{n+1}\circ_1\tau)$ on $\ell+m+1,\dots,\ell+m+n$
uniquely determine $\sigma(\ell+m+1),\dots,\sigma(\ell+m+n)$,
i.e. uniquely determine the shuffle $\sigma\in S_{\ell+m,n}$.
Then, since $\sigma$ is uniquely determined by $\sigma(1_{n+1}\circ_1\tau)$, 
it is clear that $\tau$ is uniquely determined as well.
A similar proof works for (c).
\end{proof}
\begin{proposition}\label{prop:perm2}
\begin{enumerate}[(a)]
\item
The set of shuffles\/ $S_{m,n}$ decomposes as
$$
S_{m,n}
=
\{\sigma\in S_{m,n}\,|\,\sigma(1)=1\}
\sqcup
\{\sigma\in S_{m,n}\,|\,\sigma(m+1)=1\}
\,.
$$
\item
We have a bijection\/
$\{\sigma\in S_{m+1,n-1}\,|\,\sigma(1)=1\}
\stackrel{\sim}{\longrightarrow}\{\sigma\in S_{m,n}\,|\,\sigma(m+1)=1\}$
given by
\begin{equation}\label{michelino:eq4}
\sigma\mapsto\sigma\cdot(1_n\circ_1(1,2)\circ_11_{m})
\,,
\end{equation}
where $\cdot$ is the product in the symmetric group\/ $S_{m+n}$.
\item
We have a bijection
$S_{m-1,n}\stackrel{\sim}{\longrightarrow}\{\sigma\in S_{m,n}\,|\,\sigma(1)=1\}$
given by
\begin{equation}\label{michelino:eq5}
\sigma\mapsto1_2\circ_2\sigma
\,.
\end{equation}
\end{enumerate}
\end{proposition}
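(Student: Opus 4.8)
The plan is to treat the three parts separately, in each case working directly with the combinatorial description of shuffles in \eqref{eq:perm6} and with the explicit formulas for the $\circ_i$-products recorded in Section \ref{sec:2.3}. For part (a): since $1$ is the least element of $\{1,\dots,m+n\}$ and a shuffle $\sigma\in S_{m,n}$ is increasing on $\{1,\dots,m\}$ and on $\{m+1,\dots,m+n\}$ separately, the position $k=\sigma^{-1}(1)$ at which $\sigma$ attains the value $1$ must be the smallest element of whichever of the two blocks contains it; thus $k=1$ (and $\sigma(1)=1$) or $k=m+1$ (and $\sigma(m+1)=1$). Hence $S_{m,n}=\{\sigma\mid\sigma(1)=1\}\cup\{\sigma\mid\sigma(m+1)=1\}$, and the union is disjoint because $\sigma$ is injective, so $\sigma(1)$ and $\sigma(m+1)$ cannot both equal $1$.

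For part (b): by \eqref{michelino:eq2} the permutation $1_n\circ_1(1,2)\circ_11_m$ equals the cyclic permutation $c=(1,\dots,m+1)\in S_{m+n}$, so the proposed map is $\sigma\mapsto\sigma\cdot c$, where $c(i)=i+1$ for $1\le i\le m$, $c(m+1)=1$, and $c(i)=i$ otherwise. If $\sigma\in S_{m+1,n-1}$ with $\sigma(1)=1$, then $(\sigma c)(i)=\sigma(i+1)$ for $1\le i\le m$, $(\sigma c)(m+1)=\sigma(1)=1$, and $(\sigma c)(i)=\sigma(i)$ for $i\ge m+2$; the inequalities $\sigma(2)<\dots<\sigma(m+1)$ and $1=\sigma(1)<\sigma(m+2)<\dots<\sigma(m+n)$ (the latter using injectivity of $\sigma$ together with $\sigma(1)=1$) then show that $\sigma c\in S_{m,n}$ with $(\sigma c)(m+1)=1$. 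Applying the same computation to $c^{-1}=(m+1,m,\dots,1)$ shows that $\tau\mapsto\tau\cdot c^{-1}$ sends $\{\tau\in S_{m,n}\mid\tau(m+1)=1\}$ into $\{\tau\in S_{m+1,n-1}\mid\tau(1)=1\}$, and these two maps are manifestly mutually inverse, which proves the claimed bijection.

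For part (c): by the explicit action of $1_n\circ_i\alpha$ on tensor powers recalled in Section \ref{sec:2.3} (the case $n=2$, $i=2$), the permutation $1_2\circ_2\sigma\in S_{m+n}$ fixes $1$ and satisfies $(1_2\circ_2\sigma)(j+1)=\sigma(j)+1$ for $1\le j\le m-1+n$. Hence for $\sigma\in S_{m-1,n}$ one gets $1<\sigma(1)+1<\dots<\sigma(m-1)+1$ and $\sigma(m)+1<\dots<\sigma(m-1+n)+1$, so $1_2\circ_2\sigma\in S_{m,n}$ with $(1_2\circ_2\sigma)(1)=1$. Conversely, if $\tau\in S_{m,n}$ and $\tau(1)=1$, then $\tau$ restricts to a bijection of $\{2,\dots,m+n\}$, so $\sigma(j):=\tau(j+1)-1$ defines a permutation in $S_{m-1+n}$ which lies in $S_{m-1,n}$ (comparing the two increasing blocks) and satisfies $1_2\circ_2\sigma=\tau$; this is the inverse of the map \eqref{michelino:eq5}.

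All of the verifications are elementary; the only point that needs a little care is the bookkeeping of composition order in part (b) --- distinguishing $\sigma\cdot c$ from $c\cdot\sigma$ and checking that right multiplication by $c$ moves the value $1$ from position $1$ to position $m+1$ while keeping both blocks increasing --- but this becomes transparent once one uses the identification $1_n\circ_1(1,2)\circ_11_m=(1,\dots,m+1)$ from \eqref{michelino:eq2}.
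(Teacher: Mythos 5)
Your proof is correct and follows essentially the same route as the paper's: part (a) by inspecting where the value $1$ can sit in a shuffle, and parts (b) and (c) by identifying $1_n\circ_1(1,2)\circ_11_m$ with the cycle $(1,\dots,m+1)$ via \eqref{michelino:eq2} and computing the explicit action of $1_2\circ_2\sigma$. The only (harmless) difference is that you conclude bijectivity by exhibiting explicit inverse maps, whereas the paper observes injectivity and then compares the cardinalities $\frac{(m+n-1)!}{m!(n-1)!}$ and $\frac{(m+n-1)!}{(m-1)!n!}$ of the two sides.
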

\begin{proof}
Claim (a) is obvious since, if $\sigma$ is an $(m,n)$-shuffle,
then either $1=\sigma(1)$ or $1=\sigma(m+1)$.
For (b), recall that, by \eqref{michelino:eq2},
$(1_n\circ_1(1,2)\circ_11_{m})$ is the cyclic permutation $1\mapsto2\mapsto\dots\mapsto m+1\mapsto1$.
Hence, 
the product $\sigma\cdot(1_n\circ_1(1,2)\circ_11_{m})$ maps
$$
\begin{array}{ccccccc}
1&\dots&m&m+1&m+2&\dots&m+n \\
\downarrow&&\downarrow&\downarrow&\downarrow&&\downarrow \\
\sigma(2)\,<&\dots\,<&\sigma(m+1)\,\,,&\sigma(1)=1\,<&\sigma(m+2)\,<&\dots\,<&\sigma(m+n)
\end{array}
$$
It follows that $\sigma\cdot(1_n\circ_1(1,2)\circ_11_{m})$ lies in $S_{m,n}$,
provided that $\sigma\in S_{m+1,n-1}$,
and that it maps $m+1\mapsto1$.
On the other hand, the map \eqref{michelino:eq4} is clearly injective,
hence it is bijective since the two sets 
$\{\sigma\in S_{m+1,n-1}\,|\,\sigma(1)=1\}$
and $\{\sigma\in S_{m,n}\,|\,\sigma(m+1)=1\}$
have the same cardinality $\frac{(m+n-1)!}{m!(n-1)!}$.
This proves (b).

Next, let us prove claim (c).
By the definition of the $\circ_i$-products,
we have
$$
(1_2\circ_2\sigma)(1)=1
\,,\,\text{ and }\,\,
(1_2\circ_2\sigma)(1+i)=1+\sigma(i) \text{ for } i=1,\dots,m+n-1\,.
$$
In particular, $(1_2\circ_2\sigma)\in S_{m,n}$ provided that $\sigma\in S_{m-1,n}$.
On the other hand, 
the map \eqref{michelino:eq5} is clearly injective,
hence it is bijective since the two sets 
$S_{m-1,n}$ and $\{\sigma\in S_{m,n}\,|\,\sigma(1)=1\}$
have the same cardinality $\frac{(m+n-1)!}{(m-1)!n!}$.
This proves (c).
Finally, claim (d) follows from (b) and (c).
\end{proof}

%%%%%%%%%%%%%%%%%%%%%%%%%%%%%%%%%%%%%%%
\section{Superoperads and the associated $\mb Z$-graded Lie superalgebras}\label{sec:3}

In this section, we review the definition and some basic properties of superoperads,
which will be needed throughout the rest of the paper.
For extended reviews on the theory of operads,
see e.g. \cite{LV12,MSS02}.

%%%
\subsection{Definition of a superoperad}\label{sec:3.1}

Recall that 
a (linear, unital, symmetric) \emph{superoperad} $\mc P$ is a collection of vector superspaces $\mc P(n)$, $n\geq0$, with parity $p$,
endowed, for every $f\in\mc P(n)$ and $m_1,\dots,m_n\geq0$, with the \emph{composition} 
parity preserving linear map, 
\begin{equation}\label{eq:operad1}
\begin{split}
\mc P(n) \otimes \mc P(m_1)\otimes\dots\otimes\mc P(m_n)\,\,&\to\,\,\mc P(M_n) \,, \\
f \otimes g_1 \otimes\dots\otimes g_n \,\, &\mapsto \,\, f(g_1\otimes\dots\otimes g_n) \,,
\end{split}
\end{equation}
where $M_n:=m_1+\dots+m_n$ (cf. \eqref{20170821:eq2a}),
satisfying the following associativity axiom:
\begin{equation}\label{eq:operad2}
f\big(
(g_1\otimes\dots\otimes g_n)
(h_1\otimes\dots\otimes h_{M_n})
\big)
=
\big(f
(g_1\!\otimes\dots\otimes\! g_n)
\big)
(h_1\otimes\dots\otimes h_{M_n})
\,\in\mc P\Big(\sum_{j=1}^{M_n}\ell_j\Big)
\,,
\end{equation}
for every $f\in\mc P(n)$, $g_i\in\mc P(m_i)$ for $i=1,\dots,n$,
and $h_{j}\in\mc P(\ell_{j})$ for $j=1,\dots,M_n$.
In the left-hand side of \eqref{eq:operad2}
the linear map 
$$\bigotimes_{i=1}^n g_i\colon \bigotimes_{j=1}^{M_n}\mc P(\ell_j)
\to\bigotimes_{i=1}^n\mc P\Bigl(\sum_{j=M_{i-1}+1}^{M_i}\ell_{j}\Bigr)$$
is the tensor product of composition maps, defined by \eqref{20170804:eq1}, applied to
$$
h_1\otimes\dots\otimes h_{M_n} = (h_1\otimes\dots\otimes h_{M_1}) \otimes (h_{M_1+1}\otimes\dots\otimes h_{M_2})
\otimes\dots\otimes (h_{M_{n-1}+1}\otimes\dots\otimes h_{M_n}).
$$
We assume that $\mc P$ is endowed with a \emph{unit} element $1\in\mc P(1)$
satisfying the following unity axioms: 
\begin{equation}\label{eq:operad3}
f(1\otimes\dots\otimes 1)=1(f)=f
\,,\,\,\text{ for every }\,\,f\in\mc P(n)
\,.
\end{equation}
Furthermore, we assume that,
for each $n\geq1$, $\mc P(n)$ has a right action of the symmetric group $S_n$,
denoted $f^\sigma$, for $f\in\mc P(n)$ and $\sigma\in S_n$,
satisfying the following equivariance axiom
($f\in\mc P(n)$, $g_1\in\mc P(m_1),\dots,g_n\in\mc P(m_n)$,
$\sigma\in S_n$, $\tau_1\in S_{m_1},\dots,\tau_n\in S_{m_n}$):
\begin{equation}\label{eq:operad4}
f^\sigma(g_1^{\tau_1}\otimes \dots\otimes g_n^{\tau_n})
=
\big(f(\sigma(g_1\otimes\dots\otimes g_n))\big)^{\sigma(\tau_1,\dots,\tau_n)}
\,,
\end{equation}
where the composition $\sigma(\tau_1,\dots,\tau_n)\in S_{m_1+\dots+m_n}$ 
of permutation was defined in Section \ref{sec:2.2},
and the left action of $\sigma\in S_n$ on the tensor product of vector superspaces
was defined in \eqref{eq:operad18}.

For simplicity, from now on, we will use the term operad in place of superoperad.

\begin{example}\label{ex:operadS}
The \emph{symmetric group} operad $\mc S$ is defined as 
the collection of purely even superspaces $\mc S(n)=\mb F[S_n]$
for $n\geq1$ and $\mc S(0)=\mb F$,
with the composition maps obtained by exteding linearly \eqref{eq:operad19}
to the group algebras,
the unity $1\in S_1$,
and the action of right multiplication of $S_n$ on $\mb F[S_n]$.
This is an operad,
indeed the associativity axiom \eqref{eq:operad2} follows from Proposition \ref{20170607:prop1},
and the equivariance axiom \eqref{eq:operad4}
follows from Proposition \ref{20170607:prop2}.
\end{example}
\begin{example}\label{ex:operadH}
Given a vector superspace $V=V_{\bar 0}\oplus V_{\bar 1}$,
the operad $\mc{H}om$ is defined as 
the collection of superspaces
$\mc{H}om(n)=\Hom(V^{\otimes n},V)$, $n\geq0$,
endowed with the composition maps
($f\in\mc P(n)$, $g_i\in\mc P(m_i)$ for $i=1,\dots,n$,
$v_j\in V$ for $j=1,\dots,M_n:=m_1+\dots+m_n$):
\begin{equation}\label{eq:operad24}
(f(g_1\otimes \dots\otimes g_n))
(v_1\otimes\dots\otimes v_{M_n})
:=
f\big((g_1\otimes\dots\otimes g_n)(v_1\otimes\dots\otimes v_{M_n})\big)
\,,
\end{equation}
the unity $1=\id_V\in\End V$,
and the right action of $S_n$ on $\Hom(V^{\otimes n},V)$
given by  \eqref{eq:operad7}.
The associativity and unit axioms, for this example, are obvious.
Let us prove the equivariance axiom \eqref{eq:operad4}.
When applied to a monomial $v_1\otimes\dots\otimes v_{M_n}$,
the left-hand side of \eqref{eq:operad4} gives
(we use the notation \eqref{20170821:eq2a}) 
$$
\begin{array}{l}
\displaystyle{
\vphantom{\Big(}
\big(f^\sigma(g_1^{\tau_1}\otimes\dots\otimes g_n^{\tau_n})\big)
(v_1\otimes\dots\otimes v_{M_n})
} \\
\displaystyle{
\vphantom{\Big(}
=
f^{\sigma}\big(
(g_1^{\tau_1}\otimes\cdots\otimes g_n^{\tau_n})
(v_1\otimes\cdots\otimes v_{M_n})
\big)
} \\
\displaystyle{
\vphantom{\Big(}
=
f^{\sigma}\!\big(
(g_1\!\otimes\!\cdots\!\otimes g_n)
\big(
\tau_1(v_1\!\otimes\!\cdots\!\otimes v_{M_1})\!
\otimes\!\cdots\!\otimes
\tau_n(v_{M_{n-1}\!+\!1}\otimes\!\cdots\!\otimes v_{M_n})
\big)
} \\
\displaystyle{
\vphantom{\Big(}
=
f\big(
(\sigma(g_1\otimes\cdots\otimes g_n))
\big(
\sigma(\tau_1,\dots,\tau_n)
(v_1\otimes\cdots\otimes v_{M_n})
\big)
} \\
\displaystyle{
\vphantom{\Big(}
=
\big(f
(\sigma(g_1\otimes\dots\otimes g_n))
\big)^{\sigma(\tau_1,\dots,\tau_n)}
(v_1\otimes\dots\otimes v_{M_n})
\,.}
\end{array}
$$
For the third equality, we used Lemma \ref{20170821:lem}
and the definition \eqref{eq:operad19} of $\sigma(\tau_1,\dots,\tau_n)$.
\end{example}

Given an operad $\mc P$, one defines, for each $i=1,\dots,n$, 
the $\circ_i$-product
$\circ_i\colon\mc P(n)\times\mc P(m)\to\mc P(n+m-1)$
by insertion in position $i$, i.e.
\begin{equation}\label{eq:operad8}
f\circ_i g=f(
%1^{\otimes (i-1)}\otimes g\otimes 1^{\otimes(n-i)}
\overbrace{1\otimes\dots\otimes 1}^{i-1}
\otimes\stackrel{\vphantom{\Big(}i}{g}\otimes
\overbrace{1\otimes\dots\otimes1}^{n-i})\,.
\end{equation}
Of course, knowing all the $\circ_i$-products allows to reconstruct,
thanks to the associativity axiom \eqref{eq:operad2}, the whole operad structure, by
\begin{equation}\label{eq:operad8a}
f(g_1,\dots,g_n)
=
(\cdots((f\circ_1 g_1)\circ_{m_1+1}g_2)\cdots)\circ_{m_1+\dots+m_{n-1}+1} g_n
\,.
\end{equation}
Then, the unity axiom (i) becomes $1\circ_1 f=f\circ_i1=f$ for every $i=1,\dots,n$,
and the associativity axiom (ii) is equivalent to the following identities for the $\circ_i$-products,
cf. \eqref{eq:operad25b}
($f\in\mc P(n),\,g\in\mc P(m),\,h\in\mc P(\ell)$):
\begin{equation}\label{eq:operad25}
(f\circ_ig)\circ_j h
=
\left\{\begin{array}{ll}
\displaystyle{
\vphantom{\Big(}
(-1)^{p(g)p(h)}
(f\circ_j h)\circ_{\ell+i-1}g } & \text{ if } 1\leq j<i 
\,, \\
\displaystyle{
\vphantom{\Big(}
f\circ_i(g\circ_{j-i+1}h) } & \text{ if } i\leq j<i+m 
\,, \\
\displaystyle{
\vphantom{\Big(}
(-1)^{p(g)p(h)}
(f\circ_{j-m+1}h)\circ_ig } & \text{ if } i+m\leq j<n+m
\,.
\end{array}\right.
\end{equation}
In particular, the $\circ_1$-product is associative.
Note that the third identity in \eqref{eq:operad25}
is equivalent to the first one by flipping the equality.
Furthermore, the equivariance condition \eqref{eq:operad4}
and the supersymmetric equivariance condition \eqref{eq:operad4}
both become, in terms of the $\circ_i$-products, cf. \eqref{eq:operad9b}
\begin{equation}\label{eq:operad9}
f^{\sigma}\circ_i g^{\tau}
=
(f\circ_{\sigma(i)}g)^{\sigma\circ_i\tau}
\,,
\end{equation}
for $f\in\mc P(n)$, $g\in\mc P(m)$, $\sigma\in S_n$, $\tau\in S_m$,
where $\sigma\circ_i\tau\in S_{n+m-1}$ is defined by \eqref{eq:perm1}.

By definition, an operad $\mc P$ is \emph{filtered} if each vector superspace $\mc P(n)$ 
is endowed with a filtration $\fil^r\mc P(n)$, which is preserved by the action 
of the symmetric group and is preserved by the composition maps, i.e.,
\begin{equation}\label{eq:filop}
f^\sigma\in\fil^r \mc P(n), \quad f(g_1\otimes\dots\otimes g_n) \in\fil^{r+s_1+\cdots+s_n} \mc P(m_1+\cdots+m_n)
\end{equation}
for all $f\in\fil^r \mc P(n)$, $\sigma\in S_n$ and $g_i\in\fil^{s_i} \mc P(m_i)$.
In the case of a decreasing filtration of $\mc P$, the corresponding \emph{associated graded} operad is
\begin{equation}\label{eq:filop2}
\gr^r \mc P(n) = \fil^r \mc P(n) / \fil^{r+1} \mc P(n),
\end{equation}
with the induced action of the symmetric groups and composition maps.
This is a \emph{graded operad}, i.e., each superspace $\mc P(n)$ is graded and the analog of \eqref{eq:filop} holds degreewise.

A \emph{morphism} $\ph$ from an operad $\mc P$ to an operad $\mc Q$ is a collection of linear maps $\ph_n\colon\mc P(n)\to\mc Q(n)$ commuting with the action of the symmetric groups and compatible with the composition maps. When $\mc P$ and $\mc Q$ are filtered, the map $\ph_n$ is required to send $\fil^r\mc P(n)$ to $\fil^r\mc Q(n)$, and similarly for graded operads.

%%%
\subsection{Universal Lie superalgebra associated to an operad}\label{sec:3.2}

Let $\mc P$ be an operad.
We let $W=W(\mc P)$ be the $\mb Z$-graded vector superspace
$W=\bigoplus_{n\geq-1}W_n$,
where
\begin{equation}\label{20170603:eq3}
W_n=\mc P(n+1)^{S_{n+1}}
=\big\{f\in\mc P(n+1)\,\big|\,f^\sigma=f\,\forall \sigma\in S_{n+1}\big\}
\,.
\end{equation}
We define the $\Box$-product of $f\in W_n$ and $g\in W_m$ as follows:
\begin{equation}\label{eq:box}
f\Box g
=
\sum_{\sigma\in S_{m+1,n}}
(f\circ_1 g)^{\sigma^{-1}}
\,.
\end{equation}
Note that $S_{m+1,-1}=\emptyset$, hence $f\Box g=0$ if $f\in W_{-1}$.
\begin{example}\label{ex:box}
For $f,g\in W_1$, we have
\begin{equation}\label{eq:box1}
\begin{split}
f\Box g
&= f\circ_1 g + (f\circ_1 g)^{(23)} + (f\circ_1 g)^{(132)} \\
&= f\circ_1 g + f\circ_2 g + (f\circ_2 g)^{(12)}.
\end{split}
\end{equation}
The second equality follows from \eqref{eq:operad9} and the fact that $f$ and $g$ are symmetric, using
$(23)=(132)(12)$ and $(132)=(12)\circ_2 (1)$.
\end{example}
\begin{theorem}\label{20170603:thm2}
\begin{enumerate}[(a)]
\item
For\/ $f\in W_n$ and\/ $g\in W_m$, we have\/ $f\Box g\in W_{n+m}$.
\item
The associator of the\/ $\Box$-product is right supersymmetric, i.e.
\begin{equation}\label{20170608:eq2}
(f\Box g)\Box h-f\Box(g\Box h)
=(-1)^{p(g)p(h)}
(f\Box h)\Box g-f\Box(h\Box g)
\,.
\end{equation}
\item
Consequently, $W$ is a\/ $\mb Z$-graded Lie superalgebra
with Lie bracket given by\/ ($f\in W_n$, $g\in W_m$)
\begin{equation}\label{20170603:eq4}
[f,g]=f\Box g-(-1)^{p(f)p(g)}g\Box f
\,.
\end{equation}
\end{enumerate}
\end{theorem}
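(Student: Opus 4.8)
The plan is to prove parts (a) and (b) by direct computation with shuffle sums and the $\circ_i$-identities, and to deduce part (c) formally. For (a), the key point is that $f\circ_1 g\in\mc P(n+m+1)$ is fixed by the Young subgroup $Y\cong S_{m+1}\times S_n$ of $S_{n+m+1}$ permuting among themselves the first $m+1$ slots (the inputs of $g$) and, separately, the last $n$ slots (the remaining inputs of $f$): setting $\sigma=1_{n+1}$ in the equivariance relation \eqref{eq:operad9} and using $g^\tau=g$ gives invariance under the first factor, while taking the second argument to be $g$ and $\sigma$ in the stabilizer of $1$ in $S_{n+1}$, and using $f^\sigma=f$, gives invariance under the second factor embedded via \eqref{eq:perm5}; these two sets of permutations have disjoint support and commute. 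By Proposition \ref{prop:perm1}(a) the shuffles $\sigma\in S_{m+1,n}$ represent the left cosets $S_{n+m+1}/Y$, so their inverses represent the right cosets $Y\backslash S_{n+m+1}$, and since $f\circ_1 g$ is $Y$-fixed, $(f\circ_1 g)^{\sigma^{-1}}$ depends only on $Y\sigma^{-1}$. Hence by \eqref{eq:box} the sum $f\Box g$ runs, with multiplicity one, over all right $Y$-cosets; right multiplication by any $\rho\in S_{n+m+1}$ merely permutes these cosets, so $f\Box g$ is $S_{n+m+1}$-invariant, i.e. $f\Box g\in W_{n+m}$ by \eqref{20170603:eq3}.

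For (b), I would expand both sides of the associator. Substituting \eqref{eq:box} into itself and moving the outer $\circ_1 h$ past the symmetric-group actions with \eqref{eq:operad9} gives
\[
(f\Box g)\Box h=\sum_{\sigma\in S_{m+1,n}}\ \sum_{\tau\in S_{\ell+1,n+m}}\big((f\circ_1 g)\circ_{\sigma^{-1}(1)}h\big)^{(\sigma^{-1}\circ_1 1_{\ell+1})\tau^{-1}}.
\]
For a shuffle $\sigma\in S_{m+1,n}$ one has $\sigma^{-1}(1)\in\{1,m+2\}$ (the analogue of Proposition \ref{prop:perm2}(a)), so the $\circ_i$-associativity \eqref{eq:operad25} gives $(f\circ_1 g)\circ_1 h=f\circ_1(g\circ_1 h)$ in the first case and $(f\circ_1 g)\circ_{m+2}h=(-1)^{p(g)p(h)}(f\circ_2 h)\circ_1 g$ in the second, so that $(f\Box g)\Box h=A+(-1)^{p(g)p(h)}B_{g,h}$, where $A$ collects the $f\circ_1(g\circ_1 h)$-terms and $B_{g,h}$ the $(f\circ_2 h)\circ_1 g$-terms. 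Expanding $f\Box(g\Box h)$ the same way yields only $f\circ_1(g\circ_1 h)$-terms, of the shape $\big(f\circ_1(g\circ_1 h)\big)^{(1_{n+1}\circ_1\rho^{-1})\pi^{-1}}$ with $\rho\in S_{\ell+1,m}$, $\pi\in S_{m+\ell+1,n}$. It therefore suffices to prove (i) $A=f\Box(g\Box h)$ and (ii) $B_{g,h}=(-1)^{p(g)p(h)}B_{h,g}$; indeed then the associator equals $(-1)^{p(g)p(h)}B_{g,h}=B_{h,g}$, which is $(-1)^{p(g)p(h)}$ times $(f\Box h)\Box g-f\Box(h\Box g)$, i.e. \eqref{20170608:eq2}.

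Claim (i) is coset bookkeeping: $f\circ_1(g\circ_1 h)\in\mc P(n+m+\ell+1)$ is fixed by the Young subgroup $K\cong S_{\ell+1}\times S_m\times S_n$ (inputs of $h$, remaining inputs of $g$, remaining inputs of $f$ — here we use that $f$, $g$, $h$ are all fully symmetric), and, using the factorizations of Propositions \ref{prop:perm1}(b) and \ref{prop:perm2}(c) together with \eqref{eq:perm5} and \eqref{eq:perm7}, one checks that the two families of group elements occurring in $A$ and in $f\Box(g\Box h)$ each represent every right $K$-coset exactly once (the two cardinalities agree, both being $\frac{(n+m+\ell+1)!}{(\ell+1)!\,m!\,n!}$), so both sums equal $\sum_{C\in K\backslash S_{n+m+\ell+1}}(f\circ_1(g\circ_1 h))^{c}$. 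For (ii), the full symmetry of $f$ is essential: rewriting $(f\circ_2 g)\circ_1 h$ by \eqref{eq:operad25}, then replacing $f$ by $f^{(12)}$ and applying \eqref{eq:operad9}, one exhibits it as $(-1)^{p(g)p(h)}$ times a fixed symmetric-group translate of $(f\circ_2 h)\circ_1 g$, the extra sign being exactly the Koszul sign from interchanging the $g$- and $h$-blocks; a final reindexing of the shuffle sums (again by Propositions \ref{prop:perm1} and \ref{prop:perm2}) then identifies the two collections of exponents. Making this reindexing line up with the Koszul signs is the main obstacle of the whole theorem; everything else is formal or a routine computation.

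Finally, (c) is the standard passage from a $\mb Z$-graded right pre-Lie (right-symmetric) superalgebra to a $\mb Z$-graded Lie superalgebra. By (a), $[f,g]\in W_{n+m}$ for $f\in W_n$, $g\in W_m$, so $W=\bigoplus_{n\geq-1}W_n$ is $\mb Z$-graded under the bracket \eqref{20170603:eq4}, which is super-skewsymmetric by construction; the super Jacobi identity follows by expanding the Jacobiator into associators of the $\Box$-product and observing that these cancel in pairs precisely by the right supersymmetry \eqref{20170608:eq2}.
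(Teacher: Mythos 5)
Your proposal is correct and follows essentially the same route as the paper: part (a) rests on the factorization of Proposition \ref{prop:perm1}(a), and part (b) uses the same decomposition of $(f\Box g)\Box h$ according to whether the shuffle satisfies $\sigma(1)=1$ or $\sigma(m+2)=1$, identifies the first piece with $f\Box(g\Box h)$ via $\circ_1$-associativity, and exhibits the remainder as supersymmetric in $g,h$ using $f=f^{(1,2)}$ and the equivariance axiom. The only difference is cosmetic: where the paper writes out explicit chains of equalities (e.g.\ \eqref{20170608:eq1}, \eqref{20170608:eq5}), you phrase the same combinatorics as Young-subgroup coset bookkeeping (and the final reindexing you invoke is Proposition \ref{prop:perm0}(b) rather than \ref{prop:perm1}--\ref{prop:perm2}, a harmless citation slip).
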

\begin{proof}
For claim (a), we need to prove that $f\Box g$ is fixed by the action of the symmetric group $S_{m+n+1}$.
We have
\begin{equation}\label{20170608:eq1}
\begin{array}{l}
\displaystyle{
\vphantom{\Big(}
\sum_{\sigma\in S_{m+n+1}}
(f\circ_1 g)^{\sigma^{-1}}
} \\
\displaystyle{
\vphantom{\Big(}
=
\sum_{\substack{\beta\in S_{m+1},\alpha\in S_n, \\ \sigma\in S_{m+1,n}}}
(f\circ_1 g)^{\big(\sigma\cdot(1_{n+1}\circ_1\beta)\cdot(1_{m+2}\circ_{m+2}\alpha)\big)^{-1}}
} \\
\displaystyle{
\vphantom{\Big(}
=
\sum_{\substack{\beta\in S_{m+1},\alpha\in S_n, \\ \sigma\in S_{m+1,n}}}
(f\circ_1 g)^{(1_{m+2}\circ_{m+2}\alpha^{-1})\cdot(1_{n+1}\circ_1\beta^{-1})\cdot\sigma^{-1}}
} \\
\displaystyle{
\vphantom{\Big(}
=
\sum_{\substack{\beta\in S_{m+1},\alpha\in S_n, \\ \sigma\in S_{m+1,n}}}
(f\circ_1 g)^{((1_{2}\circ_{2}\alpha^{-1})\circ_11_{m+1})\cdot(1_{n+1}\circ_1\beta^{-1})\cdot\sigma^{-1}}
} \\
\displaystyle{
\vphantom{\Big(}
=
\sum_{\substack{\beta\in S_{m+1},\alpha\in S_n, \\ \sigma\in S_{m+1,n}}}
(f^{(1_{2}\circ_{2}\alpha^{-1})}\circ_1 g^{\beta^{-1}})^{\sigma^{-1}}
} \\
\displaystyle{
\vphantom{\Big(}
=
(m+1)!n!\sum_{\sigma\in S_{m+1,n}}
(f\circ_1 g)^{\sigma^{-1}}
=
(m+1)!n!\,\,f\Box g
\,,}
\end{array}
\end{equation}
where we used
Proposition \ref{prop:perm1}(a) for the first equality,
the homomorphism property of the maps \eqref{eq:perm4} for the second equality,
identity \eqref{eq:perm14} for the third equality,
the equivariance conditions \eqref{eq:operad9} and the
obvious identities $(1_{2}\circ_{2}\alpha^{-1})(1)=1=1_{n+1}(1)$ for the fourth equality,
and the assumptions that $f\in W_n$ and $g\in W_m$ for the fifth equality.
Since the left-hand side of \eqref{20170608:eq1} is manifestly invariant with respect to the action of $S_{m+n+1}$,
we conclude that $f\Box g$ is invariant as well, proving (a).

Next, let us prove claim (b).
We have
\begin{equation}\label{20170608:eq3}
\begin{array}{l}
\displaystyle{
\vphantom{\Big(}
f\Box(g\Box h)
=
\sum_{\sigma\in S_{\ell+m+1,n}}\sum_{\tau\in S_{\ell+1,m}}
\big(f\circ_1(g\circ_1 h)^{\tau^{-1}}\big)^{\sigma^{-1}}
} \\
\displaystyle{
\vphantom{\Big(}
=
\sum_{\sigma\in S_{\ell+m+1,n}}\sum_{\tau\in S_{\ell+1,m}}
\big(f\circ_1(g\circ_1 h)\big)^{(1_{n+1}\circ_1\tau^{-1})\cdot\sigma^{-1}}
} \\
\displaystyle{
\vphantom{\Big(}
=
\sum_{\sigma\in S_{\ell+m+1,n}}\sum_{\tau\in S_{\ell+1,m}}
\big(f\circ_1(g\circ_1 h)\big)^{(\sigma\cdot(1_{n+1}\circ_1\tau))^{-1}}
} \\
\displaystyle{
\vphantom{\Big(}
=
\sum_{\sigma\in S_{\ell+1,m,n}}
\big(f\circ_1(g\circ_1 h)\big)^{\sigma^{-1}}
\,.}
\end{array}
\end{equation}
In the second equality we used the equivariance condition \eqref{eq:operad9},
in the third equality we used the fact that the map \eqref{eq:perm4} is a group homomorphism,
and in the fourth equality we used Proposition \ref{prop:perm1}(b).
On the other hand, we have
\begin{equation}\label{20170608:eq4}
(f\Box g)\Box h
=
\sum_{\sigma\in S_{m+1,n}}\sum_{\tau\in S_{\ell+1,m+n}}
\big((f\circ_1g)^{\sigma^{-1}}\circ_1 h\big)^{\tau^{-1}}
\,.
\end{equation}
By Proposition \ref{prop:perm2}(a),
the sum in the right-hand side of \eqref{20170608:eq4}
split as sum of two terms,
in the first one we sum over the shuffles $\sigma\in S_{m+1,n}$
such that $\sigma(1)=1$,
and in the second one we sum over the shuffles $\sigma\in S_{m+1,n}$
such that $\sigma(m+2)=1$.
The first term is
\begin{equation}\label{20170608:eq5}
\begin{array}{l}
\displaystyle{
\vphantom{\Big(}
\sum_{\substack{\sigma\in S_{m+1,n} \\ \text{s.t. } \sigma(1)=1}}\sum_{\tau\in S_{\ell+1,m+n}}
\big((f\circ_1g)^{\sigma^{-1}}\circ_1 h\big)^{\tau^{-1}}
} \\
\displaystyle{
\vphantom{\Big(}
=
\sum_{\substack{\sigma\in S_{m+1,n} \\ \text{s.t. } \sigma(1)=1}}\sum_{\tau\in S_{\ell+1,m+n}}
\big((f\circ_1g)\circ_1 h\big)^{(\sigma^{-1}\circ_11_{\ell+1})\cdot\tau^{-1}}
} \\
\displaystyle{
\vphantom{\Big(}
=
\sum_{\substack{\sigma\in S_{m+1,n} \\ \text{s.t. } \sigma(1)=1}}\sum_{\tau\in S_{\ell+1,m+n}}
\big((f\circ_1g)\circ_1 h\big)^{(\tau\cdot(\sigma\circ_11_{\ell+1}))^{-1}}
} \\
\displaystyle{
\vphantom{\Big(}
=
\sum_{\sigma\in S_{m,n}}\sum_{\tau\in S_{\ell+1,m+n}}
\big((f\circ_1g)\circ_1 h\big)^{(\tau\cdot((1_2\circ_2\sigma)\circ_11_{\ell+1}))^{-1}}
} \\
\displaystyle{
\vphantom{\Big(}
=
\sum_{\sigma\in S_{m,n}}\sum_{\tau\in S_{\ell+1,m+n}}
\big((f\circ_1g)\circ_1 h\big)^{(\tau\cdot(1_{\ell+2}\circ_{\ell+2}\sigma))^{-1}}

} \\
\displaystyle{
\vphantom{\Big(}
=
\sum_{\sigma\in S_{\ell+1,m,n}}
\big((f\circ_1g)\circ_1 h\big)^{\sigma^{-1}}
\,,}
\end{array}
\end{equation}
where we used the equivariance relation \eqref{eq:operad9} for the first equality,
the fact that the map \eqref{eq:perm5} is a homomorphism for the second equality,
Proposition \ref{prop:perm2}(c) for the third equality,
equation \eqref{eq:perm14} for the fourth equality,
and Proposition \ref{prop:perm1}(c) for the fifth equality.
Note that the right-hand side of \eqref{20170608:eq5} coincides with the right-hand side of \eqref{20170608:eq3}
since the $\circ_1$-product is associative.
The second term, where we take the sum over the shuffles $\sigma\in S_{m+1,n}$
such that $\sigma(m+2)=1$ in \eqref{20170608:eq4}, is
\begin{equation}\label{20170608:eq6}
\begin{array}{l}
\displaystyle{
\vphantom{\Big(}
\sum_{\substack{\sigma\in S_{m+1,n} \\ \text{s.t. }\sigma(m+2)=1}}\sum_{\tau\in S_{\ell+1,m+n}}
\big((f\circ_1g)^{\sigma^{-1}}\circ_1 h\big)^{\tau^{-1}}
} \\
\displaystyle{
\vphantom{\Big(}
=
\sum_{\substack{\sigma\in S_{m+1,n} \\ \text{s.t. }\sigma(m+2)=1}}\sum_{\tau\in S_{\ell+1,m+n}}
\big(
(f\circ_1g)\circ_{\sigma^{-1}(1)}h
\big)^{(\sigma^{-1}\circ_11_{\ell+1})\cdot\tau^{-1}}
} \\
\displaystyle{
\vphantom{\Big(}
=
\sum_{\substack{\sigma\in S_{m+1,n} \\ \text{s.t. }\sigma(m+2)=1}}\sum_{\tau\in S_{\ell+1,m+n}}
\big(
(f\circ_1g)\circ_{m+2}h
\big)^{(\sigma\circ_{m+2}1_{\ell+1})^{-1}\tau^{-1}}
} \\
\displaystyle{
\vphantom{\Big(}
=
\sum_{\substack{\sigma\in S_{m+1,n} \\ \text{s.t. }\sigma(m+2)=1}}\sum_{\tau\in S_{\ell+1,m+n}}
\big(
f(g\otimes h\otimes \overbrace{1\otimes\dots\otimes 1}^{n-1})
\big)^{(\tau\cdot(\sigma\circ_{m+2}1_{\ell+1}))^{-1}}
} \\
\displaystyle{
\vphantom{\Big(}
=\!\!\!
\sum_{\sigma\in S_{m\!+\!1,n\!-\!1}}
\sum_{\tau\in S_{\ell\!+\!1,m\!+\!n}}
\!\!\!
\big(
f(g\!\otimes\! h\!\otimes\! \overbrace{1\!\otimes\! \dots\!\otimes\! 1}^{n-1})
\big)^{(\tau\cdot(
(
(1_2\circ_2\sigma)\cdot(1_n\circ_1(1,2)\circ_11_{m+1})
)
\circ_{m+2}1_{\ell+1}))^{-1}}
\,,}
\end{array}
\end{equation}
where
we used the equivariance relation \eqref{eq:operad9} for the first equality,
equation \eqref{20170608:eq7a} for the second equality,
the definition \eqref{eq:operad8} of the $\circ_i$-products for the third equality,
and Proposition \ref{prop:perm2}(d) for the fourth equality.
Equation \eqref{eq:operad9b}, with $(1_2\circ_2\sigma)$ in place of $\beta$,
$(1_n\circ_1(1,2)\circ_11_{m+1})$ in place of $\sigma$, $\alpha=\tau=1_{\ell+1}$ and $i=m+2$,
gives
$$
\begin{array}{l}
\displaystyle{
\vphantom{\Big(}
((1_2\circ_2\sigma)\cdot(1_n\circ_1(1,2)\circ_11_{m+1}))
\circ_{m+2}1_{\ell+1}
} \\
\displaystyle{
\vphantom{\Big(}
=
((1_2\circ_2\sigma)\circ_11_{\ell+1})
\cdot
((1_n\circ_1(1,2)\circ_11_{m+1})\circ_{m+2}1_{\ell+1})
} \\
\displaystyle{
\vphantom{\Big(}
=
(1_{\ell+2}\circ_{\ell+2}\sigma)
\cdot
(1,2)(1_{m+1},1_{\ell+1},1_{n-1})
\,,}
\end{array}
$$
where, for the second equality, we used equations \eqref{eq:perm14} and \eqref{michelino:eq3}.
Hence, the right-hand side of \eqref{20170608:eq6} becomes
\begin{equation}\label{20170608:eq6b}
\begin{array}{l}
\displaystyle{
\vphantom{\Big(}
\sum_{\sigma\in S_{m+1,n-1}}
\sum_{\tau\in S_{\ell+1,m+n}}
\big(
f(g\otimes h\otimes 1\otimes\dots\otimes 1)
\big)^{(\tau\cdot
(1_{\ell+2}\circ_{\ell+2}\sigma)
\cdot
(1,2)(1_{m+1},1_{\ell+1},1_{n-1})
)^{-1}}
} \\
\displaystyle{
\vphantom{\Big(}
=
\sum_{\sigma\in S_{\ell+1,m+1,n-1}}
\big(
f(g\otimes h\otimes 1\otimes \dots\otimes 1)
\big)^{(
\sigma\cdot
(1,2)(1_{m+1},1_{\ell+1},1_{n-1})
)^{-1}}
} \\
\displaystyle{
\vphantom{\Big(}
=
\sum_{\sigma\in S_{m+1,\ell+1,n-1}}
\big(
f(g\otimes h\otimes 1\otimes \dots \otimes 1)
\big)^{\sigma^{-1}}
\,,}
\end{array}
\end{equation}
where we used Proposition \ref{prop:perm1}(c) in the first equality
and Proposition \ref{prop:perm0}(b) for the second equality.
Combining \eqref{20170608:eq3}, \eqref{20170608:eq4}, \eqref{20170608:eq5},
\eqref{20170608:eq6} and \eqref{20170608:eq6b},
we get
\begin{equation}\label{20170608:eq7b}
(f\Box g)\Box h-f\Box (g\Box h)
=
\sum_{\sigma\in S_{m+1,\ell+1,n-1}}
\big(
f(g\otimes h\otimes 1\otimes \dots \otimes 1)
\big)^{\sigma^{-1}}
\,.
\end{equation}
To conclude the proof of (b),
we observe that the right-hand side of \eqref{20170608:eq7b}
is manifestly supersymmetric with respect to the exchange of $g$ and $h$.
Indeed, since $f\in W_n$, we have $f=f^{(1,2)}$.
Hence, by the equivariance condition \eqref{eq:operad4}, we have
$$
\begin{array}{l}
\displaystyle{
\vphantom{\Big(}
\sum_{\sigma\in S_{m+1,\ell+1,n-1}}
\big(
f(g\otimes h\otimes1\otimes\dots\otimes1)
\big)^{\sigma^{-1}}
} \\
\displaystyle{
\vphantom{\Big(}
=
\sum_{\sigma\in S_{m+1,\ell+1,n-1}}
\big(
f^{(1,2)}(g^{1_{m+1}}\otimes h^{1_{\ell+1}}\otimes 1\otimes\dots\otimes1)
\big)^{\sigma^{-1}}
} \\
\displaystyle{
\vphantom{\Big(}
=
(-1)^{p(g)p(h)}
\sum_{\sigma\in S_{m+1,\ell+1,n-1}}
\big(
f(h\otimes g\otimes 1\otimes\dots\otimes1)
\big)^{
(1,2)(1_{m+1},1_{\ell+1},1_{n-1})\cdot\sigma^{-1}}
} \\
\displaystyle{
\vphantom{\Big(}
=
(-1)^{p(g)p(h)}
\sum_{\sigma\in S_{m+1,\ell+1,n-1}}
\big(
f(h\otimes g\otimes 1\otimes \dots \otimes 1)
\big)^{
(\sigma\cdot(1,2)(1_{\ell+1},1_{m+1},1_{n-1}))^{-1}}
} \\
\displaystyle{
\vphantom{\Big(}
=
(-1)^{p(g)p(h)}
\sum_{\sigma\in S_{\ell+1,m+1,n-1}}
\big(
f(h\otimes g\otimes 1\otimes \dots\otimes 1)
\big)^{
\sigma^{-1}}
\,,}
\end{array}
$$
again by Proposition \ref{prop:perm0}(b).
Claim (c) is an obvious consequence of (b).
\end{proof}
\begin{remark}\label{20170603:thm1}
For an arbitrary non-symmetric operad $\mc P$
(i.e. for which we do not require 
the action of the symmetric groups and
the equivariance axiom \eqref{eq:operad4}),
we can also construct a $\mb Z$-graded Lie superalgebra
\begin{equation}\label{20170603:eq1}
\mc G=\bigoplus_{n\geq-1}\mc G_n
\,\,,\quad
\mc G_{n-1}=\mc P(n)
\,,
\end{equation}
with Lie bracket
($f\in\mc P(n)$, $g\in\mc P(m)$)
\begin{equation}\label{20170603:eq2}
[f,g]
=
\sum_{i=1}^{n} 
f\circ_i g
-(-1)^{p(f)p(g)}
\sum_{i=1}^m
g\circ_i f
\,.
\end{equation}
Indeed, letting $f\circ g= \sum_{i=1}^{n} f\circ_i g$, we have,
by the associativity condition \eqref{eq:operad25},
$$
\begin{array}{l}
\displaystyle{
\vphantom{\Big(}
(f\circ g)\circ h-f\circ(g\circ h)
=
\sum_{i=1}^{n}\sum_{j=1}^{m+n-1}
(f\circ_i g)\circ_j h
-
\sum_{i=1}^{n}\sum_{j=1}^{m}
f\circ_i (g\circ_j h)
} \\
\displaystyle{
\vphantom{\Big(}
=
(-1)^{p(g)p(h)}
\sum_{1\leq j<i\leq n}
(f\circ_j h)\circ_{\ell+i-1} g
+
\sum_{1\leq i<j\leq n}
(f\circ_i g)\circ_{m+j-1} h
\,.}
\end{array}
$$
Since the expression in the right-hand side is supersymmetric with respect to the exchange of $g$ and $h$,
it follows that \eqref{20170603:eq2} is a Lie superalgebra bracket.
In the special case when each $\mc P(n)$ has the same parity as $n+1$,
the resulting bracket \eqref{20170603:eq2}
is known as the Gerstenhaber bracket \cite{Ger63}.
\end{remark}
%%%%%%%%%%%%%%%%%%%%%%%%%%%%%%%%%%%%%%%
\section{The operad governing Lie superalgebras}\label{sec:4}

%Recall that a Lie superalgebra is a vector superspace $V$
%endowed with a parity preserving, 
%bilinear map $[\cdot\,,\,\cdot]:\,V\times V\to V$
%satisfying skewsymmetry ($a,b\in V$):
%\begin{equation}\label{20170612:eq1}
%[a,b]=-(-1)^{p(a)p(b)}[b,a]
%\,,
%\end{equation}
%where $p(a),p(b)\in\mb Z/2\mb Z$ denote the parities of $a,b\in V$,
%and the Jacobi identity ($a,b,c\in V$):
%\begin{equation}\label{20170612:eq2}
%[a,[b,c]]-(-1)^{p(a)p(b)}[b,[a,c]]
%=[[a,b],c]
%\,.
%\end{equation}

Given the vector superspace $V$, with parity $p$,
we denote by $\Pi V$ the same vector space with reversed parity $\bar p=1-p$,
and we consider the corresponding operad $\mc Hom(\Pi V)$
from Example \ref{ex:operadH},
and the associated $\mb Z$-graded Lie superalgebra $W(\Pi V):=W(\mc Hom(\Pi V))$
given by Theorem \ref{20170603:thm2}.
\begin{proposition}[\cite{NR67,DSK13}]\label{20170612:prop1}
We have a bijective correspondence between 
the odd elements\/ $X\in W_1(\Pi V)$ such that\/ $X\Box X=0$
and the Lie superalgebra brackets\/
$[\cdot\,,\,\cdot]\colon V\times V\to V$ on\/ $V$,
given by 
\begin{equation}\label{20170612:eq3}
[a,b]=(-1)^{p(a)}X(a\otimes b)
\,.
\end{equation}
\end{proposition}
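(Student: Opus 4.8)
The plan is to unwind what the conditions ``$X\in W_1(\Pi V)$ is odd'' and ``$X\Box X=0$'' mean concretely, and to match them against the axioms of a Lie superalgebra bracket under the correspondence \eqref{20170612:eq3}. First I would recall that $W_1(\Pi V)=\mc Hom(\Pi V)(2)^{S_2}=\Hom((\Pi V)^{\otimes2},\Pi V)^{S_2}$, so an element $X$ is an $S_2$-invariant bilinear map on $\Pi V$. The parity conventions of Section~\ref{sec:2.0} (computed with the reversed parity $\bar p=1-p$ on $\Pi V$) have to be tracked carefully: a map $X$ that is odd as an element of $\Hom((\Pi V)^{\otimes2},\Pi V)$ corresponds, after the parity shift and the sign $(-1)^{p(a)}$ in \eqref{20170612:eq3}, to a bracket $[\cdot\,,\,\cdot]\colon V\times V\to V$ that is parity-preserving in the original grading of $V$. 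The $S_2$-invariance $X^{(12)}=X$, translated through \eqref{eq:operad7} with the $\Pi V$-signs and the prefactor $(-1)^{p(a)}$, becomes exactly the skewsymmetry $[b,a]=-(-1)^{p(a)p(b)}[a,b]$; this is the same bookkeeping as in \eqref{eq:intro3} of the introduction, and I would present it as a short direct sign computation.

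The heart of the argument is to show that $X\Box X=0$ is equivalent to the (super) Jacobi identity for $[\cdot\,,\,\cdot]$. Using Example~\ref{ex:box}, for $X\in W_1$ one has $X\Box X = X\circ_1 X + X\circ_2 X + (X\circ_2 X)^{(12)}$, an element of $\mc Hom(\Pi V)(3)=\Hom((\Pi V)^{\otimes3},\Pi V)$. I would evaluate this on a monomial $a\otimes b\otimes c$ using the definition \eqref{eq:operad8} of $\circ_i$ in $\mc Hom$, i.e. $X\circ_1 X(a\otimes b\otimes c)=X(X(a\otimes b)\otimes c)$ and $X\circ_2 X(a\otimes b\otimes c)=\pm X(a\otimes X(b\otimes c))$ with the Koszul sign from moving $X$ (which is odd) past $a$. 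Rewriting each term via \eqref{20170612:eq3} in terms of the bracket, and collecting the $(-1)^{p(a)}$-type factors, the three terms become, up to an overall nonzero sign, $[[a,b],c]$, $\pm[a,[b,c]]$, and $\mp(-1)^{p(a)p(b)}[b,[a,c]]$; using skewsymmetry of $X$ (already established) one recognizes $X\Box X(a\otimes b\otimes c)=0$ as precisely the Jacobi identity \eqref{eq:intro9}-type relation $[a,[b,c]]-(-1)^{p(a)p(b)}[b,[a,c]]=[[a,b],c]$. Since $X$ is determined by $[\cdot\,,\,\cdot]$ and vice versa by \eqref{20170612:eq3}, and both the skewsymmetry and the Jacobi identity hold for $X$ iff they hold for the bracket, the correspondence $X\mapsto[\cdot\,,\,\cdot]$ is the desired bijection onto Lie superalgebra structures on $V$.

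The main obstacle I expect is purely the sign bookkeeping: one must consistently use $\bar p=1-p$ for parities of elements viewed in $\Pi V$, apply the Koszul--Quillen rule from \eqref{20170804:eq1} and \eqref{eq:operad7} when $X$ (an odd map) is moved past arguments and when $S_2$ or $S_3$ act, and verify that all the stray factors of $(-1)^{p(a)}$, $(-1)^{p(b)}$, etc., either cancel or reassemble into exactly the signs appearing in the standard skewsymmetry and super-Jacobi identities. I would organize this by first treating the homogeneous case (fixing parities of $a,b,c$) and checking a couple of representative parity patterns, then invoking bilinearity; alternatively one can cite \cite{DSK13}, where this translation is carried out in detail, and present here only the key identities $X\Box X(a\otimes b\otimes c)=\pm\big([[a,b],c]-[a,[b,c]]+(-1)^{p(a)p(b)}[b,[a,c]]\big)$ together with the skewsymmetry equivalence, leaving the sign verification to the reader.
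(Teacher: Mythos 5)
Your proposal is correct and follows essentially the same route as the paper's proof: identify oddness of $X$ with parity preservation of the bracket, $S_2$-invariance with skewsymmetry, and compute $X\Box X$ on $a\otimes b\otimes c$ via the shuffle decomposition $X\circ_1X+X\circ_2X+(X\circ_2X)^{(12)}$ to recover the super-Jacobi identity (the paper's overall sign works out to $(-1)^{p(b)}$). The paper's proof is equally terse about the sign bookkeeping, so nothing essential is missing.
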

\begin{proof}
By definition, $X\in (\mc Hom(\Pi V))(2)_{\bar 1}$ is
an odd linear map $X\colon (\Pi V)^{\otimes 2}\to\Pi V$,
and it corresponds, via \eqref{20170612:eq3},
to a parity preserving bilinear map $[\cdot\,,\,\cdot]:\,V\times V\to V$.
Moreover, to say that $X$ lies in $W_1(\Pi V)=(\mc Hom(\Pi V))(2)^{S_2}$
is equivalent to say that the corresponding bracket $[\cdot\,,\,\cdot]$
satisfies skewsymmetry. %\eqref{20170612:eq1}.
Finally, by the definition \eqref{eq:box} of the $\Box$-product,
we have ($a,b,c\in V$)
$$
\begin{array}{l}
\displaystyle{
\vphantom{\Big(}
(X\Box X)(a\otimes b\otimes c)
=
\sum_{\sigma\in S_{2,1}}(X\circ_1 X)^{\sigma^{-1}}(a\otimes b\otimes c)
} \\
\displaystyle{
\vphantom{\Big(}
=
(-1)^{p(b)}\Big(
[[a,b],c]-[a,[b,c]]+(-1)^{p(a)p(b)}[b,[a,c]]
\Big)
\,.}
\end{array}
$$
Hence, $X\Box X=0$ if and only if the Jacobi identity %\eqref{20170612:eq2} 
holds.
\end{proof}
Note that, if $X\in W_1(\Pi V)_{\bar 1}$ satisfies $X\Box X=0$,
then it follows by the Jacobi identity for the Lie superalgebra $W(\Pi V)$
that $(\ad X)^2=0$,
i.e. we have a cohomology complex $(W(\Pi V),\ad X)$.
\begin{definition}
Let $V$ be a Lie superalgebra.
The corresponding \emph{Lie superalgebra cohomology complex} is defined as
$$
(W(\Pi V),\ad X)
\,,
$$
where $X\in W(\Pi V)_{\bar 1}$ is given by \eqref{20170612:eq3}.
\end{definition}
Obviously, the kernel of $\ad X$ is a subalgebra of $W(\Pi V)$ and the image of $\ad X$ is its ideal.
Hence, the cohomology $H(W(\Pi V),\ad X)$ has the structure of a Lie superalgebra.
\begin{remark}\label{rem:lie-operad}
One can define the $\mc Lie$ operad as follows:
$\mc Lie(1)=\mb F1$;
$\mc Lie(2)$ is the non-trivial $1$-dimensional representation of $S_2$,
with basis element denoted by $[\cdot\,,\,\cdot]$;
for every $n\geq2$,  all the elements of $\mc Lie(n)$ are 
obtained by composition of $[\cdot\,,\,\cdot]\in\mc Lie(2)$,
and they are subject to the relation in $\mc Lie(3)$ corresponding to the Jacobi identity:
$$
[\cdot\,,\,\cdot]\circ_2[\cdot\,,\,\cdot]-\sigma_{12}([\cdot\,,\,\cdot]\circ_2[\cdot\,,\,\cdot])
=
[\cdot\,,\,\cdot]\circ_1[\cdot\,,\,\cdot]
\,.
$$
Then, a Lie superalgebra structure on a vector superspace $V$
is the same as a morphism of (symmetric) operads
$\mc Lie\to\mc Hom(V)$.
Proposition \ref{20170612:prop1} gives such a morphism by sending $[\cdot\,,\,\cdot]$ to $X$.
\end{remark}

In the following sections, we will repeat the same line of reasoning 
as the one used in the present section
for the cohomology theories of Lie conformal algebras, 
Poisson algebras, Poisson vertex algebras and vertex algebras:
after reviewing their definition,
we will construct, for each of them, an operad $\mc P$,
and we will describe their algebraic structures as an element $X\in W_1\subset W(\mc P)$
such that $X\Box X=0$.
In this way, we automatically get,
for each algebraic structure of interest,
the corresponding cohomology complex $(W(\mc P),\ad X)$.

%%%%%%%%%%%%%%%%%%%%%%%%%%%%%%%%%%%%%%%
\section{The operad governing Lie conformal superalgebras}\label{sec:LCA}

%%%
\subsection{Lie conformal superalgebras}\label{sec:LCA.0}

Recall that a Lie conformal superalgebra is a vector superspace $V$,
endowed with an even endomorphism $\partial\in\End(V)$
and a bilinear (over $\mb F$) $\lambda$-bracket
$[\cdot\,_\lambda\,\cdot]\colon V\times V\to V[\lambda]$
satisfying 
sesquilinearity ($a,b\in V$):
\begin{equation}\label{20170612:eq6}
[\partial a_\lambda b]=-\lambda[a_\lambda b]
\,,\,\,
[a_\lambda \partial b]=(\lambda+\partial)[a_\lambda b]
\,,
\end{equation}
skewsymmetry ($a,b\in V$):
\begin{equation}\label{20170612:eq4}
[a_\lambda b]=-(-1)^{p(a)p(b)}[b_{-\lambda-\partial}a]
\,,
\end{equation}
and the Jacobi identity ($a,b,c\in V$):
\begin{equation}\label{20170612:eq5}
[a_{\lambda}[b_\mu c]]-(-1)^{p(a)p(b)}[b_\mu [a_\lambda ,b]]
=[[a_\lambda b]_{\lambda+\mu}c]
\,.
\end{equation}

%%%
\subsection{The $\mc Chom$ operad}\label{sec:LCA.1}

Let $V=V_{\bar 0}\oplus V_{\bar 1}$ be a vector superspace endowed
with an even endomorphism $\partial\in\End V$.
The operad $\mc{C}hom$ is defined as 
the collection of superspaces
\begin{equation}\label{eq:chom1}
\mc{C}hom(n)=\Hom_{\mb F[\partial]^{\otimes n}}(V^{\otimes n}
,\mb F_-[\lambda_1,\dots,\lambda_n]\otimes_{\mb F[\partial]}V)
\,\,,\,\,\,\,
n\geq0
\,.
\end{equation}
Here and further 
$\lambda_1,\dots,\lambda_k$ are commuting indeterminates of even parity and
$\mb F_-[\lambda_1,\dots,\lambda_k]$
denotes the space of polynomials in the variables $\lambda_1,\dots,\lambda_n$.
This space is endowed with a structure of a left $\mb F[\partial]^{\otimes n}$-module
by letting $P_1(\partial)\otimes\,\cdots\,\otimes P_n(\partial)$ act as
multiplication by $P_1(-\lambda_1)\cdots P_n(-\lambda_n)$,
and a structure of a right $\mb F[\partial]$-module
by letting $\partial$ act as multiplication by $-\lambda_1-\dots-\lambda_n$.

Note that $\mc Chom(0)=V/\dd V$.
Obviously, we can identify $\mb F_-[\lambda]\otimes_{\mb F[\partial]}V\simeq V$,
so that $\mc Chom(1)=\End_{\mb F[\partial]}(V)$.
For arbitrary $n\geq1$, $\mc Chom(n)$ consists of all linear maps
$$
f\colon V^{\otimes n}\,\longrightarrow\,\mb F_-[\lambda_1,\dots,\lambda_n]\otimes_{\mb F[\partial]}V
\,\,,\,\,\,\,
v_1\otimes\,\cdots\,\otimes v_n\mapsto f_{\lambda_1,\dots,\lambda_n}(v_1\otimes\,\cdots\,\otimes v_n)
\,,
$$
satisfying the sesquilinearity conditions:
\begin{equation}\label{20170613:eq1}
f_{\lambda_1,\dots,\lambda_n}(v_1\otimes\,\cdots\partial v_i\cdots\,\otimes v_n)
=-\lambda_if_{\lambda_1,\dots,\lambda_n}(v_1\otimes\,\cdots\,\otimes v_n)
\,\,\text{ for all } i=1,\dots,n
\,.
\end{equation}
In particular, $\mc Chom(2)$ is identified with the space of all $\la$-brackets on $V$,
satisfying \eqref{20170612:eq6}.

The $\mb Z/2\mb Z$-structure of $\mc Chom(n)$ is induced by that of $V$.
The composition of $f\in\mc Chom(n)$
and $g_1\in\mc Chom(m_1),\dots,g_n\in\mc Chom(m_n)$ is defined as follows:
\begin{equation}\label{20170613:eq2}
\begin{array}{l}
\displaystyle{
\vphantom{\Big(}
\big(f(g_1\otimes\dots\otimes g_n)\big)_{\lambda_1,\dots,\lambda_{M_n}}
(v_1\otimes\dots\otimes v_{M_n})
} \\
\displaystyle{
\vphantom{\Big(}
:=
f_{\Lambda_1,\dots,\Lambda_n}
\big(
((g_1)_{\lambda_{1},\dots,\lambda_{M_1}}
\otimes\dots\otimes
(g_n)_{\lambda_{M_{n-1}+1},\dots,\lambda_{M_n}})
(v_1\otimes\dots\otimes v_{M_n})
\big)
\,,}
\end{array}
\end{equation}
where we let (cf. \eqref{20170821:eq2a})
\begin{equation}\label{20170821:eq2}
M_i=\sum_{j=1}^im_j
\,,\,\,
i=0,\dots,n
\,\,,\,\,\text{ and }\,\,
\Lambda_i
=
\sum_{j=M_{i-1}+1}^{M_i}
\lambda_j
\,,\,\,
i=1,\dots,n
\,,
\end{equation}
and, recalling \eqref{20170804:eq1}, we have
\begin{equation}\label{20170821:eq5}
\begin{array}{l}
\displaystyle{
\vphantom{\Big(}
((g_1)_{\lambda_{1},\dots,\lambda_{M_1}}
\otimes\dots\otimes
(g_n)_{\lambda_{M_{n-1}+1},\dots,\lambda_{M_n}})
(v_1\otimes\dots\otimes v_{M_n})
} \\
\displaystyle{
\vphantom{\Big(}
=\pm\,
(g_1)_{\lambda_{1},\dots,\lambda_{M_1}}\!\!(v_1\otimes\dots\otimes v_{M_1})
\otimes\dots\otimes
(g_n)_{\lambda_{M_{\!n\!-\!1}\!+1},\dots,\lambda_{M_n}}\!\!(v_{M_{\!n\!-\!1}\!+1}\otimes\dots\otimes v_{M_n})
\,,}
\end{array}
\end{equation}
where
\begin{equation}\label{20170821:eq5b}
\pm=(-1)^{\sum_{i<j}p(g_j)(p(v_{M_{i-1}+1})+\dots+p(v_{M_i}))}
\,.
\end{equation}
The unity in the $\mc Chom$-operad is $1=\id_V\in\mc Chom(1)=\End_{\mb F[\partial]} V$,
and the right action of $S_n$ on $\mc Chom(n)$
is given by (cf. \eqref{eq:operad14}, \eqref{eq:operad7} and \eqref{20170615:eq2}):
\begin{equation}\label{20170613:eq3}
\begin{array}{l}
\displaystyle{
\vphantom{\Big(}
(f^\sigma)_{\lambda_1,\dots,\lambda_n}(v_1\otimes\dots\otimes v_n)
=
f_{\sigma(\lambda_1,\dots,\lambda_n)}
(\sigma(v_1\otimes\dots\otimes v_n))
} \\
\displaystyle{
\vphantom{\Big(}
\,\,\,\,\,\,\,\,\,\,\,\,\,\,\,\,\,\,
=
\epsilon_v(\sigma) 
f_{\lambda_{\sigma^{-1}(1)},\dots,\lambda_{\sigma^{-1}(n)}}
(v_{\sigma^{-1}(1)}\otimes\dots\otimes v_{\sigma^{-1}(n)})
\,,}
\end{array}
\end{equation}
for every $\sigma\in S_n$,
where $\epsilon_v(\sigma)$ is given by \eqref{eq:operad14}.

Let us first check the associativity axiom for the operad $\mc Chom$,
which reads
\begin{equation}\label{20170822:eq1}
\begin{array}{l}
\displaystyle{
\vphantom{\Big(}
\big(
(f(g_1\otimes\dots\otimes g_n))(h_1\otimes\dots\otimes h_{M_n})
\big)_{\lambda_1,\dots,\lambda_{L_{M_n}}}
(v_1\otimes\dots\otimes v_{L_{M_n}})
} \\
\displaystyle{
\vphantom{\Big(}
=
\big(
f((g_1\otimes\dots\otimes g_n)(h_1\otimes\dots\otimes h_{M_n}))
\big)_{\lambda_1,\dots,\lambda_{L_{M_n}}}
(v_1\otimes\dots\otimes v_{L_{M_n}})
\,,}
\end{array}
\end{equation}
for every $f\in\mc Chom(n)$,
$g_i\in\mc Chom(m_i)$, $i=1,\dots,n$,
$h_j\in\mc Chom(\ell_j)$, $j=1,\dots,m_1+\dots+m_n=:M_n$,
and $v_k\in V$, $k=1,\dots,\ell_1+\dots+\ell_{M_n}=:L_{M_n}$.
Let us denote, in accordance to \eqref{20170821:eq2},
\begin{equation}\label{20170822:eq2}
M_i=\sum_{j=1}^i m_j
\,,\,\,i=0,\dots,n
\,,\,\,\text{ and }\,\,
L_j=\sum_{k=1}^j \ell_k
\,.
\end{equation}
Then, using the definition \eqref{20170613:eq2} of the composition map in the operad $\mc Chom$,
one easily checks that both sides of \eqref{20170822:eq1} are equal to
$$
\begin{array}{l}
\displaystyle{
\vphantom{\Big(}
f_{\sum_{i=1}^{L_{M_1}}\!\!\!\lambda_i,\dots,\sum_{i=L_{M_{\!n\!-\!1}}\!\!\!+1}^{L_{M_n}}\!\lambda_i}
\Big(
\big(
(g_1)_{\sum_{j=1}^{L_1}\!\!\lambda_j,\dots,\sum_{j=L_{\!M_1\!-\!1}\!+1}^{L_{M_1}}\!\!\lambda_j}
\!\!\!\otimes\cdots
} \\
\displaystyle{
\vphantom{\Big(}
\otimes\!
(g_n)_{\sum_{j=L_{M_{\!n\!-\!1}}\!\!+1}^{L_{M_{\!n\!-\!1}\!\!+2}}\!\!\lambda_j,\dots,\sum_{j=L_{M_n\!-\!1}\!+1}^{L_{M_n}}\!\!\lambda_j}
\big)
\Big(
\big(
(h_1)_{\lambda_1,\dots,\lambda_{L_1}}
\otimes\cdots
} \\
\displaystyle{
\vphantom{\Big(}
\otimes
(h_{M_n})_{\lambda_{L_{M_n-1}+1},\dots,\lambda_{L_{M_n}}}
\big)
(v_1\otimes\dots\otimes v_{L_{M_n}})
\Big)
\Big)
\,,}
\end{array}
$$
proving associativity.
The unity axiom is immediate to check.
Next, we prove the equivariance axiom \eqref{eq:operad4}.
If we apply the left-hand side of \eqref{eq:operad4}
to a monomial $v_1\otimes\dots\otimes v_{M_n}$
and we evaluate it on the variables $\lambda_1,\dots,\lambda_{M_n}$,
we get
$$
\begin{array}{l}
\displaystyle{
\vphantom{\Big(}
\big(f^\sigma(g_1^{\tau_1}\otimes \dots\otimes g_n^{\tau_n})\big)_{\lambda_1,\dots,\lambda_{M_n}}
(v_1\otimes\dots\otimes v_{M_n})
} \\
\displaystyle{
\vphantom{\Big(}
=
(f^\sigma)_{\Lambda_1,\dots,\Lambda_n}
\Big(
\big(
(g_1^{\tau_1})_{\lambda_1,\dots,\lambda_{M_1}}
\otimes \dots\otimes 
(g_n^{\tau_n})_{\lambda_{M_{n-1}+1},\dots,\lambda_{M_n}}
\big)
(v_1\otimes\dots\otimes v_{M_n})
\Big)
} \\
\displaystyle{
\vphantom{\Big(}
=
f_{\sigma(\Lambda_1,\dots,\Lambda_n)}
\Big(
\sigma\Big(
\big(
(g_1)_{\tau_1(\lambda_1,\dots,\lambda_{M_1})}
\otimes \dots\otimes 
(g_n)_{\tau_n(\lambda_{M_{n-1}+1},\dots,\lambda_{M_n})}
\big)
\big(
} \\
\displaystyle{
\vphantom{\Big(}
\,\,\,\,\,\,\,\,\,\,\,\,\,\,\,\,\,\,\,\,\,\,\,\,\,\,\,
\,\,\,\,\,\,\,\,\,\,\,\,\,\,\,\,\,\,\,\,\,\,\,\,\,\,\,
\tau_1(v_1\otimes\dots\otimes v_{M_1})
\otimes\dots\otimes
\tau_n(v_{M_{n-1}+1}\otimes\dots\otimes v_{M_n})
\big)
\Big)\Big)
} \\
\displaystyle{
\vphantom{\Big(}
=
f_{\sigma(\Lambda_1,\dots,\Lambda_n)}
\Big(
\big(
\sigma
\big(
(g_1)_{\tau_1(\lambda_1,\dots,\lambda_{M_1})}
\otimes \dots\otimes 
(g_n)_{\tau_n(\lambda_{M_{n-1}+1},\dots,\lambda_{M_n})}
\big)
\big)
\big(
} \\
\displaystyle{
\vphantom{\Big(}
\,\,\,\,\,\,\,\,\,\,\,\,\,\,\,\,\,\,\,\,\,\,\,\,\,\,\,
\,\,\,\,\,\,\,\,\,\,\,\,\,\,\,\,\,\,\,\,\,\,\,\,\,\,\,
\sigma(\tau_1,\dots,\tau_n)
(v_1\otimes\dots\otimes v_{M_n})
\big)
\Big)
} \\
\displaystyle{
\vphantom{\Big(}
=
\big(f(\sigma(g_1\otimes\dots\otimes g_n))\big)_{(\sigma(\tau_1,\dots,\tau_n))(\lambda_1,\dots,\lambda_{M_n})}
\big(
\sigma(\tau_1,\dots,\tau_n)
(v_1\otimes\dots\otimes v_{M_n})
\big)
} \\
\displaystyle{
\vphantom{\Big(}
=
\big(
\big(f(\sigma(g_1\otimes\dots\otimes g_n))\big)^{\sigma(\tau_1,\dots,\tau_n)}
\big)_{\lambda_1,\dots,\lambda_{M_n}}
(v_1\otimes\dots\otimes v_{M_n})
\,.}
\end{array}
$$
For the first equality we used the definition \eqref{20170613:eq2} of the composition maps in $\mc Chom$,
for the second equality we used the definition \eqref{20170613:eq3}
of the action of the symmetric group on $\mc Chom$,
for the third equality we used Lemma \ref{20170821:lem}
and the definition \eqref{eq:operad19} of the composition map of permutations,
for the fourth equality we used again \eqref{eq:operad19} and \eqref{20170613:eq2},
and for the last equality we used again \eqref{20170613:eq3}.
This proves the equivariance condition \eqref{eq:operad4}.

%%%
\subsection{Lie conformal superalgebras and the operad $\mc Chom$}\label{sec:LCA.2}

Given the vector superspace $V$, with parity $p$,
and the even endomorphism $\partial\in\End(V)$,
we denote by $\Pi V$ the same vector space with reversed parity $\bar p=1-p$.
Obviously, $\partial$ is also an even endomorphism of $\Pi V$.
We consider the corresponding operad $\mc Chom(\Pi V)$
from Section \ref{sec:LCA.1}
and the associated $\mb Z$-graded Lie superalgebra 
$W^\partial(\Pi V):=W(\mc Chom(\Pi V))$
given by Theorem \ref{20170603:thm2}.
\begin{proposition}[\cite{DSK13}]\label{20170612:prop2}
We have a bijective correspondence between 
the odd elements\/ $X\in W^\partial_1(\Pi V)$ such that\/ $X\Box X=0$
and the Lie conformal superalgebra\/ $\lambda$-brackets\/
$[\cdot\,_\lambda\,\cdot]\colon V\times V\to V[\lambda]$ on\/ $V$,
given by 
\begin{equation}\label{20170612:eq3a}
[a_\lambda b]=(-1)^{p(a)}X_{\lambda,-\lambda-\partial}(a\otimes b)
\,.
\end{equation}
\end{proposition}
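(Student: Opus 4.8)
The plan is to mirror the proof of Proposition \ref{20170612:prop1} step by step, replacing $\mc Hom(\Pi V)$ by $\mc Chom(\Pi V)$ throughout. First I would unwind the definitions: an odd element $X\in(\mc Chom(\Pi V))(2)$ is, by \eqref{eq:chom1} and the sesquilinearity condition \eqref{20170613:eq1}, exactly an odd (hence parity-preserving on $V$, after the parity shift) map $X_{\lambda_1,\lambda_2}\colon V^{\otimes2}\to\mb F_-[\lambda_1,\lambda_2]\otimes_{\mb F[\partial]}V$ satisfying $X_{\lambda_1,\lambda_2}(\partial a\otimes b)=-\lambda_1 X_{\lambda_1,\lambda_2}(a\otimes b)$ and the analogous identity in the second slot. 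Using the identification $\mb F_-[\lambda_1,\lambda_2]\otimes_{\mb F[\partial]}V\simeq V[\lambda]$ obtained by setting $\lambda_1=\lambda$, $\lambda_2=-\lambda-\partial$, the formula \eqref{20170612:eq3a} sets up a bijection between such $X$ and bilinear maps $[\cdot\,_\lambda\,\cdot]\colon V\times V\to V[\lambda]$, and the two sesquilinearity relations \eqref{20170613:eq1} translate precisely into the sesquilinearity axioms \eqref{20170612:eq6} for the $\lambda$-bracket. This is the routine ``dictionary'' step.

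Next I would identify the $S_2$-invariance condition. Saying $X\in W^\partial_1(\Pi V)=(\mc Chom(\Pi V))(2)^{S_2}$ means $X^{(12)}=X$, where the action is given by \eqref{20170613:eq3}: permuting both the arguments and the $\lambda_i$'s, with the Koszul sign $\epsilon_v((12))=(-1)^{p(a)p(b)}$ computed in $\Pi V$, i.e.\ with the shifted parity $\bar p$. Translating this through \eqref{20170612:eq3a} and the substitution $\lambda_1=\lambda,\lambda_2=-\lambda-\partial$, a short sign bookkeeping shows that $X^{(12)}=X$ is equivalent to the skewsymmetry axiom \eqref{20170612:eq4}, $[b_\lambda a]=-(-1)^{p(a)p(b)}[a_{-\lambda-\partial}b]$. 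Here one must be careful that the parity in \eqref{20170613:eq3} is that of $\Pi V$ while the parity appearing in \eqref{20170612:eq3a} and \eqref{20170612:eq4} is that of $V$; the extra sign $(-1)^{p(a)}$ in \eqref{20170612:eq3a} is exactly what reconciles them, just as in \cite{DSK13} and in Proposition \ref{20170612:prop1}.

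Finally I would compute $X\Box X$ explicitly using the definition \eqref{eq:box}, $X\Box X=\sum_{\sigma\in S_{2,1}}(X\circ_1 X)^{\sigma^{-1}}$, together with the composition formula \eqref{20170613:eq2} in $\mc Chom$ and the $S_3$-action \eqref{20170613:eq3}. Evaluating on $a\otimes b\otimes c$ and on the variables $\lambda,\mu,-\lambda-\mu-\partial$, the three shuffles in $S_{2,1}$ produce, up to an overall factor $(-1)^{p(b)}$ (mimicking the $W(V)$ computation in Proposition \ref{20170612:prop1}), the three terms $[[a_\lambda b]_{\lambda+\mu}c]-[a_\lambda[b_\mu c]]+(-1)^{p(a)p(b)}[b_\mu[a_\lambda c]]$; one keeps track of the sesquilinearity-induced $\lambda$-substitutions via \eqref{20170821:eq2} ($\Lambda_1,\Lambda_2$) and the Koszul signs from \eqref{20170821:eq5b}. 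Hence $X\Box X=0$ is equivalent to the Jacobi identity \eqref{20170612:eq5}. I expect the main obstacle to be precisely this last computation: unlike the Lie-algebra case, the composition in $\mc Chom$ feeds sums of $\lambda_i$'s into the outer map, so one must verify that the arguments land on the correct variables $\lambda,\mu,\lambda+\mu$ (using the identification of $\mb F_-[\lambda_1,\lambda_2,\lambda_3]\otimes_{\mb F[\partial]}V$ via $\lambda_3=-\lambda_1-\lambda_2-\partial$) and that all Koszul/skewsymmetry signs combine to give exactly \eqref{20170612:eq5}; once the sign conventions are pinned down this is a finite check, entirely parallel to \cite{DSK13}. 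Conversely, given a Lie conformal superalgebra structure, defining $X$ by inverting \eqref{20170612:eq3a} and running the same identities backwards shows $X$ is odd, $S_2$-invariant, and satisfies $X\Box X=0$, completing the bijection.
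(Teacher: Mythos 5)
Your proposal follows essentially the same route as the paper's proof: unwinding the definition of $\mc Chom(\Pi V)(2)$ to match sesquilinearity, translating $S_2$-invariance (with the $\Pi V$-parity Koszul sign) into skewsymmetry, and expanding $X\Box X$ over the three $(2,1)$-shuffles via \eqref{eq:box1} and \eqref{20170613:eq2} to recover, up to the overall factor $(-1)^{p(b)}$, the three terms of the Jacobi identity \eqref{20170612:eq5}. The only step you leave slightly implicit — using skewsymmetry to flip the second and third terms into the nested form $[b_\mu[a_\lambda c]]$ and $[a_\lambda[b_\mu c]]$ — is exactly what the paper carries out, so the argument is correct and complete in outline.
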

\begin{proof}
First, $X\in (\mc Chom(\Pi V))(2)$ is, by definition,
an odd $\mb F[\partial]^{\otimes2}$-module homomorphism
$X_{\lambda,\mu}\colon (\Pi V)^{\otimes 2}\to\mb F_-[\lambda,\mu]\otimes_{\mb F[\partial]}\Pi V\simeq V[\lambda]$
(the last isomorphism being obtained by letting $\mu=-\lambda-\partial$),
and it corresponds, via \eqref{20170612:eq3},
to $\lambda$-bracket $[\cdot\,,\,\cdot]\colon V\times V\to V[\lambda]$
satisfying the sesquilinearity conditions \eqref{20170612:eq6}.
The condition that $X\in (\mc Chom(\Pi V))(2)$ is odd
(with respect to the parity $\bar p$ induced by that $\Pi V$),
translates into saying that the corresponding $\lambda$-bracket $[\cdot\,,\,\cdot]$
is parity preserving.
Moreover, the condition that $X$ is fixed by the action \eqref{20170613:eq3} of the symmetric group $S_2$
translates into saying that 
the corresponding $\lambda$-bracket $[\cdot\,_\lambda\,\cdot]$
satisfies the skewsymmetry axiom \eqref{20170612:eq4}.
To complete the proof, we need to check that the equation $X\Box X=0$
translates to the Jacobi identity for the $\lambda$-bracket $[\cdot\,_\lambda\,\cdot]$.
By equation \eqref{eq:box1},
we have
$$
\begin{array}{l}
\displaystyle{
\vphantom{\Big(}
(X\Box X)_{\lambda,\mu,\nu}(a\otimes b\otimes c)
=
\sum_{\sigma\in S_{2,1}}\big((X\circ_1 X)^{\sigma^{-1}}\big)_{\lambda,\mu,\nu}(a\otimes b\otimes c)
} \\
\displaystyle{
\vphantom{\Big(}
=
X_{\lambda+\mu,\nu}(X_{\lambda,\mu}(a\otimes b)\otimes c)
+
(-1)^{\bar p(b)\bar p(c)}
X_{\lambda+\nu,\mu}(X_{\lambda,\nu}(a\otimes c)\otimes b)
} \\
\displaystyle{
\vphantom{\Big(}
\,\,\,\,\,\,\,\,\,\,\,\,\,\,\,\,\,\,
+
(-1)^{\bar p(a)(\bar p(b)+\bar p(c))}
X_{\mu+\nu,\lambda}(X_{\mu,\nu}(b\otimes c)\otimes a)
} \\
\displaystyle{
\vphantom{\Big(}
=
X_{\lambda+\mu,\nu}(X_{\lambda,\mu}(a\otimes b)\otimes c)
+
(-1)^{\bar p(b)(1+\bar p(a))}
X_{\mu,\lambda+\nu}(b\otimes X_{\lambda,\nu}(a\otimes c))
} \\
\displaystyle{
\vphantom{\Big(}
\,\,\,\,\,\,\,\,\,\,\,\,\,\,\,\,\,\,
+
(-1)^{\bar p(a)}
X_{\lambda,\mu+\nu}(a\otimes X_{\mu,\nu}(b\otimes c))
} \\
\displaystyle{
\vphantom{\Big(}
=
(-1)^{p(b)}\Big(
[[a_\lambda b]_{\lambda+\mu} c]-[a_\lambda [b_\mu c]]+(-1)^{p(a)p(b)}[b_\mu [a_\lambda c]]
\Big)
\,.}
\end{array}
$$
Hence, $X\Box X=0$ if and only if the Jacobi identity \eqref{20170612:eq5} holds.
\end{proof}

%%%
%\subsection{LCA cohomology}\label{sec:LCA.3}
%
\begin{definition}[\cite{BKV99,DSK09}]
Let $V$ be a Lie conformal superalgebra.
The corresponding \emph{Lie conformal superalgebra cohomology complex} is defined as
$$
(W^\partial(\Pi V),\ad X)
\,,
$$
where $X\in W^\dd(\Pi V)_{\bar 1}$ is given by \eqref{20170612:eq3a}.
\end{definition}

\begin{remark}
One can introduce a conformal version of the operad $\mc Chom$, which is associated to the \emph{basic} Lie conformal algebra complex (see \cite{DSK13}). This leads to the notion of a \emph{conformal operad}, which will be developed in a forthcoming publication. In the geometric context of chiral algebras, the corresponding object was constructed by Tamarkin in \cite{Tam02}.
\end{remark}

%%%%%%%%%%%%%%%%%%%%%%%%%%%%%%%%%%%%%%%
\section{The chiral operad}\label{sec:chop}
%%%%%%%%%%%%%%%%%%%%%%%%%%%%%%%%%%%%%%%

\subsection{Vertex algebras}\label{sec:vert}

In this subsection, we recall the ``fifth definition'' of a vertex algebra, given in \cite{DSK06}.
In a nutshell, this definition says that a vertex algebra is a Lie conformal algebra
in which the $\lambda$-bracket can be ``integrated''.
More precisely we have
\begin{definition}\label{def5}
A vertex algebra is a  $\mb Z /2\mb Z$-graded $\mb F[\partial]$-module $V$,
endowed with an even element $\vac\in V_{\bar{0}}$ 
and an {\itshape integral} $\lambda$-{\itshape bracket},
namely a linear map $V \otimes V \to \mb F [\lambda] \otimes V$, 
denoted by 
$$
\int^\lambda d\sigma[u_\sigma v]
=
{:}uv{:}+\int_0^\lambda\!d\sigma\,[u_\sigma v]
\,,
$$
such that the following axioms hold:
\begin{enumerate}[(i)]
\item
$\int^\lambda d\sigma[\vac_\sigma v]=\int^\lambda d\sigma[v_\sigma \vac]=v$,
\item
$\int^\lambda\!\! d\sigma\, [\partial u_\sigma v] = -\int^\lambda\!\! d\sigma\,\sigma[u_\sigma v]$,
$\int^\lambda\!\! d\sigma\,[u_\sigma \partial v] = \int^\lambda\!\!d\sigma\, (\partial+\sigma)[u_\sigma v]$,
\item 
$\int^\lambda\!\! d\sigma\,[v_\sigma u] =(-1)^{p(u)p(v)}\int^{-\lambda-\partial}\!\! d\sigma\,[u_\sigma v]$,
\item
$\int^\lambda\!\! d\sigma\!\! \int^\mu\!\! d\tau \Big(
[u_\sigma[v_\tau w]] - (-1)^{p(u)p(v)} [v_\tau[u_\sigma w]] - [[u_\sigma v]_{\sigma+\tau}w] 
\Big) = 0$.
\end{enumerate}
If we do not assume the existence of the unit element $\vac\in V$ and we drop axiom (i),
we call $V$ a \emph{non-unital vertex algebra}.
\end{definition}
Paper \cite{DSK06} contains a detailed discussion of this definition
and the proof of its equivalence to other definitions of vertex algebras.
We shall call $[u_\lambda v]$,
defined as the derivative by $\lambda$ of the polynomial $\int^\lambda\!d\sigma\,[u_\sigma v]$,
the $\lambda$-\emph{bracket} of $u$ and $v$. 
Their \emph{normally ordered product}
${:}uv{:}$ 
is defined as the constant term of the polynomial $\int^\lambda\!d\sigma\,[u_\sigma v]$.
The polynomial $\int^\lambda\!d\sigma\,[u_\sigma v]$ will be called
the \emph{integral of the} $\lambda$-\emph{bracket} of $u$ and $v$.

Axioms (i)-(iv) are a concise way to write more complicated relations
involving the normally ordered products ${:}uv{:}$ and the $\lambda$-brackets $[u_\lambda v]$.
To explain this, let us describe the meaning of axiom (ii).
Taking the derivative with respect to $\lambda$ of both equations we get the
sesquilinearity conditions of the $\lambda$-bracket:
$[\partial u_\lambda v]=-\lambda[u_\lambda v]$ and 
$[u_\lambda \partial v]=(\partial+\lambda)[u_\lambda v]$,
while putting $\lambda=0$ in axiom (ii) we get that 
$\partial$ is a derivation of the normally ordered product,
plus a new piece of notation:
\begin{equation}\label{20160719:eq4}
\int^0\!d\sigma\,\sigma[u_\sigma v]=-{:}(\partial u)v{:}
\,.
\end{equation}
Similarly, 
axiom (iii) gives two conditions:
taking the derivative of both sides with respect to $\lambda$ we get 
the skewsymmetry of the $\lambda$-bracket: 
$$[v_\lambda u]=-(-1)^{p(u)p(v)}[u_{-\lambda-\partial}v],$$
while taking the constant term in $\lambda$ we get the quasi-commutativity of the normally ordered product:
$$
{:}uv{:}-(-1)^{p(u)p(v)}{:}vu{:}\,=\,\int_{-\partial}^0\!d\lambda\,[u_\lambda v]
\,.
$$
Finally, to explain of axiom (iv) we expand
all three summand in terms of normally ordered product and $\lambda$-brackets.
The first term is immediate to understand:
$$
\begin{array}{l}
\displaystyle{
\int^\lambda\!\! d\sigma\!\! \int^\mu\!\! d\tau [u_\sigma[v_\tau w]]
=
{:}u({:}vw{:}){:}
+\int_0^\mu\!\! d\tau {:}u[v_\tau w]{:}
} \\
\displaystyle{
+\int_0^\lambda\!\! d\sigma [u_\sigma {:}vw{:}]
+\int_0^\lambda\!\! d\sigma\!\! \int_0^\mu\!\! d\tau [u_\sigma[v_\tau w]]
\,.}
\end{array}
$$
Similarly for the second term.
To correctly expand the third term,
we first perform the change of variable $\sigma+\tau\mapsto\tau$,
we exchange the order of integration in $d\sigma$ and $d\tau$,
and we use the notation \eqref{20160719:eq4}.
As a result, we get
$$
\begin{array}{l}
\displaystyle{
\int^\lambda\!\! d\sigma\!\! \int^\mu\!\! d\tau [[u_\sigma v]_{\sigma+\tau}w] 
=
\int^{\lambda+\mu}\!\! d\tau
\Big[\Big(\int_{-\mu-\partial}^\lambda\!\! d\sigma [u_\sigma v]\Big)_{\tau}w\Big] 
} \\
\displaystyle{
={:}\Big(\int_{-\mu-\partial}^\lambda\!\! d\sigma [u_\sigma v]\Big)w{:}
+\int_0^{\lambda+\mu}\!\! d\tau
\Big[\big(\int_{-\mu-\partial}^\lambda\!\! d\sigma [u_\sigma v]\Big)_{\tau}w\Big] 
\,.}
\end{array}
$$
Hence, by taking constant term or derivative with respect to $\lambda$ and/or $\mu$,
axiom (iv) produces four different axioms on the normally ordered product and the $\lambda$-bracket.

Axiom (iv), i.e. Jacobi identity under integration, could be written in the seemingly equivalent form:
\begin{enumerate}
\item[(iv')]
$\displaystyle\int^\lambda\!\! d\sigma\!\! \int^\mu\!\! d\tau \Big(
[u_\sigma[v_{\tau-\sigma} w]] - (-1)^{p(u)p(v)} [v_{\tau-\sigma}[u_\sigma w]] - [[u_\sigma v]_{\tau}w] 
\Big) = 0$
\,.
\end{enumerate}
Not surprisingly, (iv) and (iv') are equivalent, as a consequence of the following:
\begin{lemma}\label{20160721:lem}
Let\/ $\int^\lambda\!d\sigma\,[\cdot\,_\sigma\,\cdot]\colon V\otimes V\to V[\lambda]$
be a linear map satisfying the sesquilinearity and skewsymmetry conditions under integration,
i.e. axioms (ii) and (iii) of Definition \ref{def5}.
Let
\begin{equation}\label{20160721:eq2}
J_{\lambda,\mu}(u,v,w)
=
\int^\lambda\!\! d\sigma\!\! \int^\mu\!\! d\tau \Big(
[u_\sigma[v_{\tau} w]] - (-1)^{p(u)p(v)} [v_{\tau}[u_\sigma w]] - [[u_\sigma v]_{\sigma+\tau}w] 
\Big)
\,,
\end{equation}
and let 
\begin{equation}\label{20160721:eq3}
\widetilde{J}_{\lambda,\mu}(u,v,w)
=
\int^\lambda\!\! d\sigma\!\! \int^\mu\!\! d\tau \Big(
[u_\sigma[v_{\tau-\sigma} w]] - (-1)^{p(u)p(v)} [v_{\tau-\sigma}[u_\sigma w]] - [[u_\sigma v]_{\tau}w] 
\Big)
\,.
\end{equation}
Then, we have
\begin{equation}\label{20160721:eq1}
\widetilde{J}_{\lambda,\mu}(u,v,w)
=
(-1)^{p(v)p(w)}
J_{\lambda,-\mu-\partial}(u,w,v)
\,.
\end{equation}
\end{lemma}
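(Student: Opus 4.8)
The plan is to prove \eqref{20160721:eq1} by a direct computation from the sesquilinearity and skewsymmetry axioms (ii) and (iii) alone, matching the three summands defining $\widetilde J_{\lambda,\mu}(u,v,w)$ against a suitably permuted version of the three summands defining $J_{\lambda,-\mu-\partial}(u,w,v)$. First I would record the elementary observation that the integrand of $\widetilde J_{\lambda,\mu}(u,v,w)$ is obtained from that of $J_{\lambda,\mu}(u,v,w)$ by the substitution $\tau\mapsto\tau-\sigma$ inside the integrand (this is the very origin of (iv$'$)): the summand $[[u_\sigma v]_{\sigma+\tau}w]$ becomes $[[u_\sigma v]_\tau w]$, while the other two acquire the shifted inner variable $\tau-\sigma$.

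The easiest summand is the last one. By axiom (iii), applied to the $\lambda$-bracket $[\,\cdot\,_\tau w]$ with first slot the $\sigma$-dependent element $[u_\sigma v]$, one has $\int^\mu d\tau\,[[u_\sigma v]_\tau w]=(-1)^{(p(u)+p(v))p(w)}\int^{-\mu-\partial}d\tau\,[w_\tau[u_\sigma v]]$, where the $\partial$ in the upper limit acts on all $V$-coefficients to its right. Applying $\int^\lambda d\sigma$ to both sides and comparing signs shows that $-\int^\lambda d\sigma\int^\mu d\tau\,[[u_\sigma v]_\tau w]$ is exactly $(-1)^{p(v)p(w)}$ times the middle term $-(-1)^{p(u)p(w)}\int^\lambda d\sigma\int^{-\mu-\partial}d\tau\,[w_\tau[u_\sigma v]]$ of $J_{\lambda,-\mu-\partial}(u,w,v)$. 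For the remaining two summands I would apply skewsymmetry to the inner bracket, $[v_{\tau-\sigma}w]=-(-1)^{p(v)p(w)}[w_{\sigma-\tau-\partial}v]$, and then perform the change of variables $\tau\mapsto\tau-\sigma$ together with the exchange of the order of the $d\sigma$ and $d\tau$ integrations flagged in the discussion preceding the lemma, using sesquilinearity to absorb into the spectral parameter and into the upper limit the $\partial$ produced by skewsymmetry. This converts $\int^\lambda d\sigma\int^\mu d\tau\,[u_\sigma[v_{\tau-\sigma}w]]$ into $(-1)^{p(v)p(w)}\int^\lambda d\sigma\int^{-\mu-\partial}d\tau\,[u_\sigma[w_\tau v]]$ and $-(-1)^{p(u)p(v)}\int^\lambda d\sigma\int^\mu d\tau\,[v_{\tau-\sigma}[u_\sigma w]]$ into $-(-1)^{p(v)p(w)}\int^\lambda d\sigma\int^{-\mu-\partial}d\tau\,[[u_\sigma w]_{\sigma+\tau}v]$, i.e. into $(-1)^{p(v)p(w)}$ times the first and third terms of $J_{\lambda,-\mu-\partial}(u,w,v)$. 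Summing the three contributions gives \eqref{20160721:eq1}.

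The main obstacle is the bookkeeping in the change-of-variables step. One must keep track of the normally ordered ``constant of integration'' attached to each integral and of the boundary corrections of the form $\int_0^{-\sigma}(\cdots)\,d\tau$ that arise when rewriting $\int^\mu d\tau\,G(\tau-\sigma)$ in terms of $\int^{\mu-\sigma}d\tau\,G(\tau)$, and simultaneously keep straight the three a priori distinct occurrences of $\partial$: the one introduced by skewsymmetry of the inner bracket, the one sitting in the upper limit $-\mu-\partial$, and the one implicit in the subscript $\sigma+\tau$. The computation has to be organized so that each such $\partial$ is eliminated exactly once by an application of sesquilinearity and so that all the boundary terms cancel in pairs; this is essentially the only non-formal point. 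Alternatively, the same identity can be checked by expanding both sides into normally ordered products and $\lambda$-brackets (as in the discussion before the lemma) and verifying the resulting component identities obtained by taking the constant term and the derivatives in $\lambda$ and $\mu$, each of which is a routine Lie conformal algebra manipulation using only (ii) and (iii).
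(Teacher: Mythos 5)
Your proposal is correct and follows essentially the same route as the paper: apply skewsymmetry (axiom (iii)) under the integral sign to each of the three summands of $\widetilde J_{\lambda,\mu}(u,v,w)$ and then change the inner variable of integration so as to land on $(-1)^{p(v)p(w)}J_{\lambda,-\mu-\partial}(u,w,v)$; your three target terms and all the signs are right. Two small inaccuracies in the description: for the second summand $-(-1)^{p(u)p(v)}[v_{\tau-\sigma}[u_\sigma w]]$ it is the \emph{outer} bracket, not the inner one, to which skewsymmetry must be applied (your stated output for that term is nevertheless the correct one); and the change of variables that does the job is not $\tau\mapsto\tau-\sigma$ with an exchange of the order of integration (that manipulation belongs to the expansion of axiom (iv) in the discussion preceding the lemma), but rather $\tau\mapsto-\tau-\partial$, with no reordering of the integrals. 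The step that makes the latter substitution close up is precisely the sesquilinearity observation that the extra $\sigma$ left in the inner spectral parameter after skewsymmetry equals $-\partial$ acting on the first factor $u$, so that $\sigma-\tau-\partial$ (with $\partial$ acting only on the inner bracket) becomes $-\tau-\partial$ with $\partial$ acting on everything; this is exactly the $\partial$-bookkeeping you correctly flag as the main obstacle, and once it is done there are no residual boundary terms to cancel.
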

\begin{proof}
Applying the skewsymmetry condition under the sign of integral, we have
$$
\begin{array}{l}
\displaystyle{
\vphantom{\Big(}
\widetilde{J}_{\lambda,\mu}(u,v,w)
=
\int^\lambda\!\! d\sigma\!\! \int^\mu\!\! d\tau \Big(
[u_\sigma[v_{\tau-\sigma} w]] - (-1)^{p(u)p(v)} [v_{\tau-\sigma}[u_\sigma w]] - [[u_\sigma v]_{\tau}w] 
\Big)
} \\
\displaystyle{
\vphantom{\Big(}
=
\int^\lambda\!\!\! d\sigma\!\!\! \int^\mu\!\!\! d\tau \Big(\!
-(-1)^{p(v)p(w)} [u_\sigma[w_{-\tau+\sigma-\partial} v]] 
+(-1)^{p(v)p(w)} [[u_\sigma w]_{-\tau+\sigma-\partial}v] 
} \\
\displaystyle{
\vphantom{\Big(}
+(-1)^{(p(u)+p(v))p(w)} [w_{-\tau-\partial}[u_\sigma v]] 
\Big)
\,.}
\end{array}
$$
Note that, by the sesquilinearity condition, $\sigma$ in the first term of the right-hand side 
is the same as $-\partial$ acting on
the first factor $u$.
If we then perform the change of variable $-\tau-\partial=\rho$, the right-hand side becomes
$$
\begin{array}{l}
\displaystyle{
\vphantom{\Big(}
(-1)^{p(v)p(w)}
\int^\lambda\!\!\! d\sigma\!\! \int^{-\mu-\partial}\!\!\!\!\! d\rho \Big(
[u_\sigma[w_{\rho} v]] 
- [[u_\sigma w]_{\rho+\sigma}v] 
- (-1)^{p(u)p(w)} [w_{\rho}[u_\sigma v]] 
\Big)
} \\
\displaystyle{
\vphantom{\Big(}
=
(-1)^{p(v)p(w)}
J_{\lambda,-\mu-\partial}(u,w,v)
\,.}
\end{array}
$$
This completes the proof.
\end{proof}
\begin{definition}\label{def-mod}
A left \emph{module} $M$ over a non-unital vertex algebra $V$ is a $\mb Z /2\mb Z$-graded $\mb F[\partial]$-module
endowed with an {\itshape integral} $\lambda$-{\itshape action},
$V \otimes M \to \mb F [\lambda] \otimes M$, 
denoted $v\otimes m\mapsto \int^\lambda\!d\sigma\,(v_\sigma m)$,
preserving the $\mb Z/2\mb Z$-grading,
such that the following axioms hold:
\begin{enumerate}[(i)]
\item
$\int^\lambda\!\! d\sigma\, (\partial v_\sigma m) = -\int^\lambda\!\! d\sigma\,\sigma(v_\sigma m)$,
$\int^\lambda\!\! d\sigma\,(v_\sigma \partial m) = \int^\lambda\!\!d\sigma\, (\partial+\sigma)(v_\sigma m)$,
\item
$\int^\lambda\!\! d\sigma\!\! \int^\mu\!\! d\tau \Big(
u_\sigma(v_\tau m) - (-1)^{p(u)p(v)} v_\tau(u_\sigma m) - [u_\sigma v]_{\sigma+\tau}m 
\Big) = 0$.
\end{enumerate}
This is equivalent to say that the $\mb F[\partial]$-module $V\oplus M$
has a vertex algebra structure $\int^\lambda\!d\sigma\,[\cdot\,_\sigma\,\cdot]^\sim$
such that 
$\int^\lambda\!d\sigma\,[u_\sigma v]^\sim\in V[\lambda]$ for all $u,v\in V$,
making $V$ a vertex subalgebra,
and such that 
$\int^\lambda\!d\sigma\,[v_\sigma m]^\sim\in M[\lambda]$ for all $v\in V$ and $m\in M$,
and 
$\int^\lambda\!d\sigma\,[m_\sigma n]^\sim=0$ for all $m,n\in M$,
making $M$ an abelian ideal.

The left $V$-module structure on $M$
induces a right $V$-module structure on $M$ given by
\begin{equation}\label{eq:right}
\int^\lambda d\sigma\, m_\sigma v
=
(-1)^{p(v)p(m)}\int^{-\lambda-\partial}d\sigma\, v_{\sigma}m
\,.
\end{equation}
\end{definition}

%%%%%%%%%%%%%%%
\subsection{The spaces $\mc O_{k+1}^{\star T}$}\label{sec:ostart}
In the following subsection, we will introduce the chiral operad, which governs vertex algebras. In order to do so, we need to define certain spaces of rational functions.
For $k\geq-1$, let $\mc O_{k+1}^{\star}$ be the algebra of 
polynomials in the variables $z_0,\dots,z_k$ localized on the diagonals $z_i=z_j$ for $i\neq j$.
%Introduce the notation $z_{ij}=z_i-z_j$
In other words, $\mc O_{0}^{\star}=\mb F$ and
$$
\mc O_{k+1}^{\star}
=
\mb F[z_0,\dots,z_k][z_{ij}^{-1}]_{0\leq i<j\leq k}
\,\,,\,\,\text{ where }\,\, \qquad z_{ij}=z_i-z_j, \;\; k\geq0.
$$
We will denote by $\mc O_{k+1}^{\star T}$ the subalgebra 
of translation invariant elements, i.e.,
$$
\mc O_{k+1}^{\star T}
= \Ker \Bigl( \sum_{i=0}^k \dd_{z_i} \Bigr)
=\mb F[z_{ij}^{\pm 1}]_{0\leq i<j\leq k}.
$$
Note that $\mc O_{0}^{\star T}=\mc O_{1}^{\star T}=\mb F$.

Let $\mc D_{k+1}$ be the algebra of regular differential operators 
in $z_0,\dots,z_k$,
i.e., $\mc D_{0}=\mb F$ and
$$
\mc D_{k+1}
=
\mb F[z_0,\dots,z_k][\partial_{z_0},\dots,\partial_{z_k}], \qquad k\geq0.
$$
Let $\mc D_{k+1}^T$ be the subalgebra of translation invariant elements:
$\mc D_{0}^T=\mb F$ and
$$
\mc D_{k+1}^T
= \Ker \ad\Bigl( \sum_{i=0}^k \dd_{z_i} \Bigr)
= \bigl(\mb F[z_{ij}]_{0\leq i<j\leq k} \bigr)[\partial_{z_0},\dots,\partial_{z_k}] , \qquad k\geq0.
$$
%for $k\geq0$.
%
\begin{lemma}\label{20160719:lem}
The function\/ $f(z_0,\dots,z_k)=\prod_{0\leq i<j\leq k}z_{ij}^{-1}$
is a cyclic element of the\/ $\mc D_{k+1}$-module\/ $\mc O^\star_{k+1}$.
Consequently, $f$ is a cyclic element of the\/ $\mc D_{k+1}^T$-module\/ $\mc O^{\star T}_{k+1}$.
\end{lemma}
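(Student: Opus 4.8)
The plan is to show that the cyclic $\mc D_{k+1}$-submodule generated by $f=\prod_{0\le i<j\le k}z_{ij}^{-1}$ is all of $\mc O^\star_{k+1}$, and then deduce the translation-invariant statement by an averaging (or direct restriction) argument. Write $M=\mc D_{k+1}\cdot f\subseteq\mc O^\star_{k+1}$. Since $\mc O^\star_{k+1}=\mb F[z_0,\dots,z_k][z_{ij}^{-1}]$ is spanned over $\mb F[z_0,\dots,z_k]$ by the Laurent monomials $\prod_{i<j}z_{ij}^{-n_{ij}}$ with $n_{ij}\ge 0$, and $M$ is closed under multiplication by polynomials (these lie in $\mc D_{k+1}$), it suffices to prove that every such monomial lies in $M$. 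First I would observe that applying $\partial_{z_a}$ (which lies in $\mc D_{k+1}$) to a Laurent monomial in the $z_{ij}$'s produces a linear combination of such monomials with exactly one exponent raised by one: more precisely $\partial_{z_a}z_{ij}^{-n_{ij}}$ contributes only when $a\in\{i,j\}$, giving $\mp n_{ij}z_{ij}^{-n_{ij}-1}$ times the rest. So differentiation lets us \emph{increase} pole orders, starting from the all-exponents-equal-to-one monomial $f$.

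The key combinatorial step is therefore: starting from the exponent vector $(1,1,\dots,1)$ (indexed by pairs $i<j$), reach an arbitrary exponent vector $(n_{ij})$ with all $n_{ij}\ge 1$ by a sequence of moves, each of which is "apply $\partial_{z_a}$ and project onto one target monomial," possibly after multiplying by polynomials. The cleanest way I see to organize this is by induction on $\sum_{i<j}(n_{ij}-1)$, the total number of extra poles. For the inductive step, pick a pair $(i_0,j_0)$ with $n_{i_0 j_0}\ge 2$; by induction the monomial $g$ with exponent $n_{i_0j_0}$ replaced by $n_{i_0j_0}-1$ (all others unchanged) lies in $M$. Now apply $\partial_{z_{i_0}}$ to $g$. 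This yields $g$ with $n_{i_0j_0}-1\rightsquigarrow n_{i_0j_0}$ on the factor $z_{i_0j_0}$, \emph{plus} correction terms where some other factor $z_{i_0 j}$ (with the same first index $i_0$) has its exponent raised instead. The remaining task is to cancel those correction terms; I would handle this by also using $\partial_{z_{j_0}}$ applied to $g$ — which raises $z_{i_0j_0}$ in the same way but produces a \emph{different} set of corrections (involving $z_{j j_0}$ factors) — and taking a suitable $\mb F[z_0,\dots,z_k]$-linear combination, or by a secondary downward induction on how many "wrong" pole orders a term has. Since each correction term has strictly fewer total poles shifted away from the target in an appropriate partial order, an induction on a well-chosen monomial order on exponent vectors closes this up. (Alternatively, one can first prove the one-variable statement that $z_{01}^{-1}$ generates $\mb F[z_{01}^{\pm1}]$ over $\mb F[z_{01},\partial_{z_{01}}]$, which is immediate, and bootstrap — but the genuinely multivariable pole structure still needs the cancellation argument above.)

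For the second, translation-invariant assertion, note $f\in\mc O^{\star T}_{k+1}$ and $\mc D^T_{k+1}\cdot f\subseteq\mc O^{\star T}_{k+1}$. Given $h\in\mc O^{\star T}_{k+1}$, by the first part write $h=D\cdot f$ for some $D\in\mc D_{k+1}$. Decompose $\mc D_{k+1}=\mc D^T_{k+1}\oplus\bigl(\sum_i\partial_{z_i}\bigr)\mc D_{k+1}$ — more carefully, filter $\mc D_{k+1}$ by order and use that $\mc D^T_{k+1}$ is the kernel of $\ad(\sum_i\partial_{z_i})$ while the whole algebra is a free module adapted to the translation action — so $D=D^T+\bigl(\sum_i\partial_{z_i}\bigr)D'$ with $D^T\in\mc D^T_{k+1}$. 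Since both $h$ and $D^T f$ are translation invariant, applying $\sum_i\partial_{z_i}$ to the equation $h-D^Tf=(\sum_i\partial_{z_i})D'f$ and using injectivity of $\sum_i\partial_{z_i}$ on the "non-invariant part" (its kernel on $\mc O^\star_{k+1}$ being exactly $\mc O^{\star T}_{k+1}$) forces $h=D^Tf$, so $f$ is cyclic over $\mc D^T_{k+1}$ as well.

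\textbf{Main obstacle.} The routine part is the reduction to Laurent monomials and the observation that differentiation raises pole orders; the real work — and the step I expect to be the main obstacle — is the cancellation argument controlling the unwanted correction terms produced when $\partial_{z_a}$ hits a monomial with several factors sharing the index $a$. Choosing the right induction statement (a monomial order on the exponent vectors $(n_{ij})$ that is compatible with the simultaneous raising of several exponents under a single $\partial_{z_a}$) is the crux; once that ordering is fixed, each correction term is strictly smaller, and the induction runs.
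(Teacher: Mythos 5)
There is a genuine gap, and it sits exactly at the step you flag as the ``main obstacle'': the cancellation of the correction terms is not carried out, and the inductive scheme you propose cannot close it in the form stated. Your primary induction is on the total pole count $\sum_{i<j}(n_{ij}-1)$, but when you apply $\partial_{z_a}$ to a monomial $g$ of pole count $d$, \emph{every} term produced (the target and all corrections) has pole count $d+1$, so the primary induction gives no foothold; everything is deferred to the ``secondary'' induction on a monomial order, for which no order is exhibited and no triangularity is proved. Worse, the claim that each correction term is ``strictly smaller'' is already false in the first nontrivial case $k=2$: writing $A=z_{01}^{-2}z_{02}^{-1}z_{12}^{-1}$, $B=z_{01}^{-1}z_{02}^{-2}z_{12}^{-1}$, $C=z_{01}^{-1}z_{02}^{-1}z_{12}^{-2}$, one computes $\partial_{z_0}f=-A-B$, $\partial_{z_1}f=A-C$, $\partial_{z_2}f=B+C$; these sum to zero and span only the $2$-dimensional subspace $\{a-b+c=0\}$ of $\Span\{A,B,C\}$ (and $A,B,C$ are linearly independent), so no linear combination of first derivatives of $f$ --- nor of polynomial multiples of $f$, which only rescale these identities --- isolates $A$. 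In fact $A-B+C=(z_{12}^2+z_{01}z_{02})f^2$, so producing the missing direction is essentially equivalent to producing $f^2$ from $f$, i.e.\ to the whole content of the lemma. A further structural problem is that for $k\ge2$ the monomials $\prod z_{ij}^{-n_{ij}}$ are \emph{not} linearly independent (the $z_{ij}$ satisfy $z_{ij}=z_{il}+z_{lj}$, giving partial-fraction relations such as \eqref{cycle02}), so an induction indexed by exponent vectors with a monomial order is not even well posed.

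This is also why the paper's proof looks nothing like yours: it invokes the Bernstein--Sato polynomial $b(s)$ of the Vandermonde determinant, the negativity of its roots, and the functional equation $b(s)=\pm b(-s-2)$ from \cite{BW15} to conclude $b(-2),b(-3),\dots\neq0$, whence $f^{m+1}=\frac{1}{b(-m-1)}L(-m-1)f^m$ lies in $\mc D_{k+1}f$ for all $m$; since $\mc O^\star_{k+1}=\sum_N\mb F[z_0,\dots,z_k]f^N$, cyclicity follows. The elementary differentiation trick you describe is used in the paper only in Lemma \ref{lem:alb0}, and only for \emph{acyclic} graphs, precisely because there one can differentiate along a connected component of $\Gamma\setminus e$ and hit a single factor $z_{\alpha\beta}$ with no correction terms; for the complete graph underlying $f$ no such vertex subset isolates an edge, and the clean trick fails. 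Finally, your reduction to $\mc D^T_{k+1}$ is also not quite right as written: from $h-D^Tf=\bigl(\sum_i\partial_{z_i}\bigr)(D'f)$ and the fact that the left-hand side is translation invariant you can only conclude that $\bigl(\sum_i\partial_{z_i}\bigr)(D'f)$ lies in $\ker\bigl(\sum_i\partial_{z_i}\bigr)=\mc O^{\star T}_{k+1}$, not that it vanishes. The correct (and easy) argument uses the direct-sum decompositions $\mc O^\star_{k+1}=\bigoplus_m z_0^m\,\mc O^{\star T}_{k+1}$ and $\mc D_{k+1}=\sum_m z_0^m\,\mc D^T_{k+1}$, and extracts the degree-zero component in $z_0$.
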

\begin{proof}[Proof (P.\ Etingof)]
Consider the Bernstein--Sato polynomial $b(s)$ associated to 
$f^{-1}$,
which admits a differential operator $L(s)$ (regular is $z_i$ and $s$) 
such that $L(s)f^{-s-1}=b(s)f^{-s}$.
It is known that the roots of the function $b(s)$ 
are negative, and by \cite[Cor.1.3]{BW15} we have that $b(s)=\pm b(-s-2)$. 
So $-2,-3,\dots$ are not roots of $b(s)$.
Hence, 
$f^2=\frac1{b(-2)}L(-2)f,\,f^3=\frac1{b(-3)}L(-3)f^2,\dots$,
all lie in the $\mc D_{k+1}$-submodule of $\mc O^\star_{k+1}$ generated by $f$.
The claim follows.
\end{proof}
\subsection{The operad $P^\ch$}\label{sec:wch}

Let $V=V_{\bar 0}\oplus V_{\bar 1}$ be a vector superspace endowed
with an even endomorphism $\partial$.
For $k\geq0$, the space $V^{\otimes(k+1)}\otimes\mc O_{k+1}^{\star}$ carries the structure 
of a right $\mc D_{k+1}$-module defined by
letting $z_i$ act by multiplication on $\mc O_{k+1}^{\star}$,
and letting $\partial_{z_i}$ act by
\begin{equation}\label{20160629:eq1}
\begin{split}
\big(v_0\otimes&\dots\otimes v_k\otimes f(z_0,\dots,z_k)\big)\cdot \partial_{z_i}
\\
&= v_0\otimes\cdots\otimes(\partial v_i)\otimes\cdots\otimes v_k\otimes f(z_0,\dots,z_k)
\\
&-v_0\otimes\dots\otimes v_k\otimes \frac{\partial f}{\partial z_i}(z_0,\dots,z_k).
\end{split}
\end{equation}
By restriction, $V^{\otimes(k+1)}\otimes\mc O_{k+1}^{\star T}$ is a right $\mc D_{k+1}^T$-module,
where $z_{ij}\in \mc D_{k+1}^T$ act by multiplication on $\mc O_{k+1}^{\star T}$.
For $k=-1$, $V^{\otimes 0}\otimes\mc O_{0}^{\star} \cong \mb F$ is also a module over $\mc D_0=\mc D_0^T=\mb F$.

Consider the space 
\begin{equation}\label{20160722:eq1}
V[\lambda_0,\dots,\lambda_k]\big/\big\langle\partial+\lambda_0+\dots+\lambda_k\big\rangle
\,.
\end{equation}
Here and further, $\langle\Phi\rangle$ denotes the image of an endomorphim $\Phi$.
The space \eqref{20160722:eq1} carries the structure of a right $\mc D_{k+1}$-module defined 
by letting $z_i$ act by
\begin{equation}\label{20160629:eq2}
A(\lambda_0,\dots,\lambda_k)\cdot z_i
=
-\frac{\partial}{\partial\lambda_i}A(\lambda_0,\dots,\lambda_k)
\,,
\end{equation}
and $\partial_{z_i}$ act by
\begin{equation}\label{20160629:eq3}
A(\lambda_0,\dots,\lambda_k)\cdot\partial_{z_i}
=
-\lambda_i A(\lambda_0,\dots,\lambda_k)
\,.
\end{equation}
Indeed, it is straightforward to check that both the actions \eqref{20160629:eq1}
and \eqref{20160629:eq2}-\eqref{20160629:eq3}
satisfy the defining relations 
$A\cdot\partial_{z_i}\cdot z_j=A\cdot z_j\cdot\partial_{z_i}+\delta_{ij}A$,
and that the actions \eqref{20160629:eq2}-\eqref{20160629:eq3}
commute with the operator $\partial+\lambda_0+\dots+\lambda_k$.
Note that formula \eqref{20160629:eq2} can be generalized to an arbitrary polynomial
$P(z_0,\dots,z_k)$ as follows:
\begin{equation}\label{20160629:eq2-b}
A(\lambda_0,\dots,\lambda_k)\cdot P(z_0,\dots z_k)
=
P\Big(
-\frac{\partial}{\partial\lambda_0},\dots,-\frac{\partial}{\partial\lambda_k}\Big)
A(\lambda_0,\dots,\lambda_k)
\,.
\end{equation}

A right $\mc D_{k+1}^T$-module homomorphism
from $V^{\otimes(k+1)}\otimes\mc O_{k+1}^{\star T}$
to $V[\lambda_0,\dots,\lambda_k]\big/\big\langle\partial+\lambda_0+\dots+\lambda_k\big\rangle$
is then a linear map
\begin{equation}\label{20160629:eq2-c}
\begin{split}
X\colon
V^{\otimes(k+1)}\otimes\mc O_{k+1}^{\star T}
&\to
V[\lambda_0,\dots,\lambda_k]\big/\big\langle\partial+\lambda_0+\dots+\lambda_k\big\rangle
\,,\\
\vphantom{\Big(}
v_0\otimes\dots\otimes v_k\otimes f(z_0,\dots,z_k)
&\mapsto
X_{\lambda_0,\dots,\lambda_k}(v_0,\dots,v_k;f)\,,
\end{split}
\end{equation}
satisfying the following two \emph{sesquilinearity} conditions:
\begin{equation}\label{20160629:eq4}
\begin{array}{l}
\displaystyle{
\vphantom{\Big(}
X_{\lambda_0,\dots,\lambda_k}(v_0,\dots,(\partial+\la_i) v_i,\dots,v_k;f)
=X_{\lambda_0,\dots,\lambda_k}\Bigl(v_0,\dots,v_k;\frac{\partial f}{\partial z_i}\Bigr)
%} \\
%\displaystyle{
%\vphantom{\Big(}
%\,\,\,\,\,\,\,\,\,\,\,\,\,\,\,\,\,\,\,\,\,\,\,\,\,\,\,\,\,\,\,\,\,\,\,\,\,\,\,\,\,\,\,\,\,\,\,\,\,\,\,\,\,\,\,\,\,\,\,\,\,\,\,\,\,\,\,\,\,\,\,\,\,\,\,\,\,\,\,\,\,
%-\lambda_iX_{\lambda_0,\dots,\lambda_k}(v_0,\dots,v_k;f)
\,,} \\
\displaystyle{
\vphantom{\Big(}
X_{\lambda_0,\dots,\lambda_k}(v_0,\dots,v_k;z_{ij}f)
=\Bigl(\frac{\partial}{\partial\lambda_j}-\frac{\partial}{\partial\lambda_i}\Bigr)
X_{\lambda_0,\dots,\lambda_k}(v_0,\dots,v_k;f)
\,.}
\end{array}
\end{equation}

\begin{remark}\label{ref:trcov}
Consider the usual action of $\dd$ on $V^{\otimes(k+1)}$ as
$\sum_{i=0}^k \dd_i$,
where $\dd_i$ %= 1\otimes\dots\otimes\dd\otimes\dots\otimes1$ 
denotes the action of $\dd$ on the $i$-th factor.
Then since $\sum_{i=0}^k \frac{\partial f}{\partial z_i} = 0$ for every $f\in\mc O_{k+1}^{\star T}$, the first sesquilinearity implies
\begin{equation}\label{trcov1}
X(\dd v \otimes f) = -\sum_{i=0}^k \la_i X(v \otimes f) = \dd (X(v \otimes f)), \qquad v\in V^{\otimes(k+1)}.
\end{equation}
\end{remark}

%\begin{definition}\label{def:pch}
We let $P^\ch(k+1)$ be the space of all 
right $\mc D_{k+1}^T$-homomorphisms \eqref{20160629:eq2-c},
i.e. all linear maps \eqref{20160629:eq2-c} 
satisfying the sesquilinearity conditions \eqref{20160629:eq4}.
Sometimes, in order to specify the variables of the function $f\in\mc O_{k+1}^{\star T}$,
we will denote the image of the map $X$ as
\begin{equation}\label{20160709:eq1}
X^{z_0,\dots,z_k}_{\lambda_0,\dots,\lambda_k}(v_0,\dots,v_k;f(z_0,\dots,z_k))
\,.
\end{equation}
%\end{definition}
Note that, by definition,
\begin{equation}\label{pch0}
P^\ch(0) = \Hom_{\mb F} (\mb F, V/\langle\dd\rangle) \cong V/\dd V
\end{equation}
and
\begin{equation}\label{pch1}
P^\ch(1) = \Hom_{\mb F[\dd]} (V, V[\la_0]/\langle\dd+\la_0\rangle) \cong \End_{\mb F[\dd]} (V).
\end{equation}
We will denote by $1\in P^\ch(1)$ the identity endomorphism, so that
\begin{equation}\label{pch2}
1_{\la_0}(v_0;f) = f v_0 + \langle \dd+\la_0 \rangle, \qquad v_0\in V, \;\; f\in \mc O_{1}^{\star T} = \mb F.
\end{equation}

The symmetric group $S_{k+1}$ has a right action on $P^\ch(k+1)$ by permuting simultaneously the inputs 
$v_0,\dots,v_k$ of $X$ and the corresponding variables $z_0,\dots,z_k$ in $f$. 
%We say that $X\in P^\ch(k+1)$ is \emph{symmetric} if it satisfies the following property:
Explicitly, for $X\in P^\ch(k+1)$ and $\sigma \in S_{k+1}$, we have
\begin{equation}\label{20160629:eq5}
\begin{split}
(X^\sigma &)^{z_0,\dots,z_k}_{\lambda_0,\dots,\lambda_k}(v_0,\dots,v_k;f(z_0,\dots,z_k))
\\
&=
\epsilon_v(\sigma)
X^{z_0,\dots,z_k}_{\lambda_{i_0},\dots,\lambda_{i_k}}(v_{i_0},\dots,v_{i_k};f(z_{i_0},\dots,z_{i_k})),
\end{split}
\end{equation}
where $i_s=\sigma^{-1}(s)$ and $\epsilon_v(\sigma)$
is given by \eqref{eq:operad14}.

To define the structure of an operad, we need to specify how the maps from $P^\ch$ are composed. 
Let $X\in P^\ch(k+1)$, $Y\in P^\ch(m+1)$, and $h\in\mc O_{k+m+1}^{\star T}$.
We can write $h$ in the form
\begin{equation}\label{circ2}
h(z_0,\dots,z_{k+m}) = f(z_0,\dots,z_{k}) g(z_0,\dots,z_{k+m}), 
%\qquad f\in\mc O_{k+1}^{\star T}, \;\; g\in\mc O_{k+m+1}^{\star T},
\end{equation}
where $f\in\mc O_{k+1}^{\star T}$, $g\in\mc O_{k+m+1}^{\star T}$, and $g$ has no poles at $z_i=z_j$ for $0\le i<j \le k$. Then we define
\begin{equation}\label{circ1}
\begin{split}
(Y &\circ_1 X)_{\la_0,\la_1,\dots,\la_{k+m}}^{z_0,z_1,\dots,z_{k+m}} \bigl(v_0,v_1,\dots,v_{k+m}; h(z_0,\dots,z_{k+m})\bigr) \\
&= Y_{\la_0',\la_{k+1},\dots,\la_{k+m}}^{z_0,z_{k+1},\dots,z_{k+m}} 
\bigl( X_{\la_0-\dd_{z_0},\dots,\la_k-\dd_{z_k}}^{z_0,\dots,z_k} 
( v_0,\dots,v_k; f(z_0,\dots,z_{k}) )_{\to} ,\\
& \qquad v_{k+1},\dots,v_{k+m}; g(z_0,\dots,z_{k+m}) |_{z_1=\cdots=z_k=z_0} \bigr),
\end{split}
\end{equation}
where $\la_0' = \la_0+\la_1+\cdots+\la_k$ and the arrow $\to$ means that we apply the derivatives
$\dd_{z_i}$ to $g$ before setting $z_i=z_0$ $(1\le i \le k)$.
\begin{lemma}\label{lem:pch}
The product \eqref{circ1} is a well defined map from\/ $P^\ch(k+1) \times P^\ch(m+1)$ to\/ $P^\ch(k+m+1)$.
\end{lemma}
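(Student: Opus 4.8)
The plan is to verify, in order, that formula \eqref{circ1} (i) does not depend on the choice of factorization \eqref{circ2} of $h$, (ii) is well defined on $\mc O_{k+m+1}^{\star T}$ rather than just on monomials, (iii) lands in $P^\ch(k+m+1)$, i.e. the resulting map is a right $\mc D_{k+m+1}^T$-homomorphism. The first thing I would do is record the internal consistency of the right-hand side: $X_{\la_0-\dd_{z_0},\dots,\la_k-\dd_{z_k}}(v_0,\dots,v_k;f)_{\to}$ is a priori an element of $V[\la_0,\dots,\la_k]/\langle\dd+\la_0+\dots+\la_k\rangle$ with differential-operator coefficients $\dd_{z_0},\dots,\dd_{z_k}$ acting to the right on $g(z_0,\dots,z_{k+m})|_{z_1=\dots=z_k=z_0}$; after the substitution $z_1=\dots=z_k=z_0$ the variables $z_0,z_{k+1},\dots,z_{k+m}$ remain, so feeding this into $Y_{\la_0',\la_{k+1},\dots,\la_{k+m}}^{z_0,z_{k+1},\dots,z_{k+m}}$ makes sense, the first slot of $Y$ carrying $\la_0'=\la_0+\dots+\la_k$, which is consistent with the $V[\la]$-degree of the $X$-output living in the first factor. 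One should also note that $g|_{z_1=\dots=z_k=z_0}$ lies in $\mc O_{m+1}^{\star T}$ in the variables $z_0,z_{k+1},\dots,z_{k+m}$, because $g$ has no poles along $z_i=z_j$ for $0\le i<j\le k$, so the restriction is regular there and retains only the allowed poles; hence $Y$ may legitimately be evaluated on it.

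Next I would treat independence of the factorization. Any two factorizations $h=fg=f'g'$ with $f,f'\in\mc O_{k+1}^{\star T}$ and $g,g'$ regular along $z_{ij}$ for $0\le i<j\le k$ differ by $f'=fu$, $g'=u^{-1}g$ for some invertible $u\in\mc O_{k+1}^{\star T}$; since $u$ and $u^{-1}$ are Laurent monomials in $z_{ij}$ ($0\le i<j\le k$) up to scalars — actually $u\in\mb F^\times\cdot\{$monomials in the $z_{ij}\}$, because invertible translation-invariant rational functions regular on the complement of the diagonals are exactly such monomials — it suffices to check the case $u=z_{ij}$ for a single pair $i<j\le k$. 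Here the second sesquilinearity in \eqref{20160629:eq4} for $X$ converts multiplication of $f$ by $z_{ij}$ into $\bigl(\tfrac{\dd}{\dd\la_j}-\tfrac{\dd}{\dd\la_i}\bigr)$ applied to $X(\dots;f)$ — but with $\la_i$ shifted to $\la_i-\dd_{z_i}$, so this equals $\bigl(\tfrac{\dd}{\dd\la_j}-\tfrac{\dd}{\dd\la_i}\bigr)$ composed with the shift, plus the $\dd_{z_i},\dd_{z_j}$ acting on $g'=z_{ij}^{-1}g$; a short computation using $\dd_{z_i}(z_{ij}^{-1}g)=-z_{ij}^{-2}g+z_{ij}^{-1}\dd_{z_i}g$ and the chain rule for the substitution $z_1=\dots=z_k=z_0$ shows the two expressions agree after feeding into $Y$. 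Well-definedness on all of $\mc O_{k+m+1}^{\star T}$ (not just monomials) is then automatic by linearity once the factorization-independence is in place, since one can always pick a monomial $f$ in the $z_{ij}$ ($i<j\le k$) clearing exactly the poles of $h$ on those diagonals.

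Finally, for membership in $P^\ch(k+m+1)$ I would verify the two sesquilinearity conditions \eqref{20160629:eq4} for $Y\circ_1 X$. The condition for indices $i\ge k+1$ follows directly from the corresponding sesquilinearity of $Y$ in its slots $1,\dots,m$ (those variables $z_{k+1},\dots,z_{k+m}$ and inputs $v_{k+1},\dots,v_{k+m}$ pass through untouched). The condition for index $0\le i\le k$ is where the structure of \eqref{circ1} is used: multiplying $h$ by $z_{ij}$ or replacing $v_i$ by $(\dd+\la_i)v_i$ is handled by splitting $z_{ij}$ (resp.\ the derivative) between the $f$-part and the $g$-part via the factorization, applying the sesquilinearity of $X$ on the $f$-part and the module structure \eqref{20160629:eq1}, \eqref{20160629:eq2}--\eqref{20160629:eq3} on the $g$-part, and then using the sesquilinearity of $Y$ in its $0$-th slot together with $\la_0'=\la_0+\dots+\la_k$; care is needed with the arrow convention (the $\dd_{z_i}$'s in the shifted arguments of $X$ must be applied to $g$ \emph{before} the substitution $z_1=\dots=z_k=z_0$), and with the relation $\sum_{i=0}^k\tfrac{\dd f}{\dd z_i}=0$ on $\mc O_{k+1}^{\star T}$, which is exactly what makes the $\la_0'$-slot behave correctly, cf.\ Remark \ref{ref:trcov}. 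I expect the main obstacle to be bookkeeping: tracking the interplay between the substitution $z_1=\dots=z_k=z_0$, the differential operators acting ``to the right'' on $g$ in the prescribed order, and the various $\la$-shifts, so that the factorization-independence computation and the index-$i\le k$ sesquilinearity check both close cleanly; there is no conceptual difficulty beyond the two sesquilinearity identities \eqref{20160629:eq4} and the right $\mc D$-module axioms, but the computations must be organized carefully to avoid sign and substitution-order errors.
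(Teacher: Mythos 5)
Your overall strategy coincides with the paper's: reduce independence of the factorization to the single move $R(z_{ij}f,g)=R(f,z_{ij}g)$ for $0\le i<j\le k$, prove it via the commutation identity between multiplication by $z_{ij}$, the shifts $\la_l\mapsto\la_l-\dd_{z_l}$, and the substitution $z_1=\cdots=z_k=z_0$ (this is exactly \eqref{pch11}--\eqref{pch12} in the paper), and then check the two sesquilinearity conditions of \eqref{20160629:eq4} by splitting $z_{ij}$, respectively $\dd/\dd z_i$, between the $f$-part and the $g$-part and invoking the sesquilinearities of $X$ and of $Y$ (the latter in its $0$-th slot, using $\la_0'=\la_0+\cdots+\la_k$ and, for the derivative condition, $\sum_{i=0}^k\dd f/\dd z_i=0$). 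All of that is sound and matches the paper's proof.

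One step of your reduction is wrong as stated, though it is repairable. You claim that two admissible factorizations $h=fg=f'g'$ differ by an \emph{invertible} $u\in\mc O_{k+1}^{\star T}$, hence by a Laurent monomial in the $z_{ij}$. This is false: take $h=(z_{01}+z_{02})g_1$ with $g_1$ regular on the small diagonals; then $f=z_{01}+z_{02}$, $g=g_1$ and $f'=1$, $g'=h$ are both admissible, and $u=z_{01}+z_{02}$ is not a unit of $\mc O_{k+1}^{\star T}$. The correct reduction uses that $R(f,g)$ is bilinear, so the relation $R(z_{ij}f,g)=R(f,z_{ij}g)$ extends to $R(pf,g)=R(f,pg)$ for any polynomial $p$ in the $z_{ij}$ with $0\le i<j\le k$; equivalently, $R$ descends to the tensor product of the two arguments over $\mb F[z_{ij}]_{0\le i<j\le k}$, and any two factorizations of $h$ become equal there after clearing denominators by a common monomial. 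With that adjustment (and the bookkeeping you already anticipate), your argument closes.
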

\begin{proof}
First, we will check that $Y \circ_1 X$ is independent of the choice of factorization \eqref{circ2}. Let us denote the right-hand side of  \eqref{circ1} 
by $R(f,g)$, and consider $R(f,z_{ij}g)$ where $0\le i<j\le k$. Notice that for any polynomial $P$ and $0\le i\le k$, we have
\begin{equation}\label{pch11}
\begin{split}
P(\la_0&-\dd_{z_0},\dots,\la_k-\dd_{z_k}) (z_i g) |_{z_1=\cdots=z_k=z_0} \\
&= (z_0- \dd_{\la_i}) P(\la_0-\dd_{z_0},\dots,\la_k-\dd_{z_k}) g |_{z_1=\cdots=z_k=z_0} .
\end{split}
\end{equation}
In particular,
\begin{equation}\label{pch12}
\begin{split}
P(\la_0&-\dd_{z_0},\dots,\la_k-\dd_{z_k}) (z_{ij}g) |_{z_1=\cdots=z_k=z_0} \\
&= (\dd_{\la_j} - \dd_{\la_i}) P(\la_0-\dd_{z_0},\dots,\la_k-\dd_{z_k}) g |_{z_1=\cdots=z_k=z_0}.
\end{split}
\end{equation}
Hence, the sesquilinearity of $X$ implies that $R(z_{ij}f,g)=R(f,z_{ij}g)$.
This proves that $Y \circ_1 X$ is well defined.

We will show that $Y \circ_1 X$ satisfies the second sesquilinearity in \eqref{20160629:eq4}.
First, if again $0\le i<j\le k$, then $(\dd_{\la_j} - \dd_{\la_i}) \la_0' = 0$, and \eqref{pch12} 
implies $R(f,z_{ij}g) = (\dd_{\la_j} - \dd_{\la_i}) R(f,g)$ as desired.
On the other hand, if $k+1\le j\le k+m$, then using \eqref{pch11} and the sesquilinearity of $Y$, we obtain:
\begin{align*}
R&(f,z_{0j}g)
= Y_{\la_0',\la_{k+1},\dots,\la_{k+m}}^{z_0,z_{k+1},\dots,z_{k+m}} 
\bigl( (z_{0j}- \dd_{\la_0}) X_{\la_0-\dd_{z_0},\dots,\la_k-\dd_{z_k}}^{z_0,\dots,z_k} 
( v_0,\dots,v_k; f )_{\to} ,\\
& \qquad v_{k+1},\dots,v_{k+m}; g |_{z_1=\cdots=z_k=z_0} \bigr) \\
&= (\dd_{\la_j} - \dd_{\la_0'}) Y_{\la_0',\la_{k+1},\dots,\la_{k+m}}^{z_0,z_{k+1},\dots,z_{k+m}} 
\bigl( X_{\la_0-\dd_{z_0},\dots,\la_k-\dd_{z_k}}^{z_0,\dots,z_k} 
( v_0,\dots,v_k; f )_{\to} ,\\
& \qquad v_{k+1},\dots,v_{k+m}; g |_{z_1=\cdots=z_k=z_0} \bigr) \\
&- Y_{\la_0',\la_{k+1},\dots,\la_{k+m}}^{z_0,z_{k+1},\dots,z_{k+m}} 
\bigl( \dd_{\la_0} X_{\la_0-\dd_{z_0},\dots,\la_k-\dd_{z_k}}^{z_0,\dots,z_k} 
( v_0,\dots,v_k; f )_{\to} ,\\
& \qquad v_{k+1},\dots,v_{k+m}; g |_{z_1=\cdots=z_k=z_0} \bigr) \\
&= (\dd_{\la_j} - \dd_{\la_0}) R(f,g).
\end{align*}
All other cases for $i,j$ can be obtained from the above, by using the identity $z_{lm}=z_{lp}+z_{pm}$.
This proves that $Y\circ_1 X$ satisfies the second sesquilinearity in \eqref{20160629:eq4}.

To prove the first sesquilinearity, consider $\dd h/\dd z_i$ instead of $h$. Then from \eqref{circ2}, we get
\begin{align*}
\frac{\dd h}{\dd z_i}(z_0,\dots,z_{k+m}) &= \frac{\dd f}{\dd z_i}(z_0,\dots,z_{k}) g(z_0,\dots,z_{k+m}) \\
&+ f(z_0,\dots,z_{k}) \frac{\dd g}{\dd z_i}(z_0,\dots,z_{k+m}) \,.
\end{align*}
In the right-hand side, each of the two summands is factored as in \eqref{circ2}. Thus, 
$$
(Y \circ_1 X)_{\la_0,\la_1,\dots,\la_{k+m}}^{z_0,z_1,\dots,z_{k+m}} \Bigl(v_0,v_1,\dots,v_{k+m}; \frac{\dd h}{\dd z_i}\Bigr) 
=R\Bigl(\frac{\dd f}{\dd z_i},g\Bigr) + R\Bigl(f,\frac{\dd g}{\dd z_i}\Bigr).
$$
We consider two cases: $0\le i\le k$ and $k+1\le i\le k+m$. In the first case, by the sesquilinearity of $X$, \begin{align*}
R\Bigl(\frac{\dd f}{\dd z_i},g\Bigr) &= Y_{\la_0',\la_{k+1},\dots,\la_{k+m}}^{z_0,z_{k+1},\dots,z_{k+m}} 
\bigl( X_{\la_0-\dd_{z_0},\dots,\la_k-\dd_{z_k}}^{z_0,\dots,z_k} ( v_0,\dots, \dd v_i, \dots, v_k;f )_\to ,  \\
& \qquad v_{k+1},\dots,v_{k+m}; g |_{z_1=\cdots=z_k=z_0} \bigr) \\
&+Y_{\la_0',\la_{k+1},\dots,\la_{k+m}}^{z_0,z_{k+1},\dots,z_{k+m}} 
\bigl( X_{\la_0-\dd_{z_0},\dots,\la_k-\dd_{z_k}}^{z_0,\dots,z_k} ( v_0,\dots,v_k;f )_\to , \\
&\qquad v_{k+1},\dots,v_{k+m}; \bigl((\la_i-\dd_{z_i}) g\bigr) \big|_{z_1=\cdots=z_k=z_0} \bigr).
\end{align*}
Combined with $R(f,\dd g/\dd z_i)$, this gives exactly the first sesquilinearity \eqref{20160629:eq4} for $Y\circ_1 X$.
In the case $k+1\le i\le k+m$, we have $\dd f/\dd z_i=0$ and $R(f,\dd g/\dd z_i)$ gives the sesquilinearity of $Y\circ_1 X$
after applying the sesquilinearity of $Y$.
\end{proof}

We will extend the definition of the $\circ_1$ product as follows.
Fix $n\geq1$ and $m_1,\dots,m_n\geq0$,
and again use the notation \eqref{20170821:eq2}.
Consider $Y\in P^\ch(n)$, $X_i\in P^\ch(m_i)$, and $v_k\in V$ for $1\le i\le n$, $1\le k\le M=M_n$.
Let 
\begin{equation}\label{circ3}
w_i=v_{M_{i-1}+1}\otimes\dots\otimes v_{M_i}\,\in V^{\otimes m_i}, \qquad i=1,\dots,n,
\end{equation}
where $M_0=0$.
For $h\in\mc O_{M}^{\star T}$, we can write
\begin{equation}\label{circ4}
h(z_1,\dots,z_M) = g(z_1,\dots,z_M) \prod_{i=1}^n f_i(z_{M_{i-1}+1},\dots, z_{M_i}), 
\end{equation}
so that $f_i \in \mc O_{m_i}^{\star T}$ and $g\in\mc O_{M}^{\star T}$ has no poles at
$z_k=z_l$ for $M_{i-1}+1 \le k<l \le M_i$ ($1\le i\le n$).
Then the \emph{composition} $Y(X_1\otimes\dots\otimes X_n)\in P^\ch(M)$ is defined as follows:
\begin{equation}\label{circ5}
\begin{split}
\bigl(Y&(X_1\otimes\dots\otimes X_n)\bigr)_{\la_1,\dots,\la_{M}}^{z_1,\dots,z_{M}} \bigl(v_1\otimes\dots\otimes v_{M}; h(z_1,\dots,z_{M})\bigr) \\
&= \pm Y_{\La_1,\dots,\La_n}^{z_{M_1},\dots,z_{M_n}} 
\Bigl( \bigotimes_{i=1}^n \,
(X_i)_{\la_{M_{i-1}+1}-\dd_{z_{M_{i-1}+1}},\dots,\la_{M_i}-\dd_{z_{M_i}}}^{z_{M_{i-1}+1},\dots,z_{M_i}} 
( w_i; f_i )_{\to}; \\
& \qquad g(z_1,\dots,z_M) \big|_{z_k=z_{M_i} \, (M_{i-1}+1 \le k \le M_i, \,  1\le i\le n)} \Bigr),
\end{split}
\end{equation}
where $\La_i$ are given by \eqref{20170821:eq2} and the sign $\pm$ is 
\begin{equation}\label{circ6}
\pm=(-1)^{\sum_{i<j} p(X_j)(p(v_{M_{i-1}+1})+\dots+p(v_{M_i}))}
\end{equation}
(cf.\ \eqref{20170821:eq5b}).
As in \eqref{circ1}, we first take the partial derivatives of $g$ indicated by the arrows $\to$ and then we make the substitutions $z_k=z_{M_i}$.

It is clear that the $\circ_1$-product is a special case of the composition \eqref{circ5}, namely
$Y \circ_1 X = Y(X\otimes 1\otimes\cdots\otimes 1)$, where $1\in P^\ch(1)$ is the identity operator \eqref{pch2}.

\begin{proposition}\label{prop:pch}
The collection of vector superspaces\/ $P^\ch(n)$ $(n\ge 0)$, with the action of\/ $S_n$ described above, the compositions \eqref{circ5},
and unit\/ $1\in P^\ch(1)$, is an operad.
\end{proposition}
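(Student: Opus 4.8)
The plan is to verify directly the four defining properties of an operad from Section~\ref{sec:3.1}: that the compositions \eqref{circ5} are well defined, and that they satisfy the unit axiom \eqref{eq:operad3}, the equivariance axiom \eqref{eq:operad4}, and the associativity axiom \eqref{eq:operad2}. Well-definedness of \eqref{circ5} --- independence of the choice of factorization \eqref{circ4} and the fact that the output again satisfies the two sesquilinearity conditions \eqref{20160629:eq4} --- follows by exactly the argument of Lemma~\ref{lem:pch}, applied blockwise to each factor $f_i$ and to $g$; no new phenomenon arises, so I would either state this as an immediate extension of Lemma~\ref{lem:pch} or reproduce the three short computations (the identity $R(z_{kl}f_i,g)=R(f_i,z_{kl}g)$ within one block, the second sesquilinearity via \eqref{pch11}--\eqref{pch12}, and the first sesquilinearity via the Leibniz rule for $\dd h/\dd z_i$). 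The unit axiom is transparent from \eqref{circ5}: inserting $1\in P^\ch(1)$ in slot $i$ means $m_i=1$, so the corresponding factor $f_i$ lies in $\mc O^{\star T}_1=\mb F$, the shift $\la_{M_i}-\dd_{z_{M_i}}$ acts on a scalar and disappears, and the substitution block $\{z_k=z_{M_i}: M_{i-1}+1\le k\le M_i\}$ is empty; and $1(f)=f$ is immediate from \eqref{pch2}.

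For the equivariance axiom \eqref{eq:operad4} I would follow the same line as the $\mc Chom$ computation in Section~\ref{sec:LCA.1}. Applying both sides to a monomial $v_1\otimes\cdots\otimes v_M\otimes h$ and using the definition \eqref{20160629:eq5} of the $S_n$-action (which now also permutes the variables $z_i$ and acts on $h$), the composed permutation $\sigma(\tau_1,\dots,\tau_n)$ appears exactly as in \eqref{eq:operad19}, via Lemma~\ref{20170821:lem} applied to the tensor product of the maps $X_i$; the only new ingredient is that the factorization \eqref{circ4} of $h$ transports along the permutation of the $z$'s into the factorization adapted to the permuted blocks, so the rational-function arguments match up as well. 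The Koszul sign \eqref{circ6} combines with $\epsilon_v(\sigma)$ and the $\epsilon$'s coming from the $\tau_i$ to give precisely the sign prescribed by $\sigma(\tau_1,\dots,\tau_n)$, just as in the proof of equivariance for $\mc Chom$ and in Example~\ref{ex:operadH}.

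The associativity axiom \eqref{eq:operad2} is the main obstacle. Writing $f\in P^\ch(n)$, $g_i\in P^\ch(m_i)$, $h_j\in P^\ch(\ell_j)$, one expands both sides of \eqref{eq:operad2} using \eqref{circ5} and must check that the two triple compositions coincide. Three compatibilities have to be established. First, the factorizations of the rational argument: on the right-hand side one first splits off, for each $i$, the factor carrying poles only among the $z_k$ in the $i$-th $m$-block and then within it the $\ell$-blocks, whereas on the left-hand side one splits off the $\ell$-blocks directly and then the groups corresponding to the $m$-blocks; both procedures separate the poles according to the two levels of the composition tree and agree up to a factor having no poles at the relevant diagonals, which the well-definedness just proved allows us to ignore. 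Second, the staged substitutions: the rule is "apply the indicated $\dd_z$-derivatives before identifying variables", and one must check that identifying $z_k=z_{M_i}$ and subsequently $z_{M_i}=z_{L_j}$ (with the appropriate derivatives interposed) has the same net effect as the single-stage identification on the other side; this is a bookkeeping statement about commuting a derivative past a substitution not involving its variable, together with the observation that $\dd_{z_k}+\dd_{z_{M_i}}$ acting after the identification equals $\dd_{z_k}$ acting before it. Third, the spectral shifts $\la_k\mapsto\la_k-\dd_{z_k}$ must compose: the shift by $\la_k-\dd_{z_k}$ inside $X_i$ followed by the shift by $\La_i-\dd_{z_{M_i}}$ in the outer map must agree with the shift on the other side, which follows from $\La_i=\sum_k\la_k$ over the $i$-th block and additivity of the shifts. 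The sign factors are matched exactly as for $\mc Chom$.

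I expect the substitution/derivative interchange in the second compatibility to be the most error-prone point, and would isolate it beforehand as a small two-variable identity for polynomials (of the type \eqref{pch11}) and then assemble the full computation around it. A cleaner organization that I would also consider is to define $\circ_i$ as the composition \eqref{circ5} with identities in all slots but the $i$-th, to prove first that \eqref{circ5} is recovered by iterating the $\circ_i$-products as in \eqref{eq:operad8a}, and then to verify the $\circ_i$-identities \eqref{eq:operad25}; by the symmetry noted after \eqref{eq:operad25}, only the sequential case $f\circ_i(g\circ_{j-i+1}h)=(f\circ_i g)\circ_j h$ and the parallel case $j<i$ are genuinely different, and both are instances of the bookkeeping above, amounting (modulo the signs handled as in \eqref{eq:operad25}) to the associativity of the operation "multiply the output polynomial of the innermost map by the rational function, restrict, and feed into the next map". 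Either route reduces associativity to the same elementary interchange of a substitution with a polynomial differential operator, so I would present whichever keeps the notation lightest.
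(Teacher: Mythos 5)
Your proposal is correct and follows essentially the same route as the paper's proof: well-definedness by extending Lemma \ref{lem:pch} blockwise, the unit axiom by inspection, equivariance exactly as for the $\mc Chom$ operad, and associativity reduced to the interchange of differentiation with the diagonal substitution, which is precisely the chain-rule identity \eqref{circ7} that the paper isolates as the only new ingredient. The alternative organization via $\circ_i$-products that you mention at the end is a legitimate variant but is not needed; the paper's (and your main) argument already suffices.
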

\begin{proof}
First, it is straightforward to generalize Lemma \ref{lem:pch} for the composition \eqref{circ5}: the left-hand side of \eqref{circ5}
is independent of the choice of factorization \eqref{circ4}, and it satisfies the sesquilinearity \eqref{20160629:eq4}. 
Hence, \eqref{circ5} are well-defined compositions in $P^\ch$.

The properties \eqref{eq:operad3} of the unit $1\in P^\ch(1)$ are obvious. The equivariance of the compositions \eqref{circ5}
under the action of the symmetric group is also easy to see. Its proof is identical to the proof for the $\mc Chom$ operad
from Section \ref{sec:LCA.1}.

Finally, the associativity of the compositions is also similar to the case of $\mc Chom$. The only additional ingredient is that we have
to take derivatives of functions and make substitutions in them. We use that, by the chain rule from calculus, 
\begin{equation}\label{circ7}
\dd_{z_0} \bigl( f(z_0,\dots,z_k) \big|_{z_1=\cdots=z_k=z_0} \bigr) = \sum_{i=0}^k \dd_{z_i} f(z_0,\dots,z_k) \big|_{z_1=\cdots=z_k=z_0}.
\end{equation}
Then in both sides of the associativity axiom
\begin{equation*}
(Y(X_1\otimes\dots\otimes X_n))(Z_1\otimes\dots\otimes Z_{M_n}) =
Y((X_1\otimes\dots\otimes X_n)(Z_1\otimes\dots\otimes Z_{M_n}))
\end{equation*}
the derivatives get spread in the same way over the different variables.
\end{proof}

%%%
\subsection{Vertex (super)algebra structures}\label{sec:4.4-ch}

As before, let $V$ be an $\mb F[\partial]$-module with parity $p$,
and $\Pi V$ be the same $\mb F[\partial]$-module with reversed parity $\bar p=1-p$.
Consider the $\mb Z$-graded Lie superalgebra 
$$W^{\ch}(\Pi V)=\bigoplus_{k=-1}^\infty W^{\ch}_k(\Pi V) = W(P^\ch(\Pi V)),$$
defined in Section \ref{sec:3.2} for an arbitrary operad.
Note that $W_{-1}$ and $W_0$ are given by \eqref{pch0} and \eqref{pch1} with $V$ replaced by $\Pi V$.
Hence, they are the same as for the $\mc Chom$ operad.

By definition, an odd element $X\in W^{\ch}_1(\Pi V)$ is
an odd $\mc D_2^T$-module homomorphism:
\begin{equation}\label{20160719:eq1}
X_{\lambda_0,\lambda_1}\,:\,\,
\Pi V\otimes\Pi V\otimes\mc O^{\star T}_2
\,\to\,\Pi V[\lambda_0,\lambda_1]/\langle\partial+\lambda_0+\lambda_1\rangle
\,,
\end{equation}
satisfying the sesquilinearity axioms \eqref{20160629:eq4}
and the symmetry condition \eqref{20160629:eq5}.
%and the translation equivariance \eqref{20160629:eq5b}.
%
Since $\mc O_2^{\star T}=\mb F[z_{01}^{\pm1}]$,
the $\mc D^T_2$-module homomorphism \eqref{20160719:eq1}
is uniquely determined, via the sesquilinearity axioms \eqref{20160629:eq4},
by its values on $(\Pi V)^{\otimes2}\otimes z_{01}^{-1}$.
We have
$$
\Pi V[\lambda_0,\lambda_1]\big/\big\langle \partial+\lambda_0+\lambda_1\big\rangle
\simeq
\Pi V[\lambda]
$$
by equating $\lambda_0=\lambda,\,\lambda_1=-\lambda-\partial$.
Hence, an odd $X\in W^{\ch}_1(\Pi V)$
corresponds bijectively to an even linear map 
$V\otimes V\,\to\, V[\lambda]$, which we shall denote as follows
\begin{equation}\label{20160719:eq2}
u\otimes v\,\mapsto\,
\int^{\lambda}\!d\sigma[u_\sigma v]
=\,\,:\!uv\!:+\int_0^\lambda d\sigma[u_\sigma v]
\,.
\end{equation}
Here and further, when passing from the ``$X$''-notation 
to the ``$\int^\lambda\!d\sigma\,[\cdot\,_\sigma\,\cdot]$''-notation,
we identify the vector spaces $\Pi V$ and $V$.
The correspondence between $X\in W^{\ch}(\Pi V)$ and the map \eqref{20160719:eq2}
is as follows:
%given $X\in W_1^{\ch}(\Pi V)$, 
the corresponding to $X$ integral of $\lambda$-bracket is
\begin{equation}\label{20160719:eq3}
\int^{\lambda}\!d\sigma[u_\sigma v]
=
(-1)^{p(u)}X^{z_0,z_1}_{\lambda,-\lambda-\partial}\bigl(u,v;z_{10}^{-1}\bigr)
\,.
\end{equation}
Conversely,
given the integral of $\lambda$-bracket \eqref{20160719:eq2},
we associate to it the map $X$ as in \eqref{20160719:eq1} by letting
\begin{equation}\label{20160719:eq3b}
X^{z_0,z_1}_{\lambda_0,\lambda_1}\bigl(v_0,v_1;z_{10}^{-1}\bigr)
=
(-1)^{1+\bar p(v_0)}\int^{\lambda_0}\!d\sigma[{v_0}_{\sigma} v_1]
\,,
\end{equation}
and extending it to $(\Pi V)^{\otimes2}\otimes\mc O^{\star T}_2$
via the sesquilinearity axioms \eqref{20160629:eq4}.
In particular, by sesquilinearity, we have
\begin{equation}\label{20160719:eq3c}
X^{z_0,z_1}_{\lambda_0,\lambda_1}\bigl(v_0,v_1;1\bigr)
=
(-1)^{p(v_0)}
[{v_0}_{\lambda_0} v_1]
\,.
\end{equation}

We can translate the sesquilinearity and symmetry conditions for $X$
to axioms on the corresponding integral of $\lambda$-bracket \eqref{20160719:eq2}.
All the sesquilinearity conditions \eqref{20160629:eq4} translate
to 
\begin{equation}\label{20160707:eq1b}
\int^\lambda\!d\sigma\,[\partial u_\sigma v]=-\int^\lambda\!d\sigma\,\sigma[u_\sigma v]
\,\,,\,\,\,\,
\int^\lambda\!d\sigma\,[u_\sigma \partial v]=\int^\lambda\!d\sigma\,(\partial+\sigma)[u_\sigma v]
\,.
\end{equation}
while the symmetry conditions \eqref{20160629:eq5} on $X$
translate, in the notation \eqref{20160719:eq2}, to
\begin{equation}\label{20160707:eq2}
\int^\lambda d\sigma[u_\sigma v]
=(-1)^{p(u)p(v)}\int^{-\lambda-\partial} d\sigma[v_\sigma u]
\,.
\end{equation}
As a result, we get the following:
\begin{proposition}\label{20160719:prop}
The space\/ $W_1^{\ch}(\Pi V)$
is identified via \eqref{20160719:eq3} 
with the space of integrals of\/ $\lambda$-brackets
$$
\int^\lambda d\sigma[\cdot\,_\sigma\,\cdot]:\,V\otimes V\to V[\lambda]
\,,
$$
satisfying axioms (ii) and (iii) in the Definition \ref{def5} of a vertex algebra.
\end{proposition}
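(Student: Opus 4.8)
The statement asserts that the identification $X \leftrightarrow \int^\lambda d\sigma[\cdot\,_\sigma\,\cdot]$ given by \eqref{20160719:eq3} restricts to a bijection between $W_1^{\ch}(\Pi V)$ and the set of maps $V\otimes V\to V[\lambda]$ satisfying axioms (ii)--(iii) of Definition \ref{def5}. Most of the work has already been laid out in the paragraphs preceding the proposition; the proof consists of assembling those observations. First I would recall that, by definition, $X\in W_1^{\ch}(\Pi V) = P^\ch(2)^{S_2}$ means $X$ is an $S_2$-invariant right $\mc D_2^T$-module homomorphism as in \eqref{20160719:eq1}. The key structural fact is that $\mc O_2^{\star T}=\mb F[z_{01}^{\pm1}]$ is a cyclic $\mc D_2^T$-module generated by $z_{01}^{-1}$ (indeed by Lemma \ref{20160719:lem}, or directly: $z_{01}^n = (z_{01})^{n+1} \cdot z_{01}^{-1}$ and $z_{01}^{-n}$ for $n\ge 1$ is obtained from $z_{01}^{-1}$ by applying $\dd_{z_0}$ repeatedly up to scalars). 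Consequently a right $\mc D_2^T$-module homomorphism $X$ is \emph{uniquely determined} by the single value $X^{z_0,z_1}_{\lambda_0,\lambda_1}(v_0,v_1;z_{10}^{-1})$, and conversely \emph{any} assignment of that value extends to a homomorphism on all of $(\Pi V)^{\otimes 2}\otimes\mc O_2^{\star T}$ provided the two sesquilinearity relations \eqref{20160629:eq4} are consistent — which they are because the defining relation $\dd_{z_i}z_{ij} = z_{ij}\dd_{z_i}+1$ of $\mc D_2^T$ is respected by both actions, as already checked after \eqref{20160629:eq3}.

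Next I would make the dictionary precise. Using the isomorphism $\Pi V[\lambda_0,\lambda_1]/\langle\dd+\lambda_0+\lambda_1\rangle \simeq \Pi V[\lambda]$ obtained by setting $\lambda_0=\lambda$, $\lambda_1=-\lambda-\dd$, and identifying $\Pi V$ with $V$ as vector spaces, formula \eqref{20160719:eq3} sets up a bijection between the data ``a value $X^{z_0,z_1}_{\lambda_0,\lambda_1}(\cdot,\cdot;z_{10}^{-1})$'' and ``an even linear map $V\otimes V\to V[\lambda]$, written $\int^\lambda d\sigma[u_\sigma v]$''; the sign $(-1)^{p(u)}$ and the parity shift are there to make $\int^\lambda d\sigma[\cdot_\sigma\cdot]$ parity-preserving exactly when $X$ is odd with respect to $\bar p$. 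So at the level of raw data (ignoring sesquilinearity and $S_2$-invariance) the correspondence is already a bijection; it remains only to match the two remaining constraints. For this I would invoke the two computations sketched in the text: (a) the sesquilinearity axioms \eqref{20160629:eq4} for $X$ translate, via \eqref{20160719:eq3b}–\eqref{20160719:eq3c} and the identification of variables, precisely into the two identities \eqref{20160707:eq1b}, i.e. axiom (ii) of Definition \ref{def5} — here one uses that $X(v_0,v_1;z_{10}^{-1})$ together with the first sesquilinearity determines $X(v_0,v_1;1) = (-1)^{p(v_0)}[v_{0\,\lambda_0}v_1]$, and then re-expresses the first sesquilinearity in the $u\mapsto \dd u$ and $v\mapsto\dd v$ cases; (b) the $S_2$-symmetry condition \eqref{20160629:eq5} for $\sigma=(12)$ becomes, after the change of variables, the integral skewsymmetry \eqref{20160707:eq2}, which is axiom (iii). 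The upshot: $X$ is $S_2$-invariant and sesquilinear $\iff$ the corresponding integral of $\lambda$-bracket satisfies (ii) and (iii).

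The main obstacle — really the only non-bookkeeping point — is verifying the two translation statements (a) and (b) carefully, in particular getting every sign right. The delicate part of (a) is that the sesquilinearity relation $X_{\lambda_0,\lambda_1}(v_0,\dots,(\dd+\lambda_i)v_i,\dots;f) = X_{\lambda_0,\lambda_1}(v_0,\dots;\dd f/\dd z_i)$ mixes the action of $\dd$ on $V$ with differentiation of $f$; when specialized to $f=z_{10}^{-1}$ one has $\dd z_{10}^{-1}/\dd z_0 = z_{10}^{-2}$ and $\dd z_{10}^{-1}/\dd z_1 = -z_{10}^{-2}$, so one must also know $X(v_0,v_1;z_{10}^{-2})$; the cleanest route is instead to evaluate on $f=1$, where $\dd f/\dd z_i=0$, giving $X(\dd v_0,v_1;1)=0$ hence — wait, that would force the $\lambda$-bracket trivial — so one must be careful: with $f=1$ one gets $X((\dd+\lambda_0)v_0,v_1;1)=X(v_0,v_1;0)=0$, i.e. $X(\dd v_0,v_1;1)=-\lambda_0 X(v_0,v_1;1)$, which is exactly $[\dd u_{\lambda_0}v]=-\lambda_0[u_{\lambda_0}v]$ after dividing by $(-1)^{p(v_0)}$; and similarly in the second slot one recovers $[u_{\lambda_0}\dd v]=(\dd+\lambda_0)[u_{\lambda_0}v]$ using $\lambda_1=-\lambda_0-\dd$. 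Integrating these sesquilinearity-of-$\lambda$-bracket relations (and using that the normally ordered product, the $\lambda=0$ constant term, transforms correctly) yields \eqref{20160707:eq1b}; this is precisely the kind of ``take the derivative / set $\lambda=0$'' bookkeeping described after Definition \ref{def5}, so I would simply reference it. For (b), one substitutes $\sigma=(12)$ into \eqref{20160629:eq5}, noting $\epsilon_v((12)) = (-1)^{\bar p(v_0)\bar p(v_1)}$, evaluates on $f=z_{10}^{-1}$ (which under the swap becomes $z_{01}^{-1}=-z_{10}^{-1}$), applies \eqref{20160719:eq3}, and after the substitution $\lambda_0=\lambda$, $\lambda_1 = -\lambda-\dd$ on both sides obtains \eqref{20160707:eq2}; the sign computation here is where one must be most careful, combining the $(-1)^{p(u)}$ from \eqref{20160719:eq3}, the $\epsilon_v$-sign, and the factor $-1$ from $z_{01}^{-1}=-z_{10}^{-1}$. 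Once (a) and (b) are in hand, the bijection follows immediately: given any integral of $\lambda$-bracket satisfying (ii)–(iii), formula \eqref{20160719:eq3b} together with sesquilinear extension produces a well-defined $X\in P^\ch(2)$ (well-definedness is the content of the cyclicity remark above), it is $S_2$-invariant by (b) and odd by the parity discussion, hence lies in $W_1^{\ch}(\Pi V)$, and the two constructions are mutually inverse by construction. $\qed$
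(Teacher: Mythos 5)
Your proposal is correct and follows essentially the same route as the paper, which likewise derives the proposition by combining the cyclicity of $z_{01}^{-1}$ in the $\mc D_2^T$-module $\mc O_2^{\star T}$, the identification $\lambda_0=\lambda$, $\lambda_1=-\lambda-\partial$ in \eqref{20160719:eq3}--\eqref{20160719:eq3c}, and the translation of the sesquilinearity conditions \eqref{20160629:eq4} and the $S_2$-symmetry \eqref{20160629:eq5} into \eqref{20160707:eq1b} and \eqref{20160707:eq2}, respectively. The only cosmetic issue is the mid-sentence self-correction in your verification of step (a), which you resolve correctly; the substance matches the paper's argument.
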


Next, let $X,Y\in W^\ch_1(\Pi V)_{\bar1}$.
We can rewrite their box-product \eqref{eq:box} 
in terms of the notation \eqref{20160719:eq3} with the integrals of $\lambda$-brackets
corresponding to $X$ and $Y$.
By Lemma \ref{20160719:lem},
the ring $\mc O^{\star T}_3=\mb F[z_{21}^{\pm1},z_{20}^{\pm1},z_{10}^{\pm1}]$
is generated as a $\mc D^T_3$-module by the cyclic element
$f=z_{21}^{-1}z_{20}^{-1}z_{10}^{-1}$.
Hence, to determine $X\Box Y$ (and to prove, for example, that $X\Box Y=0$)
it suffices to compute it for this function.

In the following three lemmas, we will compute the three summands contributing to $X\Box Y$ from \eqref{eq:box1}.
We will express them in terms of the notation \eqref{20160719:eq3}, with the above choice of $f$.

\begin{lemma}\label{lem:box1}
For\/ $X,Y\in W^\ch_1(\Pi V)_{\bar1}$, we have:
\begin{align*}
(X&\circ_1 Y)^{z_0,z_1,z_2}_{\la_0,\la_1,\la_2} \Bigl( v_0,v_1,v_2;\frac1{z_{21}z_{20}z_{10}} \Bigr) \\
&=(-1)^{p(v_1)}
\int^{\lambda_0}\!d\sigma_0\,
\int_{\lambda_1}^{\lambda_0+\lambda_1-\sigma_0}\!d\sigma_1\,
(\lambda_0+\lambda_1-\sigma_0-\sigma_1)
\big[[{v_0}_{\si_0} v_1]^Y\, {}_{\sigma_0+\sigma_1} v_2\big]^X
 \\
&
+(-1)^{p(v_1)}
\int^{\lambda_0}\!d\sigma_0\,
\int^{\lambda_1}\!d\sigma_1\,
(\lambda_0-\sigma_0)
\big[[{v_0}_{\sigma_0} v_1]^Y\, {}_{\sigma_0+\sigma_1} v_2\big]^X.
\end{align*}
\end{lemma}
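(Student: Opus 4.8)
\emph{Overall approach.} I would compute $(X\circ_1 Y)$ directly from the definition \eqref{circ1} evaluated on the cyclic generator $f=z_{21}^{-1}z_{20}^{-1}z_{10}^{-1}$ of the $\mc D^T_3$-module $\mc O^{\star T}_3$ (Lemma \ref{20160719:lem}), and then convert the result into the integral-of-$\la$-bracket notation via \eqref{20160719:eq3b}, the sesquilinearity conditions \eqref{20160629:eq4}, and the translation covariance \eqref{trcov1}.

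\emph{Step 1.} Factor $f=f_1\,g$ with $f_1(z_0,z_1)=z_{10}^{-1}\in\mc O^{\star T}_2$ and $g(z_0,z_1,z_2)=z_{21}^{-1}z_{20}^{-1}\in\mc O^{\star T}_3$; since $g$ has no pole along $z_0=z_1$, this is admissible in the sense of \eqref{circ2} (and essentially the only choice). Plugging into \eqref{circ1} with $k=m=1$, $X$ the outer and $Y$ the inner map, gives
\[
(X\circ_1 Y)^{z_0,z_1,z_2}_{\la_0,\la_1,\la_2}\Bigl(v_0,v_1,v_2;\frac1{z_{21}z_{20}z_{10}}\Bigr)
=X^{z_0,z_2}_{\la_0+\la_1,\la_2}\Bigl(Y^{z_0,z_1}_{\la_0-\dd_{z_0},\,\la_1-\dd_{z_1}}(v_0,v_1;z_{10}^{-1})_{\to},\ v_2;\ z_{21}^{-1}z_{20}^{-1}\big|_{z_1=z_0}\Bigr).
\]

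\emph{Step 2.} By \eqref{20160719:eq3b} (in the representative not involving $\la_1$) together with \eqref{trcov1}, the inner factor equals $(-1)^{1+\bar p(v_0)}\int^{\la_0-\dd_{z_0}}\!d\sigma_0\,[{v_0}_{\sigma_0}v_1]^Y$, where the operator $\dd_{z_0}$ is Taylor-expanded and applied to $g$ before the substitution $z_1=z_0$, as indicated by the arrow $\to$. Using $\dd_{z_0}^{\,j}(z_{21}^{-1}z_{20}^{-1})\big|_{z_1=z_0}=j!\,z_{20}^{-(j+2)}$ and then reducing each $z_{20}^{-(j+2)}=\frac1{(j+1)!}\dd_{z_0}^{\,j+1}z_{20}^{-1}$ to the generator $z_{20}^{-1}$ via the first sesquilinearity in \eqref{20160629:eq4} (which turns each $\dd_{z_0}$ into a factor $\dd+\la_0+\la_1$ on the first argument of $X$, hence, by $[\dd u_{\sigma_1}v_2]^X=-\sigma_1[u_{\sigma_1}v_2]^X$, into $\la_0+\la_1-\sigma_1$), one expresses $(X\circ_1 Y)(\ldots)$ as $(-1)^{p(v_1)}\sum_{j\ge0}\frac{(-1)^j}{(j+1)!}\int^{\la_0+\la_1}\!d\sigma_1\,(\la_0+\la_1-\sigma_1)^{j+1}\,\partial_{\la_0}^{\,j}[(\Phi(\la_0))_{\sigma_1}v_2]^X$, where $\Phi(\la_0)=\int^{\la_0}\!d\sigma_0\,[{v_0}_{\sigma_0}v_1]^Y$; the overall sign is $(-1)^{p(v_1)}$ because $(-1)^{1+\bar p(v_0)}(-1)^{1+\bar p(w)}=(-1)^{p(v_1)}$ with $\bar p(w)=\bar p(v_0)+\bar p(v_1)+1$, $\bar p=1+p$, and the sign \eqref{circ6} trivial.

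\emph{Step 3 and the main obstacle.} Summing over $j$ and recognizing $\sum_{j\ge0}\frac{(-1)^j}{(j+1)!}(\la_0+\la_1-\sigma_1)^{j+1}\partial_{\la_0}^{\,j}$ as the iterated-integration operator $\int_{\,\sigma_1-\la_1}^{\,\la_0}d\la_0'$, one arrives at $(-1)^{p(v_1)}\int^{\la_0+\la_1}\!d\sigma_1\int_{\sigma_1-\la_1}^{\la_0}\!d\la_0'\int^{\la_0'}\!d\sigma_0\,[[{v_0}_{\sigma_0}v_1]^Y{}_{\sigma_1}v_2]^X$. Performing the $\la_0'$-integration, interchanging orders of integration, carrying out the change of variable $\sigma_1\mapsto\sigma_0+\sigma_1$, and splitting off the contribution of ${:}v_0v_1{:}^Y=\int^0\!d\sigma_0[{v_0}_{\sigma_0}v_1]^Y$ then produces the two displayed summands, with integration ranges $[\la_1,\la_0+\la_1-\sigma_0]$ and $[0,\la_1]$ and polynomial prefactors $\la_0+\la_1-\sigma_0-\sigma_1$ and $\la_0-\sigma_0$. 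I expect this final bookkeeping — the resummation into a translation operator, keeping straight which $\dd_{z_i}$ acts before and which after the specialization $z_1=z_0$, and tracking the boundary (normally-ordered) terms so that exactly these limits and prefactors emerge — to be the real content of the proof; the rest is a routine unwinding of definitions.
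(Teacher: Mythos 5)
Your proposal is correct and follows essentially the same route as the paper's proof: factor $f=z_{10}^{-1}\cdot z_{21}^{-1}z_{20}^{-1}$, apply \eqref{circ1}, convert via \eqref{20160719:eq3b} with the same sign bookkeeping, and reduce the shifted argument to the generator $z_{20}^{-1}$ — your term-by-term expansion in powers of $\dd_{z_0}$ and resummation into $\int_{\sigma_1-\la_1}^{\la_0}d\la_0'$ is exactly the paper's Taylor-formula identity $F(\la_0-\dd_{z_0})\tfrac{1}{z_{21}z_{20}}\big|_{z_1=z_0}=\int_0^{\dd_{z_0}}d\tau\,F(\la_0-\tau)z_{20}^{-1}$ written out coefficientwise. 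The concluding interchange of integration orders and the change of variable $\sigma_1\mapsto\sigma_0+\sigma_1$ likewise coincide with the paper's final steps.
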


\begin{proof}
We have by \eqref{circ1}:
\begin{align*}
(X&\circ_1 Y)^{z_0,z_1,z_2}_{\la_0,\la_1,\la_2} \Bigl( v_0,v_1,v_2;\frac1{z_{21}z_{20}z_{10}} \Bigr) \\
&=X^{z_0,z_2}_{\la_0+\la_1,\la_2} \Bigl( Y^{z_0,z_1}_{\la_0-\dd_{z_0},\la_1-\dd_{z_1}} \Bigl( v_0,v_1;\frac1{z_{10}} \Bigr)_{\to},
v_2; \frac1{z_{21}z_{20}}\Big|_{z_1=z_0} \Bigr) \\
&=(-1)^{p(v_0)} X^{z_0,z_2}_{\la_0+\la_1,\la_2} \Bigl( \int^{\la_0-\dd_{z_0}} d\sigma [{v_0}_\sigma v_1]^Y, 
v_2; \frac1{z_{21}z_{20}}\Big|_{z_1=z_0} \Bigr).
\end{align*}
Now we use Taylor's formula for a polynomial $F$:
\begin{align*}
F(&\la_0-\dd_{z_0}) \frac1{z_{21}z_{20}}\Big|_{z_1=z_0} 
= e^{-\dd_{z_0}\dd_{\la_0}} \frac{F(\la_0)}{z_{21}z_{20}}\Big|_{z_1=z_0} 
= \frac1{z_{20}(z_{20}+\dd_{\la_0})} F(\la_0) \\
&= \frac{1-e^{-\dd_{z_0}\dd_{\la_0}}}{\dd_{\la_0}}  \frac{F(\la_0)}{z_{20}}
=\int_0^{\dd_{z_0}} d\tau \, e^{-\tau\dd_{\la_0}} \frac{F(\la_0)}{z_{20}} \\
&= \int_0^{\dd_{z_0}} d\tau F(\la_0-\tau) \frac1{z_{20}} \,.
\end{align*}
Applying this to the previous expression,
we obtain:
\begin{align*}
(X&\circ_1 Y)^{z_0,z_1,z_2}_{\la_0,\la_1,\la_2} \Bigl( v_0,v_1,v_2;\frac1{z_{21}z_{20}z_{10}} \Bigr) \\
&=(-1)^{p(v_0)} X^{z_0,z_2}_{\la_0+\la_1,\la_2} \Bigl( \int_0^{\dd_{z_0}} d\tau \int^{\la_0-\tau} d\sigma [{v_0}_\sigma v_1]^Y, 
v_2; \frac1{z_{20}} \Bigr) \\
&=(-1)^{p(v_0)} X^{z_0,z_2}_{\la_0+\la_1,\la_2} \Bigl( \int_0^{\dd+\la_0+\la_1} d\tau \int^{\la_0-\tau} d\sigma [{v_0}_\sigma v_1]^Y, 
v_2; \frac1{z_{20}} \Bigr),
\end{align*}
where for the second equality we used the sesquilinearity \eqref{20160629:eq4}. After that, we can write the result in terms of notation \eqref{20160719:eq3}:
\begin{align*}
(-1)^{p(v_1)} \int^{\la_0+\la_1} d\sigma_1 \Bigl[ \Bigl( \int_0^{\dd+\la_0+\la_1} d\tau \int^{\la_0-\tau} d\sigma [{v_0}_\sigma v_1]^Y, 
\Bigr)_{\sigma_1} v_2 \Bigr]^X.
\end{align*}
Then using sesquilinearity (ii) from Definition \ref{def5}, we replace $\dd$ by $-\sigma_1$ in the second integral.
Now we change the order of integration with respect to $\tau$ and $\sigma$:
\begin{align*}
&\int_0^{\la_0+\la_1-\sigma_1} d\tau \int^{\la_0-\tau} d\sigma F(\si,\si_1) \\
&= \int^{\si_1-\la_1} d\sigma \int_0^{\la_0+\la_1-\sigma_1} d\tau F(\si,\si_1)
+\int_{\si_1-\la_1}^{\la_0} d\sigma \int_0^{\la_0-\sigma} d\tau F(\si,\si_1) \\
&= \int^{\si_1-\la_1} d\sigma (\la_0+\la_1-\sigma_1) F(\si,\si_1)
+\int_{\si_1-\la_1}^{\la_0} d\sigma (\la_0-\sigma) F(\si,\si_1).
\end{align*}
After that, we change the order of integration with respect to $\si_1$ and $\si_0=\sigma$
and make the change of variables $\si_1\mapsto\si_0+\si_1$:
\begin{align*}
& \int^{\la_0+\la_1} d\sigma_1 \int^{\si_1-\la_1} d\sigma (\la_0+\la_1-\sigma_1) F(\si,\si_1) \\
&+ \int^{\la_0+\la_1} d\sigma_1 \int_{\si_1-\la_1}^{\la_0} d\sigma (\la_0-\sigma) F(\si,\si_1) \\
&= \int^{\la_0} d\sigma_0 \int_{\la_1}^{\la_0+\la_1-\si_0} d\sigma_1 (\la_0+\la_1-\si_0-\sigma_1) F(\si_0,\si_0+\si_1) \\
&+ \int^{\la_0} d\sigma_0 \int^{\la_1} d\sigma_1 (\la_0-\sigma_0) F(\si_0,\si_0+\si_1).
\end{align*}
Combining these equations completes the proof of Lemma \ref{lem:box1}.
\end{proof}

\begin{lemma}\label{lem:box2}
For\/ $X,Y\in W^\ch_1(\Pi V)_{\bar1}$, we have:
\begin{align*}
(X&\circ_2 Y)^{z_0,z_1,z_2}_{\la_0,\la_1,\la_2} \Bigl( v_0,v_1,v_2;\frac1{z_{21}z_{20}z_{10}} \Bigr) \\
&=
(-1)^{1+p(v_1)}
\int^{\lambda_0}\!d\sigma_0\,
\int^{\lambda_1}\!d\sigma_1\,
(\lambda_0-\sigma_0)
\big[{v_0}_{\sigma_0} [{v_1}{}_{\sigma_1} v_2]^Y\big]^X \\
&+
(-1)^{1+p(v_1)}
\int^{\lambda_0}\!d\sigma_0\,
\int_{\lambda_1}^{\lambda_0+\lambda_1-\sigma_0}\!d\sigma_1\,
(\lambda_0+\lambda_1-\sigma_0-\sigma_1)
\big[{v_0}_{\sigma_0} [{v_1}{}_{\sigma_1} v_2]^Y\big]^X.
\end{align*}
\end{lemma}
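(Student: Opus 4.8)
The plan is to compute $X\circ_2 Y$ on the cyclic function $\tfrac1{z_{21}z_{20}z_{10}}$ directly from the operad composition in $P^\ch$, mimicking step by step the proof of Lemma~\ref{lem:box1}. Since $X\circ_2 Y=X(1\otimes Y)$ by \eqref{eq:operad8}, I would apply the composition formula \eqref{circ5} with the two‑block partition of the inputs into $\{v_0\}$ and $\{v_1,v_2\}$ (and correspondingly of the variables into $\{z_0\}$ and $\{z_1,z_2\}$). I would factor $\tfrac1{z_{21}z_{20}z_{10}}=f_1 f_2 g$ with $f_1=1\in\mc O_1^{\star T}$, $f_2=z_{21}^{-1}\in\mc O_2^{\star T}$ (the block $\{z_1,z_2\}$), and $g=\tfrac1{z_{20}z_{10}}$, which has no pole on $z_1=z_2$; then \eqref{circ5} expresses $(X\circ_2 Y)(v_0,v_1,v_2;\tfrac1{z_{21}z_{20}z_{10}})$ as $X$ evaluated on $v_0$ and on $Y^{z_1,z_2}_{\la_1-\dd_{z_1},\la_2-\dd_{z_2}}(v_1,v_2;z_{21}^{-1})_\to$, with $g$ restricted to the diagonal of the block $\{z_1,z_2\}$ after the indicated derivatives act, and overall sign $\pm$ given by \eqref{circ6}.

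\textbf{Reduction to iterated integrals.} The next steps are parallel to Lemma~\ref{lem:box1}. Use \eqref{20160719:eq3b} to write $Y^{z_1,z_2}_{\mu,\nu}(v_1,v_2;z_{21}^{-1})=(-1)^{1+\bar p(v_1)}\int^{\mu}d\sigma_1\,[{v_1}_{\sigma_1}v_2]^Y$, which depends only on $\mu=\la_1-\dd_{z_1}$, so that only $\dd_{z_1}$ survives to act on $g$. Then apply the Taylor‑expansion identity from the proof of Lemma~\ref{lem:box1}, adapted to the present pole configuration (here the active variable $z_1$ sits in the factor $z_{10}^{-1}$ rather than in $z_{20}^{-1}$, which alters some signs in that identity), to trade the operator $\la_1-\dd_{z_1}$ together with the restriction of $g$ for an extra integration over a variable $\tau$, so that $g$ restricted to the diagonal becomes a simple pole. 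Next, express the outer application of $X$ via \eqref{20160719:eq3b} in the integral‑of‑$\la$‑bracket notation with integration variable $\sigma_0$, and use the sesquilinearity \eqref{20160629:eq4} of $X$ (together with sesquilinearity \eqref{20160707:eq1b} of the $Y$‑bracket) to turn the remaining $\dd$'s into $-\sigma_0$ and $-\sigma_1$. This produces a threefold iterated integral in $\tau,\sigma_0,\sigma_1$ of $[{v_0}_{\sigma_0}[{v_1}_{\sigma_1}v_2]^Y]^X$ with an explicit polynomial weight.

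\textbf{Final bookkeeping.} Exactly as in the last step of the proof of Lemma~\ref{lem:box1}, I would interchange the order of integration and perform the $\tau$‑integration explicitly; the $\tau$‑range splits into two subintervals, producing the two summands of the statement with weights $(\la_0-\sigma_0)$ and $(\la_0+\la_1-\sigma_0-\sigma_1)$ and limits $\int^{\la_1}d\sigma_1$ and $\int_{\la_1}^{\la_0+\la_1-\sigma_0}d\sigma_1$. Collecting the Koszul signs from \eqref{circ6} and the two applications of \eqref{20160719:eq3b} yields the overall factor $(-1)^{1+p(v_1)}$.

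\textbf{Main difficulty.} I expect the main obstacle to be, as in Lemma~\ref{lem:box1}, the bookkeeping rather than anything conceptual: one must carry out the Taylor step correctly when $\dd_{z_1}$ acts on $g$ before the substitution on the block $\{z_1,z_2\}$ (this step is genuinely not a relabeling of the one in Lemma~\ref{lem:box1}, since the offending variable lies in a different factor of $g$), handle the fact that the output of $Y$ is taken modulo $\langle\dd+\la_1+\la_2\rangle$ when substituting $\la_i\mapsto\la_i-\dd_{z_i}$, and keep all signs straight. One could instead try to deduce the formula from Lemma~\ref{lem:box1} via the equivariance \eqref{eq:operad9} and the $S_2$‑symmetry $X=X^{(12)}$, which gives $X\circ_2 Y=(X\circ_1 Y)^{\rho}$ for a $3$‑cycle $\rho$ fixing the cyclic function $\tfrac1{z_{21}z_{20}z_{10}}$; but that route lands on a left‑nested bracket $[[{v_1}_{\sigma_0}v_2]^Y{}_{\sigma_0+\sigma_1}v_0]^X$, and converting it to the right‑nested form of the statement via the integral skewsymmetry \eqref{20160707:eq2} reintroduces comparable sign and $\dd$‑bookkeeping, so the direct computation seems no harder.
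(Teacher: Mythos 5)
Your proposal matches the paper's proof, which likewise writes $X\circ_2 Y=X(1\otimes Y)$, applies \eqref{circ5} with the factorization $\frac1{z_{21}}\cdot\frac1{z_{20}z_{10}}$ and the substitution $z_1=z_2$, and then simply states that the rest is similar to Lemma \ref{lem:box1}. Your fleshed-out account of the remaining steps (Taylor expansion, sesquilinearity, reordering of integrations), including the caveat that the Taylor identity must be re-derived for the pole configuration $\frac1{z_{20}z_{10}}$ rather than relabeled, is exactly the intended argument.
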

\begin{proof}
Since $X\circ_2 Y = X(1\otimes Y)$, we have by \eqref{circ5}:
\begin{align*}
(X&\circ_2 Y)^{z_0,z_1,z_2}_{\la_0,\la_1,\la_2} \Bigl( v_0,v_1,v_2;\frac1{z_{21}z_{20}z_{10}} \Bigr) \\
&= (-1)^{\bar p(v_0)} X^{z_0,z_2}_{\lambda_0,\lambda_1+\lambda_2}\Bigl(v_0,
Y^{z_1,z_2}_{\lambda_1-\dd_{z_1},\lambda_2-\dd_{z_2}}
\Bigl(v_1,v_2; \frac1{z_{21}} \Bigr)_\to ; \frac1{z_{20}z_{10}} \Big|_{z_1=z_2} \Bigr).
\end{align*}
The rest of the proof is similar to that of Lemma \ref{lem:box1}.
\end{proof}

\begin{lemma}\label{lem:box3}
For\/ $X,Y\in W^\ch_1(\Pi V)_{\bar1}$, we have:
\begin{align*}
&\bigl((X\circ_2 Y)^{(12)} \bigr)^{z_0,z_1,z_2}_{\la_0,\la_1,\la_2} \Bigl( v_0,v_1,v_2;\frac1{z_{21}z_{20}z_{10}} \Bigr) \\
&=
(-1)^{p(v_1)+p(v_0)p(v_1)}
\int^{\lambda_0}\!d\sigma_0\,
\int^{\lambda_1}\!d\sigma_1\,
(\lambda_0-\sigma_0)
\big[{v_1}_{\sigma_1} [{v_0}{}_{\sigma_0} v_2]^Y\big]^X
\\
&+(-1)^{p(v_1)+p(v_0)p(v_1)}
\int^{\lambda_0}\!\!d\sigma_0\,
\int_{\lambda_1}^{\lambda_0+\lambda_1-\sigma_0}\!\!d\sigma_1\,
(\lambda_0+\lambda_1-\sigma_0-\sigma_1)
\big[{v_1}_{\sigma_1} [{v_0}{}_{\sigma_0} v_2]^Y\big]^X.
\end{align*}
\end{lemma}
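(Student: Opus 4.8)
The plan is to obtain Lemma~\ref{lem:box3} directly from Lemma~\ref{lem:box2}, since $(X\circ_2 Y)^{(12)}$ is obtained from $X\circ_2 Y$ by the transposition interchanging its first two inputs; nothing new has to be computed about the composition~\eqref{circ5}. First I would apply the definition~\eqref{20160629:eq5} of the $S_3$-action with $\sigma=(12)$: it interchanges the inputs $v_0\leftrightarrow v_1$, the spectral parameters $\la_0\leftrightarrow\la_1$, and the variables $z_0\leftrightarrow z_1$ in the test function, and inserts the Koszul sign $\epsilon_v((12))=(-1)^{\bar p(v_0)\bar p(v_1)}$, where $\bar p$ is the parity of $\Pi V$. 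Under $z_0\leftrightarrow z_1$ the cyclic generator $\tfrac{1}{z_{21}z_{20}z_{10}}$ of the $\mc D_3^T$-module $\mc O_3^{\star T}$ becomes $\tfrac{1}{z_{20}z_{21}z_{01}}=-\tfrac{1}{z_{21}z_{20}z_{10}}$, so by linearity of $X\circ_2 Y$ in its function slot,
\begin{equation*}
\bigl((X\circ_2 Y)^{(12)}\bigr)^{z_0,z_1,z_2}_{\la_0,\la_1,\la_2}\Bigl(v_0,v_1,v_2;\tfrac{1}{z_{21}z_{20}z_{10}}\Bigr)
=-(-1)^{\bar p(v_0)\bar p(v_1)}\,(X\circ_2 Y)^{z_0,z_1,z_2}_{\la_1,\la_0,\la_2}\Bigl(v_1,v_0,v_2;\tfrac{1}{z_{21}z_{20}z_{10}}\Bigr).
\end{equation*}

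Next I would substitute into the right-hand side the formula of Lemma~\ref{lem:box2}, with $v_0,v_1$ and $\la_0,\la_1$ interchanged, and then rename the dummy variables $\sigma_0\leftrightarrow\sigma_1$ so that the double bracket $[{v_1}_{\sigma_1}[{v_0}_{\sigma_0}v_2]^Y]^X$ appears exactly as in the statement. Collecting the signs via $\bar p=p+\bar 1$ gives an overall factor $(-1)^{1+p(v_1)+p(v_0)p(v_1)}$, while the iterated integrals emerge in the ``transposed'' shape $\int^{\la_1}d\sigma_1\int^{\la_0}d\sigma_0(\la_1-\sigma_1)(\cdots)+\int^{\la_1}d\sigma_1\int_{\la_0}^{\la_0+\la_1-\sigma_1}d\sigma_0(\la_0+\la_1-\sigma_0-\sigma_1)(\cdots)$. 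It then remains to check that this equals the negative of the expression with $(\sigma_0,\la_0)$ and $(\sigma_1,\la_1)$ interchanged, i.e.\ the shape appearing in Lemma~\ref{lem:box3}; this is a routine Fubini-type manipulation of exactly the kind carried out at the end of the proof of Lemma~\ref{lem:box1} (change the order of integration, split the integration region, change variables), and it supplies the remaining sign $-1$, producing the stated factor $(-1)^{p(v_1)+p(v_0)p(v_1)}$.

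The main obstacle is this last integral identity together with the careful tracking of the ``$\int^\lambda d\sigma$'' boundary (normally ordered) terms under the interchange of $(\sigma_0,\la_0)$ and $(\sigma_1,\la_1)$; everything else is sign bookkeeping. As an alternative that avoids the reconciliation identity, one may instead compute $((X\circ_2 Y)^{(12)})(v_0,v_1,v_2;\tfrac{1}{z_{21}z_{20}z_{10}})$ from scratch, starting from \eqref{circ5} and \eqref{20160629:eq5} and repeating verbatim the calculation used in the proofs of Lemmas~\ref{lem:box1} and~\ref{lem:box2} (Taylor expansion of the substitution $z_1=z_0$, sesquilinearity, and reordering of the iterated integrals); the two routes yield the same answer.
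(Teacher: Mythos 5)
Your proposal follows the paper's own proof exactly: the paper likewise obtains Lemma \ref{lem:box3} from the computation of $X\circ_2 Y$ by switching the roles of $z_0$ and $z_1$, $v_0$ and $v_1$, $\lambda_0$ and $\lambda_1$, and $\sigma_0$ and $\sigma_1$, and then observes that the sign coming from $f(z_1,z_0,z_2)=-f(z_0,z_1,z_2)$ cancels against the sign produced when the order of integration in $d\sigma_0$ and $d\sigma_1$ is restored. Your identification of the two ingredients --- the Koszul/function sign from \eqref{20160629:eq5} applied to Lemma \ref{lem:box2}, and the extra $-1$ from reconciling the iterated integrals --- is precisely the ``double sign change'' the paper invokes, so nothing is missing relative to the published argument.
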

\begin{proof}
Recall that $(X\circ_2 Y)^{(12)}$ is obtained from $X\circ_2 Y$
by switching the roles of $z_0$ and $z_1$,
$v_0$ and $v_1$, $\lambda_0$ and $\lambda_1$, and $\si_0$ and $\si_1$.
Then we perform a change of order of integration.
Note that 
there is a double sign change:
one is coming from the change of sign of the function $f=z_{21}^{-1}z_{20}^{-1}z_{10}^{-1}$
when we exchange $z_0$ and $z_1$,
and the other sign change pops out when we change 
the order of integration in $d\si_0$ and $d\si_1$.
\end{proof}

As a result of the above three lemmas, the box-product $X\Box Y$ can be written as follows
\begin{equation}\label{20160718:eq5}
\begin{split}
(-1&)^{p(v_1)+1}
(X\Box Y)^{z_0,z_1,z_2}_{\lambda_0,\lambda_1,\lambda_2}
\Big(v_0,v_1,v_2;\frac1{z_{21}z_{20}z_{10}}\Big) 
\\
&= 
\int^{\lambda_0}\!\!\!d\sigma_0\,
\int^{\lambda_1}\!\!\!d\sigma_1\,
(\lambda_0-\sigma_0)
j^{X,Y}_{\si_0,\si_1}(v_0,v_1,v_2) \\
&+
\int^{\lambda_0}\!d\sigma_0\,
\int_{\lambda_1}^{\lambda_0+\lambda_1-\sigma_0}
\!\!\!\!\!\!\!\!\!\!\!
d\sigma_1\,
(\lambda_0+\lambda_1-\sigma_0-\sigma_1)
j^{X,Y}_{\si_0,\si_1}(v_0,v_1,v_2)\,,
\end{split}
\end{equation}
where
\begin{equation}\label{20160718:eq5a}
\begin{split}
j^{X,Y}_{\si_0,\si_1}(v_0&,v_1,v_2)
=
\big[{v_0}_{\sigma_0} [{v_1}{}_{\sigma_1} v_2]^Y\big]^X \\
&-(-1)^{p(v_0)p(v_1)}
\big[{v_1}_{\sigma_1} [{v_0}{}_{\sigma_0} v_2]^Y\big]^X
-\big[[{v_0}_{\sigma_0} v_1]^Y\, {}_{\sigma_0+\sigma_1} v_2\big]^X.
\end{split}
\end{equation}

From this we can derive the main result of the present section.
\begin{theorem}\label{20160719:thm}
An odd element\/ $X\in W^{\ch}_1(\Pi V)$ satisfies\/ $X\Box X=0$
if and only if the corresponding integral of\/ $\lambda$-bracket\/ \eqref{20160719:eq3}
satisfies axiom (iv) of Definition \ref{def5}.
Consequently, such elements\/ $X$
are in bijective correspondence with the structures
of non-unital vertex algebras on the\/ $\mb F[\partial]$-module\/ $V$.
\end{theorem}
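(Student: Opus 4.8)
The plan is to reduce the theorem to formula~\eqref{20160718:eq5} together with an elementary lemma about the integral transform occurring there. By Proposition~\ref{20160719:prop}, the odd element $X\in W^{\ch}_1(\Pi V)$ is, via~\eqref{20160719:eq3}, the same datum as an integral of $\lambda$-bracket $\int^\lambda\!d\sigma[\cdot\,_\sigma\,\cdot]\colon V\otimes V\to V[\lambda]$ satisfying axioms (ii) and (iii) of Definition~\ref{def5}; so it remains only to show that $X\Box X=0$ is equivalent to axiom (iv). Since $X$ is odd, $[X,X]=2\,X\Box X$ by~\eqref{20170603:eq4}, and by Theorem~\ref{20170603:thm2}(a) we have $X\Box X\in W^{\ch}_2(\Pi V)\subset P^{\ch}(3)$, i.e.\ $X\Box X$ is a right $\mc D^T_3$-module homomorphism on $(\Pi V)^{\otimes3}\otimes\mc O^{\star T}_3$. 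By Lemma~\ref{20160719:lem}, $f=z_{21}^{-1}z_{20}^{-1}z_{10}^{-1}$ generates $\mc O^{\star T}_3$ over $\mc D^T_3$, so $X\Box X=0$ if and only if $(X\Box X)^{z_0,z_1,z_2}_{\lambda_0,\lambda_1,\lambda_2}(v_0,v_1,v_2;f)=0$ for all $v_0,v_1,v_2\in V$.

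Next I would invoke Lemmas~\ref{lem:box1}, \ref{lem:box2} and~\ref{lem:box3}, which combine into~\eqref{20160718:eq5}: taking $Y=X$, the value $(X\Box X)(v_0,v_1,v_2;f)$ equals, up to the sign $(-1)^{p(v_1)+1}$, the transform
\[
E(\lambda_0,\lambda_1):=\int^{\lambda_0}\!d\sigma_0\int^{\lambda_1}\!d\sigma_1\,(\lambda_0-\sigma_0)\,j_{\sigma_0,\sigma_1}+\int^{\lambda_0}\!d\sigma_0\int_{\lambda_1}^{\lambda_0+\lambda_1-\sigma_0}\!\!\!d\sigma_1\,(\lambda_0+\lambda_1-\sigma_0-\sigma_1)\,j_{\sigma_0,\sigma_1}
\]
of the polynomial $j_{\sigma_0,\sigma_1}:=j^{X,X}_{\sigma_0,\sigma_1}(v_0,v_1,v_2)\in V[\sigma_0,\sigma_1]$ defined in~\eqref{20160718:eq5a}. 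By inspection, $j^{X,X}_{\sigma_0,\sigma_1}(v_0,v_1,v_2)$ is exactly the integrand occurring in axiom (iv) (with $u=v_0$, $v=v_1$, $w=v_2$, $\sigma=\sigma_0$, $\tau=\sigma_1$); so, unwinding the notation~\eqref{20160719:eq3} and the compound-bracket conventions of Definition~\ref{def5} (and using Lemma~\ref{20160721:lem} to pass between the forms (iv) and (iv') if convenient), the statement ``$\int^{\lambda_0}\!d\sigma_0\int^{\lambda_1}\!d\sigma_1\,j^{X,X}_{\sigma_0,\sigma_1}(v_0,v_1,v_2)=0$ for all $v_0,v_1,v_2$'' is precisely axiom (iv).

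It therefore suffices to prove the following elementary lemma: for every polynomial $j_{\sigma_0,\sigma_1}\in V[\sigma_0,\sigma_1]$, one has $E\equiv0$ (as an identity of polynomials in $\lambda_0,\lambda_1$) if and only if $J(\lambda_0,\lambda_1):=\int^{\lambda_0}\!d\sigma_0\int^{\lambda_1}\!d\sigma_1\,j_{\sigma_0,\sigma_1}\equiv0$. One direction is immediate: $\partial_{\lambda_0}\partial_{\lambda_1}J=j_{\lambda_0,\lambda_1}$, so $J\equiv0\iff j\equiv0$, and $j\equiv0$ forces $E=0$. For the converse I would differentiate $E$ with respect to $\lambda_1$: in the first summand this replaces $\int^{\lambda_1}\!d\sigma_1$ by evaluation at $\sigma_1=\lambda_1$, yielding $\int^{\lambda_0}\!d\sigma_0\,(\lambda_0-\sigma_0)\,j_{\sigma_0,\lambda_1}$; in the second summand the Leibniz rule produces a boundary term at the upper limit $\sigma_1=\lambda_0+\lambda_1-\sigma_0$, which vanishes since its prefactor $\lambda_0+\lambda_1-\sigma_0-\sigma_1$ becomes $0$, a boundary term at the lower limit equal to $-\int^{\lambda_0}\!d\sigma_0\,(\lambda_0-\sigma_0)\,j_{\sigma_0,\lambda_1}$, and the interior term $\int^{\lambda_0}\!d\sigma_0\int_{\lambda_1}^{\lambda_0+\lambda_1-\sigma_0}\!\!d\sigma_1\,j_{\sigma_0,\sigma_1}$. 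The two outer contributions cancel, so $\partial_{\lambda_1}E=\int^{\lambda_0}\!d\sigma_0\int_{\lambda_1}^{\lambda_0+\lambda_1-\sigma_0}\!\!d\sigma_1\,j_{\sigma_0,\sigma_1}$. If $E\equiv0$ this vanishes identically; differentiating it with respect to $\lambda_0$ (the boundary term at $\sigma_0=\lambda_0$ is an empty integral) gives $\int^{\lambda_0}\!d\sigma_0\,j_{\sigma_0,\lambda_0+\lambda_1-\sigma_0}\equiv0$, and one further differentiation in $\lambda_0$ minus one in $\lambda_1$ yields $j_{\lambda_0,\lambda_1}\equiv0$; hence $j\equiv0$ and $J\equiv0$.

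Putting the pieces together, $X\Box X=0$ if and only if $E\equiv0$ for all $v_0,v_1,v_2$, if and only if axiom (iv) holds. Combined with Proposition~\ref{20160719:prop}, this shows that the odd elements $X\in W^{\ch}_1(\Pi V)$ with $X\Box X=0$ are in bijection with the integrals of $\lambda$-brackets on the $\mb F[\partial]$-module $V$ satisfying axioms (ii), (iii) and (iv), that is, with the non-unital vertex algebra structures on $V$ (the $\mb F[\partial]$-module structure being fixed throughout). The step I expect to be the main obstacle is the elementary lemma of the third paragraph: although it is ``just calculus'', one must check scrupulously that every manipulation is an identity of polynomials in $\lambda_0,\lambda_1$ with coefficients in $V$ and that all boundary terms arising from the repeated differentiations are correctly accounted for; the remaining bookkeeping—identifying $j^{X,X}$ with the integrand of axiom (iv)—is routine but requires care with the notation of Definition~\ref{def5}.
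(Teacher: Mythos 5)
Your setup follows the paper's own route: reduction to the cyclic generator $z_{21}^{-1}z_{20}^{-1}z_{10}^{-1}$ via Lemma \ref{20160719:lem}, the use of \eqref{20160718:eq5}, and the identification of $j^{X,X}_{\sigma_0,\sigma_1}$ with the integrand of axiom (iv) are all correct, and the repeated-differentiation calculus in your third paragraph is accurate as far as it goes. The gap is in the ``elementary lemma'' itself, specifically in the claim $J\equiv 0\iff j\equiv 0$. Only the implication $J\equiv 0\Rightarrow j\equiv 0$ holds: $J_{\lambda_0,\lambda_1}$ is not an arbitrary double antiderivative of the polynomial $j$, but the specific iterated integral built from the integral $\lambda$-bracket, and its terms of degree zero in $\lambda_0$ and/or $\lambda_1$ involve the normally ordered products ${:}uv{:}$, which carry information not contained in $j$. (As the discussion after Definition \ref{def5} explains, axiom (iv) unpacks into four separate identities, of which the ordinary Jacobi identity $j\equiv 0$ is only one; e.g.\ taking constant terms in $\lambda$ and $\mu$ in (iv) yields a quasi-associativity relation for ${:}\cdot\,\cdot{:}$ that does not follow from $j\equiv 0$.) For the same reason $E$ is not a function of the polynomial $j$ alone, so ``$j\equiv 0$ forces $E=0$'' is unjustified, and your converse chain $E\equiv 0\Rightarrow j\equiv 0\Rightarrow J\equiv 0$ breaks at the last arrow: your four differentiations destroy exactly the constant-term data that distinguishes $J\equiv0$ from $j\equiv0$. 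Had your lemma been true, $X\Box X=0$ would be equivalent to the mere Jacobi identity for the $\lambda$-bracket, which is strictly weaker than axiom (iv).

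The repair is the paper's argument: instead of differentiating $E$ all the way down to $j$, rewrite $E$ without loss of information by using sesquilinearity \emph{under the integral sign} to trade the polynomial prefactors $(\lambda_0-\sigma_0)$ and $(\lambda_0+\lambda_1-\sigma_0-\sigma_1)$ for shifts by $\partial$ in the arguments, together with the change of variables $\tilde\sigma_1=\sigma_0+\sigma_1$. This expresses $E$ as
$$
\widetilde{J}_{\lambda_0,\lambda_0+\lambda_1}((\lambda_0+\partial)v_0,v_1,v_2)
+\widetilde{J}_{\lambda_0,\lambda_0+\lambda_1}(v_0,(\lambda_1+\partial)v_1,v_2)
-J_{\lambda_0,\lambda_1}(v_0,(\lambda_1+\partial)v_1,v_2)\,,
$$
so that Lemma \ref{20160721:lem} (which gives $\widetilde J\equiv 0\iff J\equiv 0$) yields one implication, while a single differentiation of $E$ with respect to $\lambda_0$ produces exactly $\widetilde J_{\lambda_0,\lambda_0+\lambda_1}(v_0,v_1,v_2)$ and yields the other.
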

\begin{proof}
The symmetry condition $X=X^{(12)}$ on the element $X\in W^\ch_1(\Pi V)$
translates to the symmetry axiom (iii) in Definition \ref{def5}.
In the notation \eqref{20160721:eq2}, axiom (iv) of the Definition \ref{def5} of a vertex algebra 
reads
\begin{equation}\label{20160721:eq4}
J_{\lambda_0,\lambda_1}(v_0,v_1,v_2)=0
\,,
\end{equation}
while, by \eqref{20160718:eq5}, the condition that $X\Box X=0$ can be written as follows:
\begin{equation}\label{20160721:eq5}
\begin{array}{l}
\displaystyle{
\vphantom{\Big(}
\int^{\lambda_0}\!\!\!d\sigma_0\,
\int^{\lambda_1}\!\!\!d\sigma_1\,
(\lambda_0-\sigma_0)
j_{\sigma_0,\sigma_1}(v_0,v_1,v_2)
} \\
\displaystyle{
\vphantom{\Bigg(}
+
\int^{\lambda_0}\!d\sigma_0\,
\int_{\lambda_1}^{\lambda_0+\lambda_1-\sigma_0}
\!\!\!\!\!\!\!\!\!\!\!
d\sigma_1\,
(\lambda_0+\lambda_1-\sigma_0-\sigma_1)
j_{\sigma_0,\sigma_1}(v_0,v_1,v_2)
=0
\,,}
\end{array}
\end{equation}
where 
$$
\begin{array}{l}
\displaystyle{
\vphantom{\Big(}
j_{\lambda_0,\lambda_1}(v_0,v_1,v_2)
=
\frac{\partial^2}{\partial\lambda_0\partial\lambda_1}J_{\lambda_0,\lambda_1}(v_0,v_1,v_2)
} \\
\displaystyle{
\vphantom{\Big(}
=
[{v_0}_{\lambda_0}[{v_1}_{\lambda_1}v_2]]
-(-1)^{p(v_0)p(v_1)}[{v_1}_{\lambda_1}[{v_0}_{\lambda_0}v_2]]
-[[{v_0}_{\lambda_0}v_1]_{\lambda_0+\lambda_1}v_2]
,}
\end{array}
$$
which is the same as \eqref{20160718:eq5a} for $Y=X$.

By the sesquilinearity axiom (ii) and the change of variable of integration 
$\tilde{\sigma}_1=\sigma_0+\sigma_1$,
we can rewrite the left-hand side of \eqref{20160721:eq5},
using the notation \eqref{20160721:eq3}, as
$$
\widetilde{J}_{\lambda_0,\lambda_0+\lambda_1}((\lambda_0+\partial)v_0,v_1,v_2)
+\widetilde{J}_{\lambda_0,\lambda_0+\lambda_1}(v_0,(\lambda_1+\partial)v_1,v_2)
-J_{\lambda_0,\lambda_1}(v_0,(\lambda_1+\partial)v_1,v_2)
\,.
$$
Hence, due to Lemma \ref{20160721:lem}, we get that \eqref{20160721:eq4}
implies \eqref{20160721:eq5}.
Conversely, if we take the derivative with respect to $\lambda_0$ of the left-hand side of \eqref{20160721:eq5},
we get
$$
\widetilde{J}_{\lambda_0,\lambda_0+\lambda_1}(v_0,v_1,v_2)\,.
$$
Hence, \eqref{20160721:eq5} implies \eqref{20160721:eq4},
again due to Lemma \ref{20160721:lem}. 
\end{proof}

%%%%%%%%%%%%%%%%%%%%%%%%%%
\section{Vertex algebra modules and cohomology complexes}\label{sec:5}

%%%
\subsection{Cohomology of vertex algebras}\label{sec:vacoh}

As a consequence of Theorems \ref{20170603:thm2} and \ref{20160719:thm},
we obtain a cohomology complex associated to a vertex algebra $V$.

\begin{definition}\label{def:vacoho}
Let $V$ be a (non-unital) vertex algebra.
The corresponding \emph{vertex algebra cohomology complex} is defined as
$$
(W^\ch(\Pi V),\ad X)
\,,
$$
where $X\in W^\ch(\Pi V)_{\bar 1}$ is associated to the vertex algebra structure of $V$
via \eqref{20160719:eq3b}.
\end{definition}

As in Section \ref{sec:4}, the cohomology 
$$
H^\ch(V) %=\bigoplus_{k\geq-1}H^k(V)
=
\ker(\ad X)/\im(\ad X)
$$
is a $\mb Z$-graded Lie superalgebra.
However, in order to stick to the tradition,
we shift the index by 1, namely for $k\geq0$ we let
$$
H^k(V) 
=
\ker\big(\ad X\big|_{W^{\ch}_{k+1}(\Pi V)}\big)\big/[X,W^{\ch}_k(\Pi V)
\,.
$$

We will generalize the above cohomological construction 
for an arbitrary module $M$ over a vertex algebra $V$.
To this end, we first need to generalize the construction 
of the Lie superalgebra $W^\ch(\Pi V)$.

\subsection{The space $W^{\ch}(\Pi V,\Pi M)$
% as a reduction 
% of the Lie superalgebra $W^{\ch}(\Pi V\oplus \Pi M)$
}
\label{sec:4.2-ch}

Let $V$ and $M$ be vector superspaces with parity $p$, endowed 
with a structure of $\mb F[\partial]$-modules.
As usual we denote by $\Pi V$ and $\Pi M$ the same spaces with reversed parity $\bar p=1-p$.
We define the $\mb Z$-graded vector superspace (with parity still denoted by $\bar p$)
\begin{equation}\label{eq:WVM}
W^{\ch}(\Pi V,\Pi M)=\bigoplus_{k\geq-1}W^{\ch}_k(\Pi V,\Pi M)
\,,
\end{equation}
where $W^{\ch}_k(\Pi V,\Pi M)$ is the space of linear maps
$$
(\Pi V)^{\otimes(k+1)}\otimes\mc O^{\star T}_{k+1}\to
\Pi M[\lambda_0,\dots,\lambda_k]/\langle\partial+\lambda_0+\dots+\lambda_k\rangle
$$
satisfying the sesquilinearity conditions \eqref{20160629:eq4}, 
invariant with respect to the action of the symmetric group \eqref{20160629:eq5},
i.e.
\begin{align*}
& W^{\ch}_k(\Pi V,\Pi M) \\
& =\Hom_{\mc D^T_{k+1}}
\big(
(\Pi V)^{\otimes(k+1)}\otimes\mc O^{\star T}_{k+1},
\Pi M[\lambda_0,\dots,\lambda_k]/\langle\partial+\lambda_0+\dots+\lambda_k\rangle
\big)^{S_{k+1}}
\,.
\end{align*}
Of course, the Lie superalgebra $W^\ch(\Pi V)$ is a special case of \eqref{eq:WVM}
for $M=V$.

The space $W^{\ch}(\Pi V,\Pi M)$ is obtained as a subquotient of the universal
Lie superalgebra $W^{\ch}(\Pi V\oplus \Pi M)$,
via the canonical isomorphism of superspaces
\begin{equation}\label{100421:eq1-ch}
\begin{array}{c}
\mc U/\mc K
\stackrel{\sim}{\longrightarrow} W^{\ch}(\Pi V, \Pi M)\,,
\end{array}
\end{equation}
where $\mc U=\bigoplus_{k\geq-1}\mc U_k$ and $\mc K=\bigoplus_{k\geq-1}\mc K_k$,
and $\mc U_k$, $\mc K_k$ are the following subspaces of $W^{\ch}_k(\Pi V\oplus \Pi M)$:
\begin{align*}
&
\mc U_k
=
\Hom_{\mc D^T_{k+1}}
\!\!\!\Big(
(\Pi V\oplus \Pi M)^{\otimes(k+1)}\otimes\mc O^{\star T}_{k+1},
\Pi M[\lambda_0,\dots,\lambda_k]/\langle\partial\!+\!\lambda_0\!+\!\cdots\!+\!\lambda_k\rangle
\Big)^{S_{k+1}} , \\
&
\mc K_k
= 
\big\{Y \in \mc U_k \,\big|\, Y((\Pi V)^{\otimes(k+1)}\otimes\mc O^{\star T}_{k+1})=0\big\} \,,
\end{align*}
and \eqref{100421:eq1-ch} is the restriction map.
For example, we have the canonical isomorphisms 
\begin{equation}\label{20160914:eq1}
W^{\ch}_{-1}(\Pi V,\Pi M)\simeq \Pi M/\partial \Pi M
\,\,,\,\,
W^{\ch}_{0}(\Pi V,\Pi M)\simeq \Hom_{\mb F[\partial]}(\Pi V,\Pi M)
\,.
\end{equation}

The proof of the following proposition is obvious.
\begin{proposition}\label{100421:prop-ch}
Let\/ $X\in W^{\ch}_h(\Pi V\oplus \Pi M)$. Then the adjoint action of\/ $X$ 
on\/ $W^{\ch}(\Pi V\oplus\Pi M)$ leaves the subspaces\/ $\mc U$ and\/ $\mc K$ invariant
provided that the following two conditions hold:
\begin{enumerate}[(i)]
\item
$X^{z_0,\dots,z_h}_{\lambda_0,\dots,\lambda_h}(v_0,\dots,v_h;f)\in 
\Pi M[\lambda_0,\dots,\lambda_h]/\langle\partial+\lambda_0+\dots+\lambda_h\rangle$
if one of the arguments $v_i$ lies in $\Pi M$,
\item
$X^{z_0,\dots,z_h}_{\lambda_0,\dots,\lambda_h}(v_0,\dots,v_h;f)\in 
\Pi V[\lambda_0,\dots,\lambda_h]/\langle\partial+\lambda_0+\dots+\lambda_h\rangle$
if all the arguments $v_i$ lie in $\Pi V$.
\end{enumerate}
In this case, $\ad X$ induces a well-defined linear map on the space\/ $W^{\ch}(\Pi V,\Pi M)$,
via the isomorphism \eqref{100421:eq1-ch}.
\end{proposition}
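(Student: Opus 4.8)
The plan is to exploit that, by Theorem \ref{20170603:thm2}, $W^{\ch}(\Pi V\oplus\Pi M)$ is a $\mb Z$-graded Lie superalgebra, so that $\ad X$ is already a well-defined linear operator on it, automatically preserving the sesquilinearity conditions \eqref{20160629:eq4} and the $S_{k+1}$-invariance \eqref{20160629:eq5} (since the bracket of two such maps is again such a map). Hence the only things left to check are the two extra defining conditions: that a map land inside $\Pi M[\lambda_0,\dots,\lambda_k]/\langle\partial+\lambda_0+\dots+\lambda_k\rangle$ (this is what cuts $\mc U$ out of $W^{\ch}(\Pi V\oplus\Pi M)$), and that it vanish on $(\Pi V)^{\otimes(k+1)}\otimes\mc O^{\star T}_{k+1}$ (this is what cuts $\mc K$ out of $\mc U$). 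By \eqref{eq:box}, $[X,Y]$ is an $\mb F$-linear combination of terms $(X\circ_1 Y)^{\sigma^{-1}}$ and $(Y\circ_1 X)^{\sigma^{-1}}$ over suitable shuffles $\sigma$; and since by \eqref{20160629:eq5} the right $S$-action merely permutes the arguments $v_i$ together with the variables $z_i,\lambda_i$, leaving the target space fixed and carrying $\Pi V$-arguments to $\Pi V$-arguments, it suffices to analyze the bare $\circ_1$-products, for which $f\circ_1 g = f(g\otimes 1\otimes\dots\otimes 1)$.

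For $\mc U$-invariance I would read off from \eqref{circ1} that evaluating $(f\circ_1 g)(v_0,\dots;h)$ amounts to feeding a value of $g$ on the first block of arguments — after the $\lambda$-specializations, the $z$-derivatives indicated by $\to$, and the substitutions $z_i=z_0$ — into the first slot of $f$, while the remaining $v_i$ are passed through. So if $Y\in\mc U$: in $X\circ_1 Y$ the first argument of $X$ is a value of $Y$, which lies in $\Pi M$, whence hypothesis (i) forces the result into $\Pi M$; in $Y\circ_1 X$ the outer map is $Y$, so its image lies in $\Pi M$ anyway. Summing over shuffles, $X\Box Y$ and $Y\Box X$, hence $[X,Y]$, land in $\Pi M$, i.e.\ $[X,Y]\in\mc U$.

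For $\mc K$-invariance I would take $Y\in\mc K$ and evaluate $[X,Y]$ on a tensor all of whose $V$-arguments lie in $\Pi V$. In each term of $X\Box Y$, the map $Y$ is evaluated on a sub-tuple of $\Pi V$-arguments, hence gives $0$ since $Y\in\mc K$, so the term dies by linearity of $X$. In each term of $Y\Box X$, the inner map $X$ is evaluated on a sub-tuple of $\Pi V$-arguments, so by hypothesis (ii) its value lies in $\Pi V$, and then $Y$ is evaluated on a tuple all of whose entries (that value together with the passed-through $v_i$) lie in $\Pi V$, so it again vanishes. Therefore $[X,Y]$ vanishes on all $(\Pi V)$-valued tensors, i.e.\ $[X,Y]\in\mc K$; combined with the previous paragraph, $\ad X$ preserves $\mc U$ and $\mc K$ and so descends, via \eqref{100421:eq1-ch}, to a linear operator on $W^{\ch}(\Pi V,\Pi M)$. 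The one slightly delicate point — the reason the statement is nevertheless essentially immediate — is verifying that the $\lambda$-specialization / $z$-derivative / substitution bookkeeping built into the composition formulas \eqref{circ1}--\eqref{circ5} neither moves a value out of $\Pi M$ nor destroys the vanishing on $\Pi V$-inputs; this holds simply because all those operations are $\mb F$-linear in the arguments and never mix the $\Pi V$- and $\Pi M$-summands of the target module.
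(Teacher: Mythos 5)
Your proof is correct and is exactly the argument the authors have in mind: the paper simply declares this proposition obvious and gives no details, and your write-up — reducing to $\circ_1$-products via \eqref{eq:box}, using hypothesis (i) to keep $X\Box Y$ valued in $\Pi M$, $Y\in\mc U$ to handle $Y\Box X$, and hypotheses (ii) plus $Y\in\mc K$ to get vanishing on $\Pi V$-inputs — is the standard unpacking of that "obvious" proof. Nothing to object to.
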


%%%
\subsection{Cohomology of a vertex algebra with coefficients in a module}\label{sec:5.1}

As before, let $V$ and $M$ be vector superspaces with parity $p$,
endowed with $\mb F[\partial]$-module structures.
Consider the reduced superspace
$W^{\ch}(\Pi V,\Pi M)$
introduced in Section \ref{sec:4.2-ch}, with parity denoted by $\bar p$.

According to Definition \ref{def-mod},
to say that $V$ is a non-unital vertex algebra and $M$ is a $V$-module
is equivalent to say that
we have a vertex algebra structure 
$\int^\lambda\![\cdot\,_\lambda\,\cdot]^{\widetilde{}}$
on the $\mb F[\partial]$-module $V\oplus M$
extending that of $V$,
such that the integral of the $\lambda$-bracket
restricted to $V\otimes M$ is given by the vertex algebra action of $V$ on $M$,
and restricted to $M\otimes M$ vanishes.
Hence, such a structure corresponds, bijectively,
to an element $X$ of the following set:
\begin{equation}\label{100421:eq7-ch}
\begin{split}
\Big\{
X\in W^{\ch}_1(\Pi V&\oplus\Pi M)_{\bar1}\,\big|\,
[X,X]=0
\,,\,\,X_{\lambda_0,\lambda_1}(M\otimes M\otimes\mc O^{\star T}_2)=0
\\
& X_{\lambda_0,\lambda_1}(V\otimes V\otimes\mc O^{\star T}_2)\subset 
V[\lambda_0,\lambda_1]/\langle \partial+\lambda_0+\lambda_1\rangle
\,,\\
& X_{\lambda_0,\lambda_1}(V\otimes M\otimes\mc O^{\star T}_2)\subset 
M[\lambda_0,\lambda_1]/\langle \partial+\lambda_0+\lambda_1\rangle
\Big\}
\,.
\end{split}
\end{equation}
Explicitly, to $X$ in \eqref{100421:eq7-ch}
we associate the corresponding integral $\lambda$-bracket on $V$ given by \eqref{20160719:eq3}
and the corresponding integral $\lambda$-action of $V$ on $M$ given by
\begin{equation}\label{100421:eq8-ch}
\int^\lambda\!d\sigma\,v_\sigma m = 
(-1)^{p(v)}X^{z_0,z_1}_{\lambda,-\lambda-\partial}\Big(v,m;\frac1{z_{10}}\Big)
\,\,,\,\,\,\,
v\in V,\,m\in M\,.
\end{equation}

Note that every element $X$ in the set \eqref{100421:eq7-ch}
satisfies conditions (i) and (ii) in Proposition \ref{100421:prop-ch}.
Hence, $\ad X$ induces a well-defined endomorphism $d_X$ of $W^{\ch}(\Pi V,$ $\Pi M)$
such that $d_X^2=0$,
thus making $(W^{\ch}(\Pi V,\Pi M),d_X)$ a cohomology complex.
\begin{definition}
Let $V$ be a (non-unital) vertex algebra
and $M$ be a $V$-module.
The corresponding \emph{cohomology complex} of $V$ with coefficients in $M$ 
is defined as
$$
(W^\ch(\Pi V,\Pi M),d_X)
\,,
$$
where $X$ is the element of the set \eqref{100421:eq7-ch}
associated to the integral $\lambda$-bracket of $V$ by \eqref{20160719:eq3b}
and to the $V$-module structure of $M$ by \eqref{100421:eq8-ch}.
We denote by $H^{\ch}(V,M)=\bigoplus_{k\in\mb Z_+}H^k(V,M)$
the corresponding vertex algebra cohomology.
In the special case of the adjoint representation $M=V$
we recover the vertex algebra cohomology $H^{\ch}(V)$ from Definition \ref{def:vacoho}.
\end{definition}

The explicit formula for the differential $d_X$ can be obtained from \eqref{circ1}. 
In order to write such a formula, we need to split $h\in\mc O^{\star T}_{k+1}$ as in \eqref{circ2}.
For every $i=0,\dots,k$, we let
$$
h(z_0,\dots,z_k)
=
f_i(z_0,\stackrel{i}{\check{\dots}},z_k)g_i(z_0,\dots,z_k)
\,,
$$
where $g_i$ has no poles at $z_j=z_\ell$ for $j,\ell\neq i$,
and  for every $0\leq i<j\leq k$, we let
$$
h(z_0,\dots,z_k)
=
f_{ij}(z_i,z_j)g_{ij}(z_0,\dots,z_k)
\,,
$$
where $g_{ij}$ has no poles at $z_i=z_j$.
As a result,
for $Y\in W^{\ch}_{k-1}(\Pi V,\Pi M)$, we have
\begin{equation}\label{160912:eq1}
\begin{split}
& (d_X Y)^{z_0,\dots,z_k}_{\lambda_0,\dots,\lambda_k}(v_0,\dots,v_k;h(z_0,\dots,z_k))
= 
\sum_{i=0}^k (-1)^{(1+p(v_i))(s_{i+1,k}+k-i)}
\\
& \times
X^{w,z_i}_{\lambda_0+\stackrel{i}{\check{\dots}}+\lambda_k,\lambda_i}
\big(Y^{z_0,\stackrel{i}{\check{\dots}},z_k}_{
\lambda_0-\dd_{z_0},\stackrel{i}{\check{\dots}},\lambda_k-\dd_{z_k}}
(v_0,\stackrel{i}{\check{\dots}},v_k;f_i)_\to,v_i;g_i\big|_{z_0=\stackrel{i}{\check{\cdots}}=z_k=w}\big)
\\
& -(-1)^{\bar p(Y)}
\sum_{0\leq i<j\leq k} \!\!\!\!
(-1)^{(1+p(v_i))(s_{0,i-1}+i)+(1+p(v_j))(s_{0,i-1}+s_{i+1,j-1}+j-1)}
\\
& \times 
Y^{w,z_0,\stackrel{i}{\check{\dots}}\stackrel{j}{\check{\dots}},z_k}_{\lambda_i+\lambda_j,
\lambda_0,\stackrel{i}{\check{\dots}}\stackrel{j}{\check{\dots}},\lambda_k}
\big(X^{z_i,z_j}_{\lambda_i-\dd_{z_i},\lambda_j-\dd_{z_j}}
(v_i,v_j;f_{ij})_\to,v_0,\stackrel{i}{\check{\dots}}\,\stackrel{j}{\check{\dots}},v_k;g_{ij}\big|_{z_i=z_j=w}\big)
\,,
\end{split}
\end{equation}
where $s_{i,j}$ is given by
\begin{equation}\label{eq:sij}
s_{ij}=p(v_i)+\cdots+p(v_j) \,\,\text{ if }\,\, i\leq j
\,\,\text{ and }\,\,
s_{ij}=0 \,\,\text{ if }\,\, i>j
\,.
\end{equation}

As in \eqref{20160914:eq1}, we have the isomorphism 
$W^{\ch}_{-1}(\Pi V,\Pi M)\simeq M/\partial M$
obtained by identifying the map $Y:\,\mb F\to M/\langle\partial\rangle$ with
the element
\begin{equation}\label{20160920:eq1}
Y=Y(1)\in M/\partial M
\,\,,\,\,\text{ of parity }\,\,
p(Y)=1+\bar p(Y)
\,.
\end{equation}
We have the isomorphism $W^{\ch}_{0}(\Pi V,\Pi M)\simeq \Hom_{\mb F[\partial]}(V,M)$,
obtained by identifying the map 
$Y^z_{\lambda}(v;f(z)):\,V\simeq V\otimes\mc O^{\star T}_1\to M[\lambda_0]/\langle\partial+\lambda_0\rangle$
with the $\mb F[\partial]$-module homomorphism
$Y:\, V\to M$ given by
\begin{equation}\label{20160920:eq2}
Y(v)=Y^z_{\lambda}(v;1)
\,\,,\,\,\text{ of parity }\,\,
\bar p(Y)
\,.
\end{equation}
Finally, we identify an element
$$
Y^{z_0,z_1}_{\lambda_0,\lambda_1}(v_0,v_1;f(z_0,z_1)):\,V\otimes V \otimes\mc O^{\star T}_2
\to M[\lambda_0,\lambda_1]/\langle\partial+\lambda_0+\lambda_1\rangle
$$
of $W^{\ch}_{1}(\Pi V,\Pi M)$
with the integral $\lambda$-bracket 
$\tint^\lambda d\sigma[\cdot\,_\sigma\,\cdot]^Y:\, V\otimes V\to M[\lambda]$, 
given by (cf. \eqref{20160719:eq3})
\begin{equation}\label{20160920:eq3}
\int^{\lambda}\!d\sigma[u_\sigma v]^Y
=
(-1)^{p(u)}Y^{z_0,z_1}_{\lambda,-\lambda-\partial}\Big(u,v;\frac1{z_{10}}\Big)
\,\,,\,\,\text{ of parity }\,\,
1+\bar p(Y)
\,.
\end{equation}
As in Proposition \ref{20160719:prop},
the sesquilinearity and symmetry conditions for $Y$
translate to the corresponding sesquilinearity and symmetry conditions for
the integral $\lambda$-bracket, as in axioms (ii) and (iii) of Definition \ref{def5}.

We next write an explicit formula for the differential 
$d_X\colon W^\ch_{k-1}(\Pi V,\Pi M)\to W^\ch_k(\Pi V,\Pi M)$
in the special cases $k=0,1$ and $2$,
under the above identifications.
For $k=0$ we have $Y\in M/\partial M\simeq W^\ch_{-1}(\Pi V,\Pi M)$ and,
by equation \eqref{160912:eq1},
$d_XY$ corresponds to the following $\mb F[\partial]$-module homomorphism
from $V$ to $M$:
\begin{equation}\label{160916:eq1}
(d_X Y)(v)
=
X^{w,z_0}_{0,\lambda_0}(Y,v_0;1)
=
-(-1)^{(1+p(v))p(Y)}{v}_{-\partial}Y
\,.
\end{equation}
Next, for $k=1$, let $Y\in\Hom_{\mb F[\partial]}(V,M)\simeq W^\ch_0(\Pi V,\Pi M)$.
Then $d_XY$, given by equation \eqref{160912:eq1},
corresponds to the following integral $\lambda$-bracket of $u,v\in V$:
\begin{equation}\label{160916:eq2}
\begin{array}{l}
\vphantom{\Big(}
\displaystyle{
(-1)^{\bar p(Y)}
\int^\lambda\!d\sigma
[{u}_\sigma{v}]^{d_XY}
} \\
\vphantom{\Big(}
\displaystyle{
=
\int^\lambda\!d\sigma
[{Y(u)}_\sigma{v}]
+(-1)^{\bar p(Y) p(u)}
\int^\lambda\!d\sigma
[{u}_\sigma{Y(v)}]
-Y\Big(\int^\lambda\!d\sigma[{u}_\sigma{v}]\Big)
\,.}
\end{array}
\end{equation}
Finally, for $k=2$ we have $X\in W_1^\ch(\Pi V)_{\bar 1}$ and $Y\in W^\ch_1(\Pi V,\Pi M)$.
In this case $d_X(Y)=X\Box Y-(-1)^{\bar p(Y)} Y\Box X\in W^\ch_2(\Pi V,\Pi M)$,
where $X\Box Y$ is given by the same formula as in \eqref{20160718:eq5}.

%%%
\subsection{Casimirs, derivations and extensions}\label{sec:5.2}

Let $V$ be a non-unital vertex algebra and let $M$ be a $V$-module.
\begin{definition}\label{def:cas}
A \emph{Casimir element} is an element $\int m\in M/\partial M$
such that  $V_{-\partial}m=0$.
Denote by $\Cas(V,M)$ the space of Casimirs.
Note that, due to skewsymmetry of the $\lambda$-bracket,
$\Cas(V):=\Cas(V,V)=\big\{\tint v\in V/\partial V\,\big|\,[v_{\lambda}V]|_{\lambda=0}=0\big\}$.
\end{definition}
\begin{definition}\label{def:der}
A \emph{derivation} from $V$ to $M$
is an $\mb F[\partial]$-module homomorphism $D:\,V\to M$ such that 
\begin{equation}\label{eq:der}
D\Big(\int^\lambda\!d\sigma[{u}_\sigma{v}]\Big)
=
\int^\lambda\!d\sigma
({D(u)}_\sigma{v})
+(-1)^{p(D) p(u)}
\int^\lambda\!d\sigma
({u}_\sigma{D(v)})
\,.
\end{equation}
We say that a derivation is \emph{inner} if it has the following form:
\begin{equation}\label{eq:inner}
D_Y(v)=Y_{\lambda}v\,|_{\lambda=0}
\,\,\text{ for some }\,\, Y\in M/\partial M
\,.
\end{equation}
In the special case when $V=M$
we have the usual definition of a derivation of the vertex algebra $V$.
Denote by $\Der(V,M)$ the space of derivations from $V$ to $M$,
and by $\Inder(V,M)$ the subspace of inner derivations.
We also denote $\Der(V)=\Der(V,V)$ and $\Inder(V)=\Inder(V,V)$.
\end{definition}

We can now describe more explicitly the low degree cohomology.

\begin{theorem}\label{thm:lowcoho}
Let\/ $V$ be a (non-unital) vertex algebra and let\/ $M$ be a\/ $V$-module. Then:
\begin{enumerate}[(a)]
\item
$H^0(V,M)=\Cas(V,M)$.
In particular, $H^0(V)=\Cas(V)$.
\item
$H^1(V,M)=\Der(V,M)/\Inder(V,M)$.
In particular, $H^1(V)$ equals the factor of the Lie algebra\/ $\Der(V)$ 
of all derivations of\/ $V$
by the ideal of all inner derivations.
\item
$H^2(V,M)$ is the space of isomorhism classes of\/ $\mb F[\partial]$-split extension
of the vertex algebra\/ $V$ by the\/ $V$-module\/ $M$,
viewed as a (non unital) vertex algebra with trivial integral\/ $\lambda$-bracket.
\end{enumerate}
\end{theorem}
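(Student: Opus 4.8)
The plan is to realize $H^\bullet(V,M)$ as a subquotient of the cohomology of the Lie superalgebra $W^\ch(\Pi V\oplus\Pi M)$ via \eqref{100421:eq1-ch}, and then to read the low-degree groups off the explicit formulas for the differential $d_X$ collected in Section \ref{sec:5.1}. With the degree shift (cochain degree $k$ being $W^\ch_{k-1}$), and since $W^\ch_{-2}(\Pi V,\Pi M)=0$, we get $H^0(V,M)=\ker\bigl(d_X\colon W^\ch_{-1}(\Pi V,\Pi M)\to W^\ch_0(\Pi V,\Pi M)\bigr)$; identifying $W^\ch_{-1}(\Pi V,\Pi M)\simeq M/\partial M$ as in \eqref{20160914:eq1}, formula \eqref{160916:eq1} gives $(d_XY)(v)=-(-1)^{(1+p(v))p(Y)}\,v_{-\partial}Y$, so $d_XY=0$ is exactly the defining condition of a Casimir (Definition \ref{def:cas}); this proves (a). For (b), under $W^\ch_0(\Pi V,\Pi M)\simeq\Hom_{\mb F[\partial]}(V,M)$ formula \eqref{160916:eq2} shows that $d_XY=0$ is precisely the derivation identity \eqref{eq:der}, hence $\ker\bigl(d_X|_{W^\ch_0}\bigr)=\Der(V,M)$; and since $d_X^2=0$, the subspace $d_X\bigl(W^\ch_{-1}(\Pi V,\Pi M)\bigr)\subseteq\Der(V,M)$ is, by \eqref{160916:eq1} and the skewsymmetry of the $\lambda$-action, exactly the space $\Inder(V,M)$ of inner derivations \eqref{eq:inner}. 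Thus $H^1(V,M)=\Der(V,M)/\Inder(V,M)$, and $M=V$ gives the assertion about $\Der(V)$ and its ideal.

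For part (c) I would argue operadically. Let $X\in W^\ch_1(\Pi V\oplus\Pi M)_{\bar 1}$ be the element of the set \eqref{100421:eq7-ch} attached to the vertex algebra $V$ and the $V$-module $M$; by Proposition \ref{100421:prop-ch} its adjoint action descends to the differential $d_X$ on $W^\ch(\Pi V,\Pi M)$. Given an $\mb F[\partial]$-split extension $0\to M\to\widehat V\to V\to0$ in which $M$ is an abelian ideal inducing the prescribed module structure, fix an $\mb F[\partial]$-module splitting, so that $\widehat V=V\oplus M$ as an $\mb F[\partial]$-module. Since $\widehat V\to V$ is a vertex algebra morphism and the $\lambda$-action of $V$ on $M$ is the given one, the structure of $\widehat V$ agrees with the split structure $X$ on $M\otimes M$, $V\otimes M$ and $M\otimes V$, and on $V\otimes V$ it is $[\cdot_\sigma\cdot]^V$ plus an $M$-valued term; thus the structure of $\widehat V$ is the odd element $X+Y$ of $W^\ch_1(\Pi V\oplus\Pi M)$, where $Y\in W^\ch_1(\Pi V,\Pi M)$ encodes that term (extended by zero on arguments from $M$), the sesquilinearity and skewsymmetry of $\widehat V$ forcing those of $Y$. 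By Theorem \ref{20160719:thm}, $X+Y$ defines a (non-unital) vertex algebra structure iff $[X+Y,X+Y]=0$, which, using $[X,X]=0$, the vanishing $Y\Box Y=0$ (as $Y$ kills any argument from $M$ while taking values in $M$), and the fact that $[X,Y]$ likewise vanishes whenever an argument lies in $M$, is equivalent to $d_XY=0$. Conversely every $2$-cocycle $Y\in W^\ch_1(\Pi V,\Pi M)$ produces via $X+Y$ an $\mb F[\partial]$-split extension. This sets up a bijection between $\mb F[\partial]$-split extensions equipped with a splitting and $2$-cocycles.

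It remains to pass to isomorphism classes. An isomorphism of extensions, or a change of splitting of a fixed extension, is an $\mb F[\partial]$-module automorphism of $V\oplus M$ restricting to the identity on $M$ and inducing the identity on $V=(V\oplus M)/M$; such an automorphism has the form $g_\psi=\id+\psi$ for an $\mb F[\partial]$-linear $\psi\colon V\to M$ (extended by zero on $M$), and is invertible with $g_\psi^{-1}=\id-\psi$ since $\psi^2=0$. Transporting the operad element $X+Y$ along the automorphism induced by $g_\psi$, and using that $\psi$ annihilates $M$ and that $M$ is abelian (so all contributions quadratic in $\psi$, and all contributions involving both $\psi$ and $Y$, vanish), one finds that $X+Y$ goes to $X+Y'$ with $Y'-Y=d_XZ_\psi$, where $Z_\psi\in W^\ch_0(\Pi V,\Pi M)$ corresponds to $\psi$ under \eqref{20160920:eq2}; the computation is literally formula \eqref{160916:eq2}, and conversely every coboundary arises this way. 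Hence the class in $H^2(V,M)=\ker\bigl(d_X|_{W^\ch_1}\bigr)/d_X\bigl(W^\ch_0\bigr)$ of the cocycle attached to an extension is independent of the splitting and of the isomorphism class, and the induced map from isomorphism classes of $\mb F[\partial]$-split extensions of $V$ by $M$ to $H^2(V,M)$ is a bijection.

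The parts that are mere inspection are (a), (b), and the ``$2$-cocycle $\Rightarrow$ extension'' half of (c). The substantive work is in (c): checking that the $M$-component of the integrated Jacobi identity for $\widehat V$ is exactly the cocycle identity $d_XY=X\Box Y-(-1)^{\bar p(Y)}Y\Box X=0$, and that conjugation by $g_\psi$ shifts the cocycle by precisely the coboundary \eqref{160916:eq2}. Both reduce to careful tracking of the Koszul--Quillen signs, and I expect this sign bookkeeping, rather than any conceptual difficulty, to be the only genuine obstacle.
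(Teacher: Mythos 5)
Your proposal is correct and follows exactly the route the paper intends: the paper's proof is the one-line remark that the theorem is ``straightforward, using the explicit formulas \eqref{160916:eq1}, \eqref{160916:eq2} and \eqref{20160718:eq5} for the differential'' (citing the Lie conformal algebra case in \cite{DSK09} for the extension argument in (c)), and your write-up is precisely a careful execution of that plan, including the correct identification of $d_X(W^\ch_{-1})$ with $\Inder(V,M)$ via the skewsymmetry \eqref{eq:right} and the reduction of $[X+Y,X+Y]=0$ to $d_XY=0$ for part (c). Nothing further is needed.
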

\begin{proof}
Straightforward,
using the explicit formulas \eqref{160916:eq1}, \eqref{160916:eq2} and \eqref{20160718:eq5} 
for the differential.
(Cf. \cite{DSK09} for a proof in the case of Lie conformal algebras.)
\end{proof}

%%%%%%%%%%%%%%%%%%%%%%%%%%%%%%%%%%%%%%%
\section{The associated graded of the chiral operad}\label{sec:grch}
%%%%%%%%%%%%%%%%%%%%%%%%%%%%%%%%%%%%%%%

%%%
\subsection{Filtration on $P^{\ch}$}\label{sec:filch}

We introduce an increasing filtration on the space of translation invariant rational functions 
$\mc O_{k+1}^{\star T} = \mb F[z_{ij}^{\pm 1}]_{0\leq i<j\leq k}$,
given by the number of divisors:
\begin{equation}\label{fil1}
\begin{split}
\fil^{-1} \mc O_{k+1}^{\star T} &= \{0\} \subset \fil^0 \mc O_{k+1}^{\star T} = \mc O_{k+1}^{T} = \mb F[z_{ij}] \subset
\fil^1 \mc O_{k+1}^{\star T} = \sum_{i<j} \mc O_{k+1}^{T} [z_{ij}^{-1}] \subset \\
\cdots&\subset \fil^r \mc O_{k+1}^{\star T} = \sum \mc O_{k+1}^{T} [z_{i_1,j_1}^{-1} \cdots z_{i_r,j_r}^{-1}] \subset\cdots
\subset  \mc O_{k+1}^{\star T}.
\end{split}
\end{equation}
In other words, the elements of $\fil^r \mc O_{k+1}^{\star T}$ are sums of rational functions with
at most $r$ divisors each (not counting multiplicities).
For example, 
\begin{equation}\label{fil2}
\frac1{z_{01} z_{12} z_{02}} = \frac1{z_{01} z_{02}^2} + \frac1{z_{12} z_{02}^2} \in \fil^2 \mc O_{3}^{\star T}
\end{equation}
has three divisors, but it lies in $\fil^2 \mc O_{3}^{\star T}$.
In fact, 
by using relations similar to \eqref{fil2},
it is not hard to prove (cf. Lemma \ref{lem:gr} below)
that the filtration \eqref{fil1} stabilizes:
$$
\fil^k\mc O^{\star T}_{k+1}=\mc O^{\star T}_{k+1}
\,.
$$
This filtration is invariant under the actions of $\mc D_{k+1}^T$ and of the symmetric group $S_{k+1}$.
It is compatible with the multiplication:
\begin{equation}\label{fil3}
(\fil^r \mc O_{k+1}^{\star T}) \cdot (\fil^s \mc O_{k+1}^{\star T}) \subset \fil^{r+s} \mc O_{k+1}^{\star T}.
\end{equation}
Furthermore, if $g\in \fil^s \mc O_{k+1}^{\star T}$ has no pole at $z_i=z_j$, then 
$g |_{z_i=z_j} \in \fil^s \mc O_{k}^{\star T}$.

Now we define a decreasing filtration of $P^\ch(k+1)$ by
\begin{equation}\label{fil4}
\fil^r P^\ch(k+1) = \bigl\{ X \in P^\ch(k+1) \,\big|\, X( V^{\otimes(k+1)} \otimes \fil^{r-1} \mc O_{k+1}^{\star T})=0 \bigr\}.
\end{equation}
We have: $\fil^0 P^\ch(k+1) = P^\ch(k+1)$ and $\fil^{k+1} P^\ch(k+1) = \{0\}$.

\begin{proposition}\label{prop:filch}
With the above filtration, $P^\ch$ is a filtered operad $($cf.\ \eqref{eq:filop}$)$.
\end{proposition}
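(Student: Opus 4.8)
The goal is to verify the two conditions in the definition of a filtered operad (see \eqref{eq:filop}): that the filtration \eqref{fil4} is preserved by the action of the symmetric groups, and that it is compatible with the compositions \eqref{circ5} in the sense that $Y(X_1\otimes\dots\otimes X_n)\in\fil^{r+s_1+\cdots+s_n}P^\ch(M)$ whenever $Y\in\fil^r P^\ch(n)$ and $X_i\in\fil^{s_i}P^\ch(m_i)$.

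The plan is as follows. First I would dispose of the $S_{k+1}$-equivariance. By \eqref{20160629:eq5}, $X^\sigma$ evaluated on $v_0\otimes\dots\otimes v_k\otimes f$ is (up to sign) $X$ evaluated on the permuted inputs and on $f(z_{i_0},\dots,z_{i_k})$; since the filtration \eqref{fil1} on $\mc O^{\star T}_{k+1}$ counts the number of distinct divisors and this count is manifestly invariant under permuting the variables $z_i$, we have $f\in\fil^{r-1}\mc O^{\star T}_{k+1}$ iff $f(z_{i_0},\dots,z_{i_k})\in\fil^{r-1}\mc O^{\star T}_{k+1}$. Hence $X\in\fil^r P^\ch(k+1)$ forces $X^\sigma\in\fil^r P^\ch(k+1)$.

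The substantive part is the compatibility with composition. Since every composition \eqref{circ5} is obtained by iterating $\circ_1$-products and $S_n$-actions (via \eqref{eq:operad8a} together with the already-proved equivariance), it suffices to check that $Y\circ_1 X\in\fil^{r+s}P^\ch(k+m+1)$ when $Y\in\fil^r P^\ch(m+1)$ and $X\in\fil^s P^\ch(k+1)$. So take $h\in\fil^{r+s-1}\mc O^{\star T}_{k+m+1}$ and factor it as in \eqref{circ2}, $h=f\,g$ with $f\in\mc O^{\star T}_{k+1}$ and $g\in\mc O^{\star T}_{k+m+1}$ having no pole at $z_i=z_j$ for $0\le i<j\le k$. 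The key point is a bookkeeping lemma on divisors: one can choose the factorization so that if $f$ has $a$ distinct divisors and $g$ has $b$ distinct divisors, then $a+b\le r+s-1$ — indeed among the (at most $r+s-1$) distinct divisors of $h$, the ones of the form $z_{ij}$ with $i,j\le k$ go into $f$ and the rest into $g$. Then in the right-hand side of \eqref{circ1}, the inner expression $X^{z_0,\dots,z_k}_{\la_0-\dd_{z_0},\dots,\la_k-\dd_{z_k}}(v_0,\dots,v_k;f)$ vanishes unless $f$ has at least $s$ divisors, i.e.\ unless $a\ge s$; and the outer $Y$-evaluation is applied to $g|_{z_1=\cdots=z_k=z_0}$, which by the last remark after \eqref{fil3} lies in $\fil^b\mc O^{\star T}_{m+1}$, so this vanishes unless $b\ge r$. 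But $a\ge s$ and $b\ge r$ together with $a+b\le r+s-1$ is impossible; hence $(Y\circ_1 X)(V^{\otimes(k+m+1)}\otimes\fil^{r+s-1}\mc O^{\star T}_{k+m+1})=0$, i.e.\ $Y\circ_1 X\in\fil^{r+s}P^\ch(k+m+1)$. One must also note that the derivatives $\dd_{z_i}$ appearing in \eqref{circ1} do not raise the number of distinct divisors (differentiating $z_{ij}^{-n}$ produces $z_{ij}^{-n-1}$, same divisor), so the divisor-counting estimates are unaffected by the $\to$ operation.

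The main obstacle is making precise and correctly proving the bookkeeping lemma: that $h\in\fil^{r+s-1}$ admits a factorization $h=fg$ of the required shape with the divisor counts of $f$ and $g$ adding up to at most $r+s-1$. The subtlety is that \eqref{fil1} defines $\fil^t$ as a \emph{sum} of functions each with at most $t$ divisors, so one should argue termwise: write $h=\sum_\alpha h_\alpha$ with each $h_\alpha$ having at most $r+s-1$ divisors, factor each $h_\alpha=f_\alpha g_\alpha$ with $f_\alpha$ collecting the divisors $z_{ij}$, $i,j\le k$, and $g_\alpha$ the others, and then observe that the $\circ_1$-product is additive in $h$ so the vanishing argument applies to each term. (One should also recall from the proof of Lemma \ref{lem:pch} that $Y\circ_1 X$ is independent of the choice of factorization, so this termwise choice is legitimate.) Everything else — additivity of $\circ_1$ in each argument, behavior of restriction $g\mapsto g|_{z_i=z_j}$ on the filtration, invariance of the divisor count under the $\mc D^T$-action — is already recorded in the excerpt around \eqref{fil1}--\eqref{fil3} or is immediate.
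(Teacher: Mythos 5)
Your proof is correct and follows essentially the same route as the paper's: equivariance via invariance of the divisor filtration on $\mc O^{\star T}_{k+1}$, reduction of general compositions to the $\circ_1$-product, and the termwise divisor count for $h=fg$ showing that either the inner $X$ or the outer $Y$ must annihilate its argument. The paper phrases the dichotomy as ``$f\in\fil^{r-1}$ or $g\in\fil^{s-1}$'' rather than your contrapositive $a\ge s$, $b\ge r$ contradicting $a+b\le r+s-1$, but the content is identical.
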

\begin{proof}
The filtration \eqref{fil4} is invariant under the action of the symmetric group because the filtration \eqref{fil1} is.
In any operad, the compositions can be obtained from the $\circ_1$-product and the action of the symmetric group
(see \eqref{eq:operad8}, \eqref{eq:operad8a}). Thus, it is enough to prove that
$$
Y\circ_1 X \in\fil^{r+s} P^\ch(k+m+1) \quad\text{for}\quad X\in\fil^r P^\ch(k+1), \;\; Y\in\fil^s P^\ch(m+1). 
$$
To this end, we want to show that
the left-hand side of \eqref{circ1} vanishes for all $h\in \fil^{r+s-1} \mc O_{k+m+1}^{\star T}$.
By linearity, we can suppose that $h=fg$ as in \eqref{circ2}
and the number of divisors of $h$ is $\le r+s-1$. 
Since the divisors of $f$ and $g$ are disjoint,
the number of divisors of $h$ is the sum of the number of divisors of $f$ and $g$. 
Hence, $f\in \fil^{r-1} \mc O_{k+1}^{\star T}$ or $g\in \fil^{s-1} \mc O_{k+m+1}^{\star T}$.
Then we apply formula \eqref{circ1} to compute $Y\circ_1 X$.
In the first case, we have $X(f)=0$. In the second case, 
applying some derivatives and setting $z_1=\cdots=z_k=z_0$ in $g$, we will obtain an element
of $\fil^{s-1} \mc O_{m+1}^{\star T}$, which is annihilated by $Y$.
\end{proof}
As a consequence of Proposition \ref{prop:filch},
the associated graded spaces 
\begin{equation}\label{grr}
\gr^r P^\ch(n)
=
\fil^r P^\ch(n) / \fil^{r+1} P^\ch(n)
\end{equation}
form a graded operad (see the end of Section \ref{sec:3.1}).

%%%
\subsection{$n$-graphs}\label{sec:6a.1}

For $n\geq1$, we define an $n$-\emph{graph}
as a graph $\Gamma$ with the set of vertices $\{1,\dots,n\}$
and an arbitrary collection of oriented edges, denoted $E(\Gamma)$.
We denote 
by $\mc G(n)$ the collection of all $n$-graphs
without tadpoles,
and by $\mc G_0(n)$ the collection of all acyclic $n$-graphs,
i.e., $n$-graphs that have 
no cycles (including tadpoles and multiple edges).

For example, the set $\mc G_0(1)$ consists of the graph with a single vertex labelled $1$ and no edges,
the set $\mc G_0(2)$ consists of three graphs:
\begin{equation}\label{eq:2-graphs}
\begin{array}{l}
\begin{tikzpicture}
\draw (0.5,1) circle [radius=0.1];
\node at (0.5,0.7) {1};
\draw (1.5,1) circle [radius=0.1];
\node at (1.5,0.7) {2};
\node at (2,0.85) {,};
\draw (3.5,1) circle [radius=0.1];
\node at (3.5,0.7) {1};
\draw (4.5,1) circle [radius=0.1];
\node at (4.5,0.7) {2};
\draw[->] (3.6,1) -- (4.4,1);
\node at (5,0.85) {,};
\draw (6.5,1) circle [radius=0.1];
\node at (6.5,0.7) {1};
\draw (7.5,1) circle [radius=0.1];
\node at (7.5,0.7) {2};
\draw[<-] (6.6,1) --(7.4,1);
\node at (8,0.85) {,};
\end{tikzpicture}
\\
E(\Gamma)=\emptyset
\,\,\,\,\,,\,\,\,\,\qquad
E(\Gamma)\!=\!\{1\!\!\to\!\!2\}
\,\,,\,\,\qquad
E(\Gamma)\!=\!\{2\!\!\to\!\!1\}
\end{array}
\end{equation}
and $\mc G_0(3)$ consists of the following graphs,
with arbitrary orientation of all edges:
\begin{equation}\label{eq:3-graphs}
\begin{tikzpicture}
\draw (-4,2.5) circle [radius=0.1];
\node at (-4,2.2) {1};
\draw (-3.25,2.5) circle [radius=0.1];
\node at (-3.25,2.2) {2};
\draw (-2.5,2.5) circle [radius=0.1];
\node at (-2.5,2.2) {3};
\node at (-2,2.35) {,};
\draw (-1,2.5) circle [radius=0.1];
\node at (-1,2.2) {1};
\draw (-0.25,2.5) circle [radius=0.1];
\node at (-0.25,2.2) {2};
\draw (0.5,2.5) circle [radius=0.1];
\node at (0.5,2.2) {3};
\draw (-0.9,2.5) -- (-0.35,2.5);
\node at (1,2.35) {,};
\draw (2,2.5) circle [radius=0.1];
\node at (2,2.2) {1};
\draw (2.75,2.5) circle [radius=0.1];
\node at (2.75,2.2) {2};
\draw (3.5,2.5) circle [radius=0.1];
\node at (3.5,2.2) {3};
\draw (2.85,2.5) -- (3.4,2.5);
\node at (4,2.35) {,};
\draw (5,2.5) circle [radius=0.1];
\node at (5,2.2) {1};
\draw (5.75,2.5) circle [radius=0.1];
\node at (5.75,2.2) {2};
\draw (6.5,2.5) circle [radius=0.1];
\node at (6.5,2.2) {3};
\draw (5,2.6) to [out=90,in=90] (6.5,2.6);
\node at (7,2.35) {,};
%%%
\draw (-4,1) circle [radius=0.1];
\node at (-4,0.7) {1};
\draw (-3.25,1) circle [radius=0.1];
\node at (-3.25,0.7) {2};
\draw (-2.5,1) circle [radius=0.1];
\node at (-2.5,0.7) {3};
\draw (-3.9,1) -- (-3.35,1);
\draw (-3.15,1) -- (-2.6,1);
\node at (-2,0.85) {,};
\draw (-1,1) circle [radius=0.1];
\node at (-1,0.7) {1};
\draw (-0.25,1) circle [radius=0.1];
\node at (-0.25,0.7) {2};
\draw (0.5,1) circle [radius=0.1];
\node at (0.5,0.7) {3};
\draw (-0.9,1) -- (-0.35,1);
\draw (-1,1.1) to [out=90,in=90] (0.5,1.1);
\node at (1,0.85) {,};
\draw (2,1) circle [radius=0.1];
\node at (2,0.7) {1};
\draw (2.75,1) circle [radius=0.1];
\node at (2.75,0.7) {2};
\draw (3.5,1) circle [radius=0.1];
\node at (3.5,0.7) {3};
\draw (2.85,1) -- (3.4,1);
\draw (2,1.1) to [out=90,in=90] (3.5,1.1);
\node at (4,0.85) {.};
\end{tikzpicture}
\end{equation}
By convention, we also let $\mc G_0(0)=\mc G(0)$ be the set consisting of a single element (the empty graph,
with $0$ vertices).

An \emph{oriented cycle} $C$ of an $n$-graph $\Gamma\in\mc G(n)$ is, by definition, 
a collection of edges of $\Gamma$ forming a closed sequence (possibly with self intersections):
\begin{equation}\label{20170823:eq4a}
C=\{i_1\to i_2,\,i_2\to i_3,\dots,\,i_{s-1}\to i_s,\,i_s\to i_1\}\subset E(\Gamma)
\,.
\end{equation}

%%%%%%%%%
\subsection{The maps $X^\Ga$}\label{sec:grpch}
%%%%%%%%%

For an oriented graph $\Ga\in\mc G(n)$, we define
\begin{equation}\label{gr1}
p_\Ga = p_\Ga(z_1,\dots,z_n) = \prod_{(i\to j) \in E(\Ga)} z_{ij}^{-1} \in \fil^r \mc O_{n}^{\star T}, 
\qquad z_{ij}=z_i-z_j,
\end{equation}
where the product is over all edges of $\Ga$ and
$r$ is the number of edges. 
%$z_e=z_{ij}=z_i-z_j$ for an edge $e=(i\to j)$.
Note that if we change the orientation of a single edge of $\Ga$, then $p_\Ga$ will change sign.
For any graph $G$ with a set of vertices labeled by an index set $I$, we introduce the notation
\begin{equation}\label{com3}
\la_G = \sum_{i\in I} \la_i, \qquad \dd_{z_G} = \sum_{i\in I} \dd_{z_i}, 
\qquad \dd_G = \sum_{i\in I} \dd_i, 
\end{equation}
where $\dd_i=1\otimes\cdots\otimes\dd\otimes\cdots\otimes1$ denotes the action of $\dd$ on the $i$-th factor in a tensor product $V^{\otimes n}$.
Note that, by translation covariance, we have $\partial_{z_G}p_G=0$.
\begin{lemma}\label{lem:alb0}
For an acyclic graph\/ $\Gamma\in\mc G_0(n)$,
we have
$$
\mb F[\partial_{z_1},\dots,\partial_{z_n}]p_\Gamma
=
\mb F\big[z_{ij}^{-1}\,\big|\,(i\to j)\in E(\Gamma)\big]p_\Gamma
\,.
$$
\end{lemma}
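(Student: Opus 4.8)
The plan is to prove both inclusions of the claimed equality
$$\mb F[\dd_{z_1},\dots,\dd_{z_n}]p_\Ga = \mb F\big[z_{ij}^{-1}\,\big|\,(i\to j)\in E(\Ga)\big]p_\Ga.$$
The inclusion $\subseteq$ is the substantive one; the inclusion $\supseteq$ will follow by a short induction using that $z_{ij}^{-1}p_\Ga$ is, up to sign and a combinatorial factor, obtained from $p_\Ga$ by applying $\dd_{z_i}$ (or $\dd_{z_j}$) along the edge $i\to j$.

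For $\supseteq$: note that for a single edge $(i\to j)\in E(\Ga)$, since $\dd_{z_i} z_{ij}^{-1} = -z_{ij}^{-2}$, applying $\dd_{z_i}$ to $p_\Ga$ increases the multiplicity of the factor $z_{ij}^{-1}$ by one (with a sign), plus lower-order terms if $z_{ij}$ appears elsewhere — but in an acyclic graph each ordered pair $(i,j)$ supports at most one edge, and I can set up the induction on the total multiplicity $\sum_{(i\to j)\in E(\Ga)} a_{ij}$ of the monomial $\prod z_{ij}^{-a_{ij}}p_\Ga$ we want to reach. Starting from $p_\Ga$ and successively differentiating, each $z_{ij}^{-1}$-factor of $p_\Ga$ can be raised independently because $z_{ij} = z_i - z_j$ and, acyclicity aside, the key point is that differentiating in $z_i$ only affects factors $z_{i\ell}^{\pm1}$, so by choosing differentiations carefully and subtracting already-constructed elements, one gets every $\prod_{(i\to j)\in E(\Ga)} z_{ij}^{-a_{ij}}p_\Ga$ with $a_{ij}\ge1$ (equivalently, every element of the right-hand side, since $p_\Ga$ already carries one factor of each $z_{ij}^{-1}$) inside $\mb F[\dd_{z_1},\dots,\dd_{z_n}]p_\Ga$.

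For $\subseteq$: I would argue by induction on $n$ (or on $|E(\Ga)|$). The module $\mb F[\dd_{z_1},\dots,\dd_{z_n}]p_\Ga$ is spanned by $D p_\Ga$ for monomial differential operators $D=\dd_{z_1}^{b_1}\cdots\dd_{z_n}^{b_n}$, and I want to show each such $Dp_\Ga$ is an $\mb F$-linear combination of elements $\prod_{(i\to j)\in E(\Ga)} z_{ij}^{-a_{ij}}p_\Ga$, $a_{ij}\ge1$. Because $\Ga$ is acyclic, it has a sink — a vertex, say $n$ after relabeling, with no outgoing edges; all edges at $n$ are of the form $i\to n$. Differentiation $\dd_{z_n}$ acts on $p_\Ga$ only through the factors $z_{in}^{-1}$ for $i\to n$ in $E(\Ga)$: writing $p_\Ga = q\cdot\prod_{i\to n} z_{in}^{-1}$ with $q$ not involving $z_n$, one has $\dd_{z_n}^{b_n}p_\Ga = q\cdot \dd_{z_n}^{b_n}\prod_{i\to n} z_{in}^{-1}$, and by the multinomial/partial-fractions expansion (using that the $z_{in}$ differ pairwise by the $z_{ij}$, $i,j\neq n$, which lie in the coefficient ring) this is a combination of $q\cdot\prod_{i\to n} z_{in}^{-a_i}$ with coefficients polynomial in the $z_{ij}$, $i,j<n$. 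The remaining operators $\dd_{z_1}^{b_1}\cdots\dd_{z_{n-1}}^{b_{n-1}}$ then act on $q$ times those polynomial coefficients; here I invoke the inductive hypothesis applied to the graph $\Ga'=\Ga\setminus\{n\}$ (still acyclic) to control $\dd_{z_1}^{b_1}\cdots\dd_{z_{n-1}}^{b_{n-1}}$ applied to $q=p_{\Ga'}$, absorbing the extra polynomial factors since polynomials in the $z_{ij}$ times an element of $\mb F[z_{ij}^{-1}\mid(i\to j)\in E(\Ga')]p_{\Ga'}$ stays in that set (clearing a $z_{ij}^{-1}$ by a $z_{ij}$ only decreases exponents, and each $z_{ij}$ in $E(\Ga')$ occurs with exponent $\ge1$ in every term).

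The main obstacle I anticipate is bookkeeping in the sink step: after differentiating in $z_n$ and then in the other variables, cross terms appear in which a polynomial factor like $z_{ij}$ (with $i\to n$, $j\to n$ both edges) multiplies a reciprocal $z_{in}^{-a}$, and one must check these never produce a genuinely new denominator outside $\{z_{ij}^{-1}\mid (i\to j)\in E(\Ga)\}$ — acyclicity is exactly what guarantees that the sink's incoming neighbors and the factorization $p_\Ga=q\,\prod_{i\to n}z_{in}^{-1}$ behave well, so the argument is clean but requires care to state the induction so that the polynomial coefficients are harmlessly absorbed. I would organize it so that the inductive statement is: $D\,p_\Ga\in\mb F[z_{ij}^{-1}\mid(i\to j)\in E(\Ga)]\,p_\Ga$ for every monomial differential operator $D$, which is precisely strong enough to feed the recursion.
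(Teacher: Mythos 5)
Your proposal has the two inclusions the wrong way around, and the direction you dismiss as a ``short induction'' is precisely where the content of the lemma lies. The inclusion $\mb F[\dd_{z_1},\dots,\dd_{z_n}]p_\Ga\subseteq\mb F[z_{ij}^{-1}\mid(i\to j)\in E(\Ga)]p_\Ga$ is immediate and needs no acyclicity: the right-hand side contains $p_\Ga$ and is stable under every $\dd_{z_k}$, since differentiating a monomial $\prod_e z_e^{-a_e}$ in $z_k$ just returns a sum of such monomials with one exponent at an edge incident to $k$ raised by one. Your elaborate sink-peeling argument for this direction is not only unnecessary but contains errors: after applying $\dd_{z_n}^{b_n}$ the remaining operators $\dd_{z_1},\dots,\dd_{z_{n-1}}$ do act on the factors $z_{in}^{-a_i}$ (not only on $q$), and your absorption claim --- that multiplying an element of $\mb F[z_{ij}^{-1}\mid(i\to j)\in E(\Ga')]p_{\Ga'}$ by a polynomial in the $z_{ij}$ stays in that set --- is false, since a factor $z_{ij}$ can lower the exponent of $z_{ij}^{-1}$ to zero.

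The substantive direction is the reverse one: showing that every $\prod_{(i\to j)\in E(\Ga)}z_{ij}^{-m_{ij}}$ with $m_{ij}\geq 1$ lies in $\mb F[\dd_{z_1},\dots,\dd_{z_n}]p_\Ga$. Here your plan reduces to ``choosing differentiations carefully and subtracting already-constructed elements,'' which is exactly the gap: a single $\dd_{z_i}$ applied to a monomial produces a sum over \emph{all} edges incident to $i$, and for a vertex of degree greater than one there is no evident way to isolate a single edge by such subtractions; your aside ``acyclicity aside'' suggests you have not located where the hypothesis enters. The paper's proof supplies the missing mechanism: for an edge $e=(\al\to\be)$, acyclicity forces the connected components $\Ga_\al$ and $\Ga_\be$ of $\al$ and $\be$ in $\Ga\setminus e$ to be disjoint, and then the operator $\dd_{z_{\Ga_\al}}=\sum_{k\in\Ga_\al}\dd_{z_k}$ annihilates every factor $z_{ij}^{-m_{ij}}$ with both or neither endpoint in $\Ga_\al$ (by translation invariance), so that
\begin{equation*}
-\frac1{m_{\al\be}}\,\dd_{z_{\Ga_\al}}\prod_{(i\to j)\in E(\Ga)}z_{ij}^{-m_{ij}}
=z_{\al\be}^{-1}\prod_{(i\to j)\in E(\Ga)}z_{ij}^{-m_{ij}}
\end{equation*}
raises exactly the one chosen exponent; induction on $\sum m_{ij}$ then finishes the proof. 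Without this (or an equivalent device), your argument for the hard inclusion does not go through.
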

\begin{proof}
Clearly, applying derivatives to the function $p_\Gamma$,
we get an element of the space 
$\mb F\big[z_{ij}^{-1}\,\big|\,(i\to j)\in E(\Gamma)\big]p_\Gamma$.
Hence, we only need to show the opposite inclusion, i.e. that
for arbitrary exponents $m_{ij}\geq1$,
%$$
%\big\{m_{ij}\geq1\,\big|\,(i\to j)\in E(\Gamma)\big\}
%\,,
%$$
we have
\begin{equation}\label{eq:alb1}
\prod_{(i\to j)\in E(\Gamma)}z_{ij}^{-m_{ij}}
\in
\mb F[\partial_{z_1},\dots,\partial_{z_n}]p_\Gamma
\,.
\end{equation}
Assuming, by induction, that \eqref{eq:alb1} holds,
we show how to apply derivatives in order to increase arbitrarily
the exponents of the function $\prod_{(i\to j)\in E(\Gamma)}z_{ij}^{-m_{ij}}$.
Fix an edge $e=(\alpha\to \beta)$ of the graph $\Gamma$,
and let $\Gamma\backslash e$ be the graph obtained by deleting the edge $e$
from $\Gamma$.
Since by assumption $\Gamma$ is acyclic,
the connected components $\Gamma_\alpha$ and $\Gamma_\beta$ of $\alpha$ and $\beta$ 
in $\Gamma\backslash e$ are disjoint.
Then it is easy to check that %(cf.\ \eqref{com3}):
\begin{align*} 
%-&\frac1{m_{\alpha\beta}}\Big(\!\!\sum_{\ell:\text{ vert.'s of } \Gamma_\alpha}\partial_{z_\ell}\Big)\!\!
-\frac1{m_{\alpha\beta}} \dd_{z_{\Gamma_\alpha}}
&\prod_{(i\to j)\in E(\Gamma)}z_{ij}^{-m_{ij}}
= \frac1{m_{\alpha\beta}} \dd_{z_{\Gamma_\beta}}
%\frac1{m_{\alpha\beta}}\Big(\!\!\sum_{\ell:\text{ vert.'s of } \Gamma_\beta}\partial_{z_\ell}\Big)\!\!
\prod_{(i\to j)\in E(\Gamma)}z_{ij}^{-m_{ij}}
\\
=
z_{\alpha\beta}^{-1} &\prod_{(i\to j)\in E(\Gamma)}z_{ij}^{-m_{ij}}
\,.
\end{align*}
The claim follows.
\end{proof}
\begin{lemma}\label{lem:alb1}
The space\/ $\fil^r\mc O_n^{\star T}$ is generated
as a\/ $\mc D_n^T$-module
by the functions\/ $p_\Gamma$,
with\/ $\Gamma\in\mc G_0(n)$ acyclic graphs with at most\/ $r$ edges.
\end{lemma}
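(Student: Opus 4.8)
The plan is to establish the two inclusions separately; the nontrivial one is obtained by an induction that rewrites an arbitrary translation-invariant rational function as a combination of the graph functions $p_\Gamma$ while keeping all poles along the diagonals $z_i=z_j$.

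First I note the easy inclusion: the $\mc D_n^T$-submodule generated by the listed $p_\Gamma$ lies in $\fil^r\mc O_n^{\star T}$, since each $p_\Gamma$ with $|E(\Gamma)|\le r$ has at most $r$ divisors and $\fil^r\mc O_n^{\star T}$ is a $\mc D_n^T$-submodule (the filtration \eqref{fil1} being $\mc D_n^T$-invariant). For the reverse inclusion I would first pass to a convenient spanning set: by the definition of the filtration, $\fil^r\mc O_n^{\star T}$ is spanned over $\mb F$ by ``monomial'' elements $f=Q\,z_{e_1}^{-m_1}\cdots z_{e_p}^{-m_p}$, where $Q\in\mb F[z_{ij}]$, the $e_1,\dots,e_p$ are distinct edges on $\{1,\dots,n\}$, $p\le r$, and all $m_i\ge1$ (factors of exponent $0$ are discarded). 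Writing $\mc M$ for the $\mc D_n^T$-submodule generated by the $p_\Gamma$ in question and $G(f)$ for the graph on $\{1,\dots,n\}$ with edge set $\{e_1,\dots,e_p\}$, I will show $f\in\mc M$ by induction on the pair $\mu(f)=(p,\ell)$, ordered lexicographically, where $\ell$ is the girth of $G(f)$ (so $\ell=\infty$ exactly when $G(f)$ is acyclic).

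The base (acyclic) case is essentially Lemma \ref{lem:alb0}: if $G(f)$ has no cycle then $G(f)\in\mc G_0(n)$ with $\le r$ edges, so $p_{G(f)}\in\mc M$; since all $m_i\ge1$ and $\mc D_n^T$ contains every operator $q(z)\,\partial_{z_1}^{a_1}\!\cdots\partial_{z_n}^{a_n}$ with $q\in\mb F[z_{ij}]$, Lemma \ref{lem:alb0} gives $f\in Q\cdot\mb F[\partial_{z_1},\dots,\partial_{z_n}]\,p_{G(f)}\subseteq\mc D_n^T\cdot p_{G(f)}\subseteq\mc M$. For the inductive step, suppose $G(f)$ has a cycle and fix a shortest cycle $C=(v_1,v_2,\dots,v_\ell,v_1)$ (it is simple). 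Set $e_1=\{v_1,v_2\}$, $e_2=\{v_2,v_3\}$, and let $e^*=\{v_1,v_3\}$ be the ``shortcut''; by translation invariance $z_{e^*}=z_{e_1}+z_{e_2}$ for suitable orientations. Substituting $z_{e_2}=z_{e^*}-z_{e_1}$ and expanding $z_{e_1}^{-m_1}z_{e_2}^{-m_2}$ in partial fractions in $z_{e_1}$ over $\mb F[z_{e^*}^{\pm1}]$, homogeneity forces every coefficient to be a scalar times a negative power of $z_{e^*}$, so that
\[
z_{e_1}^{-m_1}z_{e_2}^{-m_2}=\sum_{a=1}^{m_1}\alpha_a\,z_{e_1}^{-a}z_{e^*}^{-(m_1+m_2-a)}+\sum_{b=1}^{m_2}\beta_b\,z_{e_2}^{-b}z_{e^*}^{-(m_1+m_2-b)},\qquad\alpha_a,\beta_b\in\mb F,
\]
with all exponents $\ge1$. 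Multiplying by $Q$ and by the remaining factors $z_{e_i}^{-m_i}$ ($i\ge3$) exhibits $f$ as a scalar combination of monomial elements of the same shape. The dichotomy that makes the induction go is: if $\ell=3$, then $e^*$ is the third edge of the triangle $C$, hence already among the $e_i$, and each term has support of size $\le p-1$, so its first invariant drops; if $\ell\ge4$, then $e^*$ is not among the $e_i$ (otherwise $\{v_1v_2,v_2v_3,v_1v_3\}$ would be a $3$-cycle, contradicting minimality of $\ell$), so each term has support of size $p$ but containing the cycle $v_1\to v_3\to v_4\to\cdots\to v_\ell\to v_1$ of length $\ell-1$, so its girth drops while $p$ is unchanged. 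Either way $\mu$ strictly decreases lexicographically, and the inductive hypothesis completes the step.

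The main obstacle — and the reason the reduction must be carried out exactly this way — is that $\mc O_n^{\star T}=\mb F[z_{ij}^{\pm1}]$ only permits poles along the diagonals $z_i=z_j$: a careless partial-fraction decomposition would introduce denominators such as $z_{e_1}-z_{e_2}$, which are \emph{not} invertible in $\mc O_n^{\star T}$. This is precisely why one must choose two \emph{consecutive} edges of the cycle so that the new denominator $z_{e^*}=z_{e_1}+z_{e_2}$ is again a diagonal $z_{v_1}-z_{v_3}$, and why the homogeneity bookkeeping (which guarantees that the partial-fraction coefficients are scalars times powers of $z_{e^*}$, with no further denominators) is essential. The only other point requiring care is the choice of a well-founded induction measure; the lexicographic pair (number of divisors, girth) works because the $\ell=3$ step strictly lowers the number of divisors, the $\ell\ge4$ step strictly lowers the girth while preserving the number of divisors, and the acyclic case — where the statement reduces cleanly to Lemma \ref{lem:alb0} — is the terminal one.
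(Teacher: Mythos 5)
Your proof is correct, and its overall architecture --- reduce to monomials $Q\,z_{e_1}^{-m_1}\cdots z_{e_p}^{-m_p}$ with $p\le r$, handle acyclic supports via Lemma \ref{lem:alb0}, and eliminate cycles by an induction exploiting a linear relation among the $z_e$ --- matches the paper's. The difference lies in the cycle-elimination step. The paper uses the full cycle relation $\sum_{e\in C}z_e=0$ (as in \eqref{cycle02}) to argue that any monomial whose support graph contains a cycle already lies in $\fil^{p-1}\mc O_n^{\star T}$, so its induction is on the number of divisors alone; making that precise for arbitrary exponents $m_i$ needs a small secondary induction (say on the total pole order along the cycle), which the paper leaves implicit behind ``a computation similar to \eqref{fil2}''. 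You instead use only the two-term chord relation $z_{v_1v_3}=z_{v_1v_2}+z_{v_2v_3}$ together with a homogeneous partial-fraction expansion, which does not immediately lower the number of divisors when the cycle has length at least $4$ --- hence your finer lexicographic measure (number of divisors, girth). Both mechanisms work: yours is more explicit and self-contained (the partial-fraction coefficients are computable scalars, all new exponents stay positive, and well-foundedness is clear since the triangle case drops $p$ while the longer-cycle case drops the girth with $p$ fixed), whereas the paper's is shorter and yields, along the way, the slightly stronger fact that a monomial with cyclic support and $p$ distinct divisors lies in $\fil^{p-1}$. Your remark that the chord must join vertices two steps apart on the cycle --- so the new denominator is again a genuine diagonal $z_{v_1}-z_{v_3}$ rather than a non-invertible element such as $z_{e_1}-z_{e_2}$ --- is exactly the right point of care.
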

\begin{proof}
Clearly, every function in $\fil^r\mc O_n^{\star T}$
can be written as a linear combination of functions of the form
\begin{equation}\label{eq:alb2}
f\, z_{i_1j_1}^{-m_{i_1j_1}}\cdots z_{i_rj_r}^{-m_{i_rj_r}}
\,,
\end{equation}
with $m_{i_\ell j_\ell}\geq0$ and $f$ polynomial.
We need to show that the function \eqref{eq:alb2}
can be obtained starting from some $p_\Gamma$
and acting with $\mc D_n^T$.
Let $\Gamma\in\mc G(n)$ be the graph with edges $(i_1\to j_1),\dots,(i_r\to j_r)$.
By a computation similar to \eqref{fil2} (cf. \eqref{cycle02} below), 
if the graph is not acyclic, then the function \eqref{eq:alb2} lies in $\fil^{r-1}\mc O_n^{\star T}$
and the claim holds by induction.
For an acyclic graph $\Gamma$, as an immediate consequence of 
Lemma \ref{lem:alb0}, we have that the function \eqref{eq:alb2} is generated by $p_\Gamma$.
\end{proof}

By restriction, for every $X\in P^\ch(n)$, we have maps
\begin{equation}\label{gr2}
\begin{split}
X^\Ga_{\lambda_1,\dots,\lambda_n} \colon
V^{\otimes n}
&\to
V[\lambda_1,\dots,\lambda_n]\big/\big\langle\partial+\lambda_1+\dots+\lambda_n\big\rangle
\,,\\
\vphantom{\Big(}
v_1\otimes\dots\otimes v_n
&\mapsto
X_{\lambda_1,\dots,\lambda_n}^{z_1,\dots,z_n}(v_1,\dots,v_n;p_\Ga)\,.
\end{split}
\end{equation}
By \eqref{fil4}, if $X\in \fil^r P^\ch(n)$, we have
$X^\Ga=0$ for graphs $\Ga$ with fewer than $r$ edges. Furthermore, relations among the $p_\Ga$'s lead to the following relations for the maps $X^\Ga$.
\begin{lemma}\label{lem:gr}
Let\/ $\Gamma\in\mc G(n)$ be a graph with\/ $r$ edges containing an oriented cycle\/
$C\subset E(\Gamma)$. Then we have the following \emph{cycle relations:}
\begin{enumerate}[(a)]
\item
$\vphantom{\Big(}
X^\Gamma=0$ for all\/ $X\in \fil^r P^\ch(n);$
\item
$\displaystyle{\sum_{e\in C}Y^{\Gamma\setminus e}=0}$ for all\/ $Y \in \fil^{r-1} P^\ch(n)$,
where\/ $\Gamma\setminus e$ is the graph obtained from\/ $\Gamma$ by removing the edge\/ $e$.
\end{enumerate}
\end{lemma}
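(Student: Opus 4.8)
The plan is to reduce both cycle relations to a single algebraic identity among rational functions in $\mc O_n^{\star T}$, namely the statement that an oriented cycle can be re-expanded, via partial fractions, as a sum of products with strictly fewer divisors. Concretely, for an oriented cycle $C = \{i_1\to i_2,\, i_2\to i_3,\dots,\,i_{s-1}\to i_s,\, i_s\to i_1\}$, one has the telescoping identity $z_{i_1i_2}+z_{i_2i_3}+\dots+z_{i_{s-1}i_s}+z_{i_si_1}=0$. Dividing the product $\prod_{e\in C} z_e^{-1}$ appropriately and using the standard partial-fraction trick (as in \eqref{fil2}), one obtains
\begin{equation}\label{cycle02}
\prod_{e\in C} z_{e}^{-1}
=
-\sum_{e\in C} \Bigl(\prod_{e'\in C,\, e'\ne e} z_{e'}^{-1}\Bigr)\Bigl(\prod_{e''\in C,\, e''\ne e,\, e''\ne e_0} z_{e''}\Bigr)\,\cdot(\text{suitable sign})
\end{equation}
— more precisely, for $s=3$ the identity reads $\frac{1}{z_{ab}z_{bc}z_{ca}} = \frac{1}{z_{ab}z_{ca}^2}+\frac1{z_{bc}z_{ca}^2}$ after multiplying $z_{ab}+z_{bc}+z_{ca}=0$ by $(z_{ab}z_{bc}z_{ca})^{-1}$ and regrouping; the general case follows by induction on $s$, peeling off one vertex of the cycle at a time. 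The upshot that I actually need is: the function $p_\Gamma$ associated to a graph $\Gamma$ with $r$ edges containing an oriented cycle $C$ can be written as a $\mb F$-linear combination of functions each having at most $r-1$ distinct divisors, and moreover each of those summands is (up to polynomial factors, hence up to $\mc D_n^T$-action by Lemma \ref{lem:alb0}) of the form $p_{\Gamma'}$ with $E(\Gamma') = (E(\Gamma)\setminus C)\cup(C\setminus\{e\})$ for the various $e\in C$, with appropriate signs dictated by the orientation conventions of \eqref{gr1}.

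For claim (a): if $X\in\fil^r P^\ch(n)$, then by definition $X$ annihilates $V^{\otimes n}\otimes\fil^{r-1}\mc O_n^{\star T}$. Since $p_\Gamma\in\fil^{r-1}\mc O_n^{\star T}$ by the partial-fraction reduction above (it has an expansion with $\le r-1$ divisors in each term), we get $X^\Gamma_{\lambda_1,\dots,\lambda_n}(v_1,\dots,v_n) = X^{z_1,\dots,z_n}_{\lambda_1,\dots,\lambda_n}(v_1,\dots,v_n;p_\Gamma)=0$ immediately. This is essentially free once the reduction $p_\Gamma\in\fil^{r-1}$ is in hand.

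For claim (b): here $Y\in\fil^{r-1}P^\ch(n)$, so $Y^{\Gamma\setminus e}$ is generally nonzero ($\Gamma\setminus e$ has $r-1$ edges). I would apply $Y$ to the partial-fraction identity for $p_\Gamma$. On one hand $Y^{z_1,\dots,z_n}(v;p_\Gamma)=0$ because $p_\Gamma$, after the reduction, can be pushed into $\fil^{r-2}\mc O_n^{\star T}$ — wait, that is too strong; rather, the correct bookkeeping is that the identity \eqref{cycle02} expresses $p_\Gamma$ as a signed sum $\sum_{e\in C}(\pm)\,q_e$ where each $q_e$ has exactly $r-1$ divisors, namely $q_e$ is $p_{\Gamma\setminus e}$ up to a polynomial (monomial) factor. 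By Lemma \ref{lem:alb0} the polynomial factor is realized by a translation-invariant differential operator, i.e. $q_e = D_e\, p_{\Gamma\setminus e}$ for some $D_e\in\mc D_n^T$, and since $Y$ is a $\mc D_n^T$-module homomorphism, $Y^{z}(v;q_e)$ equals $Y^{\Gamma\setminus e}$ acted on by the dual operator on the $\lambda$'s and $\partial$'s — but crucially, one checks (as in the proof of Lemma \ref{lem:alb0}, the operator is a sum of $\partial_{z_{\Gamma_\alpha}}$-type derivatives whose action on the target is by multiplication by $\pm\lambda_{\Gamma_\alpha}$, which commutes out) that these dual operators, summed over $e\in C$ with the correct signs, telescope to zero, leaving precisely $\sum_{e\in C}Y^{\Gamma\setminus e}=0$. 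Alternatively, and more cleanly, one argues directly: $p_\Gamma\in\fil^{r-1}$ and the \emph{symbol} of $p_\Gamma$ in $\gr^{r-1}\mc O_n^{\star T}=\fil^{r-1}/\fil^{r-2}$ is the class $\sum_{e\in C}(\pm)[p_{\Gamma\setminus e}]$; applying the induced map $Y$ on $\gr^{r-1}$ (which is well-defined since $Y\in\fil^{r-1}P^\ch$ kills $\fil^{r-2}$) to the relation "$[p_\Gamma]=\sum_{e\in C}(\pm)[p_{\Gamma\setminus e}]$ in $\gr^{r-1}$, but also $[p_\Gamma]=0$ in $\gr^{r-1}$ since $p_\Gamma$ has $r$ genuine divisors only if the graph is acyclic — which it is not" gives the result. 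I expect the main obstacle to be pinning down the signs in \eqref{cycle02} consistently with the orientation-dependent sign convention for $p_\Gamma$ in \eqref{gr1}, and verifying the inductive step of the partial-fraction identity for cycles of length $s>3$ with self-intersections; everything else is a routine transport of an identity in $\mc O_n^{\star T}$ through the defining properties of $P^\ch$.
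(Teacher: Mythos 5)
Your part (a) is correct and coincides with the paper's argument: the telescoping relation $\sum_{e\in C}z_e=0$ gives a partial-fraction expansion of $p_C$ (hence of $p_\Gamma$) into terms with at most $r-1$ distinct divisors, so $p_\Gamma\in\fil^{r-1}\mc O_n^{\star T}$ and $X^\Gamma=X(\,\cdot\,;p_\Gamma)=0$ for $X\in\fil^rP^\ch(n)$.

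Part (b), however, has a genuine gap. The paper's proof is a one-liner that uses the telescoping relation \emph{before} any partial-fraction manipulation: since multiplying $p_\Gamma$ by $z_e$ simply deletes the factor $z_e^{-1}$, one has $z_e\,p_\Gamma=p_{\Gamma\setminus e}$, hence
$0=\bigl(\sum_{e\in C}z_e\bigr)\,p_\Gamma=\sum_{e\in C}p_{\Gamma\setminus e}$
is an exact identity in $\mc O_n^{\star T}$; applying the linear map $Y$ to it gives $\sum_{e\in C}Y^{\Gamma\setminus e}=0$ with no further input. Neither of your two proposed routes reaches this. The first applies $Y$ to the partial-fraction expansion of $p_\Gamma$, but the summands there are not the $p_{\Gamma\setminus e}$ (they carry squared factors such as $z_{12}^{-2}$, cf.\ \eqref{fil2}), and the claim that the compensating $\mc D_n^T$-operators ``telescope to zero'' is both unverified and, as stated, would produce the trivial identity $0=0$ rather than the desired relation; moreover, transporting those operators through $Y$ also produces terms with $\partial$ shifted onto the $v_i$'s, which you do not account for. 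The second route asserts $[p_\Gamma]=0$ in $\gr^{r-1}\mc O_n^{\star T}=\fil^{r-1}/\fil^{r-2}$ ``since $\Gamma$ is not acyclic''; this is false: $p_\Gamma\in\fil^{r-1}$ only forces its class to vanish in $\gr^{r}$, and in the example \eqref{fil2} the class of $p_\Gamma$ in $\gr^2\mc O_3^{\star T}$ is visibly nonzero. The symbol identity you invoke therefore does not hold, and that version of the argument collapses as well.
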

\begin{proof}
After relabeling the vertices, we can assume that
\begin{equation*}%\label{cycle0}
C=\{1\to 2,\,2\to 3,\dots,\,s-1\to s,\,s\to 1\}
\,.
\end{equation*}
Then $p_\Ga$ has a factor
\begin{equation*}%\label{cycle01}
p_C= \frac1{z_{12} z_{23} \cdots z_{s-1,s} z_{s1}} \,.
\end{equation*}
Since $C$ has $s$ edges, we expect $p_C\in\fil^{s} \mc O_{n}^{\star T}$; however, we claim that in fact
$p_C\in\fil^{s-1} \mc O_{n}^{\star T}$. Indeed, using the relation
\begin{equation}\label{cycle01}
\sum_{e\in C} z_e = z_{12} + z_{23} + \cdots + z_{s-1,s} + z_{s1} = 0,
\end{equation}
we have as in \eqref{fil2}, 
\begin{equation}\label{cycle02}
\begin{split}
-p_C &= \frac{-z_{12}}{ z_{12}^2 z_{23} \cdots z_{s-1,s} z_{s1} } \\
&= \frac{z_{23}}{ z_{12}^2 z_{23} \cdots z_{s-1,s} z_{s1} } + \cdots 
+ \frac{z_{s-1,s}}{ z_{12}^2 z_{23} \cdots z_{s-1,s} z_{s1} } 
+ \frac{z_{s1}}{ z_{12}^2 z_{23} \cdots z_{s-1,s} z_{s1} }
%\in\fil^{s-1} \mc O_{n}^{\star T} 
\,.
\end{split}
\end{equation}
In particular, $p_\Ga\in\fil^{r-1} \mc O_{n}^{\star T}$, which implies
claim (a).
Claim (b) follows from the equation
\begin{equation*}%\label{cycle03}
0 = \sum_{e\in C} z_e p_\Ga = \sum_{e\in C} p_{\Ga\setminus e} \,.
\end{equation*}
\end{proof}

\begin{example}\label{ex:sign}
Let $Y \in \fil^{r-1} P^\ch(n)$ and $\Ga' \in\mc G_0(n)$ be a graph with $r-1$ edges. For a fixed edge $e=(i\to j)$ of $\Ga'$, we denote by $\Ga$ 
the graph obtained by adding the opposite edge $e'=(j\to i)$. Then $\Ga$ has a $2$-cycle $C=\{e,e'\}$, and 
$\Ga''=\Ga\setminus e$ is obtained from $\Ga'=\Ga\setminus e'$ by reversing the orientation of $e$. In this case, \eqref{eq:cycle2} implies
$Y^{\Ga'} = -Y^{\Ga''}$.
\end{example}
We will derive additional relations from the sesquilinearity conditions \eqref{20160629:eq4} for $X\in \fil^r P^\ch(n)$.
\begin{lemma}\label{lem:gr2}
Let\/ $X\in\fil^r P^\ch(n)$ and $\Gamma\in\mc G(n)$ be a graph with\/ $r$ edges.
Denote the connected components of\/ $\Gamma$ by\/ $\Gamma_\alpha$.
Then we have the following \emph{sesquilinearity relations:}
\begin{enumerate}[(a)]
\item
$\displaystyle{
(\partial_{\lambda_i} - \partial_{\lambda_j})
X^\Ga_{\lambda_1,\dots,\lambda_n} = 0 \quad\text{for any}\quad (i\to j) \in E(\Ga)}$,
which means that\/ $X^\Ga$ is a polynomial of the sums\/ $\la_{\Ga_\al};$
\item
$\displaystyle{\vphantom{\Big(}
X^{\Gamma}_{\lambda_1,\dots,\lambda_n} \bigl( \dd_{\Ga_\al}(v_1\otimes\dots\otimes v_n) \bigr)
= - \la_{\Ga_\al} X^{\Gamma}_{\lambda_1,\dots,\lambda_n} (v_1\otimes\dots\otimes v_n)
}$.
\end{enumerate}
\end{lemma}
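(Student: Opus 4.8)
The plan is to translate both sesquilinearity conditions of \eqref{20160629:eq4} into statements about the specific function $p_\Ga$. Recall that $X\in P^\ch(n)$ acts on $V^{\otimes n}\otimes\mc O_n^{\star T}$ and $X^\Ga$ is its restriction to the slice $V^{\otimes n}\otimes p_\Ga$, as in \eqref{gr2}. The two parts of the lemma are essentially the two sesquilinearity identities read off on this slice, so the work is bookkeeping about how $\dd_{z_i}$ and multiplication by $z_{ij}$ act on $p_\Ga$.

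For part (a): fix an edge $(i\to j)\in E(\Ga)$. By construction \eqref{gr1}, $p_\Ga$ has $z_{ij}^{-1}$ as a factor, so $p_\Ga = z_{ij}^{-1} q$ where $q = p_{\Ga\setminus(i\to j)}\in\fil^{r-1}\mc O_n^{\star T}$; equivalently $z_{ij}p_\Ga = q$. Now apply the second sesquilinearity in \eqref{20160629:eq4} with this $i,j$ to the function $p_\Ga$:
\begin{equation*}
\Bigl(\frac{\partial}{\partial\lambda_j}-\frac{\partial}{\partial\lambda_i}\Bigr)
X^{z_1,\dots,z_n}_{\lambda_1,\dots,\lambda_n}(v_1,\dots,v_n;p_\Ga)
=
X^{z_1,\dots,z_n}_{\lambda_1,\dots,\lambda_n}(v_1,\dots,v_n;z_{ij}p_\Ga)
=
X^{z_1,\dots,z_n}_{\lambda_1,\dots,\lambda_n}(v_1,\dots,v_n;q).
\end{equation*}
But $q\in\fil^{r-1}\mc O_n^{\star T}$ and $X\in\fil^r P^\ch(n)$, so by \eqref{fil4} the right-hand side is zero, giving $(\partial_{\lambda_i}-\partial_{\lambda_j})X^\Ga_{\lambda_1,\dots,\lambda_n}=0$. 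Running over all edges of $\Ga$, the polynomial $X^\Ga$ is annihilated by $\partial_{\lambda_i}-\partial_{\lambda_j}$ whenever $i,j$ lie in the same connected component $\Ga_\al$, hence depends only on the sums $\la_{\Ga_\al}$ (this last deduction is the elementary fact that a polynomial killed by all $\partial_{\lambda_i}-\partial_{\lambda_j}$ over the edges of a connected graph is a polynomial in the sum of those variables — here one should also keep in mind the ambient relation $\partial+\lambda_1+\dots+\lambda_n=0$, which is consistent with the statement).

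For part (b): fix a connected component $\Ga_\al$ with vertex set $I_\al$, so $\dd_{\Ga_\al}=\sum_{i\in I_\al}\dd_i$ in the notation \eqref{com3}. Applying the first sesquilinearity in \eqref{20160629:eq4} once for each $i\in I_\al$ and summing, we get
\begin{equation*}
X^{\Gamma}_{\lambda_1,\dots,\lambda_n}\bigl(\dd_{\Ga_\al}(v_1\otimes\dots\otimes v_n)\bigr)
+\la_{\Ga_\al}X^{\Gamma}_{\lambda_1,\dots,\lambda_n}(v_1\otimes\dots\otimes v_n)
=
X^{z_1,\dots,z_n}_{\lambda_1,\dots,\lambda_n}\Bigl(v_1,\dots,v_n;\sum_{i\in I_\al}\frac{\partial p_\Ga}{\partial z_i}\Bigr),
\end{equation*}
using that $\sum_{i\in I_\al}(\partial+\la_i)$ acting slotwise equals the first-sesquilinearity combination. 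The right-hand side vanishes because $\partial_{z_{\Ga_\al}}p_\Ga=0$: indeed $p_\Ga=p_{\Ga_\al}\cdot\prod_{\be\neq\al}p_{\Ga_\be}$, the factors $p_{\Ga_\be}$ for $\be\neq\al$ do not involve the variables $z_i$ with $i\in I_\al$, and $\partial_{z_{\Ga_\al}}p_{\Ga_\al}=0$ by translation covariance (noted just before Lemma \ref{lem:alb0}, since $p_{\Ga_\al}$ depends only on differences $z_i-z_j$ within $I_\al$). Rearranging gives the claimed identity.

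The only real subtlety — and the step I would be most careful with — is getting the signs and the slotwise action of $\dd$ exactly right when I sum the first sesquilinearity over $i\in I_\al$: one must check that $\sum_{i\in I_\al}X(v_0,\dots,(\partial+\la_i)v_i,\dots,v_n;p_\Ga)$ really equals $X(\dd_{\Ga_\al}v;p_\Ga)+\la_{\Ga_\al}X(v;p_\Ga)$ with no stray Koszul signs (here it works because $\dd$ is even and $\la_i$ are scalars). Everything else is a direct substitution into \eqref{20160629:eq4} together with the filtration property \eqref{fil4} and translation covariance of $p_\Ga$.
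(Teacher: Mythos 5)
Your proof is correct and follows essentially the same route as the paper's: part (a) comes from $z_{ij}p_\Ga\in\fil^{r-1}\mc O_n^{\star T}$ being killed by $X$ together with the second sesquilinearity condition in \eqref{20160629:eq4}, and part (b) from summing the first sesquilinearity condition over the vertices of $\Ga_\al$ and using $\dd_{z_{\Ga_\al}}p_\Ga=0$. The sign bookkeeping you flag is indeed harmless since $\dd$ is even and the $\la_i$ are scalars.
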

\begin{proof}

If $(i\to j)$ is an edge of a graph $\Ga$ with $r$ edges, then $z_{ij} p_\Ga \in\fil^{r-1} \mc O_{n}^{\star T}$. Hence,
$$
X_{\lambda_1,\dots,\lambda_n}^{z_1,\dots,z_n}(v_1,\dots,v_n; z_{ij} p_\Ga) = 0.
$$
Claim (a) then follows from the sesquilinearity condition \eqref{20160629:eq4}.
Next, let us prove claim (b).
Since $\Ga$ is a disjoint union of the $\Ga_\al$'s, the function $p_\Ga$ 
is the product of the corresponding $p_{\Ga_\al}$'s.
By the translation covariance of $p_{\Gamma_\alpha}$,
we have
$\dd_{z_{\Ga_\al}} p_{\Ga_\al} = 0$, 
and hence
$\dd_{z_{\Ga_\al}} p_{\Ga} = 0$.
Claim (b) then follows again from \eqref{20160629:eq4}.
\end{proof}

%%%%%%%%%
\subsection{Compositions of the maps $X^\Ga$}\label{sec:comgr}
%%%%%%%%%

Now we will investigate how the maps \eqref{gr2} compose. For $X\in P^\ch(k+1)$ and $Y\in P^\ch(m+1)$, their $\circ_1$-product $Y \circ_1 X \in P^\ch(k+m+1)$ is given by \eqref{circ1}. We want to find $(Y \circ_1 X)^\Ga$, where $\Ga\in\mc G(k+m+1)$ is a graph whose vertices are labeled by $0,1,\dots,k+m$. In order to apply \eqref{circ1} for $h=p_\Ga$, according to \eqref{circ2}, we factor
\begin{equation}\label{com1}
p_\Ga = f(z_0,\dots,z_{k}) g(z_0,\dots,z_{k+m}), \qquad f=p_{\Ga'}, \;\; g=p_{\Ga''}.
%\qquad f\in\mc O_{k+1}^{\star T}, \;\; g\in\mc O_{m+k+1}^{\star T},
\end{equation}
Here $\Ga'$ is the subgraph of $\Ga$ with vertices $0,1,\dots,k$ and all edges from $\Ga$ among these vertices;
$\Ga''$ is the subgraph of $\Ga$ that includes all edges of $\Ga$ not in $\Ga'$. 
The factorization \eqref{com1} holds because $E(\Ga)$ is the disjoint union of $E(\Ga')$ and $E(\Ga'')$.

Setting $z_1=\cdots=z_k=z_0$ in $p_{\Ga''}$ corresponds to contracting the vertices $0,1,\dots,k$ to a single vertex labeled $0$.
We let $\bar\Ga''$ be the graph with vertices labeled $0,k+1,\dots,k+m$ and edges obtained from the edges of $\Ga''$ by replacing any vertex $0\le i\le k$ with $0$, keeping the same orientation. Then
\begin{equation}\label{com2}
p_{\Ga''} |_{z_1=\cdots=z_k=z_0} = p_{\bar\Ga''}.
\end{equation}

Finally, introduce graphs $G_i$ $(0\le i\le k)$ as follows. Take the connected component of the vertex $i$ in $\Ga''$ and remove from it the vertex $i$ and all edges connected to $i$. Then $G_i$ is the resulting subgraph of $\Ga''$. Note that, by construction, the vertices of $G_i$ form a subset of $\{k+1,\dots,k+m\}$.
For another description of the graphs $\Gamma'$, $\bar\Gamma''$
and $G_i$, see Examples \ref{ex:gamma} and \ref{ex:gamma2} below.

\begin{proposition}\label{prop:com}
With the above notation, suppose that the graph\/ $\bar\Ga''$ is acyclic. Then
\begin{equation}\label{com4}
\begin{split}
(Y \circ_1 X)_{\la_0,\la_1,\dots,\la_{k+m}}^\Ga &(v_0, v_1,\dots, v_{k+m} ) \\
= Y_{\la_{\Ga'},\la_{k+1},\dots,\la_{k+m}}^{\bar\Ga''} &\bigl( 
X_{\la_0+\la_{G_0}+\dd_{G_0},\dots,\la_k+\la_{G_k}+\dd_{G_k}}^{\Ga'} (v_0,\dots,v_k), v_{k+1},\dots,v_{k+m} \bigr)
\end{split}
\end{equation}
for\/ $X\in P^\ch(k+1)$ and\/ $Y\in P^\ch(m+1)$.
Here we also use the notation \eqref{com3} with\/ $\dd_i$ representing the action of\/ $\dd$ on\/ $v_i$.

Assume, in addition, that\/
$X\in\fil^r P^\ch(k+1)$, $Y\in\fil^s P^\ch(m+1)$,
$\Gamma'$ has\/ $r$ edges 
and\/ $\bar\Gamma''$ has $s$ edges.
Then equation \eqref{com4} holds without the assumption that\/ $\bar\Ga''$ is acyclic.
\end{proposition}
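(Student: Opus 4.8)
The plan is to prove \eqref{com4} directly from the definition \eqref{circ1} of the $\circ_1$-product, specializing $h=p_\Ga$ with the factorization \eqref{com1}, and then to remove the acyclicity hypothesis by a degree/dimension argument using the cycle relations of Lemma \ref{lem:gr}. First I would unwind \eqref{circ1}: plugging in $f=p_{\Ga'}$ and $g=p_{\Ga''}$, the inner evaluation of $X$ is taken at the shifted variables $\la_i-\dd_{z_i}$ applied to the \emph{function} $p_{\Ga'}(z_0,\dots,z_k)$, with the differential operators $\dd_{z_i}$ acting to the right on $p_{\Ga''}$ before the substitution $z_1=\cdots=z_k=z_0$ is made; by \eqref{com2} that substitution turns $p_{\Ga''}$ into $p_{\bar\Ga''}$. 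The key computation is to show that, after these substitutions, the net effect of the shift $\la_i\mapsto\la_i-\dd_{z_i}$ coming from differentiating $p_{\Ga''}$ is precisely the replacement $\la_i\mapsto\la_i+\la_{G_i}+\dd_{G_i}$ appearing in \eqref{com4}.

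The heart of this is a chain-rule bookkeeping lemma: since $\dd_{z_i}$ acts on $p_{\Ga''}$ before contraction, and every edge of $\Ga''$ incident to the vertex $i$ contributes a factor $z_{i\bullet}^{-1}$, differentiating with respect to $z_i$ and then contracting $z_1=\dots=z_k=z_0$ produces, via identity \eqref{circ7} and the translation-covariance $\dd_{z_{\Ga_\al}}p_{\Ga_\al}=0$, a derivative that can be rewritten as acting on the vertices of the connected component — i.e. on $G_i$. One first handles the case $p_{\Ga''}=p_{\bar\Ga''}$ depends on $z_0,\dots,z_k$ only through a single edge out of $i$, then extends multiplicatively; here the fact that the $G_i$ have disjoint vertex sets inside $\{k+1,\dots,k+m\}$ (because we cut the connected component at $i$) is what lets the shifts on different $i$'s commute and be treated independently. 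I would also use \eqref{20160629:eq4} (sesquilinearity of $X$) to convert the function-side derivatives $\dd_{z_i}$ into the input-side operators $\dd_i$, which is how the $\dd_{G_i}$ term in \eqref{com4} arises. Finally $\la_0'=\la_0+\la_1+\cdots+\la_k$ from \eqref{circ1} becomes $\la_{\Ga'}$ in the notation \eqref{com3}, matching the first slot of $Y^{\bar\Ga''}$ in \eqref{com4}.

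For the second assertion — dropping acyclicity of $\bar\Ga''$ under the hypotheses $X\in\fil^r$, $Y\in\fil^s$, $\Ga'$ has $r$ edges, $\bar\Ga''$ has $s$ edges — the point is that both sides of \eqref{com4} then vanish, so the identity holds trivially. Indeed, if $\bar\Ga''$ contains an oriented cycle while having only $s$ edges, then $p_{\bar\Ga''}\in\fil^{s-1}\mc O^{\star T}$ by the argument in Lemma \ref{lem:gr}(a) (equations \eqref{cycle01}--\eqref{cycle02}); since $Y\in\fil^s P^\ch(m+1)$, by \eqref{fil4} $Y$ annihilates $\fil^{s-1}\mc O^{\star T}$, so $Y^{\bar\Ga''}=0$ and the right-hand side of \eqref{com4} is $0$. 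For the left-hand side, a cycle in $\bar\Ga''$ lifts to a cycle in $\Ga''\subset\Ga$ (contraction of the vertices $0,\dots,k$ to one vertex cannot destroy a cycle, only possibly create a tadpole, but a genuine cycle among the $z_{k+1},\dots,z_{k+m}$ indices persists), so $\Ga$ itself contains an oriented cycle of edges all lying in $\Ga''$; then $p_\Ga\in\fil^{r+s-1}\mc O^{\star T}$ by the same cycle relation, and $Y\circ_1 X\in\fil^{r+s}P^\ch$ by Proposition \ref{prop:filch}, so $(Y\circ_1 X)^\Ga=0$ as well. (One must also note that in the case $\Ga'$ has $r$ edges and $X\in\fil^r$, all the $X^{\Ga'}$ terms in \eqref{com4} are the ``top'' components and are well-defined; if $\Ga'$ had fewer than $r$ edges both sides would already be zero by \eqref{fil4}.)

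I expect the main obstacle to be the chain-rule identity relating $\dd_{z_i}$ acting on $p_{\Ga''}$-before-contraction to $\la_{G_i}+\dd_{G_i}$ acting after contraction, especially keeping the signs straight when edges of $\Ga''$ point \emph{into} a vertex $0\le i\le k$ versus \emph{out} of it, and making sure the ``apply derivatives before substituting'' convention ($\to$ in \eqref{circ1}) is respected throughout — this is exactly the kind of place where the translation-covariance relation $\dd_{z_G}p_G=0$ must be invoked to reorganize a sum of derivatives over one side of an edge into the negative of the sum over the other side, as in the proof of Lemma \ref{lem:alb0}.
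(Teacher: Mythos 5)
Your derivation of the formula \eqref{com4} in the acyclic case follows essentially the same route as the paper: factor $p_\Ga=p_{\Ga'}p_{\Ga''}$, observe via the edge-by-edge identity $-\dd_{z_i}z_{ij}^{-1}=\dd_{z_j}z_{ij}^{-1}$ together with translation covariance $\dd_{z_{G_i}}p_{G_i}=0$ that $-\dd_{z_i}p_{\Ga''}=\dd_{z_{G_i}}p_{\Ga''}$ (this is where acyclicity of $\bar\Ga''$, i.e.\ disjointness of the $G_i$, enters), and then use the first sesquilinearity of the \emph{outer} map $Y$ (not of $X$, a small mislabel on your part) to trade $\dd_{z_j}$, $j\in G_i$, for $\la_j+\dd_j$, turning $\la_i-\dd_{z_i}$ into $\la_i+\la_{G_i}+\dd_{G_i}$. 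That part is fine.

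The second assertion, however, contains a genuine gap. Your argument for the vanishing of the left-hand side rests on the claim that an oriented cycle of $\bar\Ga''$ lifts to an oriented cycle of $\Ga''\subset\Ga$, so that $p_\Ga\in\fil^{r+s-1}\mc O^{\star T}_{k+m+1}$. This is false: contracting the vertices $0,\dots,k$ to a single vertex can \emph{create} cycles that pass through the contracted vertex without there being any cycle in $\Ga''$ (and without producing a tadpole, since $\Ga''$ has no edges among $0,\dots,k$). For instance, take $k=2$, $m=2$, with $E(\Ga')=\{0\to1\}$ and $E(\Ga'')=\{0\to3,\ 3\to2\}$; then $\bar\Ga''$ has the $2$-cycle $\{0\to3,\ 3\to0\}$ with $s=2$ edges, yet $\Ga$ is a tree and $p_\Ga=z_{01}^{-1}z_{03}^{-1}z_{32}^{-1}$ has three distinct divisors, hence does \emph{not} lie in $\fil^{r+s-1}=\fil^2$. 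So you cannot conclude $(Y\circ_1X)^\Ga=0$ from $Y\circ_1X\in\fil^{r+s}P^\ch$. The correct argument works one level further in: in \eqref{circ1} the function fed to $Y$ is $p_{\Ga''}$ (and its $z_i$-derivatives) restricted to $z_1=\cdots=z_k=z_0$, i.e.\ $p_{\bar\Ga''}$ and functions with the same divisor set; since $\bar\Ga''$ has $s$ edges and contains an oriented cycle, the relation \eqref{cycle01}--\eqref{cycle02} shows these all lie in $\fil^{s-1}\mc O^{\star T}_{m+1}$, which is annihilated by $Y\in\fil^s P^\ch(m+1)$. Your argument for the vanishing of the right-hand side (via Lemma \ref{lem:gr}(a)) is correct as stated.
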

\begin{proof}
In order to apply \eqref{circ1}, we need to compute $\partial_{z_i}p_{\Gamma''}$.
After possibly changing a sign, we will assume that all edges of $\Ga''$ are oriented as $(i\to j)$ with $i<j$. 
The assumption that $\bar\Ga''$ has no cycles implies that $G_i$ and $G_l$ are disconnected for $0\le i,l\le k$.
Let $E_i$ be the set of all edges of $\Ga''$ starting from the vertex $i$.
Then we can write
\begin{equation*}
E(\Ga'') = \bigsqcup_{i=0}^k \bigl( E_i \sqcup E(G_i) \bigr) \sqcup F
\end{equation*}
for some subset $F$ of edges among vertices $k+1,\dots,k+m$.
Thus
$$
p_{\Gamma''}
=
p_F
\prod_{i=0}^k
p_{E_i}p_{G_i}
\,,
$$
where $p_F=\prod_{e\in F}z_e^{-1}$, and similarly for $p_{E_i}$.

For every edge $(i\to j) \in E_i$, we have $-\dd_{z_i} z_{ij}^{-1} = \dd_{z_j} z_{ij}^{-1}$.
Hence
$$
-\dd_{z_i}p_{E_i}
=
\sum_{(i\to j)\in E_i}\partial_{z_j}p_{E_i}
=
\partial_{z_{G_i}}
p_{E_i}
\,.
$$
Using that $\dd_{z_{G_i}} p_{G_i} = 0$ and $\partial_{z_{G_i}}p_F=0$, this implies
\begin{equation*}
-\dd_{z_i} p_{\Ga''} = \dd_{z_{G_i}} p_{\Ga''}.
\end{equation*}
The statement then follows from \eqref{circ1}, by applying equation \eqref{com2}
and the sesqui-linearity \eqref{20160629:eq4},
after observing that $\la_{\Ga'}=\la_0'= \la_0+\la_1+\cdots+\la_k$.

For the last assertion of the proposition,
if $\bar\Gamma''$ has a cycle, 
then by Lemma \ref{lem:gr}(a) the right-hand side of \eqref{com4} vanishes.
Hence, we need to check that, in this case, the left-hand side of \eqref{com4}
vanishes as well.
This follows from formula \eqref{circ1}
and the fact that, after differentiating $p_{\Gamma''}$ and setting $z_1=\dots=z_k=z_0$,
the resulting function is in $\fil^{r-1}\mc O^{\star T}_{m+1}$.
\end{proof}

%%%%
%\subsection{Description of the associated graded $\gr P^\ch(n)$}\label{sec:8.5}

We can summarize all the previous results as follows:
\begin{corollary}\label{cor:gr}
\begin{enumerate}[(a)]
\item
For every\/ $X\in\fil^r P^\ch(k+1)$ and every graph\/ $\Gamma\in\mc G(k+1)$ with at most\/ $r$ edges,
the map
$$
X^\Ga \colon
V^{\otimes(k+1)}
\to
V[\lambda_0,\dots,\lambda_k]\big/\big\langle\partial+\lambda_0+\dots+\lambda_k\big\rangle
$$
defined by \eqref{gr2},
satisfies the cycle relations (a) and (b) from Lemma \ref{lem:gr}
and the sesquilinearity relations (a) and (b) from Lemma \ref{lem:gr2}.
\item
For\/ $X\in\fil^r P^\ch(k+1)$, $Y\in\fil^s P^\ch(m+1)$,
and for\/ $\Gamma\in \mc G(k+m+1)$ such that\/ $\Gamma'$ 
has at most\/ $r$ edges and\/ $\bar\Gamma''$ has at most\/ $s$ edges,
equation \eqref{com4} holds.
\item
If\/ $X\in\fil^r P^\ch(k+1)$ is such that\/ $X^\Gamma=0$ for all graphs\/ $\Gamma\in\mc G(k+1)$
with\/ $r$ edges,
then\/ $X\in\fil^{r+1} P^\ch(k+1)$.
\end{enumerate}
\noindent
Hence, we have an induced injective map defined on the associated graded space\/ $\gr^rP^\ch(k+1)$,
such that
$$
\bar X
\,\mapsto\,
\tilde X=\{X^\Gamma\,|\,\Gamma\in\mc G(k+1) \,\,\,\,\mathrm{ with }\,\,\,\, r \,\,\,\,\mathrm{ edges }\}
\,.
$$
\end{corollary}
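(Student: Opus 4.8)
The statement is a summary of facts already established in this section, so the plan is to verify items (a)--(c) and then read off the induced map. For (a), I would argue as follows. If $\Gamma$ has strictly fewer than $r$ edges, then $p_\Gamma$ has fewer than $r$ distinct divisors, so $p_\Gamma\in\fil^{r-1}\mc O_{k+1}^{\star T}$ and $X^\Gamma=0$ directly from the definition \eqref{fil4} of $\fil^r P^\ch$, whence all the claimed identities read $0=0$. For $\Gamma$ with exactly $r$ edges, the cycle relation (a) is Lemma \ref{lem:gr}(a), the cycle relation (b) is Lemma \ref{lem:gr}(b) applied to $Y=X$ (using that the filtration is decreasing, so $\fil^r P^\ch\subseteq\fil^{r-1}P^\ch$), and the sesquilinearity relations (a), (b) are Lemma \ref{lem:gr2}(a),(b) verbatim.

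For (b), I would distinguish cases. If $\bar\Gamma''$ is acyclic, equation \eqref{com4} is the first assertion of Proposition \ref{prop:com}, which needs no hypothesis on the numbers of edges. If $\bar\Gamma''$ contains a cycle, split according to edge counts: when $\Gamma'$ has $r$ edges and $\bar\Gamma''$ has $s$ edges this is the second assertion of Proposition \ref{prop:com}; and when $\Gamma'$ has fewer than $r$ edges or $\bar\Gamma''$ has fewer than $s$ edges, both sides of \eqref{com4} vanish -- the right-hand side because the relevant $X^{\Gamma'}$ or $Y^{\bar\Gamma''}$ is $0$ (as in (a), or by Lemma \ref{lem:gr}(a)), and the left-hand side because $Y\circ_1 X\in\fil^{r+s}P^\ch$ by Proposition \ref{prop:filch} while $p_\Gamma$ then has at most $r+s-1$ divisors. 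Here one uses that the substitution $z_1=\cdots=z_k=z_0$ leaves $\Gamma''$ with the same edge count and tadpole-free, so that $\bar\Gamma''\in\mc G(m+1)$.

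For (c), I would use Lemma \ref{lem:alb1}: $\fil^r\mc O_{k+1}^{\star T}=\sum_\Gamma\mc D_{k+1}^T\,p_\Gamma$, the sum ranging over acyclic $\Gamma$ with at most $r$ edges. Moving the differential operators across the tensor sign via the right $\mc D_{k+1}^T$-module structure \eqref{20160629:eq1}, every element of $V^{\otimes(k+1)}\otimes\fil^r\mc O_{k+1}^{\star T}$ becomes a $\mc D_{k+1}^T$-linear combination of vectors $v\otimes p_\Gamma$ with $\Gamma$ acyclic of at most $r$ edges. Since $X$ is a $\mc D_{k+1}^T$-module homomorphism and $X(v\otimes p_\Gamma)=X^\Gamma(v)$ vanishes for all such $\Gamma$ -- by hypothesis when $|E(\Gamma)|=r$, and as in (a) when $|E(\Gamma)|<r$ -- it follows that $X$ annihilates $V^{\otimes(k+1)}\otimes\fil^r\mc O_{k+1}^{\star T}$, i.e.\ $X\in\fil^{r+1}P^\ch(k+1)$.

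Finally, $\bar X\mapsto\tilde X$ is well defined on $\gr^r P^\ch(k+1)$ because $X\in\fil^{r+1}P^\ch(k+1)$ forces $X^\Gamma=0$ for every $\Gamma$ with $r$ edges (as then $p_\Gamma\in\fil^r\mc O_{k+1}^{\star T}$), and it is injective precisely by (c); by (a) its image consists of families obeying the cycle and sesquilinearity relations, while (b) records how composition of operators is transported to such families. I expect the only genuine work to be the bookkeeping in (b) and (c), in particular the $\mc D_{k+1}^T$-module generation step in (c); everything else is a direct citation.
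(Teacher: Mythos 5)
Your proposal is correct and follows exactly the paper's (one-line) proof, which cites Lemmas \ref{lem:gr} and \ref{lem:gr2} for (a), Proposition \ref{prop:com} for (b), and Lemma \ref{lem:alb1} together with the sesquilinearity conditions (i.e.\ $\mc D^T_{k+1}$-linearity of $X$) for (c); you have merely filled in the routine case analysis on edge counts and the step of moving differential operators across the tensor sign. The only cosmetic slip is in the citation of the cycle relation (b): the nontrivial instance for $X\in\fil^r P^\ch(k+1)$ is Lemma \ref{lem:gr}(b) applied to a graph with $r+1$ edges (so that the graphs $\Gamma\setminus e$ have $r$ edges), rather than to a graph with $r$ edges, where the relation is vacuous.
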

\begin{proof}
Claim (a) is given by Lemmas \ref{lem:gr}, \ref{lem:gr2}.
Claim (b) is given by Proposition \ref{prop:com}.
Claim (c) follows from Lemma \ref{lem:alb1} and and the sesquilinearity conditions.
\end{proof}
Using this corollary,
in Section \ref{sec:6} below, we will provide a more detailed description of the 
associated graded operad $\gr P^\ch$.

%%%
\subsection{Refinement of the filtration on $P^{\ch}$}\label{sec:filch-ref}

We refine the filtration of the chiral operad $P^\ch$ introduced in Section \ref{sec:filch}
as follows.
Let $V$ be a vector superspace with an increasing filtration
\begin{equation}\label{eq:last3}
\fil^{-1}V=\{0\}
\,\subset\,
\fil^0V
\,\subset\,
\fil^1V
\,\subset\,
\fil^2V
\,\subset\,
\cdots
\,\subset\,
V
\,.
\end{equation}
This induces an increasing filtration on the tensor products
$$
\fil^s\big(V^{\otimes(k+1)}\otimes \mc O_{k+1}^{\star T}\big)
=
\sum_{r_0+r_1+\dots+r_{k+1}=s}
\fil^{r_0}V
\otimes
\cdots
\otimes
\fil^{r_k}V
\otimes
\fil^{r_{k+1}}\mc O_{k+1}^{\star T}
\,,
$$
if $s\geq0$, and $\fil^s=0$ if $s<0$.
For example, for $k=1$, we have
\begin{equation}\label{eq:last2}
\fil^s\big(V^{\otimes2}\otimes \mc O_2^{\star T}\big)
=
\big(
\fil^s(V^{\otimes2})\otimes \mc O_2^T
\big)
\oplus
\big(
\fil^{s-1}(V^{\otimes2})\otimes \mc O_2^{\star T}
\big)
\,.
\end{equation}
The corresponding refined filtered space $\fil^r P^\ch(k+1)$ is defined 
as the set of elements $X \in P^\ch(k+1)$ such that
\begin{equation}\label{fil4-ref}
X\big( 
\fil^s(V^{\otimes(k+1)} \otimes \mc O_{k+1}^{\star T})
\big)
\subset
(\fil^{s-r}V)[\lambda_0,\dots,\lambda_k]/\langle\partial+\lambda_0+\dots+\lambda_k\rangle
\,,
\end{equation}
for every $s$.
This is a decreasing filtration, possibly infinite in both directions.

\begin{proposition}\label{prop:filch-ref}
With the above refined filtration, $P^\ch(V)$ is a filtered operad $($cf.\ \eqref{eq:filop}$)$.
Hence, 
we have the corresponding Lie superalgebra filtration $\fil^r W^\ch(V)$ of $W^\ch(V)$.
\end{proposition}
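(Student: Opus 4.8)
The plan is to follow the proof of Proposition \ref{prop:filch}: first verify that the refined filtration is invariant under the symmetric group actions, then reduce the composition axiom \eqref{eq:filop} to the $\circ_1$-product, and finally check that case directly from formula \eqref{circ1}.

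For the invariance under $S_{k+1}$, note that the induced filtration $\fil^s(V^{\otimes(k+1)}\otimes\mc O_{k+1}^{\star T})$ is preserved by the simultaneous permutation of the tensor factors of $V^{\otimes(k+1)}$ and of the variables $z_0,\dots,z_k$, since both the total-degree filtration on $V^{\otimes(k+1)}$ and the pole filtration \eqref{fil1} on $\mc O_{k+1}^{\star T}$ are $S_{k+1}$-invariant. Hence, if $X\in\fil^r P^\ch(k+1)$ and $\sigma\in S_{k+1}$, then by \eqref{20160629:eq5} the value of $X^\sigma$ on $\fil^s(V^{\otimes(k+1)}\otimes\mc O_{k+1}^{\star T})$ equals, up to the sign $\epsilon_v(\sigma)$, the value of $X$ on $\sigma^{-1}$ of that same space, hence lies in $(\fil^{s-r}V)[\lambda_0,\dots,\lambda_k]/\langle\partial+\lambda_0+\dots+\lambda_k\rangle$; thus $X^\sigma\in\fil^r P^\ch(k+1)$ (and clearly $1\in\fil^0 P^\ch(1)$). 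As in the proof of Proposition \ref{prop:filch}, every composition in an operad is obtained from the $\circ_1$-product and the symmetric group actions (using \eqref{eq:operad8a} together with the equivariance \eqref{eq:operad9} to express each $\circ_i$ through $\circ_1$), so it suffices to prove
$$
Y\circ_1 X\in\fil^{r+s}P^\ch(k+m+1)
\quad\text{whenever}\quad
X\in\fil^r P^\ch(k+1),\ Y\in\fil^s P^\ch(m+1).
$$

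The heart of the argument is the degree bookkeeping in \eqref{circ1}. By linearity I evaluate $Y\circ_1 X$ on a pure tensor $v_0\otimes\dots\otimes v_{k+m}\otimes h$ lying in $\fil^t(V^{\otimes(k+m+1)}\otimes\mc O_{k+m+1}^{\star T})$, with $v_i\in\fil^{r_i}V$, $h$ a monomial with at most $r_{k+m+1}$ distinct divisors and $r_0+\dots+r_{k+m}+r_{k+m+1}\le t$. I factor $h=fg$ as in \eqref{circ2} by collecting into $f$ all factors $z_{ij}^{-1}$ with $0\le i<j\le k$; then $f\in\mc O_{k+1}^{\star T}$ and $g\in\mc O_{k+m+1}^{\star T}$ has no pole at $z_i=z_j$ for $0\le i<j\le k$, and since the divisors of $f$ and $g$ are disjoint we have $f\in\fil^a\mc O_{k+1}^{\star T}$, $g\in\fil^b\mc O_{k+m+1}^{\star T}$ with $a+b\le r_{k+m+1}$. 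Now I apply \eqref{circ1}. Since $X\in\fil^r P^\ch(k+1)$, the inner value $X^{z_0,\dots,z_k}_{\lambda_0-\dd_{z_0},\dots,\lambda_k-\dd_{z_k}}(v_0,\dots,v_k;f)_{\to}$ has coefficients in $\fil^{r_0+\dots+r_k+a-r}V$: the substitutions $\lambda_i\mapsto\lambda_i-\dd_{z_i}$ and the derivatives indicated by the arrow act on the polynomial variables and on $g$, not on the $V$-valued coefficients, so they do not raise the $\fil^\bullet V$-degree. Likewise $g\big|_{z_1=\dots=z_k=z_0}$ and its derivatives stay in $\fil^b\mc O_{m+1}^{\star T}$ by the restriction property of the filtration \eqref{fil1} noted in Section \ref{sec:filch} (applied $k$ times). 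Feeding these into $Y\in\fil^s P^\ch(m+1)$, whose arguments then carry total filtration degree at most $(r_0+\dots+r_k+a-r)+(r_{k+1}+\dots+r_{k+m})+b=(r_0+\dots+r_{k+m})+(a+b)-r\le t-r$, we get a value with coefficients in $\fil^{(t-r)-s}V=\fil^{t-(r+s)}V$. Hence $Y\circ_1 X\in\fil^{r+s}P^\ch(k+m+1)$, so $P^\ch(V)$ is a filtered operad. The step I expect to require the most care—though it is routine—is exactly this one: tracking the two filtration parameters separately through \eqref{circ1}, i.e. that the pole count is additive under the factorization and left unchanged by the $\dd_{z_i}$'s and by the substitutions $z_i\mapsto z_0$, while the $\fil^\bullet V$-degree is untouched by all the polynomial and differential manipulations.

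Finally, since $W^\ch_k(V)=P^\ch(k+1)^{S_{k+1}}$, setting $\fil^r W^\ch_k(V)=\fil^r P^\ch(k+1)\cap W^\ch_k(V)$ gives a decreasing filtration of the $\mb Z$-graded vector superspace $W^\ch(V)$, and because the $\Box$-product \eqref{eq:box} is assembled from the $\circ_1$-product and the action of the symmetric groups—both of which respect the refined filtration by the above—we get $\fil^r W^\ch(V)\,\Box\,\fil^s W^\ch(V)\subset\fil^{r+s}W^\ch(V)$, hence $[\fil^r W^\ch(V),\fil^s W^\ch(V)]\subset\fil^{r+s}W^\ch(V)$. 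Thus $\fil^\bullet W^\ch(V)$ is a Lie superalgebra filtration of $W^\ch(V)$.
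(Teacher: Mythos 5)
Your proof is correct and follows exactly the route the paper intends: the paper's own proof of Proposition \ref{prop:filch-ref} simply states that the argument is the same as for Proposition \ref{prop:filch}, and your write-up is precisely that argument with the extra $\fil^\bullet V$-degree tracked alongside the pole count through the factorization \eqref{circ2} and formula \eqref{circ1}. The closing observation that the $\Box$-product is built from $\circ_1$ and the symmetric group action, hence respects the filtration, matches the paper's appeal to Theorem \ref{20170603:thm2}(c).
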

\begin{proof}
The proof of the first statement is the same as for Proposition \ref{prop:filch}.
The last assertion follows from Theorem \ref{20170603:thm2}(c).
\end{proof}

Recall that a \emph{filtered} vertex algebra is a vertex algebra $V$
with an increasing filtration \eqref{eq:last3}
such that 
\begin{equation}\label{eq:last4}
{:}(\fil^pV)(\fil^qV){:}
\,\subset\,
\fil^{p+q}V
\,\,\text{ and }\,\,
[\fil^pV\,_\lambda\,\fil^qV]
\,\subset\,
\fil^{p+q-1}V[\lambda]
\,,
\end{equation}
for all $p,q$.
\begin{theorem}\label{20160719:thm-ref}
Let\/ $V$ be a filtered vector superspace.
Under the correspondence from Theorem \ref{20160719:thm},
the structures of filtered vertex algebra on\/ $V$
are in bijection with the 
odd elements\/ $X\in\fil^1 W^{\ch}_1(\Pi V)$ satisfying\/ $X\Box X=0$.
\end{theorem}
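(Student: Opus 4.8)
The plan is to combine Theorem~\ref{20160719:thm} with a direct analysis of the refined filtration in its lowest nontrivial degree. By Theorem~\ref{20160719:thm}, the odd elements $X\in W^\ch_1(\Pi V)$ with $X\Box X=0$ already parametrize the (non-unital) vertex algebra structures on $V$, via the integral $\lambda$-bracket \eqref{20160719:eq3b}, so it suffices to show that, for such an $X$, one has $X\in\fil^1 W^\ch_1(\Pi V)$ if and only if the associated integral $\lambda$-bracket satisfies the two inclusions \eqref{eq:last4} defining a filtered vertex algebra. Since the refined filtration on $P^\ch(2)$ is $S_2$-invariant (Proposition~\ref{prop:filch-ref}), we have $\fil^1 W^\ch_1(\Pi V)=W^\ch_1(\Pi V)\cap\fil^1 P^\ch(2)$, so the real task is to characterize membership in $\fil^1 P^\ch(2)$.

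First I would reduce that membership to two conditions, obtained by evaluating $X$ on $f=1$ and on $f=z_{10}^{-1}$. Since $1\in\fil^0\mc O^{\star T}_2$ and $z_{10}^{-1}\in\fil^1\mc O^{\star T}_2$, the definitions \eqref{fil4-ref} and \eqref{eq:last2} show that $X\in\fil^1 P^\ch(2)$ implies, for all $p,q$,
\begin{align*}
X_{\la_0,\la_1}\big(\fil^pV\otimes\fil^qV;1\big)&\subseteq(\fil^{p+q-1}V)[\la_0,\la_1]/\langle\dd+\la_0+\la_1\rangle,\\
X_{\la_0,\la_1}\big(\fil^pV\otimes\fil^qV;z_{10}^{-1}\big)&\subseteq(\fil^{p+q}V)[\la_0,\la_1]/\langle\dd+\la_0+\la_1\rangle.
\end{align*}
Conversely, these two conditions force $X\in\fil^1 P^\ch(2)$: the $\mc D^T_2$-module $\mc O^{\star T}_2=\mb F[z_{01}^{\pm1}]$ is generated by $z_{10}^{-1}$ (Lemma~\ref{20160719:lem} for $k=1$), and using the sesquilinearity relations \eqref{20160629:eq4} one checks by induction on $|n|$ that $X_{\la_0,\la_1}(\fil^pV\otimes\fil^qV;z_{01}^{n})$ lies in $(\fil^{p+q-1}V)[\la_0,\la_1]/\langle\dots\rangle$ for $n\ge0$ and in $(\fil^{p+q}V)[\la_0,\la_1]/\langle\dots\rangle$ for $n<0$: the positive powers come from $f=1$ by multiplication by $z_{01}$, which acts on $X_{\la_0,\la_1}(v_0,v_1;\cdot)$ as a constant-coefficient differential operator in $\la_0,\la_1$, and the negative powers come from $f=z_{10}^{-1}$ by applying $\dd_{z_0}$, which by the first relation in \eqref{20160629:eq4} replaces $v_0$ by $(\dd+\la_0)v_0$. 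Together with \eqref{eq:last2} and bilinearity this is precisely \eqref{fil4-ref} with $r=1$.

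It then remains to match the two displayed conditions with \eqref{eq:last4}. By \eqref{20160719:eq3c} one has $X_{\la_0,\la_1}(v_0,v_1;1)=\pm[v_0{}_{\la_0}v_1]$, so the first condition is exactly $[\fil^pV{}_\lambda\fil^qV]\subseteq(\fil^{p+q-1}V)[\lambda]$, the second inclusion in \eqref{eq:last4}. By \eqref{20160719:eq3b}, $X_{\la_0,\la_1}(v_0,v_1;z_{10}^{-1})=\pm\int^{\la_0}\!d\sigma\,[v_0{}_\sigma v_1]=\pm\big({:}v_0v_1{:}+\int_0^{\la_0}\!d\sigma\,[v_0{}_\sigma v_1]\big)$, and since the integral term is already controlled by the first condition (it lies in $(\fil^{p+q-1}V)[\lambda]\subseteq(\fil^{p+q}V)[\lambda]$), the second condition is equivalent to ${:}(\fil^pV)(\fil^qV){:}\subseteq\fil^{p+q}V$, the first inclusion in \eqref{eq:last4}. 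Thus $X\in\fil^1 W^\ch_1(\Pi V)$ exactly when the vertex algebra determined by $X$ is filtered, which yields the asserted bijection.

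The step I expect to require the most care is the inductive reduction to the two generators $1$ and $z_{10}^{-1}$, and in particular the bookkeeping of the index shift: $z_{10}^{-1}$ raises the filtration degree of $V^{\otimes2}\otimes\mc O^{\star T}_2$ by one whereas $1$ does not, which is exactly what produces the ``$-1$'' in the $\lambda$-bracket inclusion but not in the normally ordered product inclusion. This is also the only point at which one uses that $V$ is filtered as an $\mb F[\dd]$-module, namely $\dd\fil^pV\subseteq\fil^pV$, so that moving between consecutive negative powers of $z_{10}$ via the first sesquilinearity in \eqref{20160629:eq4} does not raise the $V$-filtration; everything else is formal manipulation of \eqref{fil4-ref}, \eqref{eq:last2}, and the dictionary of Theorem~\ref{20160719:thm}.
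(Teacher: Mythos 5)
Your proposal is correct and follows essentially the same route as the paper's (very terse) proof: both reduce membership in $\fil^1 P^\ch(2)$ to the two evaluations at $f=1$ and $f=z_{10}^{-1}$ and match these with the two inclusions in \eqref{eq:last4} via \eqref{eq:last2}. You additionally supply the details the paper leaves implicit — the sesquilinearity induction showing that these two generators suffice, and the observation that the value at $z_{10}^{-1}$ is the full integral of the $\lambda$-bracket (whose non-constant part is already controlled by the bracket condition) rather than just the normally ordered product — both of which are accurate and welcome.
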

\begin{proof}
If $V$ is a filtered vertex algebra,
then, due to \eqref{eq:last4}, the corresponding $X$ satisfies
$$
X^{z_0,z_1}_{\lambda_0,\lambda_1}(\fil^pV\otimes \fil^qV\otimes 1)
=
[\fil^pV_{\lambda_0} \fil^qV]
\subset
\fil^{p+q-1}V[\lambda_0]
$$
and
$$
X^{z_0,z_1}_{\lambda_0,\lambda_1}\big(\fil^pV\otimes \fil^qV\otimes \frac1{z_{10}}\big)
=
{:}(\fil^pV)(\fil^qV){:}
\subset
\fil^{p+q}V
\,.
$$
By \eqref{eq:last2}, this means that $X\in\fil^1 W^{\ch}_1(\Pi V)$.
\end{proof}

%%%%%%%%%%%%%%%%%%%%%%%%%%%%%%%%%%%%%%%
\section{The cooperad of $n$-graphs}\label{sec:6a}

%%%
\subsection{Cocomposition of $n$-graphs}\label{sec:6a.2}

As in Section \ref{sec:6a.1}, 
let $\mc G(n)$ be the collection of all $n$-graphs
which have no tadpoles,
and $\mc G_0(n)$ be the collection of all acyclic $n$-graphs.
Fix an $n$-tuple $(m_1,\dots,m_n)$ of positive integers,
and let $M_1,\dots,M_n$ as in \eqref{20170821:eq2a}.
We define the \emph{cocomposition map}
\begin{equation}\label{20170614:eq2}
\Delta^{m_1\dots m_n}
\colon
\mc G(M_n)\to\mc G(n)\times\mc G(m_1)\times\dots\times\mc G(m_n)
\,,
\end{equation}
denoted
\begin{equation}\label{20170614:eq3}
\Gamma\,\mapsto\,
\Delta^{m_1\dots m_n}_0(\Gamma),\Delta^{m_1\dots m_n}_1(\Gamma),\dots,
\Delta^{m_1\dots m_n}_n(\Gamma)
\,,
\end{equation}
as follows.
$\Delta^{m_1\dots m_n}_1(\Gamma)$ is the subgraph of $\Gamma$ associated 
to the vertices $\{1,\dots,M_1\}$,
$\Delta^{m_1\dots m_n}_2(\Gamma)$ is the subgraph of $\Gamma$ associated 
to the vertices $\{M_1+1,\dots,M_2\}$
(which we relabel $\{1,\dots,m_2\}$),
and so on up to
$\Delta^{m_1\dots m_n}_n(\Gamma)$, which is the subgraph of $\Gamma$ 
associated to the last $m_n$ vertices 
$\{M_{n-1}+1,\dots,M_n\}$
(which we relabel $\{1,\dots,m_n\}$),
and finally $\Delta^{m_1\dots m_n}_0(\Gamma)$ is the graph obtained by collapsing 
the first $m_1$ vertices of $\Gamma$ (and all edges among them) into a single vertex (which we label $1$),
the second $m_2$ vertices of $\Gamma$ into a single vertex (which we label $2$),
and so on up to the last $m_n$ vertices of $\Gamma$ into a single vertex (which we label $n$).

For example, consider the list of integers $(3,3,1,2)$,
and the $9$-graph 
\begin{equation}\label{eq:graph-example}
\,\begin{tikzpicture}
\node at (-1,0) {$\Gamma=$};
 \draw (0,0) circle [radius=0.1];
\node at (0,-0.3) {1};
\draw (1,0) circle [radius=0.1];
\node at (1,-0.3) {2};
\draw (2,0) circle [radius=0.1];
\node at (2,-0.3) {3};
\draw (3,0) circle [radius=0.1];
\node at (3,-0.3) {4};
\draw (4,0) circle [radius=0.1];
\node at (4,-0.3) {5};
\draw (5,0) circle [radius=0.1];
\node at (5,-0.3) {6};
\draw (6,0) circle [radius=0.1];
\node at (6,-0.3) {7};
\draw (7,0) circle [radius=0.1];
\node at (7,-0.3) {8};
\draw (8,0) circle [radius=0.1];
\node at (8,-0.3) {9};
\draw[->] (0.1,0) -- (0.9,0);
\draw[->] (0,0.1) to [out=90,in=90] (2,0.1);
\draw[->] (0,0.1) to [out=90,in=90] (3,0.1);
\draw[->] (2,-0.1) to [out=270,in=270] (5,-0.1);
\draw[->] (2,-0.1) to [out=270,in=270] (8,-0.1);
\draw[<-] (3.1,0) -- (3.9,0);
\draw[<-] (7.1,0) -- (7.9,0);
\draw [dotted,thin] (1,0) circle [radius=1.3];
\draw [dotted,thin] (4,0) circle [radius=1.3];
\draw [dotted,thin] (6,0) circle [radius=0.6];
\draw [dotted,thin] (7.5,0) circle [radius=0.8];
\node at (9,0) {$\in\mc G_0(9)\,,$};
\end{tikzpicture}
\end{equation}
Then, the cocomposition 
$\Delta^{3312}(\Gamma)\in\mc G(4)\times\mc G_0(3)\times\mc G_0(3)\times\mc G_0(1)\times\mc G_0(2)$ 
consists of the following graphs:
the subgraph of $\Gamma$ associated to the first three vertices, is
$$
\,\begin{tikzpicture}
\node at (-2,0) {$\Delta^{3312}_1(\Gamma)=$};
\draw (0,0) circle [radius=0.1];
\node at (0,-0.3) {1};
\draw (1,0) circle [radius=0.1];
\node at (1,-0.3) {2};
\draw (2,0) circle [radius=0.1];
\node at (2,-0.3) {3};
\draw[->] (0.1,0) -- (0.9,0);
\draw[->] (0,0.1) to [out=90,in=90] (2,0.1);
%\draw [dotted,thin] (1,0) circle [radius=1.3];
\node at (3,0) {$\in\mc G_0(3)\,,$};
\end{tikzpicture}
$$
the subgraph $\Gamma$ associated to the second three vertices
(and relabeling the vertices), is
$$
\,\begin{tikzpicture}
\node at (-2,0) {$\Delta^{3312}_2(\Gamma)=$};
\draw (0,0) circle [radius=0.1];
\node at (0,-0.3) {1};
\draw (1,0) circle [radius=0.1];
\node at (1,-0.3) {2};
\draw (2,0) circle [radius=0.1];
\node at (2,-0.3) {3};
\draw[<-] (0.1,0) -- (0.9,0);
%\draw [dotted,thin] (1,0) circle [radius=1.3];
\node at (3,0) {$\in\mc G_0(3)\,,$};
\end{tikzpicture}
$$
the subgraph associated to the seventh vertex is just $\Delta^{3312}_3(\Gamma)=\substack{\circ \\ 1}\in\mc G_0(1)$,
the subgraph of $\Gamma$ associated to the last two vertices is
$$
\,\begin{tikzpicture}
\node at (-2,0) {$\Delta^{3312}_4(\Gamma)=$};
\draw (0,0) circle [radius=0.1];
\node at (0,-0.3) {1};
\draw (1,0) circle [radius=0.1];
\node at (1,-0.3) {2};
\draw[<-] (0.1,0) -- (0.9,0);
%\draw [dotted,thin] (0.5,0) circle [radius=0.8];
\node at (2,0) {$\in\mc G_0(2)\,,$};
\end{tikzpicture}
$$
and finally, collapsing all these subgraphs into single vertices, we get
$$
\begin{tikzpicture}
\node at (-2,0) {$\Delta^{3312}_0(\Gamma)=$};
\draw (0,0) circle [radius=0.1];
\node at (0,-0.3) {1};
\draw (1,0) circle [radius=0.1];
\node at (1,-0.3) {2};
\draw (2,0) circle [radius=0.1];
\node at (2,-0.3) {3};
\draw (3,0) circle [radius=0.1];
\node at (3,-0.3) {4};
\draw[->] (0,0.1) to [out=90,in=90] (1,0.1);
\draw[->] (0,-0.1) to [out=270,in=270] (1,-0.1);
\draw[->] (0,-0.1) to [out=270,in=270] (3,-0.1);
\node at (4,0) {$\in\mc G(4)\,.$};
\end{tikzpicture}
$$
Note that
if $\Gamma$ is acyclic,
then all the subgraphs $\Delta^{m_1\dots m_n}_i(\Gamma)$, for $i=1,\dots,n$, are acyclic as well,
while, in general, this is not the case for $\Delta^{m_1\dots m_n}_0(\Gamma)$.
\begin{example}\label{ex:gamma}
A special case is when $m_1=k+1$ and $m_2=\dots=m_{n}=1$.
With the notation of Section \ref{sec:comgr}, we have in this case
$\Delta^{(k+1)1\dots1}_1(\Gamma)=\Gamma'$,
$\Delta^{(k+1)1\dots1}_2(\Gamma)=\dots=\Delta^{(k+1)1\dots1}_{n}(\Gamma)=\circ$,
and
$\Delta^{(k+1)1\dots1}_0(\Gamma)=\bar\Gamma''$.
\end{example}
\begin{lemma}\label{20170823:lem1}
For every\/ $m_1,\dots,m_n$,
there is a natural bijective correspondence
\begin{equation}\label{20170823:eq5}
\Delta
\colon
E(\Gamma)
\,\stackrel{\sim}{\longrightarrow}\,
E\big(\Delta^{m_1\dots m_n}_0(\Gamma)\big)
\sqcup
E\big(\Delta^{m_1\dots m_n}_1(\Gamma)\big)
\sqcup\dots\sqcup
E\big(\Delta^{m_1\dots m_n}_n(\Gamma)\big)
\,.
\end{equation}
\end{lemma}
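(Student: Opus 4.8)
The plan is to establish \eqref{20170823:eq5} by sorting the edges of $\Gamma$ according to the blocks into which their two endpoints fall. Write $\{1,\dots,M_n\}$ as the disjoint union of the $n$ consecutive blocks $B_i=\{M_{i-1}+1,\dots,M_i\}$, with $|B_i|=m_i$, and for a vertex $v$ let $i(v)$ be the index with $v\in B_{i(v)}$ and $v'$ its relabelled position $1,\dots,m_{i(v)}$ inside $B_{i(v)}$, exactly as in Remark \ref{rem:operad1}. The edge set $E(\Gamma)$ then splits as $E_{\mathrm{in}}\sqcup E_{\mathrm{out}}$, where $E_{\mathrm{in}}$ consists of the \emph{intra-block} edges $a\to b$ with $i(a)=i(b)$, and $E_{\mathrm{out}}$ of the \emph{inter-block} edges with $i(a)\ne i(b)$; every edge belongs to exactly one of the two sets.

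First I would match $E_{\mathrm{in}}$ with $\bigsqcup_{i=1}^n E(\Delta^{m_1\dots m_n}_i(\Gamma))$: by definition $\Delta^{m_1\dots m_n}_i(\Gamma)$ is the subgraph of $\Gamma$ associated to $B_i$ with $B_i$ relabelled $\{1,\dots,m_i\}$, so the assignment $(a\to b)\mapsto(a'\to b')$ restricts, for each $i$, to a bijection between $\{e\in E_{\mathrm{in}}: i(a)=i(b)=i\}$ and $E(\Delta^{m_1\dots m_n}_i(\Gamma))$, and summing over $i$ gives the claim. Then I would match $E_{\mathrm{out}}$ with $E(\Delta^{m_1\dots m_n}_0(\Gamma))$: by definition $\Delta^{m_1\dots m_n}_0(\Gamma)$ is obtained from $\Gamma$ by contracting each $B_i$ to the single vertex $i$ and discarding the edges that become tadpoles, i.e.\ discarding exactly $E_{\mathrm{in}}$; hence each $e=(a\to b)\in E_{\mathrm{out}}$ descends to the edge $i(a)\to i(b)$ of $\Delta^{m_1\dots m_n}_0(\Gamma)$, and this descent is a multiplicity-preserving bijection $E_{\mathrm{out}}\stackrel{\sim}{\longrightarrow}E(\Delta^{m_1\dots m_n}_0(\Gamma))$. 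In passing one notes that no inter-block edge becomes a tadpole while all intra-block edges are discarded, so $\Delta^{m_1\dots m_n}_0(\Gamma)$ has no tadpoles and indeed lies in $\mc G(n)$, as required for the cocomposition map $\Delta^{m_1\dots m_n}$ of \eqref{20170614:eq2} to be well defined. Defining $\Delta$ on $E(\Gamma)$ by these two rules and assembling the bijections yields \eqref{20170823:eq5}.

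Since the construction is entirely combinatorial and $\Delta$ is given by an explicit formula — so in particular it is natural with respect to isomorphisms of $\Gamma$ respecting the block structure — there is no real obstacle here. The only point that deserves explicit mention is that $n$-graphs are allowed to carry multiple parallel edges, so that two distinct inter-block edges of $\Gamma$ joining the same ordered pair of blocks produce two distinct edges of $\Delta^{m_1\dots m_n}_0(\Gamma)$; this is precisely what keeps the map on $E_{\mathrm{out}}$ injective, and it is illustrated by the double edge $1\to 2$ appearing in $\Delta^{3312}_0(\Gamma)$ in the example following \eqref{eq:graph-example}.
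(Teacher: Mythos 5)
Your proof is correct and follows essentially the same route as the paper's: the paper's argument is exactly the dichotomy between edges with both endpoints in a single block $\{M_{i-1}+1,\dots,M_i\}$ (matched with $E(\Delta^{m_1\dots m_n}_i(\Gamma))$) and the remaining edges (matched with $E(\Delta^{m_1\dots m_n}_0(\Gamma))$). Your additional remarks on relabelling, multiplicities of parallel edges, and the absence of tadpoles in $\Delta^{m_1\dots m_n}_0(\Gamma)$ are sound elaborations of the same idea.
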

\begin{proof}
An edge $e\in E(\Gamma)$
has either both tail and head contained in one of the subsets $\{M_{i-1}+1,\dots,M_i\}$,
for some $i=1,\dots,n$,
in which case it corresponds to an edge of $\Delta^{m_1\dots m_n}_i(\Gamma)$,
or it does not, in which case it corresponds to an edge of $\Delta^{m_1\dots m_n}_0(\Gamma)$.
\end{proof}
It follows from Lemma \ref{20170823:lem1} that the cooperad of graphs $\mc G$ is graded by the number of edges.
\begin{lemma}\label{20170823:lem2}
Let\/ $C\subset E(\Gamma)$ be an oriented cycle of an\/ $n$-graph\/ $\Gamma\in\mc G(n)$.
Then, 
\begin{enumerate}[(a)]
\item
either\/ $\Delta(C)\subset E\big(\Delta^{m_1\dots m_n}_i(\Gamma)\big)$,
in which case\/ $\Delta(C)$ is an oriented cycle 
of\/ $\Delta^{m_1\dots m_n}_i(\Gamma)\in\mc G(m_i)$;
\item
or, $\Delta(C)\cap E(\Delta^{m_1\dots m_n}_0(\Gamma))$ 
is an oriented cycle of\/ $\Delta^{m_1\dots m_n}_0(\Gamma)\in\mc G(n)$.
\end{enumerate}
\end{lemma}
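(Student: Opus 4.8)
The plan is to track how the edges of $C$ are distributed by the bijection $\Delta$ of Lemma~\ref{20170823:lem1}. Recall from the proof of that lemma that an edge $e=(u\to v)$ of $\Gamma$ has $\Delta(e)\in E\big(\Delta^{m_1\dots m_n}_i(\Gamma)\big)$ for a unique $i\in\{1,\dots,n\}$ exactly when both $u$ and $v$ lie in the block $\{M_{i-1}+1,\dots,M_i\}$ --- call such an edge \emph{intra-block} --- and $\Delta(e)\in E\big(\Delta^{m_1\dots m_n}_0(\Gamma)\big)$ otherwise --- call it \emph{inter-block}. Write $b\colon\{1,\dots,M_n\}\to\{1,\dots,n\}$ for the map sending a vertex to the index of its block, so that $e=(u\to v)$ is intra-block iff $b(u)=b(v)$ (and then $\Delta(e)$ is an edge of $\Delta^{m_1\dots m_n}_{b(u)}(\Gamma)$), and inter-block iff $b(u)\ne b(v)$ (and then $\Delta(e)$ is the edge $b(u)\to b(v)$ of $\Delta^{m_1\dots m_n}_0(\Gamma)$, with orientation preserved). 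The elementary fact I will use repeatedly is that an intra-block edge $i_t\to i_{t+1}$ of the cycle forces $b(i_t)=b(i_{t+1})$, so along any string of consecutive intra-block edges of $C$ all vertices involved lie in one block.

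First I would dispose of case (a). Suppose every edge of $C=\{i_1\to i_2,\dots,i_s\to i_1\}$ is intra-block. Traversing the closed sequence and using the fact just noted, all of $i_1,\dots,i_s$ lie in one block $i$. Hence $\Delta(C)\subset E\big(\Delta^{m_1\dots m_n}_i(\Gamma)\big)$, and under the relabeling of vertices used in the definition of $\Delta^{m_1\dots m_n}_i(\Gamma)$, the set $\Delta(C)$ is exactly the image of the closed sequence $i_1\to i_2\to\dots\to i_s\to i_1$, i.e.\ an oriented cycle of $\Delta^{m_1\dots m_n}_i(\Gamma)$ in the sense of \eqref{20170823:eq4a}. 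This is claim (a).

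For case (b), assume $C$ has at least one inter-block edge, and list the inter-block edges of $C$ in cyclic order as $e_{t_1},\dots,e_{t_k}$ with $e_{t_j}=(i_{t_j}\to i_{t_j+1})$. Between two cyclically consecutive inter-block edges $e_{t_j}$ and $e_{t_{j+1}}$ (indices mod $k$) every edge of $C$ is intra-block, so the vertices $i_{t_j+1},\dots,i_{t_{j+1}}$ lie in one block; thus $b(i_{t_j+1})=b(i_{t_{j+1}})$, and the wrap-around string gives $b(i_{t_k+1})=b(i_{t_1})$. Hence the edge $\Delta(e_{t_j})=\big(b(i_{t_j})\to b(i_{t_j+1})\big)=\big(b(i_{t_j})\to b(i_{t_{j+1}})\big)$, and the $k$ distinct edges $\Delta(e_{t_1}),\dots,\Delta(e_{t_k})$ form the closed sequence $b(i_{t_1})\to b(i_{t_2})\to\dots\to b(i_{t_k})\to b(i_{t_1})$, an oriented cycle of $\Delta^{m_1\dots m_n}_0(\Gamma)$. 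Since the intra-block edges of $C$ map under $\Delta$ into $\bigsqcup_{i=1}^n E\big(\Delta^{m_1\dots m_n}_i(\Gamma)\big)$, which by Lemma~\ref{20170823:lem1} is disjoint from $E\big(\Delta^{m_1\dots m_n}_0(\Gamma)\big)$, this oriented cycle is precisely $\Delta(C)\cap E\big(\Delta^{m_1\dots m_n}_0(\Gamma)\big)$, giving claim (b).

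I do not expect a genuine obstacle here: the whole argument is bookkeeping about blocks. The one place needing a moment's care is the wrap-around step in case (b) together with the remark that the collapsed edges $\Delta(e_{t_j})$ are never tadpoles (an inter-block edge joins distinct blocks), so the resulting closed sequence really is an oriented cycle in $\mc G(n)$; one also sees in passing that the case $k=1$ cannot occur, since a single inter-block edge, together with the intra-block path of $C$ joining its endpoints, would force those endpoints into one block, a contradiction.
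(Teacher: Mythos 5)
Your proof is correct; the paper itself dismisses this lemma as ``Obvious,'' and your block-by-block bookkeeping (intra-block edges keep the cycle inside one $\Delta^{m_1\dots m_n}_i(\Gamma)$, while the inter-block edges collapse to a closed sequence in $\Delta^{m_1\dots m_n}_0(\Gamma)$) is exactly the intended argument spelled out. The extra observations --- that the collapsed edges are not tadpoles and that a single inter-block edge cannot occur --- are worthwhile touches that the paper leaves implicit.
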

\begin{proof}
Obvious.
\end{proof}
Let, as above, $m_1,\dots,m_n$ be positive integers,
and let $\Gamma\in\mc G(M_n)$.
We now introduce an important notion, which will be essential in Section \ref{sec:6}.
\begin{definition}\label{20170824:de1}
Let $k\in\{1,\dots,M_n\}$ and $j\in\{1,\dots,n\}$.
We say that $j$ is \emph{externally connected} to $k$ (via the graph $\Gamma$
and its cocomposition $\Delta^{m_1\dots m_n}(\Gamma)$)
if there there is an unoriented path (without repeating edges) of $\Delta^{m_1\dots m_n}_0(\Gamma)$
joining $j$ to $i$, where $i\in\{1,\dots,n\}$ is such that $k\in\{M_{i-1}+1,\dots,M_i\}$,
and the edge out of $i$ is the image, via the map $\Delta$ in \eqref{20170823:eq5},
of an edge which has its head or tail in $k$.
We denote by 
$$
\mc E(k)
=
\mc E(\Gamma,m_1,\dots,m_n;k)
\,
\subset\{1,\dots,n\}
\,,
$$
the set of all $j\in\{1,\dots,n\}$
which are externally connected to $k$.
Moreover, given a set of variables $x_1,\dots,x_{n}$,
we denote
\begin{equation}\label{20170824:eq1}
X(k)
=
X(\Gamma,m_1,\dots,m_n;k)
=
\sum_{j\in\mc E(k)}x_j
\,.
\end{equation}
\end{definition}
For example, for the graph in \eqref{eq:graph-example},
we have
$$
\begin{array}{l}
\displaystyle{
\vphantom{\Big(}
X(1)=x_1+x_2+x_4
\,,\,\,
X(2)=0
\,,\,\,
X(3)=x_1+x_2+x_4
\,,\,\,
X(4)=x_1+x_2+x_4
\,,} \\
\displaystyle{
\vphantom{\Big(}
X(5)=0
\,,\,\,
X(6)=x_1+x_2+x_4
\,,\,\,
X(7)=0
\,,\,\,
X(8)=0
\,,\,\,
X(9)=x_1+x_2
\,.}
\end{array}
$$
Note that, if $k\in\{M_{i-1}+1,\dots,M_i\}$,
then $i\not\in\mc E(k)$ unless $\Delta^{m_1\dots m_n}_0(\Gamma)$ is not acyclic.
\begin{example}\label{ex:gamma2}
In the setting of Example \ref{ex:gamma},
let $m_1=k+1$ and $m_2=\dots=m_n=1$.
Assuming that $\bar\Gamma''$ is acyclic,
for every $\ell=0,\dots,k$, the set $\mc E(\ell)$ coincides with the set of vertices of the graph $G_\ell$
defined in Section \ref{sec:comgr}.
\end{example}

%%%
\subsection{Coassociativity of the cocomposition map of $n$-graphs}\label{sec:6a.22}

The collection of sets $\mc G(n)$, $n\geq0$, together with the cocomposition maps \eqref{20170614:eq2},
defines a \emph{cooperad} \cite{LV12},
or, equivalently, the dual $\mc G^*$ is naturally an operad.

We will not give a formal definition of what a cooperad is (since we will never use it),
but we will prove here the main conditions: coassociativity, in Proposition \ref{20170615:prop1} below,
and coequivariance with respect to the action of the symmetric group,
in the next Section \ref{sec:6a.3}, see Proposition \ref{20170615:prop2}.

Fix a list 
$m_1,\dots,m_n$ of $n$ positive integers,
denote $M_i=\sum_{j=1}^im_j$, $i=0,\dots,n$, as in \eqref{20170821:eq2a},
then fix a list $\ell_1,\dots,\ell_{M_n}$ of $M_n$ positive integers,
and denote $L_j=\sum_{k=1}^j\ell_k$, $j=0,\dots,M_n$, as in \eqref{20170821:eq2b}.
Given a graph $\Gamma\in\mc G_0(L_{M_n})$,
we can apply to it the cocomposition $\Delta^{\ell_1\dots\ell_{M_n}}$, to get
$$
\begin{array}{l}
\displaystyle{
\vphantom{\Big(}
\Delta_0^{\ell_1\dots\ell_{M_n}}(\Gamma)\in\mc G(M_n)
\,,} \\
\displaystyle{
\vphantom{\Big(}
\Delta_1^{\ell_1\dots\ell_{M_n}}(\Gamma)\in{\mc G}(\ell_1)
\,,\,\dots\,,
\Delta_{M_n}^{\ell_1\dots\ell_{M_n}}(\Gamma)\in{\mc G}(\ell_{M_n})
\,,}
\end{array}
$$
and, to the first graph above, we can further apply the cocomposition map $\Delta^{m_1\dots m_n}$
\eqref{20170614:eq2}, to get
$$
\begin{array}{l}
\displaystyle{
\vphantom{\Big(}
\Delta^{m_1\dots m_n}_0\big(\Delta_0^{\ell_1\dots\ell_{M_n}}(\Gamma)\big)\in\mc G(n)
\,,} \\
\displaystyle{
\vphantom{\Big(}
\Delta^{m_1\dots m_n}_1\big(\Delta_0^{\ell_1\dots\ell_{M_n}}(\Gamma)\big)\in\mc G(m_1)
\,,\dots,
\Delta^{m_1\dots m_n}_n\big(\Delta_0^{\ell_1\dots\ell_{M_n}}(\Gamma)\big)\in\mc G(m_n)
\,.}
\end{array}
$$
Alternatively, we can consider the $n$ integers (summing to $L_{M_n}$)
$$
\begin{array}{l}
\displaystyle{
\vphantom{\big(}
K_1:=L_{M_1}=\sum_{j=1}^{M_1}\ell_j
\,,\,\,
K_2:=L_{M_2}-L_{M_1}=\sum_{j=M_1+1}^{M_2}\ell_j
\,,\,\dots} \\
\displaystyle{
\vphantom{\big(}
\dots\,,\,
K_n:=L_{M_n}-L_{M_{n-1}}=\sum_{j=M_{n-1}+1}^{M_n}\ell_j
\,,}
\end{array}
$$
we can apply the corresponding cocomposition map 
$\Delta^{K_1\dots K_n}$ to $\Gamma$, to get
$$
\begin{array}{l}
\displaystyle{
\vphantom{\Big(}
\Delta^{K_1\dots K_n}_0(\Gamma)\in\mc G(n)
\,,} \\
\displaystyle{
\vphantom{\Big(}
\Delta^{K_1\dots K_n}_1(\Gamma)\in{\mc G}(K_1)
\,,\dots,
\Delta^{K_1\dots K_n}_n(\Gamma)\in{\mc G}(K_n)
\,,}
\end{array}
$$
and, to each of the graph in the second line,
we can apply the corresponding cocomposition map 
$\Delta^{\ell_{M_{i-1}+1}\dots\ell_{M_i}}$, $i=1,\dots,n$
to get
$$
\begin{array}{l}
\displaystyle{
\vphantom{\Big(}
\Delta_0^{\ell_{M_{i-1}+1}\dots\ell_{M_i}}
\big(\Delta^{K_1\dots K_n}_i(\Gamma)\big)\in\mc G(m_i)
\,,} \\
\displaystyle{
\vphantom{\Big(}
\Delta_1^{\ell_{M_{\!i\!-\!1}\!\!+1}\dots\ell_{M_i}}\!
\big(\Delta^{K_1\dots K_n}_i(\Gamma)\big)\in{\mc G}(\ell_{M_{\!i\!-\!1}\!\!+1})
\,,\dots,
\Delta_{m_i}^{\ell_{M_{\!i\!-\!1}\!\!+1}\dots\ell_{M_i}}\!
\big(\Delta^{K_1\dots K_n}_i(\Gamma)\big)\in{\mc G}(\ell_{M_i})
.}
\end{array}
$$
\begin{proposition}\label{20170615:prop1}
The cocomposition maps \eqref{20170614:eq2} of graphs
satisfy the following coassociativity conditions:
\begin{enumerate}[(i)]
\item
$\Delta^{m_1\dots m_n}_0\big(\Delta_0^{\ell_1\dots\ell_{M_n}}(\Gamma)\big)
=\Delta^{K_1\dots K_n}_0(\Gamma)$
in\/ $\mc G(n)$;
\item
$\Delta^{m_1\dots m_n}_i\big(\Delta_0^{\ell_1\dots\ell_{M_n}}(\Gamma)\big)
=
\Delta_0^{\ell_{M_{i-1}+1}\dots\ell_{M_i}}\big(\Delta^{K_1\dots K_n}_i(\Gamma)\big)$
in\/ $\mc G(m_i)$,
for every\/ $i=1,\dots,n$;
\item
$\Delta_{M_{i-1}+j}^{\ell_1\dots\ell_{M_n}}(\Gamma)
=
\Delta_j^{\ell_{M_{i-1}+1}\dots\ell_{M_i}}\big(\Delta^{K_1\dots K_n}_i(\Gamma)\big)$
in\/ ${\mc G}(\ell_{ij})$,
for every\/ $i=1,\dots,n$ and\/ $j=1,\dots,m_i$.
\end{enumerate}
\end{proposition}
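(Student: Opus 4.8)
The plan is to reduce all three identities to elementary transitivity/commutation properties of two operations on graphs: passing to the induced subgraph $\Gamma[S]$ on a subset $S$ of vertices (keeping all edges of $\Gamma$ with both endpoints in $S$, with the unique order-preserving relabelling $S\stackrel{\sim}{\longrightarrow}\{1,\dots,|S|\}$), and contracting a partition $\mathcal P$ of the vertex set (deleting edges internal to a block, keeping edges between distinct blocks with their orientations and multiplicities, and relabelling the blocks $1,2,\dots$ in the order inherited from $\Gamma$). By definition $\Delta^{m_1\dots m_n}_0(\Gamma)=\Gamma/\mathcal P$, where $\mathcal P$ is the partition of $\{1,\dots,M_n\}$ into the consecutive blocks $B_i=\{M_{i-1}+1,\dots,M_i\}$, and $\Delta^{m_1\dots m_n}_i(\Gamma)=\Gamma[B_i]$ with the order-preserving identification $B_i\stackrel{\sim}{\longrightarrow}\{1,\dots,m_i\}$; the same holds for $\Delta^{\ell_1\dots\ell_{M_n}}$ and $\Delta^{K_1\dots K_n}$. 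In this language the three claims become: (iii) $(\Gamma[S])[T']=\Gamma[T]$ for $T\subseteq S$ ($T'$ the image of $T$); (i) $(\Gamma/\mathcal Q)/\overline{\mathcal P}=\Gamma/\mathcal P$ when $\mathcal Q$ refines $\mathcal P$, where $\mathcal Q$ is the small-block partition (sizes $\ell_1,\dots,\ell_{M_n}$), $\mathcal P$ the big-block partition (sizes $K_1,\dots,K_n$, which is exactly why $K_i=\sum_{j=M_{i-1}+1}^{M_i}\ell_j$), and $\overline{\mathcal P}$ the induced partition of the blocks of $\mathcal Q$ into groups of sizes $m_1,\dots,m_n$; and (ii) $(\Gamma/\mathcal Q)\bigl[\{\text{blocks of }\mathcal Q\text{ inside }S\}\bigr]=(\Gamma[S])/(\mathcal Q|_S)$ when $S$ is a union of blocks of $\mathcal Q$, applied with $S$ the $i$-th big block.

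I would then prove each of the three elementary facts by the same two-step verification. For the vertex sets: in each case the vertices of the two sides are canonically the same set of original/small-block labels, and the successive order-preserving relabellings built into the iterated $\Delta_i$'s compose to the single order-preserving relabelling built into the right-hand side. For the edge data: any edge $e\in E(\Gamma)$ survives to the final graph, with its orientation, precisely when its two endpoints land on distinct vertices of the final graph, and for a fixed $e$ this condition is manifestly independent of the order in which the two operations are performed; multiplicities are likewise unchanged. Combining this with the natural edge bijection of Lemma \ref{20170823:lem1}, applied iteratively, shows that the edge sets of the two sides coincide together with orientations and multiplicities. Hence (iii) is immediate from the definition of induced subgraph, (i) is transitivity of contraction along a refinement, and (ii) is the commutation of "induced subgraph on a saturated subset" with "contraction".

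The only genuinely delicate point — and the one I expect to occupy most of the write-up — is the compatibility of the nested order-preserving relabellings; everything else is formal. Concretely, for (ii) one must check that the vertex of $\Delta^{m_1\dots m_n}_i(\Delta^{\ell_1\dots\ell_{M_n}}_0(\Gamma))$ labelled $t$ and the vertex of $\Delta^{\ell_{M_{i-1}+1}\dots\ell_{M_i}}_0(\Delta^{K_1\dots K_n}_i(\Gamma))$ labelled $t$ both correspond to the original small block $M_{i-1}+t$, and similarly that in (iii) the double relabelling $B_i\to\{1,\dots,m_i\}$ restricted to a sub-block agrees with the direct one. So the main obstacle is bookkeeping: organizing the indices $M_i$, $L_j$, $K_i$ so that the relabellings on the two sides are visibly identical; once this is set up, the proposition follows at once.
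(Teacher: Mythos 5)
Your proposal is correct and follows essentially the same route as the paper's own proof, which argues pictorially that restricting to blocks and collapsing blocks compose and commute in the obvious way; you have simply formalized the same observation in the language of induced subgraphs and contractions along partitions, with the edge-survival criterion and the order-preserving relabellings made explicit. Nothing is missing.
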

\begin{proof}
All claims become obvious if they are explained ``pictorially''.
Consider an arbitrary graph, which we can depict as follows:
$$
\begin{tikzpicture}
\node at (-6.5,0) {$\Gamma=$};
\draw (0,0) ellipse (5.5 and 1.5);
\draw (-3.5,0) ellipse (1 and 0.6);
\node at (-1.8,0) {$\dots$};
\draw (0,0) ellipse (1 and 0.6);
\node at (1.5,0) {$\dots$};
\draw (3,0) ellipse (1 and 0.6);
\draw (-4.1,0) circle (0.2);
\node at (-3.5,0) {$\dots$};
\draw (-3,0) circle (0.2);
\node at (-0.5,0) {$\dots$};
\draw (0,0) circle (0.2);
\node at (0.5,0) {$\dots$};
\draw (2.4,0) circle (0.2);
\node at (3,0) {$\dots$};
\draw (3.5,0) circle (0.2);
\node at (2,-1.7) {$L$};
\node at (-3,-0.8) {$K_1$};
\node at (0.5,-0.8) {$K_i$};
\node at (3.5,-0.8) {$K_n$};
\node at (-4,-0.4) {$\ell_{1}$};
\node at (-2.7,-0.4) {$\ell_{m_1}$};
\node at (0.2,-0.4) {$\ell_{M_{i-1}+j}$};
\node at (2.6,-0.4) {$\ell_{M_{n-1}+1}$};
\node at (3.7,-0.4) {$\ell_{M_n}$};
\end{tikzpicture}
$$
where each the intermidiate ovals surround subgraphs of $K_1,\dots,K_n$ vertices respectively,
and, inside the $i$-th oval, the inner circles surround subgraphs 
of $\ell_{M_{i-1}+1}$,$\dots,$ $\ell_{M_i}$ vertices respectively.

In condition (i), the graph $\Delta^{K_1\dots K_n}_0(\Gamma)$ in the right-hand side
is obtained starting from $\Gamma$ and collapsing all intermidiate subgraphs (= intermidiate ovals)
to single vertices.
On the other hand, 
the graph $\Delta^{m_1\dots m_n}_0\big(\Delta_0^{\ell_1\dots\ell_{M_n}}(\Gamma)\big)$ in the left-hand side
is obtained by first collapsing all the inner subgraphs (= inner circles) to single vertices and then,
in the resulting graph, by further collapsing the intermidiate subgraphs (= intermidiate ovals) to single vertices.
The result is obviously the same.

In condition (ii),
the graph $\Delta^{m_1\dots m_n}_i\big(\Delta_0^{\ell_1\dots\ell_{M_n}}(\Gamma)\big)$ in the left-hand side
is obtained starting from $\Gamma$ by collapsing all inner subgraphs (= inner circles)
to single vertices,
and then, in the resulting graph, by taking the $i$-th intermidiate subgraph.
On the other hand,
the graph $\Delta_0^{\ell_{M_{i-1}+1}\dots\ell_{M_i}}\big(\Delta^{K_1\dots K_n}_i(\Gamma)\big)$ in the right-hand side
is obtained by first taking the $i$-th intermidiate subgraph (= intermidiate oval) of $\Gamma$,
and then, inside it, by collapsing all inner subgraphs (=inner circles) to single vertices.
The result is obviously the same.

Finally, in condition (iii),
the graph $\Delta_{M_{i-1}+j}^{\ell_1\dots\ell_{M_n}}(\Gamma)$ in the left-hand side
is obtained by looking at the $M_{i-1}+j$-th inner subgraph (= inner circle) 
of $\Gamma$, which is the $j$-th circle inside the $i$-th intermidiated oval,
while the graph 
$\Delta_j^{\ell_{M_{i-1}+1}\dots\ell_{M_i}}\big(\Delta^{K_1\dots K_n}_i(\Gamma)\big)$ in the right-hand side
is obtained by first taking the $i$-th intermidiate subgraph (= intermidiate oval) of $\Gamma$,
and then, inside it, by taking the $j$-th inner subgraph (= inner circle).
The result is obviously the same.
\end{proof}

%%%
\subsection{Coequivariance of the cocomposition map of $n$-graphs}\label{sec:6a.3}

For every $n\geq1$, there is a natural (left) action of the symmetic group $S_n$
on the set $\mc G_0(n)$ of acyclic $n$-graphs,
and on the set $\mc G(n)$ of all $n$-graphs.
It is defined as follows:
given the $n$-graph $\Gamma$ and the permutation $\sigma\in S_n$,
we define $\sigma(\Gamma)$ to be the same graph as $\Gamma$,
but with the vertex which was labelled $1$ relabelled as $\sigma(1)$,
%the vertex which was labelled $2$ relabelled as $\sigma(2)$,
and so on up to the vertex which was labelled $n$,
which is relabelled as $\sigma(n)$.
For example, 
if $\Gamma\in\mc G_0(4)$ is the following $4$-graph:
$$
\begin{tikzpicture}
\node at (-1,0) {$\Gamma=$};
\draw (0,0) circle [radius=0.1];
\node at (0,-0.3) {1};
\draw (1,0) circle [radius=0.1];
\node at (1,-0.3) {2};
\draw (2,0) circle [radius=0.1];
\node at (2,-0.3) {3};
\draw (3,0) circle [radius=0.1];
\node at (3,-0.3) {4};
\draw[<-] (0.1,0) -- (0.9,0);
\draw[->] (0,0.1) to [out=90,in=90] (2,0.1);
\draw[<-] (2.1,0) -- (2.9,0);
\node at (4,0) {$\in{\mc G}(4)\,.$};
\end{tikzpicture}
$$
and $\sigma\in S_4$ is the permutation $\sigma=(1,3,4)$ 
(in the standard cycle decomposition),
then
$$
\begin{tikzpicture}
\node at (-1,0) {$\sigma(\Gamma)=$};
\draw (0,0) circle [radius=0.1];
\node at (0,-0.3) {3};
\draw (1,0) circle [radius=0.1];
\node at (1,-0.3) {2};
\draw (2,0) circle [radius=0.1];
\node at (2,-0.3) {4};
\draw (3,0) circle [radius=0.1];
\node at (3,-0.3) {1};
\draw[<-] (0.1,0) -- (0.9,0);
\draw[->] (0,0.1) to [out=90,in=90] (2,0.1);
\draw[<-] (2.1,0) -- (2.9,0);
\node at (4,0) {$=$};
\draw (5,0) circle [radius=0.1];
\node at (5,-0.3) {1};
\draw (6,0) circle [radius=0.1];
\node at (6,-0.3) {2};
\draw (7,0) circle [radius=0.1];
\node at (7,-0.3) {3};
\draw (8,0) circle [radius=0.1];
\node at (8,-0.3) {4};
\draw[->] (6.1,0) -- (6.9,0);
\draw[->] (5,0.1) to [out=90,in=90] (8,0.1);
\draw[->] (7.1,0) -- (7.9,0);
\end{tikzpicture}
$$
\begin{proposition}\label{20170615:prop2}
For every positive integers\/ $n,m_1,\dots,m_n$,
every permutations\/ $\sigma\in S_n$,
$\tau_1\in S_{m_1}$, $\dots$, $\tau_n\in S_{m_n}$,
and every graph\/ $\Gamma\in\mc G_0(m_1+\dots+m_n)$,
we have
\begin{equation}\label{20170615:eq4}
\begin{array}{l}
\displaystyle{
\vphantom{\Big(}
\Delta^{m_{\sigma^{-1}(1)}\dots m_{\sigma^{-1}(n)}}
\big(
(\sigma(\tau_1,\dots,\tau_n))(\Gamma)
\big)
} \\
\displaystyle{
\vphantom{\Big(}
=
\Big(
\sigma\big(\Delta_0^{m_1\dots m_n}(\Gamma)\big)
,
\tau_{\sigma^{-1}(1)}\big(\Delta_{\sigma^{-1}(1)}^{m_1\dots m_n}(\Gamma)\big)
,\dots,
\tau_{\sigma^{-1}(n)}\big(\Delta_{\sigma^{-1}(n)}^{m_1\dots m_n}(\Gamma)\big)
\Big)
\,,}
\end{array}
\end{equation}
where the composition of permutations\/ $\sigma(\tau_1,\dots,\tau_n)$
is defined by \eqref{eq:operad19}.
\end{proposition}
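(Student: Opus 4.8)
The plan is to prove \eqref{20170615:eq4} directly on the level of vertices and edges, using the explicit description of the permutation $\sigma(\tau_1,\dots,\tau_n)$ from Remark \ref{rem:operad1}. Set $\pi=\sigma(\tau_1,\dots,\tau_n)\in S_{M_n}$, write $M'_s=\sum_{t=1}^s m_{\sigma^{-1}(t)}$ for the partial sums of the reordered list $(m_{\sigma^{-1}(1)},\dots,m_{\sigma^{-1}(n)})$, and recall that the action on graphs relabels vertex $i$ as $\sigma(i)$. By formula \eqref{eq:operad17}, if $k=M_{i-1}+j$ with $1\le i\le n$ and $1\le j\le m_i$, then $\pi(k)=M'_{\sigma(i)-1}+\tau_i(j)$. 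Consequently $\pi$ sends the $i$-th block of vertices $\{M_{i-1}+1,\dots,M_i\}$ of $\Gamma$ bijectively onto the $\sigma(i)$-th block $\{M'_{\sigma(i)-1}+1,\dots,M'_{\sigma(i)}\}$ of the reordered grouping, and the induced map of internal positions is exactly $j\mapsto\tau_i(j)$; the edge bijection of Lemma \ref{20170823:lem1} is compatible with this relabelling.

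Granting this, I would first handle the slots $s\ge1$ of \eqref{20170615:eq4}. By definition $\Delta^{m_{\sigma^{-1}(1)}\dots m_{\sigma^{-1}(n)}}_s(\pi(\Gamma))$ is the full subgraph of $\pi(\Gamma)$ on its $s$-th block, with vertices relabelled to $\{1,\dots,m_{\sigma^{-1}(s)}\}$. Putting $s=\sigma(i)$, the displayed description of $\pi$ identifies this with the subgraph $\Delta^{m_1\dots m_n}_i(\Gamma)$ of $\Gamma$ relabelled by $j\mapsto\tau_i(j)$, i.e., with $\tau_i(\Delta^{m_1\dots m_n}_i(\Gamma))$; reindexing via $i=\sigma^{-1}(s)$ gives exactly the $s$-th entry on the right of \eqref{20170615:eq4}. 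For the slot $0$, I would note that collapsing the $s$-th block of $\pi(\Gamma)$ and collapsing the $\sigma^{-1}(s)$-th block of $\Gamma$ produce the same result after the renaming $\sigma^{-1}(s)\mapsto s$; hence an edge of $\Delta^{m_{\sigma^{-1}(1)}\dots m_{\sigma^{-1}(n)}}_0(\pi(\Gamma))$ between vertices $s$ and $s'$ corresponds to an edge of $\Delta^{m_1\dots m_n}_0(\Gamma)$ between $\sigma^{-1}(s)$ and $\sigma^{-1}(s')$, which is precisely the description of $\sigma(\Delta^{m_1\dots m_n}_0(\Gamma))$ via the action \eqref{20170615:eq2}.

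A slightly more structured variant, parallel to the proof of Proposition \ref{20170615:prop1}, is to factor $\pi=\sigma(1_{m_1},\dots,1_{m_n})\cdot 1_n(\tau_1,\dots,\tau_n)$, which is a special case of the equivariance \eqref{eq:operad23}: the first factor permutes whole blocks, the second permutes inside each block. The case $\sigma=1_n$ just replaces each $\Delta^{m_1\dots m_n}_i(\Gamma)$ by $\tau_i(\Delta^{m_1\dots m_n}_i(\Gamma))$ and leaves $\Delta^{m_1\dots m_n}_0(\Gamma)$ unchanged; the case $\tau_1=\dots=\tau_n=1$ permutes the list $(\Delta^{m_1\dots m_n}_1(\Gamma),\dots,\Delta^{m_1\dots m_n}_n(\Gamma))$ and relabels $\Delta^{m_1\dots m_n}_0(\Gamma)$ by $\sigma$; composing the two (as left actions) yields \eqref{20170615:eq4}. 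Either route reduces the proposition to the purely pictorial bookkeeping already used for coassociativity.

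I expect no conceptual difficulty: the whole content is combinatorial bookkeeping of labels. The one point that requires care — and the likeliest source of a labelling slip — is keeping the two left actions consistent: the action \eqref{20170615:eq2} on the tuple of graphs moves the content of slot $i$ into slot $\sigma(i)$, while the action on a single graph relabels vertex $i$ as $\sigma(i)$, and both must agree with the vertex tracking extracted from \eqref{eq:operad17}; the first paragraph is designed to pin this down once and for all. Acyclicity of $\Gamma$ enters only to guarantee $\Delta^{m_1\dots m_n}_i(\Gamma)\in\mc G_0(m_i)$ for $i\ge1$, as already observed after the definition of the cocomposition map, so that \eqref{20170615:eq4} holds in $\mc G_0(m_i)$ for those slots.
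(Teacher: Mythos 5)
Your proof is correct and follows essentially the same route as the paper's: the paper's argument is a ``pictorial'' version of exactly the vertex-tracking you carry out via formula \eqref{eq:operad17}, namely that the vertex $M_{i-1}+j$ is relabelled $m_{\sigma^{-1}(1)}+\dots+m_{\sigma^{-1}(\sigma(i)-1)}+\tau_i(j)$, so the $i$-th block lands in slot $\sigma(i)$ with internal relabelling $\tau_i$, and the collapsed graph is relabelled by $\sigma$. Your explicit formula-based write-up (and the optional factorization $\pi=\sigma(1_{m_1},\dots,1_{m_n})\cdot 1_n(\tau_1,\dots,\tau_n)$) is just a more formal rendering of the same bookkeeping.
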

\begin{proof}
Also for this proposition we provide a ``pictorial'' proof.
Consider an arbitrary acyclic $m_1\!+\!\dots\!+\!m_n$-graph $\Gamma$,
which we depict as:
\begin{equation}\label{20170616:eq1}
\begin{tikzpicture}
\node at (-6.5,0) {$\Gamma=$};
\draw (0,0) ellipse (5.5 and 1.5);
\draw (-3.5,0) ellipse (1 and 0.6);
\node at (-1.8,0) {$\dots$};
\draw (0,0) ellipse (1 and 0.6);
\node at (1.5,0) {$\dots$};
\draw (3,0) ellipse (1 and 0.6);
\draw [fill] (-4.1,0) circle (0.1);
\node at (-3.5,0) {$\dots$};
\draw [fill] (-3,0) circle (0.1);
\node at (-0.5,0) {$\dots$};
\draw [fill] (0,0) circle (0.1);
\node at (0.5,0) {$\dots$};
%
%\draw (2.4,0) circle (0.2);
\node at (3,0) {$\dots$};
\draw [fill] (3.5,0) circle (0.1);
\node at (-4.1,-0.4) {1};
\node at (-2.8,-0.4) {$m_1$};
\draw [->] (-1,-2) -- (0,-0.2);
\draw [->] (4,-2) -- (3.5,-0.2);
\node at (-1,-2.5) {$m_1+\dots+m_{i-1}+j$};
\node at (4,-2.5) {$m_1+\dots+m_n$};
\end{tikzpicture}
\end{equation}
where we represented only the vertices (not the edges), labelled from $1$ to $m_1+\dots+m_n$,
grouped (by the inner ovals) in groups of $m_1,\dots,m_n$ vertices.
Hence, as indicated, the vertex in the $i$-th oval ($i=1,\dots,n$), in the $j$-th position within that oval
($j=1,\dots,m_i$) is labelled $m_1+\dots+m_{i-1}+j$.

When we apply the permutation $\sigma(\tau_1,\dots,\tau_n)\in S_{m_1+\dots+m_n}$
to the graph $\Gamma$, we get, by the way the symmetric group acts on $\mc G_0(m_1+\dots,+m_n)$,
the exact same picture,
but with the vertices labelled according to the action of the permutation $\sigma(\tau_1,\dots,\tau_n)$,
given by formula \eqref{eq:operad17}.
Hence, we have
$(\sigma(\tau_1,\dots,\tau_n))(\Gamma)=$
$$
\begin{tikzpicture}
%\node at (-8,0) {};
%
\draw (0,0) ellipse (5.5 and 1.5);
\draw (-3.5,0) ellipse (1 and 0.6);
\node at (-1.8,0) {$\dots$};
\draw (0,0) ellipse (1 and 0.6);
\node at (1.5,0) {$\dots$};
\draw (3,0) ellipse (1 and 0.6);
\draw [fill] (-4.1,0) circle (0.1);
\node at (-3.5,0) {$\dots$};
\draw [fill] (-3,0) circle (0.1);
\node at (-0.5,0) {$\dots$};
\draw [fill] (0,0) circle (0.1);
\node at (0.5,0) {$\dots$};
\node at (3,0) {$\dots$};
\draw [fill] (3.5,0) circle (0.1);
\draw [->] (0,-2) -- (0,-0.2);
\node at (-1.6,-2) {vertex labeled:};
\node at (0,-2.5) {$m_{\sigma^{-1}(1)}+\dots+m_{\sigma^{-1}(\sigma(i)-1)}+\tau_i(j)$};
\end{tikzpicture}
$$
Then, to get the picture of $(\sigma(\tau_1,\dots,\tau_n))(\Gamma)$,
with the vertices in the correct order,
we should rearrange the vertices of picture \eqref{20170616:eq2}
by moving the vertex labelled by $1$ 
(which, in the picture \eqref{20170616:eq2}, 
is in the $i={\sigma^{-1}(1)}$-th oval, in $\tau_{\sigma^{-1}(1)}^{-1}(1)$-th position)
in first position, the vertex labelled $2$ in second position, and so on.
Hence, in this rearrangement, the $i$-th oval of picture \eqref{20170616:eq1}
will be moved to position $\sigma(i)$,
and, within that oval, the $j$-th vertex will be moved to position $\tau_i(j)$.

Note that, while, in picture \eqref{20170616:eq1}
the ovals contain, in the order they are depicted, $m_1,\dots,m_n$ vertices respectively,
in the rearranged graph $(\sigma(\tau_1,\dots,\tau_n))(\Gamma)$,
where the vertex labelled $1$ come first, the vertex labelled $2$ comes second, and so on,
the vertices will be grouped in ovals containing $m_{\sigma^{-1}(1)},\dots,m_{\sigma^{-1}(n)}$ vertices
respectively.
Hence, we should apply the cocomposition map 
$\Delta^{m_{\sigma^{-1}(1)} \dots m_{\sigma^{-1}(n)}}$
to it.

According to the definition, the graph 
$$
\Delta^{m_{\sigma^{-1}(1)} \dots m_{\sigma^{-1}(n)}}_0\big((\sigma(\tau_1,\dots,\tau_n))(\Gamma)\big)
$$
is obtained by collapsing all the ovals in picture \eqref{20170616:eq2} to single vertices:
\begin{equation}\label{20170616:eq2}
\begin{tikzpicture}
%\node at (-8,0) {};
%
\draw (0,0) ellipse (4.5 and 1.5);
\draw [fill] (-3.5,0) ellipse (0.7 and 0.5);
\node at (-1.8,0) {$\dots$};
\draw [fill] (0,0) ellipse (0.7 and 0.5);
\node at (1.5,0) {$\dots$};
\draw [fill] (3,0) ellipse (0.7 and 0.5);
\node at (-3.5,-1) {$\sigma(1)$};
\node at (0,-1) {$\sigma(i)$};
\node at (3,-1) {$\sigma(n)$};
\end{tikzpicture}
\end{equation}
Obviously, this is the same graph as
$$
\sigma(\Delta^{m_1\dots m_n}(\Gamma))
\,,
$$
where we first collaps all the inner ovals of $\Gamma$ in picture \eqref{20170616:eq1}
to single vertices,
and then we apply the permutation $\sigma\in S_n$, i.e. we relabel the vertices
according to $\sigma$.

Next, according to the definition, the graph 
$$
\Delta^{m_{\sigma^{-1}(1)} \dots m_{\sigma^{-1}(n)}}_{\sigma(i)}\big((\sigma(\tau_1,\dots,\tau_n))(\Gamma)\big)
$$
is the subgraph corresponding to the $\sigma(i)$-th oval of the graph
$(\sigma(\tau_1,\dots,\tau_n))(\Gamma)$ (rearranged),
i.e. the $i$-th oval of picture \eqref{20170616:eq2}:
$$
\begin{tikzpicture}
%\node at (-8,0) {};
%
\draw [dotted] (0,0) ellipse (5.5 and 1.5);
\draw [dotted] (-3.5,0) ellipse (1 and 0.6);
\node [color=lightgray] at (-1.8,0) {$\dots$};
\draw [color=red] (0,0) ellipse (1 and 0.6);
\node [color=lightgray] at (1.5,0) {$\dots$};
\draw [dotted] (3,0) ellipse (1 and 0.6);
\draw [fill,color=lightgray] (-4.1,0) circle (0.1);
\node [color=lightgray] at (-3.5,0) {$\dots$};
\draw [fill,color=lightgray] (-3,0) circle (0.1);
\node [color=red] at (-0.5,0) {$\dots$};
\draw [fill,color=red] (0,0) circle (0.1);
\node [color=red] at (0.5,0) {$\dots$};
\node [color=lightgray] at (3,0) {$\dots$};
\draw [fill,color=lightgray] (3.5,0) circle (0.1);
\draw [->] (0,-0.8) -- (0,-0.2);
\node at (0,-1) {$\tau_i(j)$};
\end{tikzpicture}
$$
Obviously, this is the same as the graph
$$
\tau_i(\Delta^{m_1\dots m_n}_i(\Gamma))
\,,
$$
where we first take the subgraph of $\Gamma$ corresponding to the $i$-th oval 
of picture \eqref{20170616:eq1},
and then we apply the permutation $\tau_i\in S_{m_i}$, i.e. we relabel the vertices
according to $\tau_i$.
\end{proof}

%%%%%%%%%%%%%%%%%%%%%%%%%%%%%%%%%%%%%%%
\section{The operad governing Poisson vertex superalgebras}\label{sec:6}

%%%
\subsection{Definition of a Poisson vertex superalgebra}\label{sec:6.0}

Recall that a \emph{Poisson vertex superalgebra} (abbreviated PVA) is a commutative associative superalgebra $V$
endowed with an even derivation $\partial$
and a Lie conformal superalgebra $\lambda$-bracket $\{\cdot\,_\lambda\,\cdot\}$
satisfying the left Leibniz rule:
\begin{equation}\label{20170614:eq1}
\{a_\lambda bc\}=\{a_\lambda b\}c+(-1)^{p(b)p(c)}\{a_\lambda c\}b
\,.
\end{equation}

%%%
\subsection{Definition of the operad $P^\cl$}\label{sec:6.2}

Let $V=V_{\bar 0}\oplus V_{\bar 1}$ be a vector superspace endowed
with an even endomorphism $\partial\in\End V$.
The operad $P^\cl$ is 
the collection of superspaces $P^\cl(n)$ defined as follows.
As a vector superspace, 
$P^\cl(n)$ is the space of all maps 
\begin{equation}\label{20170614:eq4}
f\colon
\mc G(n)\times V^{\otimes n}
\longrightarrow
%\mb F_-[\lambda_1,\dots,\lambda_n]\otimes_{\mb F[\partial]}V
%\cong 
V[\lambda_1,\dots,\lambda_n] / \langle \dd+\lambda_1+\dots+\lambda_n \rangle
\,,
\end{equation}
which are linear in the second factor,
mapping the $n$-graph $\Gamma\in\mc G(n)$ 
and the monomial $v_1\otimes\,\cdots\,\otimes v_n\in V^{\otimes n}$
to the polynomial
\begin{equation}\label{20170614:eq5}
f^{\Gamma}_{\lambda_1,\dots,\lambda_n}(v_1\otimes\,\cdots\,\otimes v_n)
\,,
\end{equation}
satisfying 
the \emph{cycle relations}
and the \emph{sesquilinearity conditions} described below.
The cycle relations state that
\begin{equation}\label{eq:cycle1}
f^{\Gamma}=0 
\,\,\text{ unless }\,\,
\Gamma\in\mc G_0(n)
\,,
\end{equation}
and if $C\subset E(\Gamma)$ 
is an oriented cycle of $\Gamma$,
then
\begin{equation}\label{eq:cycle2}
\sum_{e\in C}f^{\Gamma\backslash e}
=0
\,,
\end{equation}
where $\Gamma\backslash e$ is the graph obtained from $\Gamma$ by removing the edge $e$.
Note that these are the same relations as in Lemma \ref{lem:gr}.
Condition \eqref{eq:cycle2} follows from \eqref{eq:cycle1}
unless $\Gamma$ contains a unique oriented cycle.
In the special case of oriented cycles of length $2$,
the cycle relation \eqref{eq:cycle2}
means that changing orientation of a single edge of the $n$-graph $\Gamma\in\mc G(n)$
amounts to a change of sign of $f^{\Gamma}$.

To write the \emph{sesquilinearity conditions},
let $\Gamma=\Gamma_1\sqcup\dots\sqcup\Gamma_s$
be the decomposition of $\Gamma$ as disjoint union of its connected components,
and let $I_1,\dots,I_s\subset\{1,\dots,n\}$ be the sets of vertices associated to these
connected components.
For example, for the graph $\Gamma$ in \eqref{eq:graph-example},
we have $\Gamma=\Gamma_1\sqcup\Gamma_2$,
with $I_1=\{1,2,3,4,5,6,8,9\}$ and $I_2=\{7\}$.
Then for every $\alpha=1,\dots,s$, we have two sesquilinearity conditions.
The first one states
\begin{equation}\label{eq:sesq1}
\frac{\partial}{\partial \lambda_i}
f^{\Gamma}_{\lambda_1,\dots,\lambda_n}(v_1\otimes\dots\otimes v_n)
\,\text{ is the same for all }\,
i\in I_\alpha
\,.
\end{equation}
In other words, the polynomial 
$f^{\Gamma}_{\lambda_1,\dots,\lambda_n}(v_1\otimes\dots\otimes v_n)$
is a function of the variables 
$\lambda_{\Gamma_\alpha}=\sum_{i\in I_\alpha}\lambda_i$, $\alpha=1,\dots,s$
(cf.\ \eqref{com3}),
and not of the variables $\lambda_1,\dots,\lambda_n$ separately.
The second sesquilinearity condition is,
again in the notation \eqref{com3},
\begin{equation}\label{eq:sesq2}
f^{\Gamma}_{\lambda_1,\dots,\lambda_n}
(\partial_{\Gamma_\alpha}(v_1\otimes\dots\otimes v_n))
=-\lambda_{\Gamma_\alpha}
f^{\Gamma}_{\lambda_1,\dots,\lambda_n}(v_1\otimes\dots\otimes v_n)
\,.
\end{equation}
These are the same relations as in Lemma \ref{lem:gr2}.

\begin{remark}\label{ref:trcov2}
Since $\Ga$ is a disjoint union of its of its connected components $\Ga_\al$,
the second sesquilinearity condition \eqref{eq:sesq2} implies
\begin{equation}\label{trcov2}
f^{\Gamma}_{\lambda_1,\dots,\lambda_n}(\dd_\Ga v) = 
-\sum_{i=1}^n \la_i \, f^{\Gamma}_{\lambda_1,\dots,\lambda_n}(v)
= \dd \bigl( f^{\Gamma}_{\lambda_1,\dots,\lambda_n}(v) \bigr), 
\qquad v\in V^{\otimes n}
\end{equation}
%for any $v\in V^{\otimes n}$ 
(cf.\ Remark \ref{ref:trcov}).
\end{remark}

The space $P^\cl(n)$ decomposes as a direct sum
\begin{equation}\label{eq:pclgr}
P^\cl(n) = \bigoplus_{r\ge0} \gr^r P^\cl(n),
\end{equation}
where $\gr^r P^\cl(n)$ is the subspace of all maps \eqref{20170614:eq4} vanishing 
on graphs $\Ga$ with number of edges not equal to $r$.
\begin{remark}\label{rem:last}
Let $V=\bigoplus_r\gr^r V$ be a graded vector space,
and consider the induced grading of the tensor powers $V^{\otimes k}$.
Then the classical operad $P^\cl(V)$
has a refined grading defined as follows:
$f\in\gr^rP^\cl(k)(V)$ if, for every graph $\Gamma\in\mc G(k)$ with $s$ edges,
we have
$$
f^\Gamma_{\lambda_1,\dots,\lambda_k}(\gr^t V^{\otimes k})
\,\subset\,
(\gr^{s+t-r}V)[\lambda_1,\dots,\lambda_k]/
\langle\partial+\lambda_1+\dots+\lambda_k\rangle
\,.
$$
The grading \eqref{eq:pclgr} corresponds to the special case when $V=\gr^0V$.
\end{remark}

The $\mb Z/2\mb Z$-grading of the superspace $P^\cl(n)$ is induced 
by that of the vector superspace $V$
(as before, the variables $\lambda_i$ are even and commute).
We also have a natural right action of the symmetric group $S_n$ on $P^\cl(n)$
by (parity preserving) linear maps,
defined by the following formula
($f\in P^\cl(n)$, $\Gamma\in\mc G(n)$, $v_1,\dots,v_n\in V$):
\begin{equation}\label{20170615:eq1}
(f^\sigma)^\Gamma_{\lambda_1,\dots,\lambda_n}(v_1\otimes\,\dots\,\otimes v_n)
=
f^{\sigma(\Gamma)}_{\sigma(\lambda_1,\dots,\lambda_n)}
(\sigma(v_1\otimes\cdots\otimes v_n))
\,,
\end{equation}
where 
$\sigma(\lambda_1,\dots,\lambda_n)$ is defined by \eqref{20170615:eq2},
$\sigma(v_1\otimes\dots\otimes v_n)$ is defined by \eqref{eq:operad6},
and $\sigma(\Gamma)$ is defined in Section \ref{sec:6a.3}.

Next, we define the composition maps of the operad $P^\cl$.
Let $f\in P^\cl(n)$ and $g_1\in P^\cl(m_1),\dots,g_n\in P^\cl(m_n)$.
Let $M_i$, $i=0,\dots,n$, and $\Lambda_i$, $i=1,\dots,n$,
be as in \eqref{20170821:eq2}.
Let $\Gamma\in\mc G(M_n)$
and consider its cocomposition 
$\Delta^{m_1\dots m_n}(\Gamma)$ defined in Section \ref{sec:6a.2}.
We let
$$
(f(g_1,\dots,g_n))^\Gamma:\, V^{\otimes M_n}
\to
%\mb F_-[\lambda_1,\dots,\lambda_{M_n}]\otimes_{\mb F[\partial]}V
V[\lambda_1,\dots,\lambda_{M_n}] / \langle \dd+\lambda_1+\dots+\lambda_{M_n} \rangle
$$
be defined by the following formula:
\begin{equation}\label{20170616:eq3}
\begin{array}{l}
\displaystyle{
\vphantom{\Big(}
(f(g_1,\dots,g_n))^\Gamma_{\lambda_1,\dots,\lambda_{M_n}}
(v_1\otimes\dots\otimes v_{M_n})
} \\
\displaystyle{
\vphantom{\Big(}
=
f^{\Delta^{m_1\dots m_n}_0(\Gamma)}_{\Lambda_1,\dots,\Lambda_n}
\bigg(
\Big(
\Big(
\Big|_{x_1=\Lambda_1+\partial}
(g_1)^{\Delta^{m_1\dots m_n}_1(\Gamma)}_{\lambda_{1}+X(1),\dots,\lambda_{M_1}+X(M_1)}
\Big)
\otimes\cdots
} \\
\displaystyle{
\vphantom{\Big(}
\,\,\,\,\,\,\,\,\,
\dots\otimes
\Big(
\Big|_{x_n=\Lambda_n+\partial}
(g_n)^{\Delta^{m_1\dots m_n}_n(\Gamma)}_{
\lambda_{M_{n-1}+1}+X(M_{n-1}+1),\dots,\lambda_{M_n}+X(M_n)}
\Big)
\Big)
(v_1\otimes\dots\otimes v_{M_n})
\bigg)
\,.}
\end{array}
\end{equation}
In formula \eqref{20170616:eq3} 
we are using the following notation.
Given the graphs $\Gamma_1\in\mc G(m_1),\,\dots,\Gamma_n\in\mc G(m_n)$,
we let, recalling \eqref{20170821:eq5},
\begin{equation}\label{20170821:eq5c}
\begin{array}{l}
\displaystyle{
\vphantom{\Big(}
\big(
(g_1)^{\Gamma_1}_{\lambda_{1},\dots,\lambda_{M_1}}
\otimes\dots\otimes
(g_n)^{\Gamma_n}_{\lambda_{M_{n-1}+1},\dots,\lambda_{M_n}}
\big)
(v_1\otimes\dots\otimes v_{M_n})
} \\
\displaystyle{
\vphantom{\Big(}
:=\pm\,
(g_1)^{\Gamma_1}_{\lambda_{1},\dots,\lambda_{M_1}}\!\!(v_1\otimes\dots\otimes v_{M_1})
\otimes\dots\otimes
(g_n)^{\Gamma_n}_{\lambda_{M_{\!n\!-\!1}\!+1},\dots,\lambda_{M_n}}\!\!(v_{M_{\!n\!-\!1}\!+1}\otimes\dots\otimes v_{M_n})
,}
\end{array}
\end{equation}
with $\pm$ the same as \eqref{20170821:eq5b}.
We are using the notation \eqref{20170824:eq1}
for the variables $X(1),\dots,X(M_n)$ appearing in \eqref{20170821:eq5c}.
%
%Furthermore, 
Finally,
for polynomials $P(\lambda)=\sum_mp_m\lambda^m$ and 
$Q(\mu)=\sum_nq_n\mu^n$ with coefficients in $V$,
we denote
\begin{equation}\label{20170824:eq2}
%\begin{array}{l}
%\displaystyle{
%\vphantom{\Big(}
\big(\big|_{x=\partial}P(\lambda+y)\big)
\otimes
\big(\big|_{y=\partial}Q(\mu+x)\big)
=
\sum_{m,n}((\mu+\partial)^np_m)\otimes((\lambda+\partial)^mq_n)
%} \\
%\displaystyle{
%\vphantom{\Big(}
%=
%\sum_{m,n}\sum_{h=0}^m\sum_{k=0}^n
%\binom{m}{h} \binom{n}{k}
%\lambda^{m-i}\mu^{n-j} (\partial^j p_m)\otimes(\partial^i q_n)
%\,.}
%\end{array}
\,,
\end{equation}
and by $\partial g_\lambda(w_1\otimes\dots\otimes w_m)$ 
we mean $\partial (g_\lambda(w_1\otimes\dots\otimes w_m))$.
\begin{remark}\label{rem:gr}
In view of Examples \ref{ex:gamma} and \ref{ex:gamma2},
in the special case $m_1=k+1, m_2=\dots=m_n=1$ and letting $n=k+m+1$,
formula \eqref{20170616:eq3} reduces to \eqref{com4}.
\end{remark}
\begin{lemma}\label{lem:pcl1}
With the above notation, the right-hand side of \eqref{20170616:eq3} is a well-defined element
of\/ $V[\lambda_1,\dots,\lambda_{M_n}] / \langle \dd+\lambda_1+\dots+\lambda_{M_n} \rangle$,
for every\/ $f\in P^\cl(n)$ and\/ $g_1\in P^\cl(m_1),\dots,g_n\in P^\cl(m_n)$.
\end{lemma}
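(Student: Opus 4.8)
The plan is to verify three things about the right-hand side of \eqref{20170616:eq3}: that the simultaneous substitutions $x_j\mapsto\Lambda_j+\partial$ are unambiguous, that the outcome is independent of the representatives chosen for the values of the $g_i$ in their quotient spaces, and that it lands in $V[\lambda_1,\dots,\lambda_{M_n}]/\langle\partial+\lambda_1+\dots+\lambda_{M_n}\rangle$.

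The first step is to dispose of the degenerate graphs. By Lemma \ref{20170823:lem2}, an oriented cycle of $\Gamma$ maps either into one of the subgraphs $\Delta^{m_1\dots m_n}_i(\Gamma)$ with $i\geq1$, producing an oriented cycle there, or onto an oriented cycle of $\Delta^{m_1\dots m_n}_0(\Gamma)$; in the first case $(g_i)^{\Delta^{m_1\dots m_n}_i(\Gamma)}=0$ and in the second $f^{\Delta^{m_1\dots m_n}_0(\Gamma)}=0$, both by \eqref{eq:cycle1}. Likewise $f^{\Delta^{m_1\dots m_n}_0(\Gamma)}=0$ whenever $\Delta^{m_1\dots m_n}_0(\Gamma)$ carries an oriented cycle. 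In all of these situations the right-hand side of \eqref{20170616:eq3} is $0$, hence trivially well defined, so I may assume that $\Gamma$ and $\Delta^{m_1\dots m_n}_0(\Gamma)$ are both acyclic. Under this assumption the observation following Definition \ref{20170824:de1} yields $i\notin\mc E(k)$ whenever $k$ lies in the $i$-th block $\{M_{i-1}+1,\dots,M_i\}$; consequently the variable $x_i$ never appears among the shifted arguments $\lambda_{M_{i-1}+l}+X(M_{i-1}+l)$ of the factor $(g_i)^{\Delta^{m_1\dots m_n}_i(\Gamma)}$, but only among those of the other tensor factors in \eqref{20170821:eq5c}. If $\partial_j$ denotes, as in \eqref{com3}, the action of $\partial$ on the $j$-th factor of $V^{\otimes n}$, then the substitution prescription in \eqref{20170616:eq3} — made explicit for two factors by \eqref{20170824:eq2} — replaces each $x_j$ throughout \eqref{20170821:eq5c} by $\Lambda_j+\partial_j$, the derivative acting on the $j$-th factor. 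Because $x_j$ occurs only in factors other than the $j$-th, and the operators $\partial_1,\dots,\partial_n$ act on distinct tensor factors and thus commute, the $n$ substitutions can be performed simultaneously and define an $\mb F[\lambda_1,\dots,\lambda_{M_n}]$-linear map independent of any ordering; \eqref{20170824:eq2} is its two-factor instance and the general case follows by iteration. Applying $f^{\Delta^{m_1\dots m_n}_0(\Gamma)}_{\Lambda_1,\dots,\Lambda_n}$, extended $\mb F[\lambda_1,\dots,\lambda_{M_n}]$-linearly in $V^{\otimes n}$, then produces an element of $V[\Lambda_1,\dots,\Lambda_n]/\langle\partial+\Lambda_1+\dots+\Lambda_n\rangle$, and since $\Lambda_1+\dots+\Lambda_n=\lambda_1+\dots+\lambda_{M_n}$ this is exactly the target space.

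It remains to check that the expression is unchanged under a change of representative of $(g_i)^{\Delta^{m_1\dots m_n}_i(\Gamma)}_{\lambda_{M_{i-1}+1}+X(M_{i-1}+1),\dots,\lambda_{M_i}+X(M_i)}(w_i)$, which is defined only modulo the image of $\partial+\Lambda_i+\sum_{k}X(k)$, the sum taken over the $i$-th block. So I would show that replacing the $i$-th factor of \eqref{20170821:eq5c} by $\bigl(\partial+\Lambda_i+\sum_{k}X(k)\bigr)v$ contributes zero. Here the combinatorial role of the sets $\mc E(k)$ is decisive: when $\Delta^{m_1\dots m_n}_0(\Gamma)$ is a forest, every vertex $j\ne i$ of the connected component $I$ of the vertex $i$ is joined to $i$ by a unique path in $\Delta^{m_1\dots m_n}_0(\Gamma)$, whose edge into $i$ is, under the bijection of Lemma \ref{20170823:lem1}, the image of a unique edge of $\Gamma$ incident to a unique vertex of the $i$-th block; tracing through Definition \ref{20170824:de1}, this shows that $\sum_{k}X(k)=\sum_{j\in I,\,j\ne i}x_j$, each summand occurring with multiplicity one. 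Hence, after the substitution $x_j\mapsto\Lambda_j+\partial_j$, the operator $\partial+\Lambda_i+\sum_{k}X(k)$ acting on the $i$-th factor becomes $\partial_I+\Lambda_I$ in the notation \eqref{com3} applied to $\Delta^{m_1\dots m_n}_0(\Gamma)$, so the contribution of the change of representative equals $f^{\Delta^{m_1\dots m_n}_0(\Gamma)}_{\Lambda_1,\dots,\Lambda_n}\bigl((\partial_I+\Lambda_I)u\bigr)$ for some $u\in V^{\otimes n}[\lambda_1,\dots,\lambda_{M_n}]$, and this vanishes by the sesquilinearity conditions \eqref{eq:sesq1}, \eqref{eq:sesq2} for $f$ applied to the connected component $I$. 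I expect this last bookkeeping — matching the edges of $\Gamma$ that straddle two blocks with the edges of $\Delta^{m_1\dots m_n}_0(\Gamma)$, and checking that each vertex of a component is counted with multiplicity exactly one in $\sum_k X(k)$ — to be the main obstacle; everything else is routine, and whenever any of the graphs occurring fails to be acyclic both sides vanish by \eqref{eq:cycle1}.
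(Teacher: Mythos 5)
Your proof is correct and follows essentially the same route as the paper's: reduce to the case where $\Delta^{m_1\dots m_n}_0(\Gamma)$ is acyclic, observe that $x_i$ is absent from the shifted arguments of $g_i$ so the substitutions $x_j\mapsto\Lambda_j+\partial$ are unambiguous, and show that a change of representative contributes $f^{\Delta^{m_1\dots m_n}_0(\Gamma)}\bigl((\Lambda_{I}+\partial_{I})u\bigr)=0$ by the sesquilinearity \eqref{eq:sesq2} applied to the connected component $I$ of $i$ in $\Delta^{m_1\dots m_n}_0(\Gamma)$. Your unique-path bookkeeping giving $\sum_k X(k)=\sum_{j\in I,\,j\neq i}x_j$ with multiplicity one is exactly what the paper extracts from Definition \ref{20170824:de1}; the only cosmetic point is that the case where a cycle of $\Gamma$ lands in some $\Delta^{m_1\dots m_n}_i(\Gamma)$ with $i\geq1$ is not ``trivially'' well defined (the zero coset still has nonzero representatives), but it is already covered by your change-of-representative argument, which uses only the acyclicity of $\Delta^{m_1\dots m_n}_0(\Gamma)$.
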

\begin{proof}
First observe that, if $\Delta_0^{m_1\dots m_n}(\Gamma)$
is not acyclic, then the right-hand side of \eqref{20170616:eq3} is $0$,
since by assumption $f$ satisfies \eqref{eq:cycle1}.
On the other hand, if $\Delta_0^{m_1\dots m_n}(\Gamma)$ is acyclic,
then by the observation at the end of Section \ref{sec:6a.2},
the variable $x_i$ does not appear in $X(k)$ when $k\in\{M_{i-1}+1,\dots,M_i\}$.
This makes the right-hand side of \eqref{20170616:eq3} a well-defined polynomial for given 
polynomials 
\begin{equation}\label{welldef1}
(g_i)^{\Gamma_i}_{\lambda_{M_{\!i\!-\!1}\!+1},\dots,\lambda_{M_i}}\!\!(v_{M_{\!i\!-\!1}\!+1}\otimes\dots\otimes v_{M_i}).
\end{equation}

However, \eqref{welldef1} are only determined up to adding elements of 
$$\langle \dd+\lambda_{M_{\!i\!-\!1}\!+1}+\dots+\lambda_{M_i} \rangle
= \langle \Lambda_i+\dd \rangle,$$
and we need to check that the right-hand side of \eqref{20170616:eq3} will remain the same after that.
Fix $1\le i\le n$, and replace in \eqref{20170616:eq3} the polynomial \eqref{welldef1} with
\begin{equation*}\label{welldef2}
(\Lambda_i+\dd) (h_i)^{\Gamma_i}_{\lambda_{M_{\!i\!-\!1}\!+1},\dots,\lambda_{M_i}}\!\!(v_{M_{\!i\!-\!1}\!+1}\otimes\dots\otimes v_{M_i})
\end{equation*}
for some map
\begin{equation*}\label{welldef3}
h_i\colon
\mc G(m_i)\times V^{\otimes m_i}
\longrightarrow
%\mb F_-[\lambda_1,\dots,\lambda_n]\otimes_{\mb F[\partial]}V
%\cong 
V[\lambda_{M_{\!i\!-\!1}\!+1},\dots,\lambda_{M_i}]
\,.
\end{equation*}
Let us introduce the shorthand notation
\begin{equation}\label{welldef4}
\tilde g_i^G
=\Big(
\Big|_{x_i=\Lambda_i+\partial}
(g_i)^G_{
\lambda_{M_{i-1}+1}+X(M_{i-1}+1),\dots,\lambda_{M_i}+X(M_i)}
\Big),
\end{equation}
for an arbitrary graph $G$.
Then in \eqref{20170616:eq3}, we need to replace $\tilde g_i^{\Delta^{m_1\dots m_n}_i(\Gamma)}$ with
\begin{equation}\label{welldef5}
\Big|_{x_i=\Lambda_i+\partial}
\bigl( \Lambda_i+\dd+X(M_{i-1}+1)+\dots+X(M_i) \bigr)
\tilde h_i^{\Delta^{m_1\dots m_n}_i(\Gamma)}.
\end{equation}

It follows from the definition \eqref{20170824:eq1} of $X(k)$, that
$$
X(M_{i-1}+1)+\dots+X(M_i) = \sum_j x_j,
$$
where the sum is over all $1\le j\le n$ such that $j$ is connected by an unoriented path with $i$ in the graph
$\Delta^{m_1\dots m_n}_0(\Gamma)$. Together with $x_i$, this gives the sum of all $x_j$ where $j$ is a vertex of the connected component $\Ga^0_i$ of $i$ in $\Delta^{m_1\dots m_n}_0(\Gamma)$. Then after setting all
$x_j=\Lambda_j+\partial$, we obtain
$$
\sum_{j\in \Ga^0_i} \Lambda_j+\partial_j = \Lambda_{\Ga^0_i} + \dd_{\Ga^0_i},
$$
using again the notation \eqref{com3}.
Then after applying $f^{\Delta^{m_1\dots m_n}_0(\Gamma)}_{\Lambda_1,\dots,\Lambda_n}$ we get $0$,
because $f$ satisfies the second sesquilinearity condition \eqref{eq:sesq2}.
\end{proof}
\begin{lemma}\label{lem:pcl2}
For every\/ $f\in P^\cl(n)$ and\/ $g_1\in P^\cl(m_1),\dots,g_n\in P^\cl(m_n)$, the composition\/
$f(g_1,\dots,g_n)$, defined by
\eqref{20170616:eq3}, is an element of\/ $P^\cl(M_n)$.
\end{lemma}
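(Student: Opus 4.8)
\textbf{Proof plan for Lemma \ref{lem:pcl2}.}

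The goal is to verify that $f(g_1,\dots,g_n)$, given by formula \eqref{20170616:eq3}, satisfies the two defining requirements of an element of $P^\cl(M_n)$: the cycle relations \eqref{eq:cycle1}--\eqref{eq:cycle2} and the sesquilinearity conditions \eqref{eq:sesq1}--\eqref{eq:sesq2}. Well-definedness modulo $\langle\dd+\lambda_1+\dots+\lambda_{M_n}\rangle$ was already established in Lemma \ref{lem:pcl1}, so I would only invoke that and not repeat it. The strategy throughout is to transport each condition on $f(g_1,\dots,g_n)$ through the cocomposition $\Delta^{m_1\dots m_n}$ to the corresponding condition on $f$ or on the $g_i$, using the combinatorial lemmas of Section \ref{sec:6a}; the whole proof is essentially a bookkeeping translation.

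First I would handle the cycle relations. For \eqref{eq:cycle1}: if $\Gamma\notin\mc G_0(M_n)$, then $\Gamma$ contains an oriented cycle $C$, and by Lemma \ref{20170823:lem2} either $\Delta(C)$ is an oriented cycle of some $\Delta^{m_1\dots m_n}_i(\Gamma)$ with $i\ge 1$ — in which case the factor $(g_i)^{\Delta^{m_1\dots m_n}_i(\Gamma)}=0$ and the whole expression \eqref{20170616:eq3} vanishes — or $\Delta(C)\cap E(\Delta^{m_1\dots m_n}_0(\Gamma))$ is an oriented cycle of $\Delta^{m_1\dots m_n}_0(\Gamma)$, in which case $f^{\Delta^{m_1\dots m_n}_0(\Gamma)}=0$ and again \eqref{20170616:eq3} vanishes. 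For \eqref{eq:cycle2}: given an oriented cycle $C$ of an acyclic-after-deletion situation, one sums \eqref{20170616:eq3} over $\Gamma\backslash e$ for $e\in C$. By Lemma \ref{20170823:lem1} the edge $e$ lives in exactly one of the pieces $\Delta^{m_1\dots m_n}_i(\Gamma)$ ($i=0,1,\dots,n$), and deleting $e$ from $\Gamma$ corresponds to deleting $\Delta(e)$ from that one piece while leaving the others unchanged. Splitting $\sum_{e\in C}$ according to which piece $\Delta(e)$ lies in, and using the cocomposition compatibility, the sum collapses to one of: $\sum_{e'}f^{\Delta^{m_1\dots m_n}_0(\Gamma)\backslash e'}$ applied to the unchanged $g$-factors (zero by the cycle relation \eqref{eq:cycle2} for $f$), or $f^{\Delta^{m_1\dots m_n}_0(\Gamma)}$ applied to a tensor product one of whose factors is $\sum_{e'}(g_i)^{\Delta^{m_1\dots m_n}_i(\Gamma)\backslash e'}$ (zero by the cycle relation for $g_i$). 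Here one must also track the variable shifts $X(k)$, but deleting an edge inside a $g_i$-block does not change the externally-connected sets $\mc E(k)$, so the $X(k)$'s are unaffected and factor out cleanly.

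For the sesquilinearity conditions \eqref{eq:sesq1}--\eqref{eq:sesq2}, I would fix a connected component $\Gamma_\alpha$ of $\Gamma$ (with vertex set $I_\alpha\subset\{1,\dots,M_n\}$) and analyze how $I_\alpha$ intersects the blocks $\{M_{i-1}+1,\dots,M_i\}$. The point is that $\lambda_{\Gamma_\alpha}=\sum_{i\in I_\alpha}\lambda_i$ and $\dd_{\Gamma_\alpha}$, when pushed through \eqref{20170616:eq3}, decompose along these blocks: within each block the intersection with $I_\alpha$ is a union of connected components of the subgraph $\Delta^{m_1\dots m_n}_i(\Gamma)$ (by Lemma \ref{20170823:lem1}, edges of $\Gamma$ inside a block map bijectively to edges of $\Delta^{m_1\dots m_n}_i(\Gamma)$), so one can apply \eqref{eq:sesq1}--\eqref{eq:sesq2} for the $g_i$'s component by component; and the images of these components under $\Delta^{m_1\dots m_n}_0$ lie in a single connected component of $\Delta^{m_1\dots m_n}_0(\Gamma)$, to which \eqref{eq:sesq1}--\eqref{eq:sesq2} for $f$ applies. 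Carefully pushing $\dd$ across the notation \eqref{20170824:eq2} (the substitutions $x_i=\Lambda_i+\partial$) and using the identity $\partial_{z_G}p_G=0$-type translation covariance baked into the variables $X(k)$, the derivative lands either on the $g_i$-arguments (giving $-\lambda$-factors by \eqref{eq:sesq2} for $g_i$) or on the outer $f$-arguments (giving $-\Lambda$-factors by \eqref{eq:sesq2} for $f$), and these combine to exactly $-\lambda_{\Gamma_\alpha}$ times the original; similarly for \eqref{eq:sesq1}. I also need to check equivariance compatibility is not required here (it is a separate axiom, proved in the next lemma), so this lemma is confined to cycle relations and sesquilinearity.

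The main obstacle I anticipate is the second sesquilinearity check \eqref{eq:sesq2} when a connected component $\Gamma_\alpha$ straddles several blocks: one must verify that the $\partial$ produced by $\dd_{\Gamma_\alpha}$ on the $g_i$-tensor-factors, after being moved through the substitution machinery \eqref{20170824:eq2} and the variable shifts $\lambda_k\mapsto\lambda_k+X(k)$, reassembles correctly — in particular that the bookkeeping of which $x_j$'s get set to $\Lambda_j+\partial$ matches the connected-component structure of $\Delta^{m_1\dots m_n}_0(\Gamma)$, exactly as in the proof of Lemma \ref{lem:pcl1}. This is the same delicate point that made Lemma \ref{lem:pcl1} nontrivial, and I would model this part of the argument closely on that proof, reusing the identity $X(M_{i-1}+1)+\dots+X(M_i)=\sum_{j}x_j$ over vertices $j$ in the connected component of $i$ in $\Delta^{m_1\dots m_n}_0(\Gamma)$. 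Everything else is routine once the combinatorial dictionary between edges/cycles/components of $\Gamma$ and those of its cocomposition pieces (Lemmas \ref{20170823:lem1} and \ref{20170823:lem2}) is in hand.
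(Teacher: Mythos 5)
Your plan follows essentially the same route as the paper's proof: the cycle relations are transported through the cocomposition via Lemmas \ref{20170823:lem1} and \ref{20170823:lem2}, and the second sesquilinearity is handled exactly as you anticipate, by reducing to a single connected component and reusing the identity $\sum_{k\in G_i}X(k)=\sum_{j\in G_0\setminus\{i\}}x_j$ from the proof of Lemma \ref{lem:pcl1}. One small imprecision: for the second cycle relation, when $\Delta(C)$ is not contained in a single $E(\Delta^{m_1\dots m_n}_i(\Gamma))$, the sum does not ``collapse to one of'' your two cases --- both types of terms occur, and the terms with $\Delta(e)\in E(\Delta^{m_1\dots m_n}_i(\Gamma))$, $i\geq1$, vanish not by the cycle relation for $g_i$ (the set $\Delta(C)\cap E(\Delta^{m_1\dots m_n}_i(\Gamma))$ need not be a cycle of that subgraph) but because $\Delta^{m_1\dots m_n}_0(\Gamma)$ then contains a cycle, so $f^{\Delta^{m_1\dots m_n}_0(\Gamma)}=0$ by \eqref{eq:cycle1}. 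The paper's proof treats this mixed case explicitly; otherwise your outline matches it.
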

\begin{proof}
We need to check that $f(g_1,\dots,g_n)$
satisfies the cycle relations \eqref{eq:cycle1}, \eqref{eq:cycle2}
and the sesquilinearity conditions \eqref{eq:sesq1}, \eqref{eq:sesq2}.
Observe that
if $\Gamma\in\mc G(M_n)$ contains a cycle,
then one of the graphs $\Delta^{m_1\dots m_n}_i(\Gamma)$, $i=0,1,\dots,n$,
must contain a cycle as well. Evaluating $f$ for $i=0$ or $g_i$ for $1\le i\le n$, we obtain $0$,
because $f$ and $g_i$ satisfy \eqref{eq:cycle1}.
Therefore, $f(g_1,\dots, g_n)$
satisfies the first cycle relation \eqref{eq:cycle1}.

To prove the second cycle relation \eqref{eq:cycle2},
consider an oriented cycle $C\subset E(\Gamma)$ of $\Gamma\in\mc G(n)$.
Recalling \eqref{20170616:eq3}, we need to show that
\begin{equation}\label{20170823:eq2}
\sum_{e\in C}
f^{\Delta^{m_1\dots m_n}_0(\Gamma\backslash e)}_{\Lambda_1,\dots,\Lambda_n}
\bigl( \tilde g_1^{\Delta^{m_1\dots m_n}_i(\Gamma\backslash e)}
\otimes\cdots\otimes \tilde g_n^{\Delta^{m_1\dots m_n}_n(\Gamma\backslash e)} \bigr) 
=0,
\end{equation}
where we use the notation \eqref{welldef4}.
Given an edge $e\in C$,
consider its image $\Delta(e)$ under the map \eqref{20170823:eq5}.
Clearly, we have
\begin{equation*}%\label{20170823:eq6}
\Delta^{m_1\dots m_n}_i(\Gamma\backslash e)
=
\left\{
\begin{array}{l}
\displaystyle{
\vphantom{\Big(}
\Delta^{m_1\dots m_n}_i(\Gamma)\backslash\Delta(e),
\;\text{ if }\;
\Delta(e)\in E\big(\Delta^{m_1\dots m_n}_i(\Gamma)\big)
,} \\
\displaystyle{
\vphantom{\Big(}
\Delta^{m_1\dots m_n}_i(\Gamma),
\;\text{ otherwise.}
}
\end{array}
\right.
\end{equation*}
Hence, by %\eqref{20170823:eq6} and 
Lemma \ref{20170823:lem1}, 
the left-hand side of \eqref{20170823:eq2} is equal to
\begin{align}
\notag
&\sum_{e'\in \Delta(C)\cap E(\Delta^{m_1\dots m_n}_0(\Gamma))}
\!\!\!\!
f^{\Delta^{m_1\dots m_n}_0(\Gamma)\backslash e'}_{\Lambda_1,\dots,\Lambda_n}
\bigl( \tilde g_1^{\Delta^{m_1\dots m_n}_i(\Gamma\backslash e)}
\otimes\cdots\otimes \tilde g_n^{\Delta^{m_1\dots m_n}_n(\Gamma\backslash e)} \bigr) 
\\ \label{20170823:eq3}
&+
\sum_{i=1}^n
\sum_{e'\in \Delta(C)\cap E(\Delta^{m_1\dots m_n}_i(\Gamma))}
\!\!\!\!\!\!\!\!\!\!\!\!\!\!
f^{\Delta^{m_1\dots m_n}_0(\Gamma)}_{\Lambda_1,\dots,\Lambda_n}
\big(
\tilde g_1^{\Delta^{m_1\dots m_n}_1(\Gamma)} \otimes\cdots
\\ \notag
&
\qquad\qquad\qquad\qquad\dots\otimes
\tilde g_i^{\Delta^{m_1\dots m_n}_i(\Gamma)
\backslash e'}
\otimes\cdots\otimes
\tilde g_n^{\Delta^{m_1\dots m_n}_n(\Gamma)} 
\big)\,.
\end{align}
If $\Delta(C)\subset E(\Delta^{m_1\dots m_n}_i(\Gamma))$,
then \eqref{20170823:eq3} reduces to
\begin{equation}\label{20170823:eq4}
%\begin{split}
\sum_{e'\in \Delta(C)}
f^{\Delta^{m_1\dots m_n}_0(\Gamma)}_{\Lambda_1,\dots,\Lambda_n}
\big(
\tilde g_1^{\Delta^{m_1\dots m_n}_1(\Gamma)} \otimes\cdots\otimes
\tilde g_i^{\Delta^{m_1\dots m_n}_i(\Gamma)
\backslash e'}
\otimes\cdots\otimes
\tilde g_n^{\Delta^{m_1\dots m_n}_n(\Gamma)} 
\big)\,.
%\end{split}
\end{equation}
In this case, by Lemma \ref{20170823:lem2}(a),
$\Delta(C)$ is an oriented cycle of $\Delta^{m_1\dots m_n}_i(\Gamma)$,
hence \eqref{20170823:eq4} vanishes by the second cycle condition \eqref{eq:cycle2} for
$g_i$.
On the other hand,
if $\Delta(C)$ is not contained in $E(\Delta^{m_1\dots m_n}_i(\Gamma))$
for any $i=1,\dots,n$,
then by Lemma \ref{20170823:lem2},
$\Delta(C)\cap E(\Delta^{m_1\dots m_n}_0(\Gamma))$ is an oriented cycle 
of $\Delta^{m_1\dots m_n}_0(\Gamma)$.
In this case, the first sum of \eqref{20170823:eq3}
vanishes since $f$ satisfies \eqref{eq:cycle2}.
Moreover, each term in the second sum of \eqref{20170823:eq3}
vanishes as well, since $\Delta^{m_1\dots m_n}_0(\Gamma)$ is not acyclic
and $f$ satisfies \eqref{eq:cycle1}.
We conclude that $f(g_1,\dots, g_n)$
satisfies the second cycle condition \eqref{eq:cycle2} as claimed.

Next, we will prove that $f(g_1,\dots, g_n)$
satisfies the first sesquilinearity relation \eqref{eq:sesq1}.
Let $(h\to k)$ be an edge in the graph $\Gamma$.
We need to prove that the right-hand side of \eqref{20170616:eq3}
is a polynomial of $\lambda_h+\lambda_k$ and not of $\lambda_h$ and $\lambda_k$ separately.
First, suppose that for some $i=1,\dots,n$, we have 
$$
h,k\in\{M_{i-1}+1,\dots,M_i\}, \quad\text{i.e.} \quad h=M_{i-1}+r, \;\; k=M_{i-1}+q,
$$
for some $r,q\in\{1,\dots,m_i\}$.
In this case, $\lambda_h$ and $\lambda_k$ are both summands of $\Lambda_i$;
hence $f_{\Lambda_1,\dots,\Lambda_n}$ has the required property
(of being polynomial of $\lambda_h+\lambda_k$ and not of $\lambda_h$ and $\lambda_k$ separately).
The image of $(h\to k)$ under the map \eqref{20170823:eq5}
is an edge $(r\to q)$ in $\Delta^{m_1\dots m_n}_i(\Gamma)$.
Thus,
$(g_i)^{\Delta^{m_1\dots m_n}_i(\Gamma)}_{\lambda_{M_{i-1}+1},\dots,\lambda_{M_i}}$
is a polynomial of $\lambda_h+\lambda_k$
by the first sesquilinearity property of $g_i$.

Now suppose that 
$$h\in\{M_{i-1}+1,\dots,M_i\}, \qquad k\in\{M_{j-1}+1,\dots,M_j\},
$$
for different $i,j\in\{1,\dots,n\}$.
In this case, $(i\to j)$ is an edge in the graph 
$\Delta^{m_1\dots m_n}_0(\Gamma)$.
Therefore, $f^{\Delta^{m_1\dots m_n}_0(\Gamma)}_{\Lambda_1,\dots,\Lambda_j}$
is a polynomial of $\Lambda_i+\Lambda_j$,
and hence of $\lambda_h+\lambda_k$.
Furthermore, 
by the assumption \eqref{eq:sesq1} on $g_t$ $(t=1,\dots,n)$
and by the definition \eqref{20170824:eq1}
of the variables $X(1),\dots,X(M_n)$,
all the $\lambda_l$'s of the same connected component of $\lambda_h$ and $\lambda_k$
appear as summed in the polynomial
$\tilde g_t^{\Delta^{m_1\dots m_n}_t(\Gamma)}$. 
%_{\lambda_{M_{t-1}+1}+X(M_{t-1}+1),\dots,\lambda_{M_t}+X(M_t)}$.
%
We conclude that \eqref{eq:sesq1} holds for $f(g_1,\dots, g_n)$, as claimed.

Finally, we will show that $f(g_1,\dots, g_n)$
satisfies the second sesquilinearity relation \eqref{eq:sesq2}.
Let $G$ be one of the connected components of $\Ga$, and consider the image 
$G_i = \Delta^{m_1\dots m_n}_i(G)$ $(0\le i\le n)$
of $G$ under the map \eqref{20170823:eq5}. 
Note that if $G_0$ contains a cycle, then $\Delta^{m_1\dots m_n}_0(\Ga)$ does,
which implies $(f(g_1,\dots, g_n))^\Ga=0$. Hence, we can suppose that $G_0$ is acyclic.
Then it is easy to see that all $G_i$ $(0\le i\le n)$ are connected. 
Furthermore, the set of vertices of $G$ is the disjoint union of the sets of vertices of $G_i$ $(1\le i\le n)$.
Thus, using again the notation \eqref{com3}, we have
$$
\la_G+\dd_G = \sum_{i=1}^n \la_{G_i}+\dd_{G_i},
$$
and to prove \eqref{eq:sesq2} for $f(g_1,\dots, g_n)$, it is enough to show that
\begin{equation}\label{welldef6}
(f(g_1,\dots,g_n))^\Gamma_{\lambda_1,\dots,\lambda_{M_n}}
\big((\la_{G_i}+\dd_{G_i})v\big) = 0, \qquad v\in V^{\otimes M_n}, \;\; 1\le i\le n.
\end{equation}
Since $g_i$ itself satisfies \eqref{eq:sesq2}, we have from \eqref{20170616:eq3}:
$$
f^{\Delta^{m_1\dots m_n}_0(\Gamma)}_{\Lambda_1,\dots,\Lambda_n}
\bigl( \tilde g_1^{\Delta^{m_1\dots m_n}_i(\Gamma)}
\otimes\cdots\otimes \tilde g_n^{\Delta^{m_1\dots m_n}_n(\Gamma)} \bigr) 
\Bigl( \Bigl( \la_{G_i}+\dd_{G_i} + \sum_{k\in G_i} X(k) \Bigr) v\Bigr)
=0.
$$
As in the proof of Lemma \ref{lem:pcl1} (cf.\ \eqref{welldef5}), we see
from the definition \eqref{20170824:eq1} of $X(k)$, that
$$
\sum_{k\in G_i} X(k) = \!\!\sum_{j\in G_0\backslash\{i\}} x_j.
$$
After setting all $x_j=\Lambda_j+\partial$, we can add $x_i$ to the above sum, because
$g_i$ is defined only up to adding elements of $\langle \Lambda_i+\partial \rangle$.
We obtain
$$
\sum_{j\in G_0} x_j \big|_{x_j=\Lambda_j+\partial} = \Lambda_{G_0}+\dd_{G_0}.
$$
After applying $f^{\Delta^{m_1\dots m_n}_0(\Gamma)}_{\Lambda_1,\dots,\Lambda_n}$ to this we get $0$,
since $f$ satisfies \eqref{eq:sesq2}.
This proves \eqref{welldef6} and finishes the proof of the lemma.
\end{proof}
\begin{theorem}\label{20170616:thm1}
The vector superspaces\/ $P^\cl(n)$, $n\geq0$,
together with the actions of the symmetric groups\/ $S_n$ given by \eqref{20170615:eq1}
and the composition maps defined by \eqref{20170616:eq3},
form an operad, which is graded by \eqref{eq:pclgr}.
\end{theorem}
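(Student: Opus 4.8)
The plan is to verify the operad axioms for $\big(P^\cl(n)\big)_{n\geq0}$ with the symmetric group actions \eqref{20170615:eq1} and the compositions \eqref{20170616:eq3}, and then to read off the grading. Well-definedness of the compositions is already in hand: Lemma \ref{lem:pcl1} shows that the right-hand side of \eqref{20170616:eq3} is a well-defined element of $V[\lambda_1,\dots,\lambda_{M_n}]/\langle\partial+\lambda_1+\dots+\lambda_{M_n}\rangle$, and Lemma \ref{lem:pcl2} shows that $f(g_1,\dots,g_n)\in P^\cl(M_n)$. For the unit axiom \eqref{eq:operad3}, I would note that $\mc G(1)$ consists of the single graph with one vertex and no edges, so $P^\cl(1)\cong\End_{\mb F[\partial]}V$ with $1\in P^\cl(1)$ the identity; unwinding \eqref{20170616:eq3} in the two degenerate cases $m_1=\dots=m_n=1$ (where $\Delta^{1\dots1}_0(\Gamma)=\Gamma$, each $\Delta^{1\dots1}_i(\Gamma)$ is a one-vertex graph, and evaluating $g_i=1$ as the identity makes the shift variables $X(k)$ irrelevant) and $n=1$ (where $\Delta^{m_1}_0(\Gamma)$ is the one-vertex graph so all $X(k)$ vanish, and the outer map $1$ acts as the identity) yields $f(1\otimes\dots\otimes1)=f=1(f)$ directly.

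For the equivariance axiom \eqref{eq:operad4}, the argument is parallel to the one given for the operad $\mc Chom$ in Section \ref{sec:LCA.1}. I would evaluate both sides on a graph $\Gamma\in\mc G(M_n)$ and a monomial $v_1\otimes\dots\otimes v_{M_n}$, unfold the $S_n$-actions using \eqref{20170615:eq1}, invoke the coequivariance of the cocomposition of $n$-graphs (Proposition \ref{20170615:prop2}) to rewrite $\Delta^{m_{\sigma^{-1}(1)}\dots m_{\sigma^{-1}(n)}}\big((\sigma(\tau_1,\dots,\tau_n))(\Gamma)\big)$ in terms of the $\sigma$- and $\tau_i$-twists of $\Delta^{m_1\dots m_n}(\Gamma)$, and apply Lemma \ref{20170821:lem} to commute $\sigma$ past the tensor product \eqref{20170821:eq5c} of the $g_i$-factors. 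One checks along the way that the auxiliary data transform consistently under the relabelling; since the set $\mc E(k)$ of Definition \ref{20170824:de1}, and hence the shift $\lambda_k\mapsto\lambda_k+X(k)$ and the substitution $\big|_{x_i=\Lambda_i+\partial}$, depend only on the collapsed graph $\Delta^{m_1\dots m_n}_0(\Gamma)$ and on which block contains $k$, this is immediate. I do not expect any serious difficulty here.

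The substantive step is the associativity axiom \eqref{eq:operad2}. Expanding both sides, evaluated on $\Gamma\in\mc G(L_{M_n})$ and a monomial, by iterated use of \eqref{20170616:eq3}, the combinatorial core is Proposition \ref{20170615:prop1}: parts (i)--(iii) say precisely that cocomposing $\Gamma$ first by $\Delta^{\ell_1\dots\ell_{M_n}}$ and then by $\Delta^{m_1\dots m_n}$ on the collapsed graph yields the same family of graphs as cocomposing first by $\Delta^{K_1\dots K_n}$ and then by $\Delta^{\ell_{M_{i-1}+1}\dots\ell_{M_i}}$ on each block. Granting this, the evaluations of $f$, of the $g_i$, and of the $h_j$ match term by term on the two sides, and what remains is to see that the spectral parameters agree, i.e.\ that the accumulated shifts $\lambda_k\mapsto\lambda_k+X(k)$ and the nested substitutions $x_i\mapsto(\text{sum of }\lambda\text{'s})+\partial$ compose correctly. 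This is the point I expect to be the main obstacle. The key observation is that, by Lemmas \ref{20170823:lem1} and \ref{20170823:lem2}, \emph{external connectedness} is transitive through the two nested cocompositions, so that the total parameter attached to a vertex $k$ after both compositions is, in either order, the sum of $\lambda_l$ over the whole connected component of $k$ in the relevant collapsed graph, with the part already present removed; and the convention \eqref{20170824:eq2} is exactly what makes the two $\partial$-substitutions commute, in the same way that the chain-rule identity \eqref{circ7} makes the derivatives in the associativity check for $P^\ch$ distribute identically over both sides. This is a careful but essentially routine verification once the graph identities of Proposition \ref{20170615:prop1} are available.

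Finally, the grading \eqref{eq:pclgr} is immediate from the edge bijection $E(\Gamma)\cong E\big(\Delta^{m_1\dots m_n}_0(\Gamma)\big)\sqcup E\big(\Delta^{m_1\dots m_n}_1(\Gamma)\big)\sqcup\dots\sqcup E\big(\Delta^{m_1\dots m_n}_n(\Gamma)\big)$ of Lemma \ref{20170823:lem1}: if $f\in\gr^{r_0}P^\cl(n)$ and $g_i\in\gr^{r_i}P^\cl(m_i)$, then $f(g_1,\dots,g_n)$ is supported on graphs with exactly $r_0+r_1+\dots+r_n$ edges, hence lies in $\gr^{r_0+\dots+r_n}P^\cl(M_n)$, while \eqref{20170615:eq1} visibly preserves the number of edges. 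Together with the axioms checked above, this exhibits $P^\cl$ as a graded operad with the decomposition \eqref{eq:pclgr}.
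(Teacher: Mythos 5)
Your proposal follows essentially the same route as the paper's proof: well-definedness via Lemmas \ref{lem:pcl1} and \ref{lem:pcl2}, associativity reduced to the coassociativity of the graph cocomposition (Proposition \ref{20170615:prop1}), equivariance reduced to Proposition \ref{20170615:prop2} together with Lemma \ref{20170821:lem}, and the grading from the edge bijection of Lemma \ref{20170823:lem1}; your extra attention to how the shifts $X(k)$ and the substitutions $x_i\mapsto\Lambda_i+\partial$ recombine under the nested cocompositions is, if anything, more explicit than the paper, whose displayed formulas for the two sides of the associativity axiom suppress this bookkeeping. The only item you skip that the paper verifies explicitly is that $f^\sigma$ again satisfies the cycle relations and sesquilinearity conditions, so that the action \eqref{20170615:eq1} really maps $P^\cl(n)$ to itself --- a routine check (using that $\sigma$ sends oriented cycles to oriented cycles and connected components to connected components), but it belongs in the proof.
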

\begin{proof}
%The plan of the proof is as follows. 
%Consider 
%$$f\in P^\cl(n), \qquad g_1\in P^\cl(m_1),\dots,g_n\in P^\cl(m_n), \qquad \sigma\in S_n.$$
First, let us check that $f^\si\in P^\cl(n)$ for every $f\in P^\cl(n)$ and $\sigma\in S_n$.
The cycle relations \eqref{eq:cycle1} and \eqref{eq:cycle2} for $f^\si$ are obvious, using the fact that
if $C\subset E(\Gamma)$ is an oriented cycle of $\Gamma$,
then $\sigma(C)$ (obtained by applying $\sigma$ to the tails and heads of all edges in $C$)
is an oriented cycle of $\sigma(\Gamma)$.
Next, if $\Gamma=\Gamma_1\sqcup\dots\sqcup\Gamma_s$
is a disjoint union of connected components, then
$\sigma(\Gamma)$ is a disjoint union of connected components $\sigma(\Gamma_1)\sqcup\dots\sqcup\sigma(\Gamma_s)$.
From here, it is easy to derive the sesquilinearity conditions \eqref{eq:sesq1} and \eqref{eq:sesq2} for $f^\sigma$.
Thus, $f^\si\in P^\cl(n)$. 
%It is clear by construction that $f\mapsto f^\sigma$ is a parity-preserving linear map $P^\cl(n)\to P^\cl(n)$.

We have already shown in Lemma \ref{lem:pcl2} that the composition 
$f(g_1,\dots,g_n) \in P^\cl(M_n)$ for $f\in P^\cl(n)$ and $g_i\in P^\cl(m_i)$.
%It follows from the definition that \eqref{20170616:eq3} gives a parity preserving linear map 
%$P^\cl(n)\otimes P^\cl(m_1)\otimes\dots\otimes P^\cl(m_n)\to P^\cl(M_n)$.
%The unity axiom \eqref{eq:operad3} is also clear.
It is clear by construction that the action of the symmetric group \eqref{20170615:eq1}
and the composition maps \eqref{20170616:eq3}
are parity preserving linear maps $P^\cl(n)\to P^\cl(n)$ and
$P^\cl(n)\otimes P^\cl(m_1)\otimes\dots\otimes P^\cl(m_n)\to P^\cl(M_n)$, respectively.
The unity axioms \eqref{eq:operad3} are obvious, where the unit $1\in P^\cl(1)$ is the identity operator
$$
1^\bullet_\la(v) = v + \langle \dd+\la \rangle \in V[\la]/\langle \dd+\la \rangle \cong V,
$$
and $\bullet$ represents the graph with one vertex.
The fact that $P^\cl$ is a graded operad follows from Lemma \ref{20170823:lem1} and the definition \eqref{20170615:eq1}, \eqref{20170616:eq3} of the operad structure. 
To finish the proof of the theorem, we need to verify the associativity \eqref{eq:operad2} of the composition
and the equivariance \eqref{eq:operad4} of the symmetric group action.

To prove the associativity axiom, given $f\in P^\cl(n)$, $g_i\in P^\cl(m_i)$ and
$h_{ij}\in P^\cl(\ell_{ij})$, we need to show that $\ph=\psi$, where
\begin{align*}
\ph &= f\bigl(g_1(h_{11},\dots,h_{1m_1}),\dots,g_n(h_{n1},\dots,h_{nm_n})\bigr),
\\
\psi &= \bigl(f(g_1,\dots,g_n)\bigr)(h_{11},\dots,h_{1m_1},\dots,h_{n1},\dots,h_{nm_n}).
\end{align*}
Let us introduce the lexicographically ordered index sets
\begin{align*}
%\label{eq:operad21}
\mc J &= \{(ij)\,|\, 1\leq i\leq n, \; 1\leq j\leq m_i \}
\,, \\
%\label{eq:operad11} 
\mc K &= \{(ijk)\,|\, 1\leq i\leq n, \; 1\leq j\leq m_i, \; 1\leq k\leq\ell_{ij} \}
\,,
\end{align*}
and the notation
\begin{align*}
\Lambda_i&=\sum_{j=1}^{m_i}\sum_{k=1}^{\ell_{ij}}\lambda_{ijk} \,, & 
\Lambda_{ij}&=\sum_{k=1}^{\ell_{ij}}\lambda_{ijk} \,,
\qquad 1\le i\le n \,, \;\; (ij)\in\mc J \,,
\\
L_i&=\sum_{j=1}^{m_i}\ell_{ij} \,, & 
L&=\sum_{i=1}^n L_i=\sum_{i=1}^n\sum_{j=1}^{m_i}\ell_{ij} \,.
\end{align*}
Then for any graph $\Gamma\in\mc G_0(L)$ and vectors $v_{ijk} \in V$,
we find from the definition of composition \eqref{20170616:eq3}:
\begin{equation*}%\label{20170803:eq1}
\begin{array}{l}
\displaystyle{
\vphantom{\Bigg(}
\ph^{\Gamma}_{(\lambda_{ijk})_{(ijk)\in\mc K}}
\Big(\bigotimes_{(ijk)\in\mc K}v_{ijk}\Big)
=
f^{\Delta_0^{L_1\dots L_n}(\Gamma)}_{\Lambda_1,\dots,\Lambda_n}
\Big(
} \\
\displaystyle{
\vphantom{\Bigg(}
\qquad
\bigotimes_{i=1}^n
(g_i)
^{\Delta_0^{\ell_{i1}\dots\ell_{im_i}}(\Delta_i^{L_1\dots L_n}(\Gamma))}
_{\Lambda_{i1},\dots,\Lambda_{im_i}}
\Big(
%} \\
%\displaystyle{
%\vphantom{\Bigg(}
%\,\,\,\,\,\,\,\,\,\,\,\,\,\,\,\,\,\,\,\,
\bigotimes_{j=1}^{m_i}
(h_{ij})
^{\Delta_j^{\ell_{i1}\dots\ell_{im_i}}(\Delta_i^{L_1\dots L_n}(\Gamma))}
_{\lambda_{ij1},\dots,\lambda_{ij\ell_{ij}}}
\big(\bigotimes_{k=1}^{\ell_{ij}}v_{ijk}\big)
\Big)
\Big)
,} \\
%\end{array}
%\end{equation*}
%and
%\begin{equation*}%\label{20170803:eq2}
%\begin{array}{l}
\displaystyle{
\vphantom{\Bigg(}
\psi^{\Gamma}_{(\lambda_{ijk})_{(ijk)\in\mc K}}
\Big(\bigotimes_{(ijk)\in\mc K}v_{ijk}\Big)
} 
\displaystyle{
\vphantom{\Bigg(}
=
f^{\Delta_0^{m_1\dots m_n}(\Delta_0^{\ell_{11}\dots\ell_{nm_n}}(\Gamma))}
_{\Lambda_1,\dots,\Lambda_n}
\Big( 
} \\
\displaystyle{
\vphantom{\Bigg(}
\qquad
\bigotimes_{i=1}^n
(g_i)
^{\Delta_i^{m_1\dots m_n}(\Delta_0^{\ell_{11}\dots\ell_{nm_n}}(\Gamma))}
_{\Lambda_{i1},\dots,\Lambda_{im_i}}
\Big(
%} \\
%\displaystyle{
%\vphantom{\Bigg(}
%\,\,\,\,\,\,\,\,\,\,\,\,\,\,\,\,\,\,\,\,
\bigotimes_{j=1}^{m_i}
(h_{ij})
^{\Delta_{m_1+\dots+m_{i-1}+j}^{\ell_{11}\dots\ell_{nm_n}}(\Gamma)}
_{\lambda_{ij1},\dots,\lambda_{ij\ell_{ij}}}
\big(\bigotimes_{k=1}^{\ell_{ij}}v_{ijk}\big)
\Big)
\Big)
.}
\end{array}
\end{equation*}
The above right-hand sides are equal by Proposition \ref{20170615:prop1},
%the right-hand side of \eqref{20170803:eq1} and \eqref{20170803:eq2} are equal,
thus proving the associativity axiom \eqref{eq:operad2}.

Now we will prove the supersymmetric equivariance \eqref{eq:operad4}.
Let $f\in P^\cl(n)$, $g_i\in P^\cl(m_i)$ as before, and $\si\in S_n$, $\tau_i\in S_{m_i}$ for
$1\le i\le n$.
Then for a graph $\Gamma\in\mc G_0(m_1+\dots+m_n)$ and vectors $v_{ij}\in V$, we compute:
\begin{equation*}%\label{20170803:eq3}
\begin{array}{l}
\displaystyle{
\vphantom{\Bigg(}
\big(f^{\sigma}(g_1^{\tau_1},\dots,g_n^{\tau_n})\big)^{\Gamma}_{\lambda_{11},\dots,\lambda_{nm_n}}
\Big(\bigotimes_{(ij)\in\mc J}v_{ij}\Big)
} \\
\displaystyle{
\vphantom{\Bigg(}
=
(f^{\sigma})^{\Delta_0^{m_1\dots m_n}(\Gamma)}_{\Lambda_1,\dots,\Lambda_n}
\Big(
\bigotimes_{i=1}^n
(g_i^{\tau_i})^{\Delta_i^{m_1\dots m_n}(\Gamma)}_{\lambda_{i1},\dots,\lambda_{im_i}}
\big(\bigotimes_{j=1}^{m_i} v_{ij}\big)
\Big)
} \\
\displaystyle{
\vphantom{\Bigg(}
=
f^{\sigma(\Delta_0^{m_1\dots m_n}(\Gamma))}_{\Lambda_{\sigma^{-1}(1)},\dots,\Lambda_{\sigma^{-1}(n)}}
\Big(
\sigma\Big(
\bigotimes_{i=1}^n
(g_i)^{\tau_i(\Delta_i^{m_1\dots m_n}(\Gamma))}_{\lambda_{i\tau_i^{-1}(1)},\dots,\lambda_{i\tau_i^{-1}(m_i)}}
\big(\tau_i\big(\bigotimes_{j=1}^{m_i} v_{ij}\big)\big)
\Big)
\Big)
} \\
\displaystyle{
\vphantom{\Bigg(}
=
\epsilon_g(\sigma)\epsilon_{\bigotimes_{i=1}^n\tau_i(\bigotimes_{j=1}^{m_1}v_{ij})}(\sigma) \,
f^{\sigma(\Delta_0^{m_1\dots m_n}(\Gamma))}_{\Lambda_{\sigma^{-1}(1)},\dots,\Lambda_{\sigma^{-1}(n)}}
\Big(
} \\
\displaystyle{
\vphantom{\Bigg(}
\,\,\,\,\,\,
\bigotimes_{i=1}^n
(g_{\sigma^{-1}(i)})
^{\tau_{\sigma^{-1}(i)}(\Delta_{\sigma^{-1}(i)}^{m_1\dots m_n}(\Gamma))}
_{\lambda_{{\sigma^{-1}(i)}\tau_{\sigma^{-1}(i)}^{-1}(1)},\dots,\lambda_{{\sigma^{-1}(i)}\tau_{\sigma^{-1}(i)}^{-1}(m_{\sigma^{-1}(i)})}}
\Big(\tau_{\sigma^{-1}(i)}\big(\bigotimes_{j=1}^{m_{\sigma^{-1}(i)}} v_{{\sigma^{-1}(i)}j}\big)\Big)
\Big)
} \\
\displaystyle{
\vphantom{\Bigg(}
=
\epsilon_g(\sigma)\epsilon_{\bigotimes_{i=1}^n\tau_i(\bigotimes_{j=1}^{m_1}v_{ij})}(\sigma) \,
f
^{\Delta_0^{m_{\sigma^{-1}(1)}\dots m_{\sigma^{-1}(n)}}\big((\sigma(\tau_1,\dots,\tau_n))(\Gamma)\big)}
_{\Lambda_{\sigma^{-1}(1)},\dots,\Lambda_{\sigma^{-1}(n)}}
\Big(
} \\
\displaystyle{
\vphantom{\Bigg(}
\,\,\,\,\,\,
\bigotimes_{i=1}^n
(g_{\sigma^{-1}(i)})
^{\Delta_i^{m_{\sigma^{-1}(1)}\dots m_{\sigma^{-1}(n)}}\big((\sigma(\tau_1,\dots,\tau_n))(\Gamma)\big)}
_{\lambda_{{\sigma^{-1}(i)}\tau_{\sigma^{-1}(i)}^{-1}(1)},\dots,\lambda_{{\sigma^{-1}(i)}\tau_{\sigma^{-1}(i)}^{-1}(m_{\sigma^{-1}(i)})}}
\Big(\tau_{\sigma^{-1}(i)}\big(\bigotimes_{j=1}^{m_{\sigma^{-1}(i)}} v_{{\sigma^{-1}(i)}j}\big)\Big)
\Big)
} \\
\displaystyle{
\vphantom{\Bigg(}
=
\epsilon_g(\sigma)
\big(f(g_{\sigma^{-1}(1)},\dots,g_{\sigma^{-1}(n)})\big)
^{(\sigma(\tau_1,\dots,\tau_n))(\Gamma)}
_{\lambda_{\sigma^{-1}(1)\tau_{\sigma^{-1}(1)}^{-1}(1)},\dots,
\lambda_{\sigma^{-1}(n)\tau_{\sigma^{-1}(n)}^{-1}(m_{\sigma^{-1}(n)})}}
\Big(
} \\
\displaystyle{
\vphantom{\Bigg(}
\,\,\,\,\,\,\,\,
\big(\sigma(\tau_1,\dots,\tau_n)\big)\big(\bigotimes_{(ij)\in\mc J}v_{ij}\big)
\Big)
} \\
\displaystyle{
\vphantom{\Bigg(}
=
\epsilon_g(\sigma)
\big(
(f(g_{\sigma^{-1}(1)},\dots,g_{\sigma^{-1}(n)}))^{\sigma(\tau_1,\dots,\tau_n)}
\big)
^{\Gamma}
_{\lambda_{11},\dots,\lambda_{nm_n}}
\Big(\bigotimes_{(ij)\in\mc J}v_{ij}\Big)
.}
\end{array}
\end{equation*}
For the first equality above,
we used the definition \eqref{20170616:eq3} of the composition;
for the second equality, the definition \eqref{20170615:eq1} 
of the action of the symmetric group on $P^\cl(n)$;
for the third equality, the definition \eqref{eq:operad6}--\eqref{eq:operad14} 
of the action of $S_n$ on a tensor product of $n$ vector superspaces;
%and \eqref{20170803:eq4},
for the fourth equality, Proposition \ref{20170615:prop2};
for the fifth equality, we used again \eqref{20170616:eq3}, \eqref{eq:operad6}
and the definition of the composition of permutations;
and for the last equality, we used again \eqref{20170615:eq1}.

This completes the proof of the theorem.
\end{proof}

%%%
\subsection{Poisson vertex algebras and the operad $P^\cl$}\label{sec:6.3}

As in Section \ref{sec:5.2},
given the vector superspace $V$, with parity $p$,
and the even endomorphism $\partial\in\End(V)$,
we denote by $\Pi V$ the same vector space with reversed parity $\bar p=1-p$.
Consider the corresponding operad $P^\cl(\Pi V)$
from Section \ref{sec:6.2}
and the associated $\mb Z$-graded Lie superalgebra 
$W^{\cl}(\Pi V):=W(P^\cl(\Pi V))$
given by Theorem \ref{20170603:thm2}.
\begin{theorem}\label{20170616:thm2}
We have a bijective correspondence between 
the odd elements\/ $X\in W^{\cl}_1(\Pi V)$ such that\/ $X\Box X=0$
and the Poisson vertex superalgebra structures on\/ $V$,
defined as follows.
The  commutative associative product and the\/ $\lambda$-bracket 
of the Poisson vertex superalgebra\/ $V$ corresponding to\/ $X$
are given by
\begin{equation}\label{20170616:eq4}
ab=(-1)^{p(a)}
X^{\bullet\!-\!\!\!\!\to\!\bullet}(a\otimes b)
\,\,,\,\,\,\,
[a_\lambda b]=(-1)^{p(a)}X^{\bullet\,\,\bullet}_{\lambda,-\lambda-\partial}(a\otimes b)
\,.
\end{equation}
\end{theorem}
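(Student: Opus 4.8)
The plan is to follow the scheme of Proposition~\ref{20170612:prop1}, Proposition~\ref{20170612:prop2} and Theorem~\ref{20160719:thm}: first make explicit what an odd element of $W^\cl_1(\Pi V)=P^\cl(2)^{S_2}$ is, then compute $X\Box X$ and identify its vanishing with the PVA axioms. By \eqref{eq:2-graphs}, $\mc G_0(2)$ consists of the empty graph $\bullet\,\,\bullet$ and the two one-edge graphs; the only cyclic $2$-graph is the one carrying both opposite edges, so by the cycle relations \eqref{eq:cycle1}--\eqref{eq:cycle2} (in particular the single-edge-reversal rule stated right after \eqref{eq:cycle2}) an element $X\in P^\cl(2)$ amounts to the pair $\bigl(X^{\bullet\to\bullet},X^{\bullet\,\,\bullet}\bigr)$, with $X^{\bullet\leftarrow\bullet}=-X^{\bullet\to\bullet}$. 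First I would check, exactly as in Proposition~\ref{20170612:prop1} and Proposition~\ref{20170612:prop2} and using the Koszul--Quillen signs for the reversed parity $\bar p$ of $\Pi V$, that $X$ is odd and $S_2$-invariant if and only if, under \eqref{20170616:eq4}, the map $ab$ is a parity-preserving supercommutative product (supercommutativity combining the $S_2$-action \eqref{20170615:eq1} on $X^{\bullet\to\bullet}$ with $X^{\bullet\leftarrow\bullet}=-X^{\bullet\to\bullet}$) and $[a_\lambda b]$ is a parity-preserving skewsymmetric map $V\otimes V\to V[\lambda]$. Then the sesquilinearity conditions \eqref{eq:sesq1}--\eqref{eq:sesq2} applied to $X^{\bullet\,\,\bullet}$ -- whose two connected components are singletons -- give exactly the sesquilinearity \eqref{20170612:eq6} of the $\lambda$-bracket, while the same conditions applied to the connected graph $\bullet\to\bullet$ (so that $X^{\bullet\to\bullet}$ depends only on $\lambda_1+\lambda_2$, which is $-\partial$ in the quotient, hence is $\lambda$-independent) say precisely that $\partial$ is an even derivation of $ab$. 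Thus an odd $X\in W^\cl_1(\Pi V)$ is the same as a differential superalgebra structure $(V,ab,\partial)$ together with a skewsymmetric sesquilinear $\lambda$-bracket, and \eqref{20170616:eq4} is a bijection onto such data; it remains to show that $X\Box X=0$ is equivalent to associativity of $ab$, the Jacobi identity \eqref{20170612:eq5}, and the left Leibniz rule \eqref{20170614:eq1}.

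By Theorem~\ref{20170603:thm2}, $X\Box X\in W^\cl_2(\Pi V)=P^\cl(3)^{S_3}$, and by Example~\ref{ex:box}, $X\Box X=X\circ_1 X+X\circ_2 X+(X\circ_2 X)^{(12)}$. Writing $X=X_0+X_1$ as in \eqref{eq:pclgr}, with $X_0$ (the $\lambda$-bracket) supported on graphs with no edges and $X_1$ (the product) supported on one-edge graphs, and using that the $\circ_i$-products and the $S_n$-action respect the grading (Lemma~\ref{20170823:lem1}, formulas \eqref{20170615:eq1}, \eqref{20170616:eq3}, and the fact that $1\in P^\cl(1)$ has degree $0$), we obtain
\begin{equation*}
X\Box X=X_0\Box X_0\,+\,\bigl(X_0\Box X_1+X_1\Box X_0\bigr)\,+\,X_1\Box X_1,
\end{equation*}
with the three summands in degrees $0,1,2$, hence supported on $3$-graphs with $0,1,2$ edges respectively; these vanish independently, and by $S_3$-invariance and the single-edge-reversal rule it suffices to test each summand on one representative graph. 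On the empty $3$-graph all external-connection variables $X(k)$ of \eqref{20170824:eq1} vanish and the substitutions $x_i=\Lambda_i+\partial$ in \eqref{20170616:eq3} are trivial, so the composition reduces to that of $\mc Chom$, and one recovers, exactly as in the displayed computation in the proof of Proposition~\ref{20170612:prop2}, a nonzero multiple of the left-hand side of the Jacobi identity \eqref{20170612:eq5}. On the graph $\{2\to3\}$ precisely three of the $\circ$-terms survive -- $X_0\circ_2 X_1$ giving a $[a_\lambda bc]$-term, $X_1\circ_1 X_0$ giving a $[a_\lambda b]\,c$-term, and $(X_1\circ_2 X_0)^{(12)}$ giving a $[a_\lambda c]\,b$-term -- and tracking the cocompositions $\Delta^{m_1\dots m_n}_i$ together with the shifts $X(k)\mapsto\Lambda_i+\partial$ shows they assemble, up to an overall sign, into the difference of the two sides of the Leibniz rule \eqref{20170614:eq1}, with a single spectral parameter $\lambda$ correctly propagated. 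On the directed path $\{1\to2,\,2\to3\}$ only the product parts contribute: $X_1\circ_1 X_1$ yields $(ab)c$ and $X_1\circ_2 X_1$ yields $a(bc)$, the $\partial$-shifts coming from the $X(k)$ cancelling because the product is $\lambda$-independent, so one gets (up to sign) the associator. Hence $X\Box X=0$ if and only if $ab$ is associative, $[\cdot\,_\lambda\,\cdot]$ satisfies Jacobi, and the Leibniz rule holds; combined with the first step, this gives the asserted bijection.

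The conceptual structure mirrors the earlier cases, so the real work -- and the main obstacle -- is the bookkeeping inside the composition formula \eqref{20170616:eq3}: identifying, for each of the three test graphs and each $\circ$-term, which zero- and one-edge graphs occur in the cocomposition pieces $\Delta^{m_1\dots m_n}_i(\Gamma)$; computing the external-connection sets $\mc E(k)$ and the associated substitutions $X(k)\mapsto\Lambda_i+\partial$ (which is exactly what forces ``Leibniz with a single $\lambda$'' and makes the $\partial$-shifts in the associator cancel); and keeping straight all the Koszul--Quillen signs -- those from the shuffles in $\Box$, from \eqref{20170615:eq1}, and from the tensor-product sign \eqref{20170821:eq5c}, all evaluated with the parity $\bar p$. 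A secondary point to argue carefully is that, by \eqref{eq:cycle1}--\eqref{eq:cycle2}, the sesquilinearity conditions \eqref{eq:sesq1}--\eqref{eq:sesq2} and $S_3$-equivariance, an element of $P^\cl(3)$ is determined by its values on these representatives, so that vanishing there entails $X\Box X=0$ globally; alternatively one may invoke the identification of such $X$ with the PVA-governing data of \cite{DSK13}, but the direct computation above is self-contained.
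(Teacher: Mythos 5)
Your proposal is correct and follows essentially the same route as the paper's proof: identify an odd symmetric $X\in P^\cl(2)$ with a commutative product plus a skewsymmetric sesquilinear $\lambda$-bracket, expand $X\Box X$ as $X\circ_1X+X\circ_2X+(X\circ_2X)^{(12)}$, and test on the three representative $3$-graphs (no edges, one edge, two-edge path) to recover the Jacobi identity, the Leibniz rule, and associativity; your decomposition $X=X_0+X_1$ by number of edges is just an explicit version of what the paper does implicitly. The only detail to make sure you record is that on the two-edge path the third term $(X\circ_2X)^{(12)}$ vanishes because the cocomposed graph $\Delta_0$ acquires a $2$-cycle, so that the degree-$2$ part is exactly the associator.
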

\begin{proof}
Note that, by the first sesquilinearity condition \eqref{eq:sesq1},
the polynomial $X^{\bullet\!-\!\!\!\!\to\!\bullet}_{\lambda_1,\lambda_2}$
depends only on $\lambda_1+\lambda_2\equiv-\partial$.
Hence, it is independent of $\lambda_1,\lambda_2$.
For this reason, in the first equation of \eqref{20170616:eq4} 
we omitted the subscripts $\lambda_1,\lambda_2$.

First, we check that the symmetry of $X$ translates
to the commutativity of the product  $ab$ and the skewsymmetry 
of the $\lambda$-bracket $[a_\lambda b]$.
We have
\begin{align*}
X^{\bullet\!-\!\!\!\!\to\!\bullet}(v_1\otimes v_2)
&=
(X^{(12)})^{\bullet\!-\!\!\!\!\to\!\bullet}(v_1\otimes v_2)
=
(-1)^{\bar p(v_1)\bar p(v_2)}X^{\bullet\!\leftarrow\!\!\!\!-\!\bullet}(v_2\otimes v_1)
\\
& =
(-1)^{p(v_1)+p(v_2)+p(v_1)p(v_2)} X^{\bullet\!-\!\!\!\!\to\!\bullet}(v_2\otimes v_1)
\,,
\end{align*}
which, by the first equation in \eqref{20170616:eq4}, is equivalent to the symmetry condition of the product:
$v_1v_2=(-1)^{p(v_1)p(v_2)}v_2v_1$.
Similarly, 
evaluating the identity $X=X^{(12)}$ on the disconnected graph $\bullet\,\,\,\bullet$, we get
$$
X^{\bullet\,\,\bullet}_{\lambda,-\lambda-\partial}(v_1\otimes v_2)
=
(X^{(12)})^{\bullet\,\,\bullet}_{\lambda,-\lambda-\partial}(v_1\otimes v_2)
=
(-1)^{\bar p(v_1)\bar p(v_2)}
X^{\bullet\,\,\bullet}_{-\lambda-\partial,\lambda}(v_2\otimes v_1)
\,,
$$
which, by the second equation in \eqref{20170616:eq4}, 
is equivalent to the skewsymmetry condition \eqref{20170612:eq5}
of the $\lambda$-bracket.

Next, we need to prove that the condition $X\Box X=0$ translates to three conditions:
the associativity of the product $ab$, the Jacobi identity \eqref{20170612:eq5} 
for the $\lambda$-bracket $[a_\lambda b]$,
and the Leibniz rule \eqref{20170614:eq1}.
Recall that, by \eqref{eq:box1},
$$
X\Box X=X\circ_1X+X\circ_2X+(X\circ_2 X)^{(12)}
\,.
$$
Since, by construction, $X\Box X$ is invariant by the action of the symmetric group,
to impose the condition $X\Box X=0$ is the same as to impose
$(X\Box X)^\Gamma=0$ for each of the three graphs:
\begin{equation}\label{eq:graphs3}
\begin{tikzpicture}
\draw (-4,2.5) circle [radius=0.1];
\node at (-4,2.2) {1};
\draw (-3.25,2.5) circle [radius=0.1];
\node at (-3.25,2.2) {2};
\draw (-2.5,2.5) circle [radius=0.1];
\node at (-2.5,2.2) {3};
\node at (-2,2.35) {,};
\draw (-1,2.5) circle [radius=0.1];
\node at (-1,2.2) {1};
\draw (-0.25,2.5) circle [radius=0.1];
\node at (-0.25,2.2) {2};
\draw (0.5,2.5) circle [radius=0.1];
\node at (0.5,2.2) {3};
\draw[->] (-0.15,2.5) -- (0.4,2.5);
\node at (1,2.35) {,};
\draw (2,2.5) circle [radius=0.1];
\node at (2,2.2) {1};
\draw (2.75,2.5) circle [radius=0.1];
\node at (2.75,2.2) {2};
\draw (3.5,2.5) circle [radius=0.1];
\node at (3.5,2.2) {3};
\draw[->] (2.1,2.5) -- (2.65,2.5);
\draw[->] (2.85,2.5) -- (3.4,2.5);
\node at (4,2.35) {.};
\end{tikzpicture}
\end{equation}
Evaluating all three summands of $X\Box X$  on the disconnected graph $\bullet\,\,\,\bullet\,\,\,\bullet$,
we get, 
by the definition \eqref{20170616:eq3} of the composition maps,
\begin{align*}
(X\circ_1 X)^{\bullet\,\,\bullet\,\,\bullet}_{\lambda_1,\lambda_2,\lambda_3}(v_1\otimes v_2\otimes v_3)
&\vphantom{\Big(}
=
X^{\bullet\,\,\bullet}_{\lambda_1+\lambda_2,\lambda_3}
\big(
X^{\bullet\,\,\bullet}_{\lambda_1,\lambda_2}(v_1\otimes v_2)\, \otimes v_3
\big))
\\
&\vphantom{\Big(} 
=
(-1)^{p(v_2)}[[{v_1}_{\lambda_1}v_2]_{\lambda_1+\lambda_2}v_3]
\,,\\
(X\circ_2 X)^{\bullet\,\,\bullet\,\,\bullet}_{\lambda_1,\lambda_2,\lambda_3}(v_1\otimes v_2\otimes v_3)
&\vphantom{\Big(}
=
(-1)^{\bar p(v_1)}
X^{\bullet\,\,\bullet}_{\lambda_1,\lambda_2+\lambda_3}
\big(
v_1\otimes X^{\bullet\,\,\bullet}_{\lambda_2,\lambda_3}(v_2\otimes v_3)
\big))
\\
&\vphantom{\Big(}
=
(-1)^{1+p(v_2)}[{v_1}_{\lambda_1}[{v_2}_{\lambda_2}v_3]]
\,,\\
((X\circ_2 X)^{(12)})^{\bullet\,\,\bullet\,\,\bullet}_{\lambda_1,\lambda_2,\lambda_3}
(v_1\otimes v_2\otimes v_3)
&\vphantom{\Big(}
=
(-1)^{\bar p(v_1)\bar p(v_2)}
(X\circ_2 X)^{\bullet\,\,\bullet\,\,\bullet}_{\lambda_2,\lambda_1,\lambda_3}(v_2\otimes v_1\otimes v_3)
\\
&\vphantom{\Big(}
=
(-1)^{p(v_2)+p(v_1)p(v_2)}[{v_2}_{\lambda_2}[{v_1}_{\lambda_1}v_3]]
\,.
\end{align*}
Hence, the condition $(X\Box X)^{\bullet\,\,\bullet\,\,\bullet}=0$
is equivalent to the Jacobi identity \eqref{20170612:eq5} for the $\lambda$-bracket.

Evaluating all three summands of $X\Box X$  on the second graph in \eqref{eq:graphs3},
we get,
by the definition \eqref{20170616:eq3} of the composition maps,
\begin{align*}
\vphantom{\Big(}
(X\circ_1 X&)^{\bullet\,\,\,\,\bullet\!-\!\!\!\!\to\!\bullet}_{\lambda_1,\lambda_2,\lambda_3}
(v_1\otimes v_2\otimes v_3)
=
X^{\bullet\!-\!\!\!\!\to\!\bullet}
\big(
X^{\bullet\,\,\,\bullet}_{\lambda_1,\lambda_2+x_3}(v_1\otimes v_2)\, \otimes 
\big(\big|_{x_3=\lambda_3+\partial}v_3\big)
\big)
\\
&\vphantom{\Big(} 
=
(-1)^{p(v_2)}[{v_1}_{\lambda_1}v_2]v_3
\,,\\
\vphantom{\Big(}
(X\circ_2 X&)^{\bullet\,\,\,\,\bullet\!-\!\!\!\!\to\!\bullet}_{\lambda_1,\lambda_2,\lambda_3}
(v_1\otimes v_2\otimes v_3)
=
(-1)^{\bar p(v_1)}
X^{\bullet\,\,\,\bullet}_{\lambda_1,\lambda_2+\lambda_3}
\big(
v_1\otimes X^{\bullet\!-\!\!\!\!\to\!\bullet}(v_2\otimes v_3)
\big))
\\
&\vphantom{\Big(}
=
(-1)^{1+p(v_2)}[{v_1}_{\lambda_1} v_2 v_3]
\,,\\
\vphantom{\Big(}
((X\circ_2 X)^{(12)}&)^{\bullet\,\,\,\,\bullet\!-\!\!\!\!\to\!\bullet}_{\lambda_1,\lambda_2,\lambda_3}
(v_1\otimes v_2\otimes v_3)
=
(-1)^{\bar p(v_1)\bar p(v_2)}
(X\circ_2 X)^{
\begin{tikzpicture}
\draw[fill] (0,0) circle [radius=0.04];
\draw[fill] (0.3,0) circle [radius=0.04];
\draw[fill] (0.6,0) circle [radius=0.04];
\draw[->] (0,0.1) to [out=90,in=90] (0.6,0.1);
\end{tikzpicture}
}_{\lambda_2,\lambda_1,\lambda_3}(v_2\otimes v_1\otimes v_3)
\\
&\vphantom{\Big(}
=
(-1)^{\bar p(v_2)+\bar p(v_1)\bar p(v_2)}
X^{\bullet\!-\!\!\!\!\to\!\bullet}
\big(
\big(\big|_{x_2=\lambda_2+\partial}v_2\big)\otimes 
X^{\bullet\,\,\,\bullet}_{\lambda_1,\lambda_3+x_2}(v_1\otimes v_3)
\big))
\\
&\vphantom{\Big(}
=
(-1)^{p(v_2)+p(v_1)p(v_2)}{v_2}[{v_1}_{\lambda_1}v_3]
\,.
\end{align*}
Hence, the condition $(X\Box X)^{\bullet\,\,\,\,\bullet\!-\!\!\!\!\to\!\bullet}=0$
is equivalent to the Leibniz rule \eqref{20170614:eq1}.

Finally, we evaluate all three summands of $X\Box X$  on the third graph in \eqref{eq:graphs3}.
We get
\begin{align*}
\vphantom{\Big(}
(X\circ_1 X&)^{\bullet\!-\!\!\!\!\to\!\bullet\!-\!\!\!\!\to\!\bullet}_{\lambda_1,\lambda_2,\lambda_3}
(v_1\otimes v_2\otimes v_3)
=
X^{\bullet\!-\!\!\!\!\to\!\bullet}
\big(
X^{\bullet\!-\!\!\!\!\to\!\bullet}(v_1\otimes v_2)\, \otimes v_3
\big)
\\
&\vphantom{\Big(} 
=
(-1)^{p(v_2)}({v_1}v_2)v_3
\,,\\
\vphantom{\Big(}
(X\circ_2 X&)^{\bullet\!-\!\!\!\!\to\!\bullet\!-\!\!\!\!\to\!\bullet}_{\lambda_1,\lambda_2,\lambda_3}
(v_1\otimes v_2\otimes v_3)
=
(-1)^{\bar p(v_1)}
X^{\bullet\!-\!\!\!\!\to\!\bullet}
\big(
v_1\otimes X^{\bullet\!-\!\!\!\!\to\!\bullet}(v_2\otimes v_3)
\big))
\\
&\vphantom{\Big(}
=
(-1)^{1+p(v_2)}{v_1} (v_2 v_3)
\,,\\
\vphantom{\Big(}
((X\circ_2 X)^{(12)}&)^{\bullet\!-\!\!\!\!\to\!\bullet\!-\!\!\!\!\to\!\bullet}_{\lambda_1,\lambda_2,\lambda_3}
(v_1\otimes v_2\otimes v_3)
=
(-1)^{\bar p(v_1)\bar p(v_2)}
(X\circ_2 X)^{
\begin{tikzpicture}
\draw[fill] (0,0) circle [radius=0.04];
\draw[fill] (0.3,0) circle [radius=0.04];
\draw[fill] (0.6,0) circle [radius=0.04];
\draw[->] (0,0.1) to [out=90,in=90] (0.6,0.1);
\draw[<-] (0,-0.1) to [out=-90,in=-90] (0.3,-0.1);
\end{tikzpicture}
}_{\lambda_2,\lambda_1,\lambda_3}(v_2\otimes v_1\otimes v_3)
\\
&\vphantom{\Big(}
=
(-1)^{\bar p(v_2)+\bar p(v_1)\bar p(v_2)}
X^{
\begin{tikzpicture}
\draw[fill] (0,0) circle [radius=0.04];
\draw[fill] (0.3,0) circle [radius=0.04];
\draw[->] (0,0.1) to [out=90,in=90] (0.3,0.1);
\draw[<-] (0,-0.1) to [out=-90,in=-90] (0.3,-0.1);
\end{tikzpicture}
}
(\cdots)=0
\,.
\end{align*}
The last equality holds because $X$ evaluated on a graph containing a cycle is,
by definition, zero.
Hence, the condition $(X\Box X)^{\bullet\!-\!\!\!\!\to\!\bullet\!-\!\!\!\!\to\!\bullet}=0$
is equivalent to the associativity of the product.
This completes the proof.
\end{proof}

In view of Theorem \ref{20170616:thm2},
we have the definition of the corresponding cohomology complex.
\begin{definition}
Let $V$ be a Poisson vertex superalgebra.
The corresponding \emph{PVA cohomology complex} is defined as
$$
(W^{\cl}(\Pi V),\ad X)
\,,
$$
where $X\in W_1(\Pi V)_{\bar 1}$ is given by \eqref{20170616:eq4}.
\end{definition}
\begin{remark}\label{rem:last2}
Let $V=\bigoplus_r\gr^rV$ be a graded vector superspace.
Recall from Remark \ref{rem:last}
that we have the corresponding grading of the operad $P^\cl(\Pi V)$,
and hence of the Lie superalgebra $W^\cl(\Pi V)$.
It is easy to check, as for Theorem \ref{20160719:thm-ref},
that under the correspondence from Theorem \ref{20170616:thm2},
the structures of graded Poisson vertex algebra on $V$
are in bijection with the 
odd elements $X\in\gr^1 W^{\cl}_1(\Pi V)$ satisfying $X\Box X=0$.
(Recall that $V$ is a graded Poisson vertex algebra if $(\gr^pV)(\gr^qV)\subset\gr^{p+q}V$
and $[\gr^p V_\lambda\gr^qV]\subset\gr^{p+q-1}V[\lambda]$.)
\end{remark}

%%%
\subsection{Relation between $\gr P^\ch$ and $P^\cl$}\label{sec:6.4}

Recall from Corollary \ref{cor:gr} that for every $\bar X\in\gr^r P^\ch(k+1)$
(with a representative $X\in\fil^r P^\ch(k+1)$)
and every graph $\Ga\in\mc G(k+1)$ with $r$ edges, 
we have the map 
$$
X^\Ga (= X(p_\Gamma)) \colon
V^{\otimes(k+1)}
\to
V[\lambda_0,\dots,\lambda_k]\big/\big\langle\partial+\lambda_0+\dots+\lambda_k\big\rangle.
$$

\begin{theorem}\label{thm:mor}
For every vector superspace\/ $V$ with an\/ $\mb F[\dd]$-module structure, 
there is a canonical injective morphism of graded operads
\begin{equation}\label{eq:morph}
\gr P^\ch(V) \,{\hookrightarrow}\, P^\cl(V)
\,,
\end{equation}
mapping $\bar X\in\gr^r P^\ch(k+1)$ to:
\begin{equation}\label{eq:nc1}
%\bar X\mapsto 
\{X^\Gamma\,|\,\Gamma\in\mc G(k+1)\,\,\,\,\mathrm{ with }\,\,\,\, r \,\,\,\,\mathrm{ edges}\}
\,\in\gr^r P^\cl(k+1)
\,.
\end{equation}
This morphism is a bijection $\gr P^\ch(k+1)\stackrel{\sim}{\rightarrow} P^\cl(k+1)$
for $k=-1,0,1$.
\end{theorem}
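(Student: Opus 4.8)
The plan is to verify the two assertions of Theorem~\ref{thm:mor} separately: first that \eqref{eq:morph} is a well-defined injective morphism of graded operads, and second that it is an isomorphism in degrees $k=-1,0,1$.

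For the morphism statement, the bulk of the work has already been assembled. First I would invoke Corollary~\ref{cor:gr}(c) to see that the assignment \eqref{eq:nc1} is well-defined and injective on each $\gr^r P^\ch(k+1)$: part~(c) says that an element of $\fil^r P^\ch(k+1)$ all of whose $X^\Gamma$ (for $\Gamma$ with $r$ edges) vanish already lies in $\fil^{r+1}$, which is exactly injectivity of $\bar X\mapsto\tilde X$; and the preceding discussion shows the image lands in $P^\cl(k+1)$, since by Corollary~\ref{cor:gr}(a) the maps $X^\Gamma$ satisfy the cycle relations of Lemma~\ref{lem:gr} and the sesquilinearity relations of Lemma~\ref{lem:gr2}, which are literally the defining relations \eqref{eq:cycle1}, \eqref{eq:cycle2}, \eqref{eq:sesq1}, \eqref{eq:sesq2} of $P^\cl$. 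That the map is $S_n$-equivariant is immediate from comparing \eqref{20160629:eq5} with \eqref{20170615:eq1}, since the action on the $\gr$ side just permutes the graphs $\Gamma$ (equivalently the functions $p_\Gamma$). Compatibility with composition is the content of Corollary~\ref{cor:gr}(b) together with Remark~\ref{rem:gr}: in the special case $m_1=k+1$, $m_2=\dots=m_n=1$ the general $P^\cl$ composition formula \eqref{20170616:eq3} reduces to \eqref{com4}, which is exactly the formula for $(Y\circ_1 X)^\Gamma$ from Proposition~\ref{prop:com}; and since in any operad all compositions are generated by $\circ_1$-products and the symmetric-group action (cf.\ \eqref{eq:operad8}, \eqref{eq:operad8a}), compatibility on $\circ_1$ plus equivariance gives full compatibility with the composition maps. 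The grading is respected by Lemma~\ref{20170823:lem1} (the number of edges adds up under cocomposition) matching the fact that the $\fil$-degrees add under $\circ_i$ in $P^\ch$. So the first half is essentially a bookkeeping assembly of results already proved.

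For the bijectivity claim in low degrees, I would argue degree by degree. For $k=-1$ we have $\gr P^\ch(0)=P^\ch(0)\cong V/\dd V$ by \eqref{pch0}, and $P^\cl(0)$ is maps from the one-element set $\mc G(0)$ (the empty graph) to $V/\langle\dd\rangle$, hence also $\cong V/\dd V$; the filtration is trivial. For $k=0$, $\mc G(1)$ has a single graph (one vertex, no edges), so $P^\cl(1)$ is just $\End_{\mb F[\dd]}(V)$ by the sesquilinearity conditions, which matches $\gr^0 P^\ch(1)=P^\ch(1)\cong\End_{\mb F[\dd]}(V)$ by \eqref{pch1}; again the higher $\fil$ vanishes since $\fil^{k+1}P^\ch(k+1)=0$. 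The real case is $k=1$. Here $\fil^0 P^\ch(2)=P^\ch(2)\supset\fil^1 P^\ch(2)\supset\fil^2 P^\ch(2)=0$, so $\gr P^\ch(2)=\gr^0\oplus\gr^1$, with $\gr^0 P^\ch(2)=P^\ch(2)/\fil^1 P^\ch(2)$ detected by evaluation on graphs with $0$ edges (the disconnected $2$-graph, giving the $\lambda$-bracket part, via $X^{\bullet\,\bullet}$) and $\gr^1 P^\ch(2)=\fil^1 P^\ch(2)$ detected by graphs with one edge (the product part, $X^{\bullet\to\bullet}$). Since $\mc O_2^{\star T}=\mb F[z_{01}^{\pm1}]$ is generated over $\mc D_2^T$ by $z_{01}^{-1}$, an element of $P^\ch(2)$ is determined by its values on $1$ and $z_{01}^{-1}$ via sesquilinearity, so injectivity of evaluation-on-graphs is clear; surjectivity onto $P^\cl(2)$ amounts to checking that any pair of maps $V^{\otimes2}\to V[\lambda]/\langle\cdots\rangle$ satisfying the $P^\cl(2)$-relations can be realized, which is a direct construction (extend by sesquilinearity from the two generating functions, noting the cycle relations for $\mc G(2)$ just encode the sign change under edge reversal, matching the antisymmetry built into $p_\Gamma\mapsto-p_\Gamma$).

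The main obstacle is the $k=1$ surjectivity: one must carefully match, on the nose, the defining relations of $P^\cl(2)$ (cycle relations for the three $2$-graphs of \eqref{eq:2-graphs}, and the two sesquilinearity conditions per connected component) against what the sesquilinearity conditions \eqref{20160629:eq4} on $P^\ch(2)$ impose after passing to $z_{01}^{-1}$ and $1$ respectively, and confirm nothing is lost or overcounted in the $\gr$. Concretely this means checking that: (i) a $P^\cl(2)$-datum on the disconnected graph $\bullet\ \bullet$ is exactly a sesquilinear skewsymmetric-unconstrained $\lambda$-bracket-type map, which reconstructs an element of $\gr^0$; (ii) a datum on $\bullet\!\to\!\bullet$ (with the cycle relation forcing $\bullet\!\leftarrow\!\bullet$ to be its negative, and the sesquilinearity \eqref{eq:sesq1}–\eqref{eq:sesq2} with the single connected component forcing it to be $\dd$-invariant, i.e.\ to define an element of $\End_{\mb F[\dd]}$-type data valued appropriately) reconstructs an element of $\gr^1 P^\ch(2)=\fil^1 P^\ch(2)$. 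I expect this to be straightforward but slightly tedious; everything else is a citation-and-assembly exercise built on Corollary~\ref{cor:gr}, Proposition~\ref{prop:com}, Remark~\ref{rem:gr}, and Lemma~\ref{20170823:lem1}.
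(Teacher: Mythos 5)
Your proposal is correct and follows essentially the same route as the paper: the morphism part is exactly the assembly of Corollary \ref{cor:gr}(a)--(c) with Proposition \ref{prop:com} and Remark \ref{rem:gr}, and the $k=1$ surjectivity is handled, as you anticipate, by constructing explicit preimages in $\fil^0 P^\ch(2)$ and $\fil^1 P^\ch(2)$ from the values of a $P^\cl(2)$-datum on the graphs $\bullet\,\,\bullet$ and $\bullet\!\to\!\bullet$, extended to all powers $z_{01}^n$ consistently with sesquilinearity. The only content you leave implicit that the paper spells out is the pair of closed formulas for $\tilde X(v_0,v_1;z_{01}^n)$ (iterated $\lambda$-derivatives for $n\ge0$ and iterated integrals, respectively $(\partial+\lambda)^m$-twists, for $n<0$), whose well-definedness is the "slightly tedious" check you flag.
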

\begin{proof}
As a consequence of Corollary \ref{cor:gr}(a)-(b) and the definition of the graded operad $P^\cl$,
the map \eqref{eq:nc1} is a well defined morphism of operads,
and by Corollary \ref{cor:gr}(c) this morphism is injective.
Surjectivity of the morphism for $k=-1,0$ is immediate. Let us prove it for $k=1$.
Recall that $\gr P^\cl(2)=\gr^0 P^\cl(2)\oplus\gr^1P^\cl(2)$.
By definition, $\gr^0P^\cl(2)$ consists of maps
$$
X^{\bullet\,\,\bullet}\,:\,\,V^{\otimes2}\,\longrightarrow\,V[\lambda]
\,,
$$
satisfying the sesquilinearity conditions
$$
X^{\bullet\,\,\bullet}_\lambda((\partial v_0)\otimes v_1)
=
-\lambda X^{\bullet\,\,\bullet}_\lambda(v_0\otimes v_1)
\,\,,\,\,\,\,
X^{\bullet\,\,\bullet}_\lambda(v_0\otimes(\partial v_1))
=
(\lambda +\partial) X^{\bullet\,\,\bullet}_\lambda(v_0\otimes v_1)
\,.
$$
A preimage $\tilde X\in F^0P^\ch(2)=P^\ch(2)$ can be constructed by letting
$$
\tilde X(v_0,v_1;z_{01}^n)
=
\left\{
\begin{array}{l}
\displaystyle{
\big(-\frac\partial{\partial\lambda}\big)^n
X^{\bullet\,\,\bullet}_\lambda(v_0\otimes v_1)
\,\,\text{ if }\,\, n\geq0\,, 
} \\
\displaystyle{
(-1)^m\!\!\!
\int_0^\lambda d\mu_1\dots\int_0^{\mu_{m-1}}\!\!\!\!\!\!
d\mu_m\,
X^{\bullet\,\,\bullet}_{\mu_m}(v_0\otimes v_1)
\,\,\text{ if }\,\, n=-m\leq-1\,.
}
\end{array}
\right.
$$
It is not hard to check that, indeed, $\tilde X$ is a well defined element of $P^\ch(2)$
and the image of its coset $[\tilde X]\in\gr^0 P^\ch(2)=F^0P^\ch(2)/F^1P^\ch(2)$
via the morphism \eqref{eq:morph} coincides with $X^{\bullet\,\,\bullet}$.
Next, $\gr^1P^\cl(2)$ consists of $\mb F[\partial]$-module homomorphisms
$$
X^{\bullet\!\to\!\bullet}\,:\,\,V^{\otimes2}\,\longrightarrow\,V
\,.
$$
A preimage $\tilde X\in F^1P^\ch(2)$ can be constructed by letting
$$
\tilde X(v_0,v_1;z_{01}^n)
=
\left\{
\begin{array}{l}
\displaystyle{
0\,\,\text{ if }\,\, n\geq0\,, 
} \\
\displaystyle{
\frac{(-1)^m}{m!}
X^{\bullet\!\to\!\bullet}((\partial+\lambda)^mv_0\otimes v_1)
\,\,\text{ if }\,\, n=-m-1\leq-1\,.
}
\end{array}
\right.
$$
Again, it is not hard to check that $\tilde X$ is a well defined element 
of $F^1P^\ch(2)=\gr^1 P^\ch(2)$
and its image via the morphism \eqref{eq:morph} coincides with $X^{\bullet\!\to\!\bullet}$.
\end{proof}
\begin{example}\label{ex:last}
Let $V$ be a non-unital vertex algebra.
By Theorem \ref{20160719:thm}, 
the vertex algebra structure of $V$ corresponds to an odd element 
$X\in W^\ch_1(\Pi V)=P^\ch(2)(\Pi V)$ such that $X\Box X=0$.
The filtration \eqref{fil4} of $P^\ch(2)$ is
$$
\fil^0P^\ch(2)=P^\ch(2)
\,\,,\,\,\,\,
\fil^1P^\ch(2)
=
\big\{f\,\big|\, f(\mc O_2^T)=0\big\}
\,\,,\,\,\,\,
\fil^2P^\ch(2)=0
\,.
$$
Hence, the image $X_0$ of $\gr^0 X$ in $P^\cl(2)$
via the map defined in Theorem \ref{thm:mor},
is the element
$$
{X_0}^{\bullet\,\,\,\bullet}=X(1)
\,\,\,\,,\,\,\,\,\,\,\,\,
{X_0}^{\bullet\!-\!\!\!\!\to\!\bullet}=0
\,.
$$
Thus we obtain a PVA structure on $V$,
where the $\lambda$-bracket is the same as for the vertex algebra $V$,
and the commutative associative product is zero.
\end{example}
Recall that if $V$ is a filtered vector space, then $P^\ch(V)$ is a filtered operad
with respect to the refined filtration introduced in Section \ref{sec:filch-ref},
and $P^\cl(\gr V)$ is a graded operad
with respect to the refined grading introduced in Remark \ref{rem:last}.
Then Theorem \ref{thm:mor} still holds:
\begin{theorem}\label{thm:mor-ref}
We have a canonical injective morphism of graded operads
from\/ 
\begin{equation}\label{eq:morph-gr}
\gr P^\ch(V)
\,\hookrightarrow\,
P^\cl(\gr V)
\,.
\end{equation}
Explicitly, $\bar f\in\gr^rP_k^\ch(V)$, with a representative\/ $f\in\fil^rP^\ch(V)$,
is mapped to the element\/ $\tilde f\in\gr^rP_k^\cl(\gr V)$ defined as follows.
If\/ $\Gamma\in\mc G(k)$ has $s$ edges
and\/ $\bar v_1\otimes \dots\otimes \bar v_k\in\gr^{t}(V^{\otimes k})
=\bigoplus_{r_1+\dots+r_k=t}\gr^{r_1}V\otimes\dots\otimes\gr^{r_k}V$,
we let
\begin{equation}\label{eq:nc2}
\tilde{f}^{\Gamma}_{\lambda_1,\dots,\lambda_k}(\bar v_1\otimes\dots\otimes \bar v_k)
=
f^{z_1,\dots,z_k}_{\lambda_1,\dots,\lambda_k}(v_1,\dots, v_k;p_\Gamma)
+
\fil^{s+t-r-1}V
\end{equation}
in\/ $(\gr^{s+t-r}V)[\lambda_1,\dots,\lambda_k]/\langle\partial+\lambda_1+\dots+\lambda_k\rangle$.
\end{theorem}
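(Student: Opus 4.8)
The statement is a refinement of Theorem \ref{thm:mor}, so the proof should parallel that one, with the filtration on $\mc O^{\star T}_k$ replaced by the combined filtration coming from \eqref{eq:last3} on $V$ and the number-of-poles filtration on $\mc O^{\star T}_k$. The first step is to verify that formula \eqref{eq:nc2} defines an element $\tilde f$ of $P^\cl(\gr V)$ that lands in the correct graded piece $\gr^r P^\cl_k(\gr V)$ in the sense of Remark \ref{rem:last}. Well-definedness has two aspects: (i) independence of the choice of lifts $v_i\in\fil^{r_i}V$ of $\bar v_i$, and of the lift $f\in\fil^r P^\ch(V)$ of $\bar f$; and (ii) that the right-hand side of \eqref{eq:nc2} actually lies in $\fil^{s+t-r}V$ modulo $\fil^{s+t-r-1}V$. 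For (ii) one uses the defining property \eqref{fil4-ref} of the refined filtration: since $p_\Gamma\in\fil^s\mc O^{\star T}_k$ and $v_1\otimes\dots\otimes v_k\in\fil^t(V^{\otimes k})$, the tensor $v_1\otimes\dots\otimes v_k\otimes p_\Gamma$ lies in $\fil^{s+t}(V^{\otimes k}\otimes\mc O^{\star T}_k)$, hence $f$ maps it into $(\fil^{s+t-r}V)[\lambda]/\langle\dots\rangle$. For independence of lifts: changing $v_i$ by an element of $\fil^{r_i-1}V$ changes the argument by an element of $\fil^{t-1}(V^{\otimes k}\otimes\mc O^{\star T}_k)$ with the same number of poles, so $f$ sends it into $\fil^{s+t-r-1}V$, which is zero in the target; changing $f$ by an element of $\fil^{r+1}P^\ch(V)$ sends the argument (in $\fil^{s+t}$) into $\fil^{s+t-r-1}V$, again zero. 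One also checks $\tilde f$ satisfies the cycle relations \eqref{eq:cycle1}, \eqref{eq:cycle2} and the sesquilinearity relations \eqref{eq:sesq1}, \eqref{eq:sesq2}: these follow from Lemmas \ref{lem:gr} and \ref{lem:gr2} exactly as in Corollary \ref{cor:gr}(a), since those relations are statements about $X(p_\Gamma)$ for fixed $V$ and pass to the associated graded.

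The second step is to show the assignment $\bar f\mapsto\tilde f$ is a morphism of operads, i.e.\ it commutes with the $S_k$-actions and with compositions. Equivariance is immediate from comparing \eqref{20160629:eq5} with \eqref{20170615:eq1} and the fact that $\sigma$ permutes $p_\Gamma$ to $\pm p_{\sigma(\Gamma)}$. Compatibility with composition is the substantive point: one must show that the image of $\gr(Y\circ_1 X)$, and more generally of $\gr$ of the full composition \eqref{circ5}, equals the $P^\cl$-composition \eqref{20170616:eq3} of the images. For the $\circ_1$-product this is precisely Proposition \ref{prop:com}: equation \eqref{com4} expresses $(Y\circ_1 X)^\Gamma$ in terms of $X^{\Gamma'}$ and $Y^{\bar\Gamma''}$ with exactly the shifts $\la_i\mapsto\la_i+\la_{G_i}+\dd_{G_i}$ that appear in \eqref{20170616:eq3} once one invokes Examples \ref{ex:gamma}, \ref{ex:gamma2} identifying $\Gamma'=\Delta^{(k+1)1\dots1}_1(\Gamma)$, $\bar\Gamma''=\Delta^{(k+1)1\dots1}_0(\Gamma)$ and $G_\ell=\mc E(\ell)$; Remark \ref{rem:gr} already records that \eqref{20170616:eq3} reduces to \eqref{com4} in this special case. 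Since by \eqref{eq:operad8a} every composition in an operad is an iterate of $\circ_1$-products and symmetric-group actions, and since both sides are already known to be operad compositions (on $\gr P^\ch$ by Proposition \ref{prop:filch}, on $P^\cl$ by Theorem \ref{20170616:thm1}), compatibility for $\circ_1$ plus equivariance gives compatibility in general. One must also track the parity/filtration bookkeeping: the refined grading on $P^\cl(\gr V)$ in Remark \ref{rem:last} is set up precisely so that the degree shift $s+t-r$ is additive under composition, matching the additivity of $r$ under composition in the filtered operad $P^\ch(V)$ and the additivity of $s$ in the graded cooperad of graphs (Lemma \ref{20170823:lem1}).

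The third step is injectivity. This is Corollary \ref{cor:gr}(c): if $f\in\fil^r P^\ch(V)$ has $f^\Gamma=0$ for all $\Gamma$ with $r$ edges, then $f\in\fil^{r+1}P^\ch(V)$, so $\bar f=0$. In the refined setting one needs the graded version — that if the image $\tilde f$ vanishes, meaning $f$ applied to every $v_1\otimes\dots\otimes v_k\otimes p_\Gamma$ with $\sum r_i=t$ and $s$ edges lies in $\fil^{s+t-r-1}V$, for all choices, then $f\in\fil^{r+1}P^\ch(V)$ in the refined sense — and this follows from Lemma \ref{lem:alb1} (the $p_\Gamma$ with $\Gamma$ acyclic generate $\fil^s\mc O^{\star T}_k$ as a $\mc D^T_k$-module), together with the sesquilinearity conditions \eqref{20160629:eq4}, which propagate the vanishing from the generators $p_\Gamma$ to all of $\fil^s\mc O^{\star T}_k$, and the fact that the $\fil^{r_i}V$ exhaust $V$.

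\textbf{Main obstacle.} The delicate point is the compatibility of the morphism with composition in the presence of the two interacting filtrations. Concretely, one has to make sure that when expressing $\gr$ of \eqref{circ1} via Proposition \ref{prop:com}, the derivatives $\dd_{z_i}$ that get converted into $\dd_{G_i}$ acting on the inputs $v_i$ interact correctly with the $V$-filtration — i.e.\ that $\dd$ preserves each $\fil^pV$ (which must be assumed or is part of the definition of filtered module), so that the shift $\la_i\mapsto\la_i+\la_{G_i}+\dd_{G_i}$ does not spoil the filtration estimate. Also, Proposition \ref{prop:com} is stated with the hypothesis that $\bar\Gamma''$ is acyclic unless one is working inside $\fil^r$/$\fil^s$ with the edge counts matched, so the proof must invoke the second (unconditional) assertion of that proposition, valid exactly on the relevant filtered pieces; checking that the edge-count hypotheses $\Gamma'$ has $r$ edges, $\bar\Gamma''$ has $s$ edges are met on the associated graded (and that cyclic $\bar\Gamma''$ contribute zero on both sides, matching \eqref{eq:cycle1} for $P^\cl$) is where one has to be careful. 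Once this bookkeeping is done, the rest is a routine transcription of the proof of Theorem \ref{thm:mor}.
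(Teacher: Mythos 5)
Your proposal is correct and follows essentially the same route as the paper: the paper's own proof is the single word ``Straightforward,'' deferring to the argument of Theorem \ref{thm:mor} (via Corollary \ref{cor:gr}, Proposition \ref{prop:com} and Lemma \ref{lem:alb1}), and your write-up is precisely that transcription with the two-filtration bookkeeping --- well-definedness from \eqref{fil4-ref}, additivity of the degree $s+t-r$ under composition, and the graded version of Corollary \ref{cor:gr}(c) for injectivity --- made explicit.
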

\begin{proof}
Straightforward.
\end{proof}
Let $V$ be a filtered vertex algebra
and let $\gr V$ be the associated graded Poisson vertex algebra.
By Theorem \ref{20160719:thm-ref}, 
the vertex algebra structure of $V$ corresponds to an odd element 
$X\in \fil^1W^\ch_1(\Pi V)=\fil^1P^\ch(2)(\Pi V)$ such that $X\Box X=0$.
Moreover, by Remark \ref{rem:last2},
the Poisson vertex algebra structure of $\gr V$ corresponds to an odd element 
$\tilde X\in \gr^1W^\cl_1(\Pi\gr V)=\gr^1 P^\cl(2)(\Pi\gr V)$ such that $\tilde X\Box\tilde X=0$.
\begin{theorem}\label{thm:last2}
The image of\/ $\bar X\in\gr^1W^\ch_1(\Pi V)$ via the morphism defined 
by Theorem \ref{thm:mor-ref} is\/ $\tilde X$.
\end{theorem}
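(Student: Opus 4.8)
The plan is to unwind the two sides of the claimed equality through the identifications already established: on one hand, $\bar X\in\gr^1 W^\ch_1(\Pi V)$ is the class of the element $X\in\fil^1 P^\ch(2)(\Pi V)$ attached to the filtered vertex algebra structure of $V$ via \eqref{20160719:eq3b}; on the other hand, $\tilde X\in\gr^1 W^\cl_1(\Pi\gr V)$ is the element attached to the Poisson vertex algebra structure of $\gr V$ via \eqref{20170616:eq4}. The morphism of Theorem \ref{thm:mor-ref} sends $\bar X$ to the element $\tilde f$ whose value on a graph $\Gamma\in\mc G(2)$ with $s$ edges and on $\bar v_0\otimes\bar v_1\in\gr^t(V^{\otimes 2})$ is $X^{z_0,z_1}_{\lambda_0,\lambda_1}(v_0,v_1;p_\Gamma)+\fil^{s+t-2}V$ in $\gr^{s+t-1}V[\lambda_0,\lambda_1]/\langle\partial+\lambda_0+\lambda_1\rangle$. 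Since $\bar X$ has degree $r=1$, only graphs with exactly one edge contribute, namely $\Gamma=(\bullet\!-\!\!\!\!\to\!\bullet)$ (together with its orientation reversal, which gives a sign), while the disconnected graph $\bullet\,\,\bullet$ has $s=0<r$, so $\tilde f^{\bullet\,\,\bullet}=0$. Wait — that is not quite what we want; the point is that $\tilde f\in\gr^1 P^\cl(2)$ means $\tilde f$ acts with a degree shift, so $\tilde f^{\bullet\,\,\bullet}$ lands in $\gr^{t-1}V$ and $\tilde f^{\bullet\!-\!\!\!\!\to\!\bullet}$ lands in $\gr^{t}V$; both are generally nonzero.

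The key computation is therefore to evaluate $X$ on the two relevant functions and identify the result with the two structure maps of $\gr V$. For the graph with one edge, $p_{\bullet\!-\!\!\!\!\to\!\bullet}=z_{01}^{-1}$, and by \eqref{20160719:eq3b}, $X^{z_0,z_1}_{\lambda_0,\lambda_1}(v_0,v_1;z_{10}^{-1})=(-1)^{1+\bar p(v_0)}\int^{\lambda_0}d\sigma[{v_0}_\sigma v_1]$; the constant term of this is $\pm\,{:}v_0 v_1{:}$, and modulo the filtration this is precisely the commutative associative product of $\gr V$ by \eqref{eq:last4} and the definition of the induced product on $\gr V$. For the disconnected graph, $p_{\bullet\,\,\bullet}=1$, and by \eqref{20160719:eq3c}, $X^{z_0,z_1}_{\lambda_0,\lambda_1}(v_0,v_1;1)=(-1)^{p(v_0)}[{v_0}_{\lambda_0}v_1]$, whose image in $\gr^{t-1}V[\lambda_0]$ is the $\lambda$-bracket of $\gr V$ by the second inclusion in \eqref{eq:last4}. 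Comparing with \eqref{20170616:eq4}, which defines $\tilde X$ by $ab=(-1)^{p(a)}\tilde X^{\bullet\!-\!\!\!\!\to\!\bullet}(a\otimes b)$ and $[a_\lambda b]=(-1)^{p(a)}\tilde X^{\bullet\,\,\bullet}_{\lambda,-\lambda-\partial}(a\otimes b)$, we see the two match once the sign bookkeeping (the $\bar p$ versus $p$ discrepancy, and the passage $\lambda_1=-\lambda-\partial$) is carried out; this is the same sign reconciliation already performed in the proof of Theorem \ref{20160719:thm} and in Theorem \ref{20170616:thm2}, so it can be quoted rather than redone.

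I expect the main obstacle to be purely organizational: making sure the degree-shift conventions of the \emph{refined} grading on $P^\cl(\gr V)$ (Remark \ref{rem:last}) and of the refined filtration on $P^\ch(V)$ (Section \ref{sec:filch-ref}) are lined up so that "$\gr^1$" means the same thing on both sides, and that the element $X\in\fil^1 P^\ch(2)(\Pi V)$ furnished by Theorem \ref{20160719:thm-ref} indeed has image of refined degree exactly $1$ (not higher) in $P^\cl(2)(\Pi\gr V)$ — equivalently that the leading terms ${:}v_0 v_1{:}$ and $[{v_0}_\lambda v_1]$ do not both drop into lower filtration pieces, which is exactly the content of $X$ being a \emph{nonzero} class, guaranteed because $V$ is genuinely a vertex algebra. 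Given all of this, the proof is the routine verification indicated by the one-word "Straightforward" that the authors will presumably give; in a plan I would simply: (1) reduce to the two graphs $\bullet\,\,\bullet$ and $\bullet\!-\!\!\!\!\to\!\bullet$ using the degree-$1$ condition and Corollary \ref{cor:gr}; (2) evaluate $X$ on $1$ and on $z_{10}^{-1}$ via \eqref{20160719:eq3b}--\eqref{20160719:eq3c}; (3) pass to $\gr V$ using \eqref{eq:last4}; (4) match with \eqref{20170616:eq4}, invoking the sign computations of Theorems \ref{20160719:thm} and \ref{20170616:thm2}.
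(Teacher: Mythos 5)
Your proposal is correct and is exactly the unwinding the paper has in mind — its own proof is the single line ``The proof follows by construction,'' and your steps (1)--(4) supply precisely the missing details: restrict to the graphs $\bullet\;\bullet$ and $\bullet\!\to\!\bullet$, evaluate $X$ on $1$ and $z_{10}^{-1}$ via \eqref{20160719:eq3b}--\eqref{20160719:eq3c}, observe via \eqref{eq:last4} that modulo the filtration only $[\,\cdot\,_\lambda\,\cdot\,]$ (resp.\ the constant term ${:}\,\cdot\,\cdot\,{:}$) survives, and match with \eqref{20170616:eq4}. Your mid-paragraph self-correction about the disconnected graph lands on the right reading of the refined degree shift, and the closing worry about $\bar X$ being a nonzero class is unnecessary (the identity of the images holds regardless), but neither affects the validity of the argument.
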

\begin{proof}
The proof follows by construction.
\end{proof}
Obviously, the morphism of operads defined in Theorem \ref{thm:mor-ref} 
induces a Lie superalgebra injective homomorphism 
\begin{equation}\label{eq:nomore}
\gr W^\ch(\Pi V)
\,\hookrightarrow\,
W^\cl(\gr\Pi V)
\,.
\end{equation}
Moreover, by Theorem \ref{thm:last2},
$\bar X=\gr X$, where $X\in W_1^\ch(\Pi V)$ is associated to the vertex algebra structure of $V$, 
is mapped by the homomorphism \eqref{eq:nomore} to $\tilde X\in W_1^\cl(\gr\Pi V)$,
associated to the PVA structure of $\gr V$.
Summarizing, we have:
\begin{theorem}\label{cor:nomore}
Let\/ $V$ be a filtered vertex algebra and let\/ $\gr V$ be the associated graded Poisson vertex algebra.
Denote by\/ $X\in \fil^1 W^\ch_1(\Pi V)$ the element corresponding to the vertex algebra structure of\/ $V$
by \eqref{20160719:eq3b} (cf.\ Theorem \ref{20160719:thm-ref}),
and denote by\/ $\tilde X\in \gr^1 W^\cl_1(\gr \Pi V)$ 
the element corresponding to the Poisson vertex algebra structure of\/ $\gr V$
by \eqref{20170616:eq4} (cf.\ Remark \ref{rem:last2}).
\begin{enumerate}[(a)]
\item
There is a canonical injective homomorphism of graded Lie superalgebras
\begin{equation}\label{eq:nomore2}
\gr W^\ch(\Pi V)
\,\hookrightarrow\,
W^\cl(\gr\Pi V)
\,,
\end{equation}
mapping\/ $\bar X\in\gr^1W^\ch(\Pi V)$ to\/ $\tilde X\in \gr^1 W^\cl_1(\gr \Pi V)$.
\item
Hence, we have an injective morphism of complexes
\begin{equation}\label{eq:nomore3}
(\gr W^\ch(\Pi V),d_{\bar X}=\gr\ad X)
\,\longrightarrow\,
(W^\cl(\gr\Pi V),d_{\tilde X}=\ad\tilde X)
\,.
\end{equation}
\item
As a consequence, we have the corresponding Lie superalgebra
homomorphism of cohomologies:
\begin{equation}\label{eq:nomore4}
H(\gr W^\ch(\Pi V),d_{\bar X})
\longrightarrow
\,
H(W^\cl(\Pi V),d_{\tilde X})
\,.
\end{equation}
\end{enumerate}
\end{theorem}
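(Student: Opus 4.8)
The plan is to assemble Theorem~\ref{cor:nomore} as a corollary of the structural results already established in this section, in particular Theorems~\ref{thm:mor-ref} and~\ref{thm:last2}, together with the general machinery of Section~\ref{sec:3.2}. First I would note that part (a) is essentially a packaging statement: Theorem~\ref{thm:mor-ref} gives a canonical injective morphism of graded operads $\gr P^\ch(V)\hookrightarrow P^\cl(\gr V)$, and applying the functor $W(-)$ of Theorem~\ref{20170603:thm2} (which is evidently functorial in morphisms of operads, since the $\Box$-product in \eqref{eq:box} is built only from $\circ_1$ and the symmetric group action, both of which a morphism preserves) produces an injective homomorphism of $\mb Z$-graded Lie superalgebras $\gr W^\ch(\Pi V)\hookrightarrow W^\cl(\gr\Pi V)$. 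Here one must be slightly careful that $W$ of the \emph{graded} operad $\gr P^\ch(\Pi V)$ is identified with $\gr W^\ch(\Pi V)$, i.e.\ that taking $S_{n+1}$-invariants commutes with passing to the associated graded; this holds because $\mb F$ has characteristic $0$, so the averaging projector onto invariants is available and the filtration on $P^\ch(n+1)$ restricts to a filtration on $P^\ch(n+1)^{S_{n+1}}$ with $\gr$ of the invariants equal to the invariants of $\gr$. That the morphism sends $\bar X\mapsto\tilde X$ is exactly the content of Theorem~\ref{thm:last2}, so part (a) follows.

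Next, for part (b), I would invoke Proposition~\ref{20170612:prop1}-style reasoning: since $X\in W^\ch_1(\Pi V)_{\bar1}$ satisfies $X\Box X=0$ (Theorem~\ref{20160719:thm-ref}), the associated graded element $\bar X\in\gr^1 W^\ch_1(\Pi V)$ satisfies $\bar X\Box\bar X=0$ in the graded Lie superalgebra $\gr W^\ch(\Pi V)$ — this is because the $\Box$-product is compatible with the filtration (Proposition~\ref{prop:filch-ref} makes $\gr W^\ch(\Pi V)$ a graded Lie superalgebra, so $\gr(X\Box X)=\bar X\Box\bar X$). Hence $(\ad\bar X)^2=0$ by the Jacobi identity, giving the differential $d_{\bar X}=\gr(\ad X)$ on $\gr W^\ch(\Pi V)$; similarly $\tilde X\Box\tilde X=0$ by Theorem~\ref{20170616:thm2} (or Remark~\ref{rem:last2}), giving $d_{\tilde X}=\ad\tilde X$ on $W^\cl(\gr\Pi V)$. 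Since the map \eqref{eq:nomore2} is a Lie superalgebra homomorphism carrying $\bar X$ to $\tilde X$, it intertwines $\ad\bar X$ with $\ad\tilde X$, hence is a morphism of complexes; injectivity is inherited from part (a). This yields \eqref{eq:nomore3}.

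Finally, part (c) is automatic: any morphism of complexes induces a map on cohomology, and since both differentials are inner derivations $\ad(\cdot)$, the cohomology spaces $\ker d/\im d$ inherit Lie superalgebra structures (as already noted for $H^\ch(V)$ in Section~\ref{sec:vacoh}), and the induced map \eqref{eq:nomore4} is a Lie superalgebra homomorphism because it is the restriction/quotient of a Lie superalgebra homomorphism. I would write this as a one-line deduction.

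I do not anticipate a genuine obstacle here, since all the hard work — constructing the operad morphism \eqref{eq:morph-gr}, checking it respects the refined gradings, and verifying $\bar X\mapsto\tilde X$ — has been done in Theorems~\ref{thm:mor-ref} and~\ref{thm:last2}. The one point that deserves a sentence of justification rather than being asserted is the commutation of $W(-)$ (i.e.\ of taking symmetric-group invariants) with the associated-graded functor; this is where characteristic $0$ is used, and I would state it explicitly as the key step, even though its proof is the short averaging-projector argument sketched above. Everything else is a formal consequence of functoriality.
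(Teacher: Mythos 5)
Your proposal is correct and follows essentially the same route as the paper, which presents this theorem as a direct summary of Theorems \ref{thm:mor-ref} and \ref{thm:last2}: the operad morphism \eqref{eq:morph-gr} induces the injective Lie superalgebra homomorphism \eqref{eq:nomore2}, Theorem \ref{thm:last2} identifies the image of $\bar X$ with $\tilde X$, and parts (b) and (c) follow formally. Your extra remark that taking $S_{n+1}$-invariants commutes with passing to the associated graded (via the characteristic-zero averaging projector) is a point the paper leaves implicit, and it is a reasonable one to make explicit.
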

\begin{remark}\label{rem:stop}
It is interesting to understand whether the morphism \eqref{eq:morph-gr}
is in fact an isomorphism. 
In the recent paper \cite{BDSHK19}, we prove this under the assumption that the filtration of $V$ is induced from a grading such that $V\cong\gr V$ as $\mb F[\partial]$-modules.
In this case, \eqref{eq:nomore2} and \eqref{eq:nomore4} 
are Lie superalgebra isomorphisms
and, since the cohomology of a complex is majorized
by the cohomology of the associated graded complex,
we get the inequalities
\begin{equation}\label{eq:ineq}
\dim H^k(W^\ch(\Pi V),d_X)
\leq
\dim H^k(W^\cl(\gr\Pi V),d_{\tilde X})
%\,\,\text{ for every }\,\, k\geq-1
%\,.
\end{equation}
for every $k\geq-1$.
These inequalities obviously always hold for $k=-1,0$.
\end{remark}

%%%
\subsection{A finite analog of the operad $P^\cl$}\label{sec:6.4b}

For a vector superspace $V$, we can define a finite analog $P^\fn$ 
of the operad $P^\cl$ introduced in Section \ref{sec:6.2} as follows
(cf. \cite{Mar96}).
We let
$P^\fn(n)$ be the space of all maps 
\begin{equation}\label{20170614:eq4-fin}
f\colon
\mc G(n)\times V^{\otimes n}
\longrightarrow V
\,,
\end{equation}
which are linear in the second factor,
mapping the $n$-graph $\Gamma\in\mc G(n)$ 
and the monomial $v_1\otimes\,\cdots\,\otimes v_n\in V^{\otimes n}$
to the vector
$f^{\Gamma}(v_1\otimes\,\cdots\,\otimes v_n)$,
satisfying 
the cycle relations \eqref{eq:cycle1} and \eqref{eq:cycle2}.
The action of the symmetric groups $S_n$ is given by 
\begin{equation}\label{20170615:eq1-fin}
(f^\sigma)^\Gamma(v_1\otimes\cdots\otimes v_n)
=
f^{\sigma(\Gamma)}
(\sigma(v_1\otimes\cdots\otimes v_n))
\,,
\end{equation}
where 
$\sigma(v_1\otimes\dots\otimes v_n)$ is defined by \eqref{eq:operad6},
and $\sigma(\Gamma)$ is defined in Section \ref{sec:6a.3}.
As for the composition maps,
using the cocomposition maps on graphs defined in \eqref{20170614:eq3},
we let
\begin{equation}\label{20170616:eq3-fin}
(f(g_1,\dots,g_n))^\Gamma
=
f^{\Delta^{m_1\dots m_n}_0(\Gamma)}
\big(
{g_1}^{\Delta^{m_1\dots m_n}_1(\Gamma)}
\otimes\dots\otimes
{g_n}^{\Delta^{m_1\dots m_n}_n(\Gamma)}
\big)
\,,
\end{equation}
for $f\in P^\fn(n)$, $g_1\in P^\fn(m_1),\,\dots,g_n\in P^\fn(m_n)$,
and $\Gamma\in\mc G(M_n)$.

The same proof as for Theorem \ref{20170616:thm2} leads to
\begin{theorem}\label{20170616:thm2-fin}
We have a bijective correspondence between 
the odd elements\/ $X\in W^{\fn}_1(\Pi V)$ such that\/ $X\Box X=0$
and the Poisson superalgebra structures on\/ $V$,
given by
\begin{equation}\label{20170616:eq4-fin}
ab=(-1)^{p(a)}
X^{\bullet\!-\!\!\!\!\to\!\bullet}(a\otimes b)
\,\,,\,\,\,\,
\{a,b\}=(-1)^{p(a)}X^{\bullet\,\,\,\bullet}(a\otimes b)
\,.
\end{equation}
\end{theorem}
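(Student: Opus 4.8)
The plan is to follow the blueprint set out in the proof of Theorem \ref{20170616:thm2} almost verbatim, since the finite operad $P^\fn$ is obtained from $P^\cl$ by discarding the $\lambda$-variables and the translation operator $\partial$ (equivalently, by specializing $V$ to have trivial $\mb F[\partial]$-action and forgetting the polynomial part). Concretely, to $X\in W^\fn_1(\Pi V)_{\bar 1}$ I associate the product and bracket by \eqref{20170616:eq4-fin}, and I must check three things: that the $S_2$-invariance of $X$ is equivalent to commutativity of $ab$ and skewsymmetry of $\{a,b\}$; that $X\Box X=0$ splits, via evaluation on the three $3$-graphs in \eqref{eq:graphs3}, into the associativity of $ab$, the Jacobi identity for $\{a,b\}$, and the Leibniz rule; and that this correspondence is a bijection. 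All of this is literally the computation in the proof of Theorem \ref{20170616:thm2} with $\lambda_i$ and $\partial$ erased.

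First I would record that $X\in P^\fn(2)$ means a linear map $\mc G(2)\times V^{\otimes 2}\to V$ subject to \eqref{eq:cycle1}--\eqref{eq:cycle2}; since the only $2$-graphs are $\bullet\ \bullet$, $\bullet\!\to\!\bullet$ and $\bullet\!\leftarrow\!\bullet$, and the $2$-cycle relation forces $X^{\bullet\leftarrow\bullet}=-X^{\bullet\to\bullet}$, the datum of $X$ is exactly the two maps $X^{\bullet\,\,\bullet}$ and $X^{\bullet\to\bullet}$ of \eqref{20170616:eq4-fin}. The condition $X=X^{(12)}$, evaluated on $\bullet\to\bullet$ (using $\epsilon_v$ from \eqref{eq:operad14} and the cycle relation to flip the arrow), gives $ab=(-1)^{p(a)p(b)}ba$, and evaluated on the disconnected graph $\bullet\ \bullet$ gives the skewsymmetry $\{b,a\}=-(-1)^{p(a)p(b)}\{a,b\}$ of an ordinary Lie superalgebra bracket. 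Next, using \eqref{eq:box1}, $X\Box X = X\circ_1 X + X\circ_2 X + (X\circ_2 X)^{(12)}$, and since $X\Box X$ is $S_3$-invariant it vanishes iff $(X\Box X)^\Gamma=0$ for each of the three graphs in \eqref{eq:graphs3}. Evaluating the composition formula \eqref{20170616:eq3-fin} on the disconnected graph $\bullet\ \bullet\ \bullet$ yields (up to the sign $(-1)^{p(v_2)}$) the Jacobiator $\{\{v_1,v_2\},v_3\}-\{v_1,\{v_2,v_3\}\}+(-1)^{p(v_1)p(v_2)}\{v_2,\{v_1,v_3\}\}$; evaluating on the middle graph $\bullet\ \ \bullet\!\to\!\bullet$ yields the Leibniz identity $\{v_1,v_2 v_3\}=\{v_1,v_2\}v_3+(-1)^{p(v_1)p(v_2)}v_2\{v_1,v_3\}$; and evaluating on $\bullet\!\to\!\bullet\!\to\!\bullet$ yields associativity $(v_1 v_2)v_3=v_1(v_2 v_3)$, the third summand dropping out because it involves $X$ on a graph with a $2$-cycle, which is zero by \eqref{eq:cycle2}. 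These are the defining axioms of a Poisson superalgebra, so $X\Box X=0$ if and only if \eqref{20170616:eq4-fin} defines a Poisson superalgebra structure. Conversely, given a Poisson superalgebra structure one reads off $X^{\bullet\to\bullet}$ and $X^{\bullet\,\,\bullet}$ from \eqref{20170616:eq4-fin}, extends by \eqref{eq:cycle2} on $\bullet\leftarrow\bullet$, and the same computations show $X\in W^\fn_1(\Pi V)$ and $X\Box X=0$; the two assignments are mutually inverse, giving the bijection.

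The only point that requires mild care — and the nearest thing to an obstacle — is bookkeeping of the Koszul--Quillen signs: the $(-1)^{p(a)}$ in \eqref{20170616:eq4-fin}, the $\epsilon_v(\sigma)$ appearing in the $S_n$-action \eqref{20170615:eq1-fin}, and the sign $\pm$ of \eqref{20170821:eq5b} hidden in the composition \eqref{20170616:eq3-fin} when one permutes tensor factors past odd maps. However, these are exactly the signs already tracked in the proof of Theorem \ref{20170616:thm2}; since the $\lambda$-dependence and $\partial$ play no role in the sign combinatorics, erasing them does not disturb the argument. Hence I would simply invoke the proof of Theorem \ref{20170616:thm2}, specialized to the finite setting, and the statement follows.
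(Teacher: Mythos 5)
Your proposal is correct and coincides with the paper's approach: the paper itself proves this theorem by the single remark that ``the same proof as for Theorem \ref{20170616:thm2} leads to'' the statement, which is precisely the specialization you carry out (erasing the $\lambda$-variables and $\partial$, identifying $X$ with the pair $X^{\bullet\,\,\bullet}$, $X^{\bullet\!\to\!\bullet}$ via the $2$-cycle relation, and reading off skewsymmetry/commutativity from $S_2$-invariance and Jacobi/Leibniz/associativity from $X\Box X=0$ evaluated on the three $3$-graphs). Your sign bookkeeping and the observation that the third summand on $\bullet\!\to\!\bullet\!\to\!\bullet$ vanishes by the cycle relation match the computation in the proof of Theorem \ref{20170616:thm2} exactly.
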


%%%%%%%%%%%%%%%%%%%%%%%%%%
\section{The variational Poisson cohomology and the PVA cohomology}
\label{sec:565}

%%%
\subsection{The Lie superalgebra $W^{\partial,\as}(\Pi V)$}

In this section, we review the construction of the cohomology complex
associated to a Poisson vertex algebra introduced in \cite{DSK13}.
Let $V$ be a commutative associative superalgebra with an even derivation $\partial$.
As usual, we denote by $p$ the parity of $V$
and by $\Pi V$ the space $V$ with reversed parity $\bar p$.
For $k\geq-1$, we let $W^{\partial,\as}_k(\Pi V)$ be the subspace
of $W_k^\partial(\Pi V)$ (cf. Section \ref{sec:LCA.2})
consisting of all linear maps
$$
f\colon V^{\otimes n}\,\longrightarrow\,\mb F_-[\lambda_1,\dots,\lambda_n]\otimes_{\mb F[\partial]}V
\,\,,\,\,\,\,
v_1\otimes\,\dots\,\otimes v_n\mapsto f_{\lambda_1,\dots,\lambda_n}(v_1\otimes\,\dots\,\otimes v_n)
\,,
$$
satisfying the sesquilinearity conditions \eqref{20170613:eq1}
and the following Leibniz rules:
\begin{equation}\label{eq:leib}
\begin{split}
& f_{\lambda_1,\dots,\lambda_n}
(v_1,\dots,u_iw_i,\dots,v_n)
\\
&\vphantom{\Big(} =
(-1)^{p(w_i)(s_{i+1,k}+k-i)}
f_{\lambda_1,\dots,\lambda_i+\partial,\dots,\lambda_n}(v_1,\dots,u_i,\dots,v_n)_\to w_i
\\
& +
(-1)^{p(u_i)(p(w_i)+s_{i+1,k}+k-i)}
f_{\lambda_1,\dots,\lambda_i+\partial,\dots,\lambda_n}(v_1,\dots,w_i,\dots,v_n)_\to u_i
\,,
\end{split}
\end{equation}
where the arrow means that $\partial$ is moved to the right
and $s_{ij}$ is as in \eqref{eq:sij}.
\begin{proposition}[{\cite[Prop.\ 5.1-5.2]{DSK13}}]\label{prop:pvacomplex}
The space 
$$
W^{\partial,\as}(\Pi V)=\bigoplus_{k\geq-1}W_k^{\partial,\as}(\Pi V)
$$
is a subalgebra of the Lie superalgebra\/ $W^{\partial}(\Pi V)$.
Moreover, there is a bijective correspondence between 
the odd elements\/ $\bar X\in W_1^{\partial,\as}(\Pi V)$
such that\/ $[\bar X,\bar X]=0$ 
and the Poisson vertex algebra\/ $\lambda$-brackets on\/ $V$, 
given by
\begin{equation}\label{eq:pvaX}
[a_\lambda b]
=
(-1)^{p(a)}\bar X_{\lambda,-\lambda-\partial}(a\otimes b)
\,.
\end{equation}
As a consequence, 
given a Poisson vertex algebra\/ $\lambda$-bracket on\/ $V$,
we have the corresponding cohomology complex\/
$(W^{\partial,\as}(\Pi V),d_{\bar X})$
with differential\/ $d_{\bar X}=\ad \bar X$.
\end{proposition}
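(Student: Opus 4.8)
The plan is to obtain the three assertions from the following pieces: Theorem \ref{20170603:thm2}, which makes $W^\partial(\Pi V)$ a $\mb Z$-graded Lie superalgebra; Proposition \ref{20170612:prop2}, which identifies odd square-zero elements of $W^\partial_1(\Pi V)$ with Lie conformal superalgebra $\lambda$-brackets on $V$; the closure of the subspace $W^{\partial,\as}(\Pi V)$ under the bracket; and a direct translation of the Leibniz rule \eqref{eq:leib} in the case of two tensor arguments into the left Leibniz rule \eqref{20170614:eq1}. Granting the first two, and once $W^{\partial,\as}(\Pi V)$ is shown to be a subalgebra, the cohomology assertion is automatic: for an odd $\bar X$ with $[\bar X,\bar X]=0$ one has $(\ad\bar X)^2=\frac12\ad[\bar X,\bar X]=0$, so $(W^{\partial,\as}(\Pi V),\ad\bar X)$ is a complex.

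\textit{Closure under the bracket.} By formula \eqref{20170603:eq4} it suffices to prove that $f\Box g\in W^{\partial,\as}_{n+m}(\Pi V)$ whenever $f\in W^{\partial,\as}_n(\Pi V)$ and $g\in W^{\partial,\as}_m(\Pi V)$; since $W^{\partial,\as}\subset W^\partial$, the sesquilinearity \eqref{20170613:eq1} is inherited, so only the Leibniz rule \eqref{eq:leib} has to be verified. I would use $f\Box g=\sum_{\sigma\in S_{m+1,n}}(f\circ_1 g)^{\sigma^{-1}}$ from \eqref{eq:box} together with the explicit composition \eqref{20170613:eq2} of the $\mc Chom$ operad, and evaluate $f\Box g$ on a monomial one of whose tensor factors is a product $u_\ell w_\ell$. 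The key structural observation is that in $f\circ_1 g$ every one of the $n+m+1$ input slots is fed entirely either to the inner map $g$ or to the outer map $f$, while the single output of $g$ that is plugged into $f$ is not itself a product; hence, for each shuffle $\sigma$, the factor in position $\ell$ is moved to a definite slot lying in the block consumed by $g$ or in the block consumed by $f$, and the Leibniz rule for $g$, respectively for $f$, splits the corresponding term in two. Summing over $\sigma\in S_{m+1,n}$ and regrouping reconstitutes the two summands of \eqref{eq:leib} for $f\Box g$. I expect the main obstacle to be the bookkeeping of the sign prefactors in \eqref{eq:leib}, namely the factors recording the parities of the tensor factors lying to the right of position $\ell$: one must check that after applying $\sigma^{-1}$ and the Koszul sign $\epsilon_v(\sigma)$ these combine into the prefactors appropriate to $f\Box g$. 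This works because a shuffle $\sigma\in S_{m+1,n}$ preserves the relative order of the factors inside each of its two blocks, so ``lying to the right of position $\ell$'' is compatible block by block; tracking this carefully, the signs match.

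\textit{Leibniz rule for $\bar X$ and conclusion.} For $k=1$ the map $\bar X$ has two tensor arguments, and I would write out \eqref{eq:leib} for $i=1$ and $i=2$ under the dictionary \eqref{eq:pvaX}. A short computation using sesquilinearity \eqref{20170613:eq1} shows that the case $i=2$ becomes exactly the left Leibniz rule \eqref{20170614:eq1}; using skewsymmetry \eqref{20170612:eq4} to move the product past the bracket, the case $i=1$ becomes the ``right Leibniz rule'', which is a formal consequence of \eqref{20170614:eq1} and skewsymmetry and hence imposes nothing new. Conversely, \eqref{20170614:eq1} together with sesquilinearity and skewsymmetry yields both instances of \eqref{eq:leib}. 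Therefore the odd elements of $W^{\partial,\as}_1(\Pi V)$ fixed by $S_2$ are precisely the $\lambda$-brackets on $V$ satisfying sesquilinearity, skewsymmetry and the left Leibniz rule, i.e.\ the brackets making $V$ a Poisson vertex algebra for its given commutative product, except for the Jacobi identity. Finally, since $W^{\partial,\as}(\Pi V)$ is a subalgebra, for such $\bar X$ the element $[\bar X,\bar X]=2\,\bar X\Box\bar X$ is computed inside $W^\partial(\Pi V)$, and by Proposition \ref{20170612:prop2} it vanishes if and only if the Jacobi identity \eqref{20170612:eq5} holds. This gives the claimed bijection, and with it the associated cohomology complex $(W^{\partial,\as}(\Pi V),d_{\bar X})$.
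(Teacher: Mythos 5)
The paper itself does not prove this proposition --- it is quoted from \cite[Prop.~5.1--5.2]{DSK13} --- so the only question is whether your argument is sound. The second and third parts of your proposal are fine: the translation of \eqref{eq:leib} for two tensor arguments into the left Leibniz rule \eqref{20170614:eq1} (with the $i=1$ instance reducing to the right Leibniz rule via skewsymmetry), the reduction of $[\bar X,\bar X]=2\,\bar X\Box\bar X=0$ to the Jacobi identity via Proposition \ref{20170612:prop2}, and the formal consequence $(\ad\bar X)^2=0$ are all correct.

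The closure argument, however, has a genuine gap: it is \emph{not} true that $f\Box g\in W^{\partial,\as}_{n+m}(\Pi V)$ for $f\in W^{\partial,\as}_n(\Pi V)$ and $g\in W^{\partial,\as}_m(\Pi V)$; only the graded commutator $[f,g]$ lands in $W^{\partial,\as}$. The simplest counterexample is $n=m=0$ with $f,g$ derivations: then $f\Box g=f\circ_1 g$ is the composition, and the composition of two derivations is not a derivation. The false step is your assertion that ``the single output of $g$ that is plugged into $f$ is not itself a product.'' When the product $u_\ell w_\ell$ sits in a slot fed to $g$, the Leibniz rule of $g$ rewrites $g(\dots,u_\ell w_\ell,\dots)$ as $g(\dots,u_\ell,\dots)_\to w_\ell\pm g(\dots,w_\ell,\dots)_\to u_\ell$, and each summand is itself a product occupying the first slot of $f$. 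Since the Leibniz rule is an identity that $f$ satisfies, the evaluation $f\bigl(g(\dots,u_\ell,\dots)_\to w_\ell,\dots\bigr)$ equals the wanted term $f\bigl(g(\dots,u_\ell,\dots),\dots\bigr)_\to w_\ell$ \emph{plus} a cross term of the form $\pm f(w_\ell,\dots)_\to g(\dots,u_\ell,\dots)$, and these cross terms do not vanish in $f\Box g$. They cancel only against the analogous cross terms of $(-1)^{p(f)p(g)}\,g\Box f$; verifying that cancellation, with the correct Koszul signs and $\partial$-shifts, is the actual content of the subalgebra claim and is what \cite[Prop.~5.1]{DSK13} does. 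So your proof must be reorganized around the full bracket \eqref{20170603:eq4} rather than around the $\Box$-product alone.
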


%%%
\subsection{Relation between $W^\cl(\Pi V)$ and $W^{\partial,\as}(\Pi V)$}

Let $V$ be a Poisson vertex algebra.
Recall that, by Theorem \ref{20170616:thm2},
associated to the PVA structure of $V$ there is an odd element $X\in W_1^\cl(\Pi V)$
such that $[X,X]=2X\Box X=0$,
and we thus have the corresponding cohomology complex 
\begin{equation}\label{eq:cohom1}
(W^\cl(\Pi V),\ad X)
\,.
\end{equation}
Moreover, by Proposition \ref{prop:pvacomplex},
we also have an odd element $\bar X\in W_1^{\partial,\as}(\Pi V)$
such that $[\bar X,\bar X]=0$,
and we thus have the corresponding cohomology complex 
\begin{equation}\label{eq:cohom2}
(W^{\partial,\as}(\Pi V),\ad\bar X)
\,.
\end{equation}
By \eqref{20170616:eq4} and \eqref{eq:pvaX}, we have
\begin{equation}\label{eq:barX}
\bar X=X^{\bullet\,\,\bullet}
\,.
\end{equation}
It is natural to ask what is the relation between the two cohomology theories \eqref{eq:cohom1}
and \eqref{eq:cohom2}.

Recall that the operad $P^\cl(\Pi V)$, hence the Lie superalgebra $W^\cl(\Pi V)$,
has a grading $\gr^r$ defined in \eqref{eq:pclgr}:
an element $f\in \gr^r\,W_k^\cl(\Pi V)$ vanishes on all graphs $\Gamma$
with $k+1$ vertices and number of edges not equal to $r$.
Hence, every $f\in W_k^\cl(\Pi V)$ decomposes as a finite sum
\begin{equation}\label{eq:fgr}
f=\sum_{r\geq0}f_r
\,,
\end{equation}
where
${f_r}^\Gamma=f^\Gamma$ if $\Gamma$ has $r$ edges, and ${f_r}^\Gamma=0$ otherwise.
In particular, the element $X$ decomposes as
$$
X=X_0+X_1
\,,
$$
and the condition $[X,X]=0$ is equivalent to
$$
[X_0,X_0]=[X_1,X_1]=[X_0,X_1]=0
\,.
$$
Hence, we have two anticommuting differentials $d_{X_0}=\ad X_0$ and $d_{X_1}=\ad X_1$
on $W^\cl(\Pi V)$, which are homogeneous of degree $0$ and $1$ respectively.
\begin{lemma}\label{lem:gr0}
We have a natural Lie algebra isomorphism
\begin{equation}\label{eq:wdgr}
W^\partial(\Pi V)\,\stackrel{\sim}{\longrightarrow}\,\gr^0W^\cl(\Pi V)
\,,
\end{equation}
mapping\/ $\bar f\in W^\partial(\Pi V)$ to the element\/ $f\in\gr^0W^\cl(\Pi V)$
such that 
$$
f^{\bullet\,\cdots\,\bullet}=\bar f
\,\,\text{ and }\,\,
f^\Gamma=0
\,\text{ if }\,
|E(\Gamma)|\neq\emptyset
\,.
$$
\end{lemma}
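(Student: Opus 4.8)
The plan is to promote the stated bijection to an isomorphism of operads $\mc Chom(\Pi V)\stackrel{\sim}{\longrightarrow}\gr^0 P^\cl(\Pi V)$ and then pass to the associated $\mb Z$-graded Lie superalgebras. First, for an arbitrary $\mb F[\partial]$-module $U$, I would construct for each $n\geq 0$ a natural isomorphism of vector superspaces $\mc Chom(n)(U)\cong\gr^0 P^\cl(n)(U)$. The key point is that an element $f\in\gr^0 P^\cl(n)(U)$ vanishes on every $n$-graph with at least one edge, so it is determined by its value $f^{\Gamma_0}$ on the edgeless graph $\Gamma_0$ (the one with $E(\Gamma_0)=\emptyset$). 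Since $\Gamma_0$ has no cycles, the cycle relations \eqref{eq:cycle1}--\eqref{eq:cycle2} impose nothing on $f^{\Gamma_0}$; and since each of the $n$ vertices of $\Gamma_0$ is its own connected component, the first sesquilinearity condition \eqref{eq:sesq1} is vacuous while the second, \eqref{eq:sesq2}, is precisely the sesquilinearity \eqref{20170613:eq1} defining $\mc Chom(n)(U)$. Finally, the two target spaces agree via the standard identification $\mb F_-[\lambda_1,\dots,\lambda_n]\otimes_{\mb F[\partial]}U\cong U[\lambda_1,\dots,\lambda_n]/\langle\partial+\lambda_1+\dots+\lambda_n\rangle$, under which $\partial$ on the left acts as $-\lambda_1-\dots-\lambda_n$. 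So $f\mapsto f^{\Gamma_0}$ is the desired bijection, with inverse sending $\bar f\in\mc Chom(n)(U)$ to the unique $f$ with $f^{\Gamma_0}=\bar f$ and $f^\Gamma=0$ whenever $\Gamma$ has an edge.

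Next I would check that this isomorphism intertwines the $S_n$-actions and the composition maps. For the symmetric group, comparing \eqref{20170613:eq3} with \eqref{20170615:eq1} and using $\sigma(\Gamma_0)=\Gamma_0$ shows the two prescriptions coincide on $\Gamma_0$. For the compositions, the crucial observation is that all cocomposition pieces $\Delta^{m_1\dots m_n}_i(\Gamma_0)$, $i=0,1,\dots,n$, of an edgeless graph are themselves edgeless; hence in Definition \ref{20170824:de1} every set $\mc E(k)$ is empty, the variables $X(k)$ of \eqref{20170824:eq1} all vanish, and the substitutions $|_{x_i=\Lambda_i+\partial}$ in formula \eqref{20170616:eq3} become trivial. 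Then \eqref{20170616:eq3} reduces, term by term and with matching sign factors \eqref{20170821:eq5b}/\eqref{20170821:eq5c}, to the $\mc Chom$ composition \eqref{20170613:eq2}. This establishes the operad isomorphism $\mc Chom(U)\cong\gr^0 P^\cl(U)$, and in particular for $U=\Pi V$.

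Finally I would descend to the universal Lie superalgebras. Because $P^\cl$ is a graded operad (Theorem \ref{20170616:thm1}), the grading \eqref{eq:pclgr} is preserved by the $S_n$-actions and by the $\circ_1$-product ($\gr^r\circ_1\gr^s\subset\gr^{r+s}$), so the $\Box$-product of Theorem \ref{20170603:thm2} restricts to $\gr^0$ and one has $W_n(\gr^0 P^\cl(\Pi V))=\big(\gr^0 P^\cl(n+1)(\Pi V)\big)^{S_{n+1}}=\gr^0 W^\cl_n(\Pi V)$. An isomorphism of operads induces an isomorphism of the associated $\mb Z$-graded Lie superalgebras, so the operad isomorphism of the previous paragraph gives $W^\partial(\Pi V)=W(\mc Chom(\Pi V))\cong W(\gr^0 P^\cl(\Pi V))=\gr^0 W^\cl(\Pi V)$, which by construction is the map described in the statement. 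The only place requiring genuine care is verifying that \eqref{20170616:eq3} really degenerates to \eqref{20170613:eq2} on $\Gamma_0$ with all sign factors correctly accounted for; everything else is routine bookkeeping.
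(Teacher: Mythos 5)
Your proof is correct and follows exactly the same route as the paper, whose entire proof consists of the observation that the operads $\mc Chom$ and $\gr^0 P^\cl$ are isomorphic "from the definitions" and that the lemma is an obvious consequence. You have simply supplied the details (vanishing off the edgeless graph, the degeneration of the sesquilinearity conditions and of the composition formula \eqref{20170616:eq3} to those of $\mc Chom$, and the passage to the universal Lie superalgebras) that the authors leave implicit.
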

\begin{proof}
It follows from the definitions,
that we have an isomorphism between the operads $\mc Chom$ and $\gr^0 P^\cl$.
The statement of the lemma is an obvious consequence of this fact.
\end{proof}
\begin{lemma}\label{lem:gr1}
Let\/ $\bar f\in W^\partial_k(\Pi V)$ and let $f_0$ be its image in\/ $\gr^0W^\cl_k(\Pi V)$
via the isomorphism \eqref{eq:wdgr}.
We have:
\begin{enumerate}[(a)]
\item
$d_X f_0=0\,\,\Longleftrightarrow\,\,d_{X_0}f_0= d_{X_1}f_0=0;$
\item
$d_{X_0}f_0=0\,\,\Longleftrightarrow\,\,d_{\bar X}\bar f=0;$
\item
$d_{X_1}f_0=0\,\,\Longleftrightarrow\,\,\bar f\in W^{\partial,\as}_k(\Pi V)$.
\end{enumerate}
Hence,
$$
f_0\in\Ker(d_X)
\,\,\,\,\Longleftrightarrow\,\,\,\,
\bar f\in\Ker\big(d_{\bar X}\big|_{W^{\partial,\as}(\Pi V)}\big)
\,.
$$
\end{lemma}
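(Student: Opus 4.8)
The plan is to establish the three equivalences (a), (b), (c) separately and then assemble them.

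First I would dispose of (a). The operad $P^\cl(\Pi V)$ is graded by the number of edges of the graphs (Lemma \ref{20170823:lem1}, \eqref{eq:pclgr}), so the $\Box$-product of a $\gr^r$-element with a $\gr^s$-element lies in $\gr^{r+s}$; hence $d_{X_0}=\ad X_0$ is homogeneous of degree $0$ and $d_{X_1}=\ad X_1$ is homogeneous of degree $1$, as already noted in the text. Applied to $f_0\in\gr^0 W^\cl_k(\Pi V)$, the two summands of $d_Xf_0=d_{X_0}f_0+d_{X_1}f_0$ lie in $\gr^0 W^\cl_{k+1}(\Pi V)$ and $\gr^1 W^\cl_{k+1}(\Pi V)$ respectively, which meet only in $0$; therefore $d_Xf_0=0$ if and only if $d_{X_0}f_0=0$ and $d_{X_1}f_0=0$.

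Next I would prove (b). By \eqref{eq:barX} and the description of the isomorphism \eqref{eq:wdgr} in Lemma \ref{lem:gr0}, the element $X_0$ is exactly the image of $\bar X\in W^\partial_1(\Pi V)$, and $f_0$ is the image of $\bar f$. Since \eqref{eq:wdgr} is an isomorphism of Lie superalgebras, it intertwines the brackets, so $d_{X_0}f_0=[X_0,f_0]$ is the image of $[\bar X,\bar f]=d_{\bar X}\bar f$; one vanishes precisely when the other does.

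Part (c) is the crux. Because $d_{X_1}f_0=[X_1,f_0]\in\gr^1 W^\cl_{k+1}(\Pi V)$, it is determined by its values on the graphs $\Gamma\in\mc G(k+2)$ with a single edge, say joining vertices $a<b$. Expanding $[X_1,f_0]$ by \eqref{20170603:eq4} and \eqref{eq:box} and using the shuffle decompositions of Section \ref{sec:2.4}, one checks that---since $f_0$ is supported on edgeless graphs and $X_1$ on the one-edge graph---only one $(2,k)$-shuffle contributes to $(f_0\Box X_1)^\Gamma$ and exactly two $(k+1,1)$-shuffles contribute to $(X_1\Box f_0)^\Gamma$. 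Feeding these graphs into the composition formula \eqref{20170616:eq3}, together with the cocomposition of graphs from Section \ref{sec:6a.2} and the sesquilinearity of $\bar f$, one finds that $(f_0\Box X_1)^\Gamma$ produces $\bar f$ with the product $v_av_b$ inserted in a single argument, while $(X_1\Box f_0)^\Gamma$ produces the two terms $\bar f(\dots,\widehat{v_a},\dots)\,v_a$ and $\bar f(\dots,\widehat{v_b},\dots)\,v_b$ with $\partial$ moved to the right as prescribed by the substitutions $|_{x_i=\Lambda_i+\partial}$ and the variables $X(k)$ of \eqref{20170824:eq1}. Using that $\bar f$ is $S_k$-invariant, the requirement that $[X_1,f_0]^\Gamma=0$ for every one-edge $\Gamma$ becomes verbatim the Leibniz rule \eqref{eq:leib}, i.e.\ $\bar f\in W^{\partial,\as}_k(\Pi V)$. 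The hard part here is purely the bookkeeping: tracking the Koszul--Quillen signs (cf.\ \eqref{20170821:eq5b}) and the placement of the $\partial$'s through \eqref{20170616:eq3} so that the identity one obtains is literally \eqref{eq:leib}, with the correct sign factors $s_{ij}$, rather than merely an equivalent rearrangement. Finally, combining (a)--(c): $f_0\in\Ker(d_X)$ iff $d_{X_0}f_0=d_{X_1}f_0=0$ iff $d_{\bar X}\bar f=0$ and $\bar f\in W^{\partial,\as}_k(\Pi V)$; since $W^{\partial,\as}(\Pi V)$ is a subalgebra of $W^\partial(\Pi V)$ containing $\bar X$ (Proposition \ref{prop:pvacomplex}), $d_{\bar X}$ restricts to $W^{\partial,\as}(\Pi V)$ and the latter condition is exactly $\bar f\in\Ker\big(d_{\bar X}\big|_{W^{\partial,\as}(\Pi V)}\big)$, giving the displayed equivalence.
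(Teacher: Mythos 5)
Your proposal is correct and follows essentially the same route as the paper: (a) by homogeneity of $d_{X_0}$ and $d_{X_1}$ with respect to the edge-grading, (b) via the Lie algebra isomorphism of Lemma \ref{lem:gr0} together with $X_0$ being the image of $\bar X$, and (c) by evaluating $[X_1,f_0]$ on a single one-edge graph and observing that exactly two $(k+1,1)$-shuffles survive in $X_1\Box f_0$ and one $(2,k)$-shuffle in $f_0\Box X_1$, yielding precisely the Leibniz rule \eqref{eq:leib}. The only difference is that you defer the explicit sign and $\partial$-placement bookkeeping in (c), which the paper carries out in full; the intermediate claims you state (which shuffles contribute and what each term evaluates to) agree with that computation.
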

\begin{proof}
Claim (a) is obvious, by looking at the various degrees separately.
Claim (b) follows from Lemma \ref{lem:gr0}.
Let us prove claim (c).
Note that $d_{X_1}f_0=[X_1,f_0]\in\gr^1W_{k+1}(\Pi V)$.
Hence, to impose the condition $[X_1,f_0]=0$
it is enough to evaluate it on graphs with $1$ edge,
and, by symmetry, on the graph
$$
\Gamma=
\bullet\,\,\,\,\,\,\,\bullet\,\,\,\cdots\,\,\,\bullet\,\,\,\,\,\,\,\bullet\!-\!\!\!\to\!\!\bullet
\,.
$$
By definition, $[X_1,f_0]=X_1\Box f_0-(-1)^{\bar p(f_0)}f_0\Box X_1$,
and we will compute the two summands separately.
By \eqref{eq:box} and \eqref{20170615:eq1}, we have
\begin{align*}
& (X_1\Box f_0)^{\Gamma}_{\lambda_1,\dots,\lambda_{k+2}}(v_1\otimes\dots\otimes v_{k+2})
\\
& =
\sum_{\sigma\in S_{k+1,1}}
\big((X_1\circ_1 f_0)^{\sigma^{-1}}\big)^{\Gamma}_{\lambda_1,\dots,\lambda_{k+2}}(v_1\otimes\dots\otimes v_{k+2})
\\
& =
\sum_{\sigma\in S_{k+1,1}}
(X_1\circ_1 f_0)^{\sigma^{-1}(\Gamma)}_{\sigma^{-1}(\lambda_1,\dots,\lambda_{k+2})}
(\sigma^{-1}(v_1\otimes\dots\otimes v_{k+2}))
\,.
\end{align*}
Observe that, since $f_0$ has zero degree,
$(X_1\circ_1 f_0)^{\sigma^{-1}(\Gamma)}=0$ 
if the subgraph obtained from $\sigma^{-1}(\Gamma)$
by deleting the vertex labeled $k+2$ has an edge.
This leaves only two shuffles in the above sum:
$\sigma=$ the identity and $\sigma=$ the transposition of $k+1$ and $k+2$.
In the latter case, $\sigma^{-1}(\Gamma)$ is the same as $\Gamma$ with reversed orientation
of the edge, which leads to a minus sign.
Hence, by \eqref{20170616:eq3} and \eqref{20170616:eq4},
we get
\begin{align*}
& (X_1\Box f_0)^{\Gamma}_{\lambda_1,\dots,\lambda_{k+2}}(v_1\otimes\dots\otimes v_{k+2})
\\
& =
(X_1\circ_1 f_0)^{\Gamma}_{\lambda_1,\dots,\lambda_{k+2}}
(v_1\otimes\dots\otimes v_{k+2})
\\
& - (-1)^{\bar p(v_{k+1})\bar p(v_{k+2})}
(X_1\circ_1 f_0)^{\Gamma}_{\lambda_1,\dots,\lambda_{k+2},\lambda_{k+1}}
(v_1\otimes\dots\otimes v_{k+2}\otimes v_{k+1})
\\
& =
X_1^{\bullet\!-\!\!\!\!\to\!\bullet}
\Big(
\bar f_{\lambda_1,\dots,\lambda_{k+1}+x_{k+2}}(v_1\otimes\dots\otimes v_{k+1})
\otimes
\big(\big|_{x_{k+2}=\lambda_{k+2}+\partial}v_{k+2}\big)
\Big)
\\
& 
- (-1)^{\bar p(v_{k+1})\bar p(v_{k+2})}
X_1^{\bullet\!-\!\!\!\!\to\!\bullet}
\Big(
\bar f_{\lambda_1,\dots,\lambda_k,\lambda_{k+2}+x_{k+1}}(v_1\otimes\dots\otimes v_k\otimes v_{k+2})
\otimes\\
&\,\,\,\,\,\,\,\,\,\,\,\,\,\,\,\,\,\,\,\,\,\,\,\,\,\,\,\,\,\,\,\,\,\,\,\,\,\,\,\,\,\,\,\,\,\,\,\,\,\,\,\,\,\,\,\,\,\,\,\,\,\,\,
\otimes
\big(\big|_{x_{k+1}=\lambda_{k+1}+\partial}v_{k+1}\big)
\Big)
\\
& =
(-1)^{1+\bar p(\bar f)+\bar p(v_1)+\dots+\bar p(v_{k+1})}
\Big(
\bar f_{\lambda_1,\dots,\lambda_{k+1}+\lambda_{k+2}+\partial}(v_1\otimes\dots\otimes v_{k+1})_\to
v_{k+2}
\\
& 
+ (-1)^{p(v_{k+1}) p(v_{k+2})}
\bar f_{\lambda_1,\dots,\lambda_{k+1}+\lambda_{k+2}+\partial}(v_1\otimes\dots\otimes v_{k+2})_\to
v_{k+1}
\Big)
\,.
\end{align*}
As for the second summand in the bracket $[X_1,f_0]$, we have,
by \eqref{eq:box} and \eqref{20170615:eq1},
\begin{align*}
& (f_0\Box X_1)^{\Gamma}_{\lambda_1,\dots,\lambda_{k+2}}(v_1\otimes\dots\otimes v_{k+2})
\\
& =
\sum_{\sigma\in S_{2,k}}
\big((f_0\circ_1 X_1)^{\sigma^{-1}}\big)^{\Gamma}_{\lambda_1,\dots,\lambda_{k+2}}(v_1\otimes\dots\otimes v_{k+2})
\\
& =
\sum_{\sigma\in S_{2,k}}
(f_0\circ_1 X_1)^{\sigma^{-1}(\Gamma)}_{\sigma^{-1}(\lambda_1,\dots,\lambda_{k+2})}
(\sigma^{-1}(v_1\otimes\dots\otimes v_{k+2}))
\,.
\end{align*}
Since $f_0$ has zero degree,
$(f_0\circ_1 X_1)^{\sigma^{-1}(\Gamma)}=0$ 
unless the only edge of the graph $\sigma^{-1}(\Gamma)$
connect the vertices labelled $1$ and $2$.
This happens for only one shuffle, given by
$$
\sigma(1)=k+1,\,
\sigma(2)=k+2,\,
\sigma(i)=i-2
\,\,\text{ for }\,\,
i=3,\dots,k+2
\,.
$$
Hence, by \eqref{20170616:eq3} and \eqref{20170616:eq4}, we have
\begin{align*}
& (f_0\Box X_1)^{\Gamma}_{\lambda_1,\dots,\lambda_{k+2}}(v_1\otimes\dots\otimes v_{k+2})
\\
& =
\bar \epsilon_\sigma(v)
\bar f_{\lambda_{k+1}+\lambda_{k+2},\lambda_1\dots,\lambda_k}
\big(
X^{\bullet\!-\!\!\!\!\to\!\bullet}(v_{k+1}\otimes v_{k+2})
\otimes
v_1\otimes\dots\otimes v_k
\big)
\\
& =
(-1)^{p(v_{k+1})}
\bar \epsilon_\sigma(v)
\bar f_{\lambda_{k+1}+\lambda_{k+2},\lambda_1\dots,\lambda_k}
(
v_{k+1} v_{k+2}
\otimes
v_1\otimes\dots\otimes v_k
)
\\
& =
(-1)^{1+\bar p(v_1)+\dots+\bar p(v_{k+1})}
\bar f_{\lambda_1\dots,\lambda_k,\lambda_{k+1}+\lambda_{k+2}}
(
v_1\otimes\dots\otimes v_k
\otimes v_{k+1} v_{k+2}
)
\,,
\end{align*}
where
$$
\bar \epsilon_\sigma(v)
=
(-1)^{(\bar p(v_{k+1})+\bar p(v_{k+2}))\sum_{i=1}^k\bar p(v_{i})}
\,.
$$
In the last equality we used the symmetry condition on $\bar f\in W^\partial(\Pi V)$,
and the fact that 
$$
\bar p(v_{k+1}v_{k+2})=1+\bar p(v_{k+1})+\bar p(v_{k+2})
\,.
$$
Combining the above results,
we conclude that the condition $[X_1,f_0]=0$
is equivalent to the equation
\begin{align*}
& \bar f_{\lambda_1,\dots,\lambda_{k+1}+\lambda_{k+2}+\partial}(v_1\otimes\dots\otimes v_{k+1})_\to
v_{k+2}
\\
& + (-1)^{p(v_{k+1}) p(v_{k+2})}
\bar f_{\lambda_1,\dots,\lambda_{k+1}+\lambda_{k+2}+\partial}(v_1\otimes\dots\otimes v_{k+2})_\to
v_{k+1}
\\
& =
\bar f_{\lambda_1\dots,\lambda_k,\lambda_{k+1}+\lambda_{k+2}}
(
v_1\otimes\dots\otimes v_k
\otimes v_{k+1} v_{k+2}
)
\,,
\end{align*}
i.e., $\bar f$ satisfies the Leibniz rule \eqref{eq:leib}.
This proves claim (c).
The last assertion of the lemma is an obvious consequence of the previous claims.
\end{proof}
\begin{theorem}\label{thm:last}
We have a canonical injective homomorphism of Lie superalgebras
\begin{equation}\label{eq:last}
H(W^{\partial,\as}(\Pi V),d_{\bar X})
%\,\stackrel{\sim}{\longrightarrow}\,
%\frac{\Ker(d_X)\cap\gr^0 W^\cl(\Pi V)}{\im(d_X)\cap\gr^0 W^\cl(\Pi V)}
\,\hookrightarrow\,
H(W^\cl(\Pi V),d_{X})
\end{equation}
induced by the map \eqref{eq:wdgr}.
\end{theorem}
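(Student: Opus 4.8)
The plan is to realize $(W^{\partial,\as}(\Pi V),d_{\bar X})$ as a differential graded Lie subalgebra of $(W^\cl(\Pi V),d_X)$ sitting inside the degree-zero piece, and then to deduce injectivity on cohomology from the existence of a chain retraction onto that degree-zero piece. For the first step I would argue as follows. By Lemma \ref{lem:gr0} the isomorphism \eqref{eq:wdgr} identifies $W^\partial(\Pi V)$ with $\gr^0 W^\cl(\Pi V)$ as Lie superalgebras, carrying $\bar X$ to $X_0$ and hence $d_{\bar X}$ to $d_{X_0}$; composing with the inclusion $\gr^0 W^\cl(\Pi V)\hookrightarrow W^\cl(\Pi V)$ gives an injective Lie superalgebra homomorphism $\iota$. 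By Lemma \ref{lem:gr1}(c), for $\bar f\in W^\partial_k(\Pi V)$ one has $d_{X_1}\iota(\bar f)=0$ precisely when $\bar f\in W^{\partial,\as}_k(\Pi V)$, so on $\iota(W^{\partial,\as}(\Pi V))$ the differential $d_X=d_{X_0}+d_{X_1}$ reduces to $d_{X_0}$, which corresponds to $d_{\bar X}$. Since $W^{\partial,\as}(\Pi V)$ is a $d_{\bar X}$-stable Lie subalgebra of $W^\partial(\Pi V)$ by Proposition \ref{prop:pvacomplex}, its image under $\iota$ is a $d_X$-stable Lie subalgebra, and $\iota$ restricts to an embedding of differential graded Lie superalgebras $(W^{\partial,\as}(\Pi V),d_{\bar X})\hookrightarrow(W^\cl(\Pi V),d_X)$. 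The induced map $\iota_*$ on cohomology is then a Lie superalgebra homomorphism, and Lemma \ref{lem:gr1} (together with the fact that coboundaries in the subcomplex are exactly $\iota(d_{\bar X}W^{\partial,\as}(\Pi V))$) identifies the cohomology of the subcomplex with $H(W^{\partial,\as}(\Pi V),d_{\bar X})$.

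For injectivity I would introduce the projection $\pi\colon W^\cl(\Pi V)=\bigoplus_{r\ge0}\gr^r W^\cl(\Pi V)\to\gr^0 W^\cl(\Pi V)\cong W^\partial(\Pi V)$ onto the degree-zero summand of the grading \eqref{eq:pclgr}. Because $P^\cl(\Pi V)$ is a graded operad the bracket satisfies $[\gr^r,\gr^s]\subseteq\gr^{r+s}$, so $\pi$ is a Lie superalgebra homomorphism; and because $d_{X_0}$ is homogeneous of degree $0$ while $d_{X_1}$ has degree $1$, only $d_{X_0}$ applied to the degree-zero component contributes to $\pi\circ d_X$, giving $\pi\circ d_X=d_{\bar X}\circ\pi$. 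Thus $\pi$ is a morphism of complexes, and by construction $\pi\circ\iota$ is the inclusion $j\colon W^{\partial,\as}(\Pi V)\hookrightarrow W^\partial(\Pi V)$. Hence $\pi_*\circ\iota_*=j_*$ on cohomology, so $\ker\iota_*\subseteq\ker j_*$: if $\iota(\bar f)=d_X g$ for a $d_{\bar X}$-cocycle $\bar f\in W^{\partial,\as}(\Pi V)$, then applying $\pi$ yields $\bar f=d_{\bar X}(\pi g)$, i.e.\ the class of $\bar f$ already vanishes in $H(W^\partial(\Pi V),d_{\bar X})$. It therefore suffices to show that $j_*\colon H(W^{\partial,\as}(\Pi V),d_{\bar X})\to H(W^\partial(\Pi V),d_{\bar X})$ is injective, equivalently that a $d_{\bar X}$-cocycle in $W^{\partial,\as}(\Pi V)$ that is a $d_{\bar X}$-coboundary in $W^\partial(\Pi V)$ is already a $d_{\bar X}$-coboundary in $W^{\partial,\as}(\Pi V)$.

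The injectivity of $j_*$ is the heart of the matter, and the cleanest route is to exhibit a chain-map retraction $\rho\colon(W^\partial(\Pi V),d_{\bar X})\to(W^{\partial,\as}(\Pi V),d_{\bar X})$ with $\rho\circ j=\mathrm{id}$, i.e.\ to realize the variational (Leibniz) subcomplex as a direct summand subcomplex of the basic Lie conformal complex; given such $\rho$, from $\bar f=d_{\bar X}(\pi g)$ with $\bar f\in W^{\partial,\as}(\Pi V)$ we get $\bar f=\rho(\bar f)=d_{\bar X}(\rho(\pi g))$ with $\rho(\pi g)\in W^{\partial,\as}(\Pi V)$, so $[\bar f]=0$ in $H(W^{\partial,\as}(\Pi V),d_{\bar X})$ and $\iota_*$ is injective. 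The construction of $\rho$ is precisely the comparison between the basic Lie conformal complex and the variational complex carried out in \cite{DSK13}, and I would isolate and quote the relevant statement from there. I expect this to be the main obstacle: the operadic bookkeeping of the previous two paragraphs is routine once Lemmas \ref{lem:gr0} and \ref{lem:gr1} are in hand, but the fact that the variational subcomplex is a retract is genuine input that must be imported from the variational Poisson cohomology theory rather than derived from the material of the present excerpt.
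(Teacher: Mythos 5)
Your setup is fine as far as it goes: the embedding $\iota$ of $W^{\partial,\as}(\Pi V)$ into $\gr^0W^\cl(\Pi V)\subset W^\cl(\Pi V)$, the projection $\pi$ onto the degree-zero summand, the identities $\pi\circ d_X=d_{\bar X}\circ\pi$ and $\pi\circ\iota=j$, and hence $\Ker\iota_*\subseteq\Ker j_*$, are all correct. The problem is the last step. You have reduced the theorem to the injectivity of $j_*\colon H(W^{\partial,\as}(\Pi V),d_{\bar X})\to H(W^{\partial}(\Pi V),d_{\bar X})$, which is a \emph{strictly stronger} statement than the one to be proved, and your proposed proof of it --- a chain retraction $\rho$ of the basic Lie conformal complex onto its Leibniz subcomplex --- is not available: no such splitting is established in \cite{DSK13} (the comparison there between the basic and variational complexes goes through quotients and exact sequences, not retractions), and nothing in the present paper supplies one. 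So the step you yourself flag as ``genuine input that must be imported'' is in fact the missing content of the proof, and the argument does not close.

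The paper avoids this entirely by never passing through $H(W^{\partial}(\Pi V),d_{\bar X})$. Given a witness $g$ with $d_Xg=\iota(\bar f)\in\gr^0W^\cl(\Pi V)$, instead of projecting $g$ to $g_0=\pi(g)$ (which only yields $\bar f\in d_{\bar X}(W^{\partial}(\Pi V))$), one keeps the whole graded equation: writing $d_X=d_{X_0}+d_{X_1}$ with $d_{X_0}$ homogeneous of degree $0$ and $d_{X_1}$ of degree $1$, a degree count gives
$$
d_X(W^\cl(\Pi V))\cap\gr^0 W^\cl(\Pi V)
=\big\{[X_0,g_0]\;\big|\;g_0\in\gr^0W^\cl(\Pi V),\ [X_1,g_0]=0\big\}\,,
$$
and by Lemma \ref{lem:gr1}(c) the condition $[X_1,g_0]=0$ says precisely that the corresponding $\bar g_0$ lies in $W^{\partial,\as}(\Pi V)$. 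Thus every $d_X$-coboundary lying in $\gr^0$ is already $d_{\bar X}$ of an element of the Leibniz subcomplex, and together with the isomorphism \eqref{eq:wdgr2} on cocycles this identifies $H(W^{\partial,\as}(\Pi V),d_{\bar X})$ with the subquotient $\bigl(\Ker(d_X)\cap\gr^0\bigr)/\bigl(d_X(W^\cl(\Pi V))\cap\gr^0\bigr)$, which injects tautologically into $H(W^\cl(\Pi V),d_X)$. The constraint on $g_0$ coming from the vanishing of the degree-$\geq1$ components of $d_Xg$ is exactly the information you throw away when you apply $\pi$, and recovering it afterwards is what your unavailable retraction $\rho$ would have had to do.
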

\begin{proof}
By Lemmas \ref{lem:gr0} and \ref{lem:gr1},
the map \eqref{eq:wdgr}
restricts to a Lie superalgebra isomorphism
\begin{equation}\label{eq:wdgr2}
\ker\big(d_{\bar X}\big|_{W^{\partial,\as}(\Pi V)}\big)
\,\stackrel{\sim}{\longrightarrow}\,
\Ker(d_X)\cap\gr^0 W^\cl(\Pi V)
\,.
\end{equation}
Note that, by degree considerations, we have
$$
d_X(W^\cl(\Pi V))\cap\gr^0 W^\cl(\Pi V)
=
\Big\{
[X_0,g_0]
\,\big|\,
g_0\in \gr^0 W^\cl(\Pi V)
\,,\,\,
[X_1,g_0]=0
\Big\}
\,.
$$
It follows that, under the isomorphism \eqref{eq:wdgr2},
$d_{\bar X}(W^{\partial,\as}(\Pi V))$
maps bijectively to
$d_X(W^\cl(\Pi V))\cap\gr^0 W^\cl(\Pi V)$.
Hence, \eqref{eq:wdgr2} induces an isomorphism
\begin{equation}\label{eq:wdgr3}
H(W^{\partial,\as}(\Pi V),d_{\bar X})
\,\stackrel{\sim}{\longrightarrow}\,
\frac{\Ker(d_X)\cap\gr^0 W^\cl(\Pi V)}{d_X(W^\cl(\Pi V))\cap\gr^0 W^\cl(\Pi V)}
\,.
\end{equation}
The claim follows since
the RHS of \eqref{eq:wdgr3}
is a subalgebra of $H(W^\cl(\Pi V),d_X)$.
\end{proof}
\begin{remark}\label{rem:stop2}
The map \eqref{eq:last} is an isomorphism for the $0$-th and $1$-st cohomologies.
Therefore, by Remark \ref{rem:stop}
we have the following inequality
\begin{equation}\label{eq:stop1}
\dim H^k(W^\ch(\Pi V),d_X)
\leq
\dim H^k(W^{\partial,\as}(\gr\Pi V),d_{\tilde X})
%\,\,\text{ for }\,\, k=-1,0\,.
\end{equation}
for $k=-1,0$.
In \cite{BDSHKV19}, we prove that  \eqref{eq:last} is an isomorphism, provided that, as a differential algebra, $V$ is an algebra of differential polynomials in finitely many variables.
\end{remark}

\subsection{Application to the free boson}\label{sec:9}

Let $\mc F$ be a differential field with the derivation $\partial$.
Consider the Lie conformal algebra of $N$ free bosons
$$
R=\mc F[\partial]u_1\oplus\dots\oplus\mc F[\partial]u_N\oplus\mc F K
\,,
$$
with the $\lambda$-brackets on the generators $u_1,\dots,u_N$
given by 
$$
[{u_i}_\lambda{u_j}]=\lambda\delta_{ij}K
\,,\qquad
i,j=1,\dots,N
\,,
$$
where $K$ is central and $\partial K=0$.
Its universal enveloping vertex algebra is
$$
\widetilde B=\mc F[K,u_i^{(n)}\,|\,i=1,\dots,N,\,n\in\mb Z_+]
\,,\qquad
\partial u_i^{(n)}=u_i^{(n+1)}
\,,
$$
with the increasing filtration defined by letting the degrees of $u_i^{(n)}$ and $K$ equal $1$.
The associated graded of the vertex algebra $\widetilde B$ is the Poisson vertex algebra
$$
\widetilde{\mc B}:=\gr \widetilde B=\mc F[K,u_i^{(n)}\,|\,i=1,\dots,N,\,n\in\mb Z_+]
\,,\qquad
\partial u_i^{(n)}=u_i^{(n+1)}
\,,\quad
\partial K=0
\,,
$$
with the $\lambda$-bracket on generators given by
$\{{u_i}_\lambda{u_j}\}=\lambda\delta_{ij}K$ for $i,j=1,\dots,N$,
where again $K$ is central.
By \eqref{eq:stop1} we have
\begin{equation}\label{eq:vic1t}
\dim H^k(\widetilde B)\leq\dim H^k(\widetilde{\mc B})
%\,\,\text{ for }\,\, k=0,1
\,,
\end{equation}
for $k=0,1$,
where on the left we have the cohomology of the vertex algebra $\widetilde B$
while on the right we have the variational Poisson cohomology
of the PVA $\widetilde{\mc B}$.
In fact, due to Remarks \ref{rem:stop} and \ref{rem:stop2}, the inequality \eqref{eq:vic1t} holds for all $k\ge0$.

We are interested in the quotients $B=\widetilde B/\langle K-1\rangle$ and $\mc B=\widetilde{\mc B}/\langle K-1\rangle$ by the ideals generated by $K-1$. Then $B$ is the vertex algebra of $N$ free bosons
$$
B=\mc F[u_i^{(n)}\,|\,i=1,\dots,N,\,n\in\mb Z_+]
\,,
$$
while $\mc B$ is the Poisson vertex algebra
$$
\mc B=\mc F[u_i^{(n)}\,|\,i=1,\dots,N,\,n\in\mb Z_+]
\,,
$$
with the $\lambda$-bracket on generators given by
$\{{u_i}_\lambda{u_j}\}=\lambda\delta_{ij}$ for $i,j=1,\dots,N$.
It is not hard to relate the cohomologies of $B$ and $\mc B$ to those of $\widetilde B$ and $\widetilde{\mc B}$, respectively, and to show that
\begin{equation}\label{eq:vic1}
\dim H^k(B)\leq\dim H^k({\mc B})
%\,\,\text{ for }\,\,
\,,\qquad k\ge0
\end{equation}
(see \cite{BDSK19} for details).

It was proved in \cite{DSK12} and \cite{DSK13}, respectively,
that $\dim H^k(\mc B)=\binom{N+1}{k+1}$
if $\mc F=\mb F$ with $\partial\mb F=0$,
and $\dim H^k(\mc B)=\binom{N}{k+1}$
if $\mc F$ is linearly closed.
The representatives of cohomology classes were explicitly computed.
Using those results,
it is easy to find representatives of a basis of the space of Casimirs for $\mc B$,
and of the space of derivations of $\mc B$ modulo inner derivations.
For $\mc F=\mb F$, 
representatives of a basis of $H^0(\mc B)\subset\mc B/\partial\mc B$ are 
the Casimir elements 
\begin{equation}\label{eq:vic2}
1,u_1,\dots,u_N
\,,
\end{equation}
and representatives of a basis of $H^1(\mc B)=\Der(\mc B)/\Inder(\mc B)$
are the following derivations,
\begin{equation}\label{eq:vic3}
\frac{\partial}{\partial u_i}
\,,\,\, i=1,\dots,N
\,\,,\,\,\,\,\text{ and }\,\,
D_{ij}=\sum_{n\in\mb Z_+}\Bigl(
u_i^{(n)}\frac{\partial}{\partial u_j^{(n)}}-u_j^{(n)}\frac{\partial}{\partial u_i^{(n)}}
\Bigr)
\,,\,\, 1\leq i<j\leq N
\,.
\end{equation}
If the field $\mc F$ is linearly closed, it contains $x$ such that $\partial x=1$,
hence we have $1=\partial x\equiv0$ in $\mc B/\partial\mc B$,
and $\frac{\partial}{\partial u_i}=\{x{u_i}_\lambda\,\cdot\}|_{\lambda=0}$, $i=1,\dots,N$,
are inner derivations,
while the remaining elements in \eqref{eq:vic2} and \eqref{eq:vic3}
are linearly independent representatives.

Note that, in the case when $\mc F=\mb F$,
the elements \eqref{eq:vic2} are Casimirs of $B$,
linearly independent over $\mb F$.
Hence, $\dim(\Cas(B))\geq N+1$.
On the other hand,
by Theorem \ref{thm:lowcoho} and the inequality \eqref{eq:vic1}
the opposite inequality holds.
It follows that
$$
\dim(\Cas(B))=N+1
\,,
$$
and the elements \eqref{eq:vic2} form a basis of $\Cas(B)$.

Next, still in the case $\mc F=\mb F$, derivations \eqref{eq:vic3}
are actually derivations of the Lie conformal algebra $R$.
Hence, they uniquely extend to derivations of its universal enveloping vertex algebra $B$,
and it is easy to see that they are linearly independent modulo inner derivations of $B$.
Hence, 
$\dim(\Der(B)/\Inder(B))\geq \frac12N(N+1)$.
On the other hand,
by Theorem \ref{thm:lowcoho} and the inequality \eqref{eq:vic1},
the opposite inclusion holds.
It follows that
$$
\dim(\Der(B)/\Inder(B))=\binom{N+1}{2}
\,,
$$
and the derivations \eqref{eq:vic3} are representatives of a basis of $\Der(B)/\Inder(B)$.

Similarly, in the case when $\mc F$ is linearly closed,
we obtain 
$$
\dim(\Cas(B))=N
\,\,\text{ and }\,\,
\dim(\Der(B)/\Inder(B))=\binom{N}{2}
\,,
$$
with the same representatives as for $\mc B$, described above.

%%%%%%%%%%%%%%%%%%%%%%%%%%%%%%%%%%%%%%%%%%%%%%%
%%%%%%%%%%%%%%%%%%%%%%%%%%%%%%%%%%%%%%%%%%%%%%%
%%%%%%%%%%%%%%% Appendix %%%%%%%%%%%%%%%%%%%%%%%%%%%
%%%%%%%%%%%%%%%%%%%%%%%%%%%%%%%%%%%%%%%%%%%%%%%
%%%%%%%%%%%%%%%%%%%%%%%%%%%%%%%%%%%%%%%%%%%%%%%

%%%%%%%%%%%%%%%%%%%%%%%%%%%%%
\appendix
\numberwithin{equation}{section}
\numberwithin{theorem}{section}

%%%%%%%%%%%%%%%%%%%%%%%%%%%%%%%%%%%%
\section{Relation to chiral algebras} \label{sec:bd-relation}
\def\bA{ {\mathbb{A}^1}}
\def\cO{\mathcal{O}}
\def\cA{\mathcal{A}}
\def\cD{\mathcal{D}}
\def\cP{\mathcal{P}}
\def\cF{\mathcal{F}}

In \cite{BD04} the authors introduced an algebro-geometric rendition of the theory of vertex algebras, which they called \emph{chiral algebras}. In this section we outline the relation of the above results with their definitions. 

%%%
\subsection{Chiral operations.} 

Consider a smooth algebraic curve $X$ over $\mathbb{F}$. 
For any right  $\mathcal{D}_{X}$-module $\mathcal{A}$, Beilinson and Drinfeld 
construct an operad $\cP^{\ch}_\cA$ whose $k+1$-ary operations are
\[ 
\cP^{\ch}_\cA (k+1) = \mathrm{Hom}_{\cD_{X^{k+1}}\text{-mod}} 
\Bigl( j_* j^* \cA^{\boxtimes(k+1)}, \Delta_* \cA \Bigr), 
\]
where $j$ is the inclusion of the open complement of the diagonal divisor 
on $X^{k+1}$ (union of hypersurfices $x^i = x^j$ for $i \neq j$), 
and $\Delta\colon\bA \rightarrow \mathbb{A}^{k+1}$ is the diagonal embedding 
$x \mapsto \left\{ x,\dots,x \right\}$.
A \emph{non-unital chiral algebra} on $X$ is by definition a morphism of operads 
\[ \mathcal{L}ie \rightarrow \cP^{\ch}_\cA. \]
In particular it is defined by a binary operation 
\begin{equation} \label{eq:muprod} 
\cP^{\ch}_\cA(2) \ni \mu\colon j_* j^* \cA \boxtimes \cA \rightarrow \Delta_* \cA, 
\end{equation} 
satisfying skewsymmetry and Jacobi identity. 

The dualizing sheaf $\omega_X$ of $X$ carries a canonical chiral algebra structure 
given by the residue map. 
For this we define $\mu$ as the cokernel of the inclusion
\[ 
0 \rightarrow  \omega_X \boxtimes \omega_X 
\rightarrow j_* j^* \omega_X \boxtimes \omega_X \xrightarrow{\mu} \Delta_* \omega_X 
\rightarrow 0. 
\]
Skewsymmetry follows from the isomorphism 
$\omega_X \boxtimes \omega_X \simeq \omega_{X^2}$, 
which is skew-equivariant for the action of $\mathbb{Z}/2\mathbb{Z}$ by permutation 
of the two factors. The Jacobi identiy is a little subtler to prove and is a consequence 
of the Cousin resolution of $\omega_{X^3}$ with respect to the diagonal stratification 
(see \cite{BD04} or \cite{FBZ04}). 
A non-unital chiral algebra $\cA$ is called \emph{unital} 
or simply a \emph{chiral algebra} if there is a morphism 
$\omega_X \rightarrow \cA$ of $\mathcal{D}_X$-modules such that the restriction 
of the multiplication $\mu_\cA$ of $\cA$ 
to $j_* j^* \omega_X \boxtimes \cA \rightarrow j_* j^* \cA \boxtimes A$ coincides 
with the cokernel of the sequence
\begin{equation} \label{eq:push1}  
0 \rightarrow \omega_X \boxtimes \cA \rightarrow j_* j^* \omega_X \boxtimes \cA 
\rightarrow \Delta_* \cA \rightarrow 0. 
\end{equation}

%%%%%%%%%%
\subsection{$\cD$-modules on the line.} 

In the particular case when $X = \bA$ is the affine line over $\mathbb{F}$, 
any $\mathcal{D}_X$-module $\cA$ is determined 
by the $\Gamma(\bA, \mathcal{D}_\bA)$-module $A:=\Gamma(\bA, \cA)$ of global sections. 
The same is true for the $\cD_{X^{k+1}}$-modules 
\[ 
j_* j^* \cA^{\boxtimes(k+1)} \qquad \text{and} \quad \Delta_* \cA. 
\]

Let $\cD_{k+1}$ be the algebra of regular differential operators on $k+1$ variables 
$z_0, \dots,z_k$ as in Section \ref{sec:ostart}, and let $I$ be the left ideal generated 
by $\left\{ z_0-z_i \right\}_{i=1}^k$. 
Let $\cO_{k+1} = \mathbb{F}[z_0,\dots,z_k]$ and recall the algebra $\cO^\star_{k+1}$ 
of functions defined in Section \ref{sec:ostart}. It is naturally an $\cO_{k+1}$-module, 
as is $\cD_{k+1}$. Notice that $A^{\otimes (k+1)}$ is naturally a $\cD_{k+1}$-module. 
We have
\[ 
\Gamma\Bigl(
\mathbb{A}^{k+1}, j_*j^* \cA^{\boxtimes(k+1)}
 \Bigr) 
 = \cO^\star_{k+1} \otimes_{\cO_{k+1}} A^{\otimes (k+1)},
 \]
and the $\cD_{k+1}$-module structure is by the action on the right factor. 

Consider $I \backslash \cD_{k+1}$, which is a $\cD_1$--$\cD_{k+1}$ bimodule as follows. 
The action of $\cD_{k+1}$ is by multiplication on the right. 
The action of $\cD_{1} = \mathbb{F}[z][\partial_z]$ on the left is defined by letting $z$ act as 
multiplication on the left by $z_0$ and $\partial_z$ act as multiplication on the left by 
$\sum_{i=0}^k \partial_{z_i}$. 
We have
\begin{equation} \label{eq:delta-push} 
\Gamma\left( \mathbb{A}^{k+1}, \Delta_* \cA \right) 
= 
A \otimes_{\cD_1} \left( I \backslash \cD_{k+1} \right), 
\end{equation}
with its natural right $\cD_{k+1}$-module structure by right multiplication on the right factor. 
We have 
\begin{equation}\label{eq:k-ary-chiral} 
\mathcal{P}^{\ch}_\cA(k+1) 
= 
\Hom_{\cD_{k+1}} \left( \cO^\star_{k+1} \otimes_{\cO_{k+1}} A^{\otimes(k+1)}, A \otimes_{\cD_1} 
\left( I \backslash \cD_{k+1} \right) \right). 
\end{equation}

%%%%%%%%%%
\subsection{Equivariant $\cD$-modules} 

Let $X$ be a smooth scheme, $G$ an algebraic group acting on $X$, 
and $\cF$ a quasi-coherent sheaf of $\cO_X$-modules. 
Denote by $a\colon G \times X \rightarrow X$ the $G$-action 
and by $\pi_2\colon G \times X \rightarrow X$ the projection to the second factor. 
We say $\cF$ is \emph{$G$-equivariant} if there exist an isomorphism 
of $\cO_{G \times X}$-modules  
\begin{equation}\label{eq:equivariant-iso} 
\alpha\colon a^* \cF \rightarrow \pi_2^* \cF 
\end{equation}
such that:
\begin{enumerate}
\item 
the diagram
\begin{equation} \label{eq:equivariant-diagram}
\xymatrix{ \left( 1_G \times a \right)^* \pi_2^* \cF \ar[r] & \pi_3^* \cF \\
\left( 1_G \times a \right)^*a^*\cF \ar[u] \ar@{=}[r] & \left( m \times 1_X \right)^* a^* \cF \ar[u]
}
\end{equation}
commutes in the category of $\cO_{G \times G \times X}$-modules
\footnote{Here $\pi_3:G \times G \times X \rightarrow X$ is the projection map.};
\item 
the pullback 
\[ 
\left( e \times 1_X \right)^* \alpha\colon \cF \rightarrow \cF\,, 
\]
where $e \in G$ is the unit, is the identity map. 
\end{enumerate}
A $\cD_X$-module $\cF$ is called \emph{strongly equivariant} 
if a given $\alpha$ as in \eqref{eq:equivariant-iso} is an isomorphism 
of $\cD_{G \times X}$-modules and the diagram \eqref{eq:equivariant-diagram} 
is in the category of $\cD_{G \times G \times X}$-modules. The module $\cF$ 
is said to be \emph{weakly equivariant} if $\alpha$ is an isomorphism 
of $\cO_G \otimes \cD_X$-modules. 

%%%
\subsection{Equivariant $\cD$-modules on the line.} 

Consider the affine line $\mathbb{A}^1$ over a field $\mathbb{F}$, 
with its natural action of the additive group $\mathbb{G}_a$ by translations. 
Let $\mathcal{F}$ be a translation equivariant $\cO_{\bA}$-module. Let $0 \in \bA$ be the origin. 
The functor $0^*$ of taking the fiber at $0$ 
defines an equivalence of categories between translation equivariant quasi-coherent sheaves 
on the line and vector spaces. 
The inverse functor associates to the vector space $V$ the sheaf associated 
to the $k[x]$-module $V[x]$ and the action of $t \in \mathbb{G}_a$ is given 
by $v(x) \mapsto v(x+t)$.  The isomorphism $\alpha$ as in \eqref{eq:equivariant-iso} 
is given by 
\begin{equation} \label{eq:translation-iso}
\alpha\colon V[t,x] \rightarrow V[t,x], \qquad v(t,x) \mapsto v(t,x+t). 
\end{equation}

Notice that $V[x]$ has a canonical right $\cD_1$-module structure with $\partial_x$ 
acting by $-d/dx$.  Similarly, $V[t,x]$ has right action 
of $\cD_2 = \Gamma(\mathbb{G}_a \times \bA,\cD_{\mathbb{G}_a \times \bA})$. 
The map \eqref{eq:translation-iso} is a morphism of $\cD_2$-modules. In fact, we have 
an equivalence of categories between strongly equivariant $\cD$-modules 
on $\mathbb{A}^1$ and vector spaces.  

Now let $\partial \in \End(V)$. As in Section \ref{sec:wch}, we have a $\cD_1$-module structure 
on $V[x]$ defined by letting $x$ act as multiplication by $x$ and $\partial_{x}$ act 
by $\partial - d/dx$. The map \eqref{eq:translation-iso} no longer commutes with the action 
of $\partial_t$; hence, it defines a weakly equivariant $\cD$-module structure on the sheaf 
associated to the $\cD_1$-module $V[x]$. In other words, differentiating the $\mathbb{G}_a$ 
action on $V[x]$, we obtain that $\partial_x$ acts by $-d/dx$, which does not coincide with 
the action obtained from the $\cD_1$-module structure.  
The assignment $(V, \partial) \mapsto V[x]$ defines an equivalence of categories 
between weakly equivariant $\cD$-modules on $\mathbb{A}^1$ and pairs $(V,\partial)$ 
of a vector space and an endomorphism. 

%%%%%%%%%%%%%%
\subsection{Equivariant operads} 
Let $\mathcal{P}$ be a symmetric operad and $G$ be a group. We say that $\cP$ 
is \emph{$G$-equivariant} if every space $\cP(n)$ carries an action of $G$ and the composition 
maps \eqref{eq:operad1} are morphisms of $G$-modules. In particular, this implies that 
the action of $G$ commutes with the symmetric group action on each $\cP(n)$. 
It is clear that
%\begin{lemma} \label{lem:invariant-suboperad}
the spaces of invariants\/ $\cP(n)^G$ (respectively, coinvariants\/ $\cP(n)_G$) 
form a suboperad (respectively, quotient operad) of\/ $\cP$. 
%\end{lemma}

%%%%%%%%%%%%%%
\subsection{Equivariant chiral operations.} \label{sec:equivariant-operations} Consider a weakly equivariant $\cD$-module 
$\cA$ on the line corresponding to the pair $(V,\partial)$. The $\cD_{k+1}$-module 
\eqref{eq:delta-push} is in this case given by
\begin{equation} \label{eq:20180528-01} 
V \otimes_{\mathbb{F}[\partial]} \mathbb{F}[x][\partial_0,\dots,\partial_k], 
\end{equation}
where we view $\mathbb{F}[x][\partial_0, \dots, \partial_k]$ as 
a $\mathbb{F}[\partial]-\cD_{k+1}$-bimodule as follows. 
The left action of $\partial$ is given by $\sum_{i=0}^k \partial_i$. 
The right action of $\partial_i$ is by multiplication by $\partial_i$,
and the right action of $z_i$ is given by
\begin{equation} \label{eq:20180528-02}
f(x,\partial_0, \dots,\partial_k) \cdot z^i 
= x f(x,\partial_0, \dots,\partial_k) + \frac{\partial}{\partial_{\partial^i}} f(x,\partial_0,\dots,\partial_k). 
\end{equation}
In this case, \eqref{eq:k-ary-chiral} reads
\[ 
\mathcal{P}^{\ch}_\cA(k+1) 
= 
\Hom_{\cD_{k+1}} \left( \cO^\star_{k+1} \otimes V^{\otimes(k+1)}, 
V \otimes_{\mathbb{F}[\partial]} \mathbb{F}[x][\partial_0,\dots,\partial_k]\right). 
\]
The group $\mathbb{G}_a$ acts on these operations as follows. 
Given $t \in \mathbb{G}_a$ and $\phi \in \mathcal{P}^{\ch}_A(k+1)$, 
we obtain a new operation $\phi^{t}$ by letting
\begin{equation} \label{eq:trnequiv1} 
\phi^t(f(z_0,\dots,z_k) \otimes v_0 \otimes\cdots \otimes v_k) 
:= 
\phi\left( f(z_0-t,\dots,z_k-t) \otimes v_0 \otimes \cdots \otimes v_k \right)\bigr|_{x=x+t}. 
\end{equation} 
The set of translation invariant operations 
$\mathcal{P}_{\cA}^{T,\ch} \subset \mathcal{P}_{\cA}^{\ch}$ defines a suboperad. 
A weakly translation equivariant $\cD$-module $\cA$ on $\bA$ is called 
a non-unital \emph{weakly translation equivariant} chiral algebra 
if the multiplication \eqref{eq:muprod} is tranlation invariant. 
For instance, the unital chiral algebra $\omega_{\bA}$ is weakly translation equivariant. 
A unital chiral algebra is called \emph{translation equivariant} if the morphism 
$\omega_{\bA} \rightarrow \cA$ is equivariant for the $\mathbb{G}_a$-action. 
 
\begin{lemma} \label{lem:equivalence} 
Let\/ $V$ be an\/ $\mathbb{F}[\partial]$-module, and\/ $\cA$ be its associated weakly 
equivariant\/ $\cD$-module on\/ $\bA$. Let\/ $P^{\ch}$ be the operad 
from Proposition \ref{prop:pch} associated to\/ $V$.  
Then we have an isomorphism of operads\/ $\cP^{T,\ch}_\cA \simeq P^{\ch}$. 
\end{lemma}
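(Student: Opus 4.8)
The plan is to unwind both sides of the claimed isomorphism $\cP^{T,\ch}_\cA \simeq P^{\ch}$ into a common explicit description as spaces of $\cD^T_{k+1}$-module homomorphisms, and then match the operad structures term by term. First I would set up the translation by combining \eqref{eq:k-ary-chiral} and \eqref{eq:20180528-01}: for the weakly equivariant $\cD$-module $\cA$ associated to $(V,\partial)$, the target $\Gamma(\mathbb A^{k+1},\Delta_*\cA)$ is $V\otimes_{\mathbb F[\partial]}\mathbb F[x][\partial_0,\dots,\partial_k]$ with the bimodule structure \eqref{eq:20180528-02}, while the source is $\cO^{\star}_{k+1}\otimes V^{\otimes(k+1)}$. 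The key observation is that the ring $\mathbb F[x][\partial_0,\dots,\partial_k]$, as a right $\cD_{k+1}$-module, is isomorphic to $\mathbb F[\lambda_0,\dots,\lambda_k]$ via $\partial_i\mapsto -\lambda_i$ (acting by multiplication) and $z_i\mapsto -\partial/\partial\lambda_i$ (acting by the derivative), so that $V\otimes_{\mathbb F[\partial]}\mathbb F[x][\partial_0,\dots,\partial_k]\cong V[\lambda_0,\dots,\lambda_k]/\langle\partial+\lambda_0+\dots+\lambda_k\rangle$, exactly the target appearing in \eqref{20160629:eq2-c}; this is precisely the content of \eqref{20160629:eq2} and \eqref{20160629:eq3}. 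Under this identification, a general $\cD_{k+1}$-homomorphism $\cO^{\star}_{k+1}\otimes V^{\otimes(k+1)}\to V\otimes_{\mathbb F[\partial]}\mathbb F[x][\partial_0,\dots,\partial_k]$ becomes a linear map $V^{\otimes(k+1)}\otimes\mc O^{\star}_{k+1}\to V[\lambda_0,\dots,\lambda_k]/\langle\partial+\lambda_0+\dots+\lambda_k\rangle$ satisfying the sesquilinearity conditions \eqref{20160629:eq4}.

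Second, I would pass to the translation-invariant part. The $\mathbb G_a$-action \eqref{eq:trnequiv1} acts on $\cO^{\star}_{k+1}$ by $f(z_0,\dots,z_k)\mapsto f(z_0-t,\dots,z_k-t)$ and on the target by $x\mapsto x+t$; differentiating, translation invariance of $\phi$ means $\phi$ intertwines $\sum_i\partial_{z_i}$ on the source with $\sum_i\partial_i=\partial$ on the target. Restricting the source and target to their translation-invariant subspaces therefore replaces $\cO^{\star}_{k+1}$ by $\cO^{\star T}_{k+1}=\mathbb F[z_{ij}^{\pm1}]_{0\le i<j\le k}$ and replaces $\cD_{k+1}$ by $\cD^T_{k+1}$; by the usual argument (a translation-invariant homomorphism out of a $\cD_{k+1}$-module is determined by its restriction to the $\cD^T_{k+1}$-generated submodule, cf.\ the role of Lemma \ref{20160719:lem}), one gets exactly $\cP^{T,\ch}_\cA(k+1)\cong P^{\ch}(k+1)=\Hom_{\cD^T_{k+1}}(V^{\otimes(k+1)}\otimes\mc O^{\star T}_{k+1},\,V[\lambda_0,\dots,\lambda_k]/\langle\partial+\lambda_0+\dots+\lambda_k\rangle)$ as vector superspaces. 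I would check that the $S_{k+1}$-actions agree: the Beilinson--Drinfeld symmetric group action permutes simultaneously the factors of $\cA^{\boxtimes(k+1)}$ and the coordinates $x_0,\dots,x_k$, which under the above dictionary is precisely \eqref{20160629:eq5}.

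Third, I would verify that the composition maps coincide. On the BD side, composition in $\cP^{\ch}_\cA$ is induced from the operad structure coming from the diagonal stratification of $X^{k+m+1}$ and the factorization $j_*j^*\cA^{\boxtimes(k+m+1)}$, concretely from restricting to the open complement of the diagonals and pushing forward along partial diagonals. I would show that, after taking global sections and passing to the $(V,\partial)$-description, this is exactly the formula \eqref{circ1} for $Y\circ_1 X$ (and more generally \eqref{circ5}): the factorization $h=fg$ in \eqref{circ2} corresponds to splitting a section on the complement of the full diagonal arrangement into the part with poles only along $z_i=z_j$ for $0\le i<j\le k$ and the rest; applying $X$ to the first factor with shifted spectral parameters $\lambda_i-\partial_{z_i}$ is the $\cD$-module incarnation of ``insert the inner operation and then restrict along the partial diagonal $z_1=\dots=z_k=z_0$'', with the arrow $\to$ recording that the differential operators act before restriction (this is the chain-rule identity \eqref{circ7}). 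Since the full composition is recovered from $\circ_1$ and the symmetric group action via \eqref{eq:operad8a}, and since Proposition \ref{prop:pch} already establishes that \eqref{circ5} defines an operad, matching $\circ_1$ and the $S_n$-action suffices.

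The main obstacle I expect is the bookkeeping in the third step: making the identification between the BD partial-diagonal pushforward formula and the explicit formula \eqref{circ1} precise, in particular tracking how the right $\cD$-module structures on $\mathbb F[x][\partial_0,\dots,\partial_k]$ and on $I\backslash\cD_{k+1}$ interact with the substitution $z_1=\dots=z_k=z_0$ and with the ``$\to$'' ordering of derivatives, and checking that the isomorphism is genuinely only canonical after the choice of origin $0\in\bA$ and the choice of the coordinate $x$ (hence ``non-canonical'' at the level of the curve, as remarked after Theorem \ref{20170616:thm2}). The sesquilinearity, equivariance, and unit checks are then routine, essentially by the same computations as for the $\mc Chom$ operad in Section \ref{sec:LCA.1} and Example \ref{ex:operadH}, so I would only indicate them briefly.
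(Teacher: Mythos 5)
Your overall strategy coincides with the paper's: identify the $\mathbb{G}_a$-invariants of source and target, show that restriction of a translation-invariant chiral operation to the invariant part of the source gives exactly an element of $P^{\ch}(k+1)$, and then match the symmetric group actions and compositions (the paper's proof actually only carries out the identification of the $(k+1)$-ary spaces and treats the operadic compatibilities as routine, so your third step is a reasonable supplement rather than a divergence). However, two points need repair. First, your step-1 claim that $V\otimes_{\mathbb F[\partial]}\mathbb F[x][\partial_0,\dots,\partial_k]\cong V[\lambda_0,\dots,\lambda_k]/\langle\partial+\lambda_0+\dots+\lambda_k\rangle$ is false as stated: since $\partial$ acts through $\sum_i\partial_i$ and not through $x$, the left-hand side is $\bigl(V[\lambda_0,\dots,\lambda_k]/\langle\partial+\lambda_0+\dots+\lambda_k\rangle\bigr)[x]$, and only its $x$-independent (i.e.\ $\mathbb{G}_a$-invariant) part is the target \eqref{20160722:eq1}; this is precisely the content of \eqref{eq:20180528-iso1}, which is an isomorphism out of $V\otimes_{\mathbb F[\partial]}\mathbb F[\partial_0,\dots,\partial_k]$, without the $x$. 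Your step 2 implicitly corrects this, but as written step 1 contradicts step 2.

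Second, the substance of the bijection is the \emph{surjectivity} of the restriction map, i.e.\ that every $X\in P^{\ch}(k+1)$ extends to a translation-invariant $\cD_{k+1}$-homomorphism on all of $\cO^{\star}_{k+1}\otimes V^{\otimes(k+1)}$, and your appeal to ``the usual argument'' with a pointer to Lemma \ref{20160719:lem} does not supply it: that lemma concerns cyclicity of $\prod z_{ij}^{-1}$ inside $\mc O^{\star}_{k+1}$ and is irrelevant here. What is actually needed is the decomposition $\cO^{\star}_{k+1}=\cO^{\star T}_{k+1}[z_0]$: writing $f=\sum_i g_i\,z_0^{n_i}$ with $g_i\in\cO^{\star T}_{k+1}$ and setting
$$
\phi\bigl(f\otimes v_0\otimes\cdots\otimes v_k\bigr):=\sum_i x^{n_i}\,X\bigl(g_i\otimes v_0\otimes\cdots\otimes v_k\bigr)
$$
gives the explicit inverse, after one checks that this is well defined, translation invariant and $\cD_{k+1}$-linear (the action of $z_0$ on the source matching multiplication by $x$ on the target). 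Injectivity of restriction does follow from your remark that the invariant part generates the source over $\cD_{k+1}$, but without the explicit extension the bijection is not established.
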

\begin{proof}
Recall the algebra of translation invariant differential operators $\cD_{k+1}^T$ of Section \ref{sec:ostart}.
The action of $\mathbb{G}_a$ on $\mathbb{A}^1$ induces an action on $\Delta_* \cA$ 
and consequently on its global sections \eqref{eq:20180528-01}, which is given simply 
by $x \mapsto x+t$. The space of invariant sections is a $\cD_{k+1}^T$-module isomorphic 
to \eqref{20160722:eq1}. Indeed, we have an isomorphism
\begin{equation} \label{eq:20180528-iso1} 
\begin{split}
V \otimes_{\mathbb{F}[\partial]} \mathbb{F}[\partial_0, \dots, \partial_k] &
\xrightarrow{\sim} V[\lambda_0, \dots,\lambda_k] \slash 
\langle \partial+\lambda_0+\cdots + \lambda_k \rangle, \\
v \otimes f(\partial_0,\dots,\partial_k) &
\mapsto f(-\lambda_0,\dots,-\lambda_k)v,
\end{split}
\end{equation}
%given by $v \otimes f(\partial_0,\cdots,\partial_k) \mapsto v \times 
%f(-\lambda_0,\cdots,-\lambda_k)$ 
which is compatible with the action of $\cD^T_1 = \mathbb{F}[\partial]$. 
Similarly, the space of $\mathbb{G}_a$-invariant sections 
of $\cO^\star_{k+1} \otimes V^{\otimes(k+1)}$ is given 
by $\cO^{\star T}_{k+1} \otimes V^{\otimes(k+1)}$ and is a $\cD_{k+1}^T$-module 
as in Section \ref{sec:wch}. 

For $\phi \in \cP_\cA^{T,\ch}(k+1)$, 
restricting $\phi$ to $\cO^{\star T}_{k+1} \otimes V^{\otimes (k+1)}$, 
we see that by \eqref{eq:trnequiv1}
\[ f \otimes v_0 \otimes \cdots \otimes v_k \]
does not depend on $x$; therefore, by \eqref{eq:20180528-iso1}
it defines a vector in
\[ V[\lambda_0, \dots,\lambda_k]\slash \langle \partial+\lambda_0+\cdots + \lambda_k \rangle. \]
Hence, $\phi$ defines an element of $P^{\ch}(k+1)$. 

Conversely, given $X$ as in \eqref{20160629:eq2-c} satisfying the sesquilinearity 
conditions \eqref{20160629:eq4}, we extend $X$ to a morphism $\phi \in \cP^{T,\ch}_\cA(k+1)$ 
as follows. By a Taylor expansion, we can express any function 
$f(z_0,\dots,z_k) \in \cO^\star_{k+1}$ as a finite sum
\[ \sum g_i (z_0, z_1, \dots,z_k) z_0^{n_i}, \]
for some $g_i \in \cO^{\star T}_{k+1}$ and some nonnegative integers $n_i$. 
We define 
\[ \phi \left( f \otimes v_0 \otimes \cdots \otimes v_k\right) 
:= \sum x^{n_i} \, X\left( g_i \otimes v_0 \otimes \cdots \otimes v_k \right), \]
where we identified $X(g_i \otimes v_0 \otimes \cdots \otimes v_k)$ 
with a translation invariant vector in \eqref{eq:20180528-01} by \eqref{eq:20180528-iso1}.
 It is clear that $\phi$ is translation invariant and it is a morphism of $\cD_{k+1}$-modules.
\end{proof}
\begin{corollary}[{\cite[0.15]{BD04}}]
There is an equivalence of categories between weakly translation equivariant chiral algebras 
on\/ $\bA$ and vertex algebras. 
\end{corollary}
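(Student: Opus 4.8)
The plan is to deduce this corollary by assembling the pieces established in the body of the paper together with Lemma \ref{lem:equivalence}. Recall that Lemma \ref{lem:equivalence} gives, for every $\mb F[\partial]$-module $V$ with associated weakly equivariant $\cD$-module $\cA$ on $\bA$, an isomorphism of operads $\cP^{T,\ch}_\cA \simeq P^{\ch}$. Since a non-unital weakly translation equivariant chiral algebra on $\bA$ is by definition a morphism of operads $\mc Lie \to \cP^{T,\ch}_\cA$, and such a morphism is (by Remark \ref{rem:lie-operad}, applied to the operad $\cP^{T,\ch}_\cA\simeq P^\ch$) the same data as an odd element $X\in W^\ch_1(\Pi V)$ with $X\Box X=0$, Theorem \ref{20160719:thm} identifies this with a non-unital vertex algebra structure on $V$. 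So the heart of the argument is: (i) translate the chiral-algebra side into operadic language via Lemma \ref{lem:equivalence}, (ii) invoke Theorem \ref{20160719:thm}, and (iii) take care of the unital vs.\ non-unital and ``equivalence of categories'' refinements.

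First I would set up the two functors. In one direction, given a weakly translation equivariant chiral algebra $\cA$ on $\bA$, the underlying weakly equivariant $\cD$-module corresponds (as recalled in Section A.4) to a pair $(V,\partial)$; the chiral multiplication $\mu\in\cP^{T,\ch}_\cA(2)$ together with skewsymmetry and Jacobi is a morphism $\mc Lie\to\cP^{T,\ch}_\cA$, which under the isomorphism of Lemma \ref{lem:equivalence} and Remark \ref{rem:lie-operad} becomes an odd $X\in W^\ch_1(\Pi V)$ with $X\Box X=0$, hence by Theorem \ref{20160719:thm} a non-unital vertex algebra structure on $V$; explicitly, the integral of the $\lambda$-bracket is read off by \eqref{20160719:eq3}. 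In the other direction, a non-unital vertex algebra $V$ gives $X$ by \eqref{20160719:eq3b}, hence a morphism $\mc Lie\to P^\ch\simeq\cP^{T,\ch}_\cA$, hence a chiral multiplication $\mu$ on $\cA$; one checks $\mu$ is translation invariant because it lands in the translation-invariant suboperad by construction. These two assignments are mutually inverse because both the operad isomorphism of Lemma \ref{lem:equivalence} and the correspondence of Theorem \ref{20160719:thm} are bijective, and they are clearly functorial (a morphism of chiral algebras is a $\cD_\bA$-module map compatible with $\mu$, which under the dictionary is an $\mb F[\partial]$-module map compatible with the integral $\lambda$-bracket, i.e.\ a vertex algebra homomorphism).

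For the \emph{unital} statement quoted as \cite[0.15]{BD04}, I would add the observation that a unit morphism $\omega_\bA\to\cA$ of $\cD_\bA$-modules, equivariant for the $\mb G_a$-action and satisfying the compatibility \eqref{eq:push1} with $\mu$, corresponds under the equivalence of Section A.4 (sending $\omega_\bA$, with its canonical chiral structure, to $\mb F$ viewed as the trivial $\mb F[\partial]$-module, and using that $\omega_\bA$ is itself weakly translation equivariant) precisely to a distinguished even vector $\vac\in V_{\bar 0}$ with $\int^\lambda d\sigma[\vac_\sigma v]=\int^\lambda d\sigma[v_\sigma\vac]=v$, i.e.\ axiom (i) of Definition \ref{def5}. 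Thus adding a unit on one side exactly matches adding a unit on the other. Combining this with the non-unital equivalence above yields the stated equivalence of categories between weakly translation equivariant chiral algebras on $\bA$ and vertex algebras.

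I expect the main obstacle to be bookkeeping rather than conceptual: one must check that the translation-invariance condition imposed on $\mu$ in the definition of a weakly translation equivariant chiral algebra corresponds under Lemma \ref{lem:equivalence} exactly to landing in $W^\ch_1(\Pi V)$ (and not merely in $P^\ch(2)$), and that the skewsymmetry/Jacobi built into the morphism $\mc Lie\to\cP^{T,\ch}_\cA$ match axioms (iii) and (iv) of Definition \ref{def5}; the latter is precisely the content of Theorem \ref{20160719:thm}, so the remaining work is identifying the Cousin/residue presentation of the chiral Jacobi identity on $\omega_{X^3}$ with the $\Box$-product relation $X\Box X=0$ through the explicit module identification \eqref{eq:20180528-iso1}. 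The unit compatibility \eqref{eq:push1} likewise needs to be unwound carefully against axiom (i). None of these steps is deep given Lemma \ref{lem:equivalence} and Theorem \ref{20160719:thm}; they are a matter of carefully transporting definitions along the established equivalences.
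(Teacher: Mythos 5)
Your proposal is correct and follows essentially the same route as the paper: reduce to the non-unital case via Lemma \ref{lem:equivalence} and Remark \ref{rem:lie-operad}, identify morphisms $\mc Lie\to\cP^{T,\ch}_\cA$ with odd $X\in W^\ch_1(\Pi V)$ satisfying $X\Box X=0$, apply Theorem \ref{20160719:thm}, and then match the unit morphism $\omega_{\mathbb{A}^1}\to\cA$ with the vacuum via the compatibility \eqref{eq:push1}. The extra worry about re-deriving the Cousin/residue form of the Jacobi identity is unnecessary, since the operad isomorphism of Lemma \ref{lem:equivalence} already transports the $\mc Lie$-relations wholesale.
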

\begin{proof}
We first prove the analogous statement for non-unital algebras. 
The non-unital weakly translation equivariant chiral algebras
are given by morphisms of operads $\mathcal{L}ie \rightarrow \cP^{T,\ch}_\cA$ 
for a weakly equivariant $\cD$-module $\cA$. By Lemma \ref{lem:equivalence},
we have an $\mathbb{F}[\partial]$-module $V$ and a morphism 
of operads $\mathcal{L}ie \rightarrow P^{\ch}$. In a similar way 
as in Remark \ref{rem:lie-operad}, these correspond to an odd element 
$X \in W^{\ch}_1(\Pi V)$ satisfying $X \Box X = 0$. 
The result then follows from Theorem \ref{20160719:thm}. 
Under this equivalence, the unit vertex algebra $\mathbb{F}$ corresponds 
to the chiral algebra $\omega_{\bA}$.

Consider now a translation equivariant unital chiral algebra $V$ on $\bA$. 
Then the morphism $\omega_\bA \rightarrow \cA$ corresponds to a morphism 
of vertex algebras $\mathbb{F} \rightarrow V$. The image of $1 \in \mathbb{F}$ 
is the vacuum vector $\vac$ of $V$. Indeed, since $\omega_{\bA} \rightarrow \cA$ 
is a morphism of $\cD$-modules, we have $\partial \vac = 0$. If $X \in W^{\ch}_1(\Pi V)$ 
is the corresponding operation, it follows from \eqref{eq:push1} that
\[ X \Bigl(  \vac \otimes v  \otimes \frac{1}{z_0 - z_1} \Bigr) = v, \qquad v\in V, \]
from where the vacuum axioms follow. 
\end{proof}
\def\cLie{\mathcal{L}ie}

\subsection{Lie conformal operad}  \label{sec:staroperad}
In addition to the operad $P^{\ch}_\cA$ associated to any $\cD_X$-module $\cA$, Beilinson and Drinfeld define an operad $P^*_\cA$ by letting 
\[ 
\cP^{*}_\cA (k+1) = \mathrm{Hom}_{\cD_{X^{k+1}}\text{-mod}} 
\Bigl(\cA^{\boxtimes(k+1)}, \Delta_* \cA \Bigr). 
\]
In the case of $X = \mathbb{A}^1$ and $\cA$ a weakly equivariant $\cD$-module, we let $\cP^{T,*}_\cA$ be the suboperad of $\mathbb{G}_a$-invariant operations. We have in the same way as Lemma \ref{lem:equivalence} the following:
\begin{lemma} \label{lem:starequivalence} 
Let\/ $V$ be an\/ $\mathbb{F}[\partial]$-module, and\/ $\cA$ be its associated weakly 
equivariant\/ $\cD$-module on\/ $\bA$. Let\/ $\mathcal{C}hom$ be the operad 
from Section \ref{sec:LCA.1} associated to\/ $V$.  
Then we have an isomorphism of operads\/ $\cP^{T,*}_\cA \simeq \mathcal{C}hom$. 
\end{lemma}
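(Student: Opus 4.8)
The statement to prove is Lemma~\ref{lem:starequivalence}: for an $\mathbb{F}[\partial]$-module $V$ with associated weakly equivariant $\mathcal{D}$-module $\mathcal{A}$ on $\mathbb{A}^1$, the suboperad $\mathcal{P}^{T,*}_\mathcal{A}$ of $\mathbb{G}_a$-invariant $*$-operations is isomorphic to the operad $\mathcal{C}hom(V)$ of Section~\ref{sec:LCA.1}.

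The plan is to mimic the proof of Lemma~\ref{lem:equivalence} almost verbatim, replacing $j_*j^*\mathcal{A}^{\boxtimes(k+1)}$ by $\mathcal{A}^{\boxtimes(k+1)}$ throughout. First I would identify the global sections: since $\mathcal{A}$ corresponds to $(V,\partial)$ via $\mathcal{A}\mapsto V[x]$ with $\partial_x$ acting by $\partial - d/dx$, the external tensor product $\mathcal{A}^{\boxtimes(k+1)}$ on $\mathbb{A}^{k+1}$ has global sections $V^{\otimes(k+1)}[x_0,\dots,x_k]$, a right $\mathcal{D}_{k+1}$-module with $\partial_{x_i}$ acting as $\partial_i - d/dx_i$ on the $i$-th slot and $x_i$ acting by multiplication; the target $\Delta_*\mathcal{A}$ has global sections $V\otimes_{\mathbb{F}[\partial]}\mathbb{F}[x][\partial_0,\dots,\partial_k]$ exactly as in \eqref{eq:20180528-01}. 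Then I would take $\mathbb{G}_a$-invariants: the invariant sections of the target are (via \eqref{eq:20180528-iso1}) the space $V[\lambda_0,\dots,\lambda_k]/\langle\partial+\lambda_0+\cdots+\lambda_k\rangle$, while the invariant sections of $\mathcal{A}^{\boxtimes(k+1)}$ are $V^{\otimes(k+1)}[z_{ij}]_{0\le i<j\le k}$, i.e.\ $V^{\otimes(k+1)}\otimes\mathcal{O}^T_{k+1}$ with its $\mathcal{D}^T_{k+1}$-module structure; here the key point (absent in the $P^{\ch}$ case because there were no poles allowed) is that after differentiating the $\mathbb{G}_a$-action the invariant polynomials in $x_0,\dots,x_k$ are exactly the polynomials in the differences $z_{ij}$. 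Hence
\[
\mathcal{P}^{T,*}_\mathcal{A}(k+1)
=
\Hom_{\mathcal{D}^T_{k+1}}\bigl(V^{\otimes(k+1)}\otimes\mathcal{O}^T_{k+1},\,
V[\lambda_0,\dots,\lambda_k]/\langle\partial+\lambda_0+\cdots+\lambda_k\rangle\bigr).
\]

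Next I would show this coincides with $\mathcal{C}hom(k+1)$. Since $\mathcal{O}^T_{k+1}=\mathbb{F}[z_{ij}]$ is generated as a $\mathcal{D}^T_{k+1}$-module by the constant function $1$ (translation-invariant polynomials are obtained from $1$ by applying the $z_{ij}$'s, which act by multiplication, so the generation is trivial here), any $\mathcal{D}^T_{k+1}$-homomorphism is determined by its value on $V^{\otimes(k+1)}\otimes 1$, subject only to the sesquilinearity forced by the relation $z_{ij}\cdot(v\otimes 1)=0$ together with the left $\mathbb{F}[\partial]^{\otimes(k+1)}$-action; unwinding, this is precisely the datum of a map $f_{\lambda_1,\dots,\lambda_n}\colon V^{\otimes n}\to\mathbb{F}_-[\lambda_1,\dots,\lambda_n]\otimes_{\mathbb{F}[\partial]}V$ satisfying \eqref{20170613:eq1}, after the change of variables $\lambda_i\leftrightarrow\lambda_{i+1}$, $i=0,\dots,k$, identifying the target via \eqref{eq:20180528-iso1}. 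I would then check that the $S_{k+1}$-actions agree — on the geometric side permuting the factors of $\mathcal{A}^{\boxtimes(k+1)}$ and the corresponding variables $z_i$, matching \eqref{20170613:eq3} — and that composition agrees: the Beilinson--Drinfeld composition for $P^*_\mathcal{A}$ is the analogue of \eqref{circ1}--\eqref{circ5} with no poles (one just substitutes $z_i=z_0$ in $g$, but now $g$ is already polynomial so the factorization \eqref{circ2} is trivial, $f=1$), which on the invariant sections is exactly the $\mathcal{C}hom$ composition \eqref{20170613:eq2}. The computation is essentially the restriction of the argument in Lemma~\ref{lem:equivalence} to the subspace where no poles occur.

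The main obstacle, as in Lemma~\ref{lem:equivalence}, is bookkeeping rather than anything conceptual: one must carefully track the two different right $\mathcal{D}$-module structures (the geometric one with $\partial_{x_i}\mapsto\partial_i-d/dx_i$ versus the module structure entering \eqref{eq:20180528-02}) and verify that the isomorphism \eqref{eq:20180528-iso1} intertwines them on the relevant subspaces, and that the sign conventions in the $S_{k+1}$-action and in the composition of tensor products of maps match the Koszul--Quillen signs of \eqref{20170804:eq1}, \eqref{20170821:eq5b}. Since all of this is literally the ``$\mathcal{O}^T$-part'' of the already-verified Lemma~\ref{lem:equivalence}, I would phrase the proof as ``the same argument as Lemma~\ref{lem:equivalence}, now without poles, yields the isomorphism,'' spelling out only the two places where the absence of poles simplifies matters (the trivial factorization \eqref{circ2} and the trivial $\mathcal{D}^T_{k+1}$-generation of $\mathcal{O}^T_{k+1}$ by $1$).
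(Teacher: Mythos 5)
Your proposal is correct and follows exactly the route the paper intends: the paper gives no separate argument for Lemma \ref{lem:starequivalence} beyond the remark that it is proved ``in the same way as Lemma \ref{lem:equivalence},'' and your proof is precisely that adaptation — drop the localization at the diagonals, identify the translation-invariant sections of $\mc A^{\boxtimes(k+1)}$ with $V^{\otimes(k+1)}\otimes\mc O^{T}_{k+1}$, and note that a $\mc D^T_{k+1}$-morphism out of it is determined by its value on $V^{\otimes(k+1)}\otimes 1$, recovering the sesquilinearity \eqref{20170613:eq1} and the $\mc Chom$ compositions. The only detail worth recording explicitly (which you gesture at correctly) is that the relations $z_{ij}=z_{i\ell}+z_{\ell j}$ among the generators of $\mc O^T_{k+1}$ are matched by the corresponding relations among the operators $\partial_{\lambda_j}-\partial_{\lambda_i}$ on the target, so the extension from the value at $1$ is consistent.
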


%%%%%%%%%%%%%%%%%%%%%%%%%%%%%%%%%%%%%%%

%%%
\subsection{Classical operations} \label{sub:classical} 

For any smooth algebraic curve $X$ over $\mb F$
and any right $\mathcal{D}_X$-module $\mathcal{A}$ on it, in \cite[1.4.27]{BD04} 
the authors define an operad of \emph{classical operations} $\cP^c_\mathcal{A}$ as follows. 
Let $\cLie$ be the Lie operad, that is, $\cLie(k+1)$ is the vector space with a basis consisting of all formal symbols
\[ 
[x_{\sigma(0)}, [x_{\sigma(1)},\cdots[x_{\sigma(k-1)},x_{\sigma(k)}]\cdots]], 
\qquad \sigma \in S_{k+1}, \quad \sigma(0)=0.
\]
The composition in $\cLie$ is defined by replacing the corresponding variables and expanding using the Jacobi and skew-symmetry identities. For each $k\ge0$ and each $(m_0,\dots,m_{k})$-shuffle $\sigma$ as in Section \ref{sec:2.4}, we let 
\[ 
\cLie_\sigma := \cLie(m_0) \otimes \cdots \otimes \cLie(m_k), 
\]
and 
\[ 
\cP^{*}_\cA(\sigma) = \mathrm{Hom}_{\cD_{X^{k+1}}\text{-mod}} 
\Bigl(\cA^{\otimes m_0} \boxtimes \cdots \boxtimes \cA^{\otimes m_k}, \Delta_* \cA \Bigr),
\]
where $\Delta\colon X \hookrightarrow X^{k+1}$ is the small diagonal embedding. 
Finally, put 
\[ 
\cP^c_\cA(n+1) = \bigoplus_{k=0}^{n} \bigoplus_\sigma \cP^{*}_\cA(\sigma) \otimes \cLie_\sigma, 
\]
where the inner sum is over $(m_0,\dots,m_k)$-shuffles $\sigma$ such that 
$\sum_{i=0}^k m_i = n+1$. The composition in $\cP^c_\cA$ is defined as the tensor product of the compositions 
in the operad $\cP^*_\cA$ defined in \ref{sec:staroperad} and the compositions in the $\cLie$ operad (see \cite[1.4.27]{BD04} for details). 
The operad $\cP^{c}_\cA$ defined this way is graded, with the grading given by $k$ in the above sum. 

\begin{remark}
In \cite{BD04} the authors work with unordered sets and equivalence relations on these sets, 
namely, instead of defining the $n$-ary operations $\cP(n)$ for an operad, they define 
the $I$-ary operations $\cP(I)$ for any finite nonempty set $I$. Similarly, composition is defined 
for any equivalence relation $S$ in $I$ instead of a shuffle $\sigma$. For the equivalence 
of these two approaches, see \cite{ginzburg}. 
\end{remark}

In the case when $X=\mathbb{A}^1$ and $\cA$ is a weakly $\mathbb{G}_a$-equivariant $\cD_X$-module,  we consider the translation equivariant suboperad $\cP_\cA^{T,c}$, and in the same way as in \ref{sec:equivariant-operations},
we have the following:
\begin{theorem}
Let\/ $V$ be an\/ $\mathbb{F}[\partial]$-module, and\/ $\cA$ be its associated weakly 
equivariant\/ $\cD$-module on\/ $\mathbb{A}^1$. Let\/ $P^{\cl}$ be the operad 
from Theorem \ref{20170616:thm1} associated to\/ $V$.  
Then we have an isomorphism of graded operads\/ $\cP^{T,c}_\cA \simeq P^{\cl}$. 
\label{thm:isoclassical}
\end{theorem}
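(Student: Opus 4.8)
The plan is to follow the same strategy used for the isomorphism $\cP^{T,\ch}_\cA\simeq P^\ch$ in Lemma \ref{lem:equivalence} (and $\cP^{T,*}_\cA\simeq\mathcal{C}hom$ in Lemma \ref{lem:starequivalence}), adapting it to the classical operad. First I would unwind the definition of $\cP^c_\cA(n+1)$ in the special case $X=\mathbb{A}^1$ and $\cA$ the weakly $\mathbb{G}_a$-equivariant $\cD$-module attached to the pair $(V,\partial)$. The global sections of $\Delta_*\cA$ for the small diagonal $\Delta\colon\mathbb{A}^1\hookrightarrow\mathbb{A}^{k+1}$ are as in \eqref{eq:delta-push}, and taking $\mathbb{G}_a$-invariants produces (by the isomorphism \eqref{eq:20180528-iso1}) the space $V[\lambda_0,\dots,\lambda_k]/\langle\partial+\lambda_0+\dots+\lambda_k\rangle$, while the $\mathbb{G}_a$-invariant sections of $\cA^{\otimes m_0}\boxtimes\cdots\boxtimes\cA^{\otimes m_k}$ give $V^{\otimes(m_0+\dots+m_k)}$ together with a $\cD^T$-module structure as in Section \ref{sec:LCA.1}. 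Thus $\cP^{T,*}_\cA(\sigma)$ becomes, by Lemma \ref{lem:starequivalence} applied blockwise, a copy of $\mathcal{C}hom$-type operations on the blocks; tensoring with $\cLie_\sigma=\cLie(m_0)\otimes\cdots\otimes\cLie(m_k)$ records a choice of iterated Lie bracket on each block.

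The key step is then to set up a bijection between the data $(\sigma;\varphi_0\otimes\ell_0,\dots,\varphi_k\otimes\ell_k)$, with $\sigma$ an $(m_0,\dots,m_k)$-shuffle, $\varphi_i\in\cP^{*}_\cA(\sigma)|_{\text{block }i}$ and $\ell_i\in\cLie(m_i)$, and the combinatorial description of $P^\cl(n+1)$ in terms of maps $\Gamma\mapsto f^\Gamma$ on acyclic graphs $\Gamma\in\mc G_0(n+1)$. The dictionary is: each iterated Lie bracket $\ell_i\in\cLie(m_i)$ on the $i$-th block, expanded via Jacobi and skewsymmetry into a sum of standard left-nested brackets, corresponds exactly to a sum of \emph{rooted trees} (oriented towards the root) on the vertices of that block, and such rooted trees are precisely the connected acyclic graphs modulo the cycle relations \eqref{eq:cycle2}; the shuffle $\sigma$ records how the roots of the $k+1$ blocks are grouped. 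Concatenating the block-trees gives an acyclic $(n+1)$-graph $\Gamma$ whose connected components are the blocks, and the $\mathcal{C}hom$-operation on the roots supplies the polynomial $f^\Gamma_{\lambda_1,\dots,\lambda_{n+1}}$, which automatically satisfies the sesquilinearity conditions \eqref{eq:sesq1}--\eqref{eq:sesq2} because of the $\cD^T$-equivariance and the block/connected-component structure. One must check this assignment is well defined (the Jacobi/skewsymmetry relations in $\cLie$ match exactly the cycle relations in $P^\cl$, cf.\ the parenthetical remark after Theorem \ref{20170616:thm2}), surjective, and injective; the reverse direction decomposes a graph into its connected components, picks a root in each, reads off the rooted-tree shape as a Lie monomial, and reads off the $\lambda$-polynomial as a $\mathcal{C}hom$-operation on roots.

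The final step is to verify that this bijection is an isomorphism of operads, i.e.\ that it intertwines the symmetric group actions, the units, and — crucially — the compositions. Compatibility with $S_{n+1}$ and the unit is immediate from the constructions \eqref{20170615:eq1} and the description of $1\in P^\cl(1)$. For the compositions, the composition in $\cP^c_\cA$ is by definition the tensor product of the composition in $\cP^*_\cA$ (which corresponds, by Lemma \ref{lem:starequivalence}, to composition in $\mathcal{C}hom$, hence to the $\gr^0$ part $f^{\bullet\cdots\bullet}$) and the composition in the $\cLie$ operad (substitution of Lie monomials, expanded via Jacobi/skewsymmetry). On the $P^\cl$ side, composition \eqref{20170616:eq3} is governed by the cocomposition of graphs $\Delta^{m_1\dots m_n}$ from Section \ref{sec:6a.2}, and one checks that contracting a block of vertices to a single vertex (the graph $\Delta^{m_1\dots m_n}_0(\Gamma)$) is exactly substitution of a Lie monomial into another, while the subgraphs $\Delta^{m_1\dots m_n}_i(\Gamma)$ are the inner Lie monomials, and the shift variables $X(k)$ of \eqref{20170824:eq1} reproduce the $\lambda_i+\dd$ bookkeeping coming from pushing the $\cD_1$-action across $\Delta_*$ in \eqref{eq:delta-push}. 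The grading match ($\cP^c_\cA$ graded by $k$, $P^\cl$ by number of edges via \eqref{eq:pclgr}) follows from Lemma \ref{20170823:lem1}.

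The main obstacle I expect is the precise bookkeeping in the composition-compatibility step: matching the $\cD$-module push-forward formulas \eqref{eq:delta-push}--\eqref{eq:20180528-02} (in particular the left $\cD_1$-action entering through $\partial_x$ acting as $\sum\partial_{z_i}$) with the substitution rule \eqref{20170824:eq2} and the externally-connected-vertex sums $X(k)$ of Definition \ref{20170824:de1}. This is the same kind of calculation that appears in the proof of Lemma \ref{lem:equivalence}, but here it must be done compatibly with the decomposition into $\cLie$-monomials; getting the signs and the variable shifts to line up across the tensor factors is the delicate point, and one should probably verify it first on $P^\cl(2)$ and $P^\cl(3)$ (using the three graphs of \eqref{eq:graphs3}) where it reduces to the identities already checked in the proof of Theorem \ref{20170616:thm2}.
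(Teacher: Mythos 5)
Your proposal follows essentially the same route as the paper's (sketched) proof: identify the $\mathcal{P}^{T,*}_{\mathcal{A}}(\sigma)$ factors with $\mathcal{C}hom$-type operations via Lemma \ref{lem:starequivalence}, translate Lie monomials into rooted trees on each block (the paper formalizes exactly this dictionary by invoking the Chapoton--Livernet identification of the pre-Lie operad with the rooted-trees operad, with the cycle relations \eqref{eq:cycle1}--\eqref{eq:cycle2} cutting out the image of $\mathcal{L}ie$ inside the span of trees), and then match compositions through the graph cocomposition of Section \ref{sec:6a.2}. The paper's own argument is likewise only a sketch that leaves the composition bookkeeping to the reader, so your proposal is correct and at essentially the same level of detail.
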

\begin{proof}[Sketch of the proof]
The proof relies on a theorem by Chapoton and Livernet \cite{chapoton}, which states that 
the operad of pre-Lie algebras\footnote{That is algebras satisfying an even version 
of \eqref{20170608:eq2}.} is isomorphic to the operad of rooted trees. Using this theorem, 
one can associate to any $n$-ary operation in the operad $\cLie$ a connected graph 
$\Gamma \in \mathcal{G}(n)$ as in Section \ref{sec:6a.1}. 
More generally, given an $(m_0,\dots,m_k)$-shuffle $\sigma$ and  
\[ 
\tau_0 \otimes \cdots \otimes \tau_k \in \cLie_\sigma, 
\] 
we obtain a graph $\Gamma \in \mathcal{G}(\sum m_i)$ with $k+1$ connected components, 
the $i$-th component of which being a connected graph in $\mathcal{G}(m_i)$. 
By Lemma \ref{lem:starequivalence}, 
to any element of $\cP^{T,*}_\cA(\sigma)$ we associate an operation $f^\Gamma$ satisfying 
the sesquilinearity conditions \eqref{eq:sesq1}, \eqref{eq:sesq2}. The operation $f^\Gamma$ 
satisfies in addition the cycle relations \eqref{eq:cycle1}, \eqref{eq:cycle2}, since 
the graph $\Gamma$ comes from an operation in $\cLie$ and therefore satisfies skew-symmetry 
(as opposed to the general graph that by Chapoton--Livernet's theorem defines only 
a pre-Lie algebra operation). 
This defines an isomorphism of graded vector spaces $\cP^{T,c}_\cA (n) \simeq P^{\cl}(n)$ 
for all $n$. One readily checks that this isomorphism is compatible with compositions 
in both operads. 
\end{proof}

%%%
%\subsection{} \label{grading} 

The operad $\cP^{\ch}_\cA$ carries a natural filtration given by the diagonal stratification 
of $X^n$ for each $n$. It gives rise to the associated graded operad $\gr \cP^{\ch}_\cA$. 
In \cite[3.2.5]{BD04} the authors produce a canonical embedding of graded operads
\[ \gr \cP^{\ch}_\cA \hookrightarrow \cP^{c}_\cA, \]
and claim that it is an isomorphism if $\cA$ is a projective $\cD_X$-module. In the case 
of $X=\mathbb{A}^1$ and considering the translation invariant suboperads, this embedding 
is the geometric counterpart to Theorem \ref{thm:mor}.  

%%%%%%%%%%%%%%%%%%%%%%%%%%%%%%%%%%%%%%%%%%%%%%%%%%%%%%%%%%%%%%%%%%%%%%%%%%%%%%%%%%%%%%%%%%%%%%%%%%%%%%%%%%%%%%
%%%%%%%%%%%%%%% Bibliography %%%%%%%%%%%%%%%%%%%%%%%%%%%%%%%%%%%%%%%%%%%%%%%%%%%%%%%%%%%%%%%%%%%%%%%%%%%%%%%%%
%%%%%%%%%%%%%%%%%%%%%%%%%%%%%%%%%%%%%%%%%%%%%%%%%%%%%%%%%%%%%%%%%%%%%%%%%%%%%%%%%%%%%%%%%%%%%%%%%%%%%%%%%%%%%%

% Non-BibTeX users please use

%%%%%%%%%%%%%%%%%%%%%%%%%%%%%%%%%%%%%%%

\end{document}